\theoremstyle{plain}
	\newtheorem{thm}{Theorem}
	\newtheorem*{thm1}{Theorem 1}
	\newtheorem*{thm3}{Theorem 2}
	\newtheorem{cor}{Corollary}
	\newtheorem*{cor1}{Corollary 1}
	\newtheorem*{cor2}{Corollary 2}
	\newtheorem{lem}{Lemma}[section]
	\newtheorem{prop}[lem]{Proposition}
	\newtheorem{prop1}{Proposition 3.7}
\theoremstyle{definition}
	\newtheorem{dfn}{Definition}[section]
	\newtheorem*{ntn}{Notations}
	\newtheorem{rem}{Remark}[section]
	\newtheorem*{rems}{Remarks}
	\newtheorem*{ex}{Example}
	\newtheorem*{exs}{Examples}
\newcommand{\bigslant}[2]{{\raisebox{.2em}{$#1$}\left/\raisebox{-.2em}{$#2$}\right.}}
\newcommand{\C}{\mathbb{C}}
\newcommand{\R}{\mathbb{R}}
\newcommand{\Q}{\mathbb{Q}}
\newcommand{\Z}{\mathbb{Z}}
\newcommand{\N}{\mathbb{N}}
\newcommand{\T}{\mathbb{T}}
\newcommand{\F}{\mathbb{F}}
\renewcommand{\P}{\textnormal{P}}
\renewcommand{\t}{\mathfrak{t}}
\newcommand{\Sed}{\textnormal{Sed}}
\newcommand{\Sd}{\textnormal{Sd}\,}
\newcommand{\vol}{\textnormal{vol}}
\newcommand{\Res}{\textnormal{Res}}
\newcommand{\card}{\textnormal{card}}
\newcommand{\Hom}{\textnormal{Hom}}
\newcommand{\id}{\textnormal{id}}
\newcommand{\df}{\textnormal{d}}
\title{\textsc{A Poincaré-Lefschetz Theorem for Cellular Cosheaves and an Application to the Tropical Homology of Orbifold Toric Varieties}}
\author{Jules Chenal \\ \\ \textsc{Institut Camille Jordan, UMR 5208} \\ \textsc{Université Claude Bernard Lyon 1}}
\begin{document}

\maketitle

\begin{abstract}
	In a first time we present a version of the Poincaré-Lefschetz theorem for certain cellular cosheaves on a particular subdivision of a CW-complex $K$. To that end we construct a cellular sheaf on $K$ whose cohomology with compact support is isomorphic to the homology of the initial cosheaf. In a second time we use the first result to generalise the tropical version of the Lefschetz hyperplane section theorem to singular tropical toric varieties and singular tropical hypersurfaces.
\end{abstract}

\section*{Introduction}

	Given a regular CW-complex $K$, we define \emph{dihomologic cosheaves} on $K$ to be a mild generalisation of the concept of cellular cosheaves on a CW-complex. These objects can be seen as a system of coefficients that associate a group to every pairs of adjacent cells of the complex $K$. For a broad variety of examples these cosheaves correspond to classical cellular cosheaves on a suitable subdivision of $K$. In a first time we tackled the question: \emph{Can we compute the homology of $F$ from a reduced quantity of data ?} Given a set of hypotheses about the \emph{local homology} of $F$ we are able to construct a cellular sheaf whose cohomology with compact support is isomorphic to the homology of $F$. We went from data carried by adjacent pairs of cells in $K$ to data carried by individual cells. 

\begin{thm1}[Cellular Poincaré-Lefschetz Theorem]
	Let $K$ be a finite dimensional, locally finite and regular CW-complex, $n\in \N$, and $F$ a dihomologic cosheaf whose local homologies $H_*(K;F_e)$ vanish at all cells $e$ in dimension different from $n$. Then, for $0\leq k\leq n$, we have a canonical isomorphism between $H_k(X;F)$ and $H_c^{n-k}(X;H_n(F_*))$. In particular $H_k(K;F)$ vanishes for $k>n$. If in addition, $K$ has dimension $n$, this isomorphism comes from an injective quasi-isomorphism $C^{n-*}_c(K;H_n(F_*))\rightarrow \Omega_*(K;F)$. 
\end{thm1}

	This statement reminded us of the Poincaré-Lefschetz duality which can be found as one of its direct corollaries.

\begin{cor1}
	If $X$ is a homology $n$-manifold then $H_k(X;\Z)\cong H^{n-k}_c(X;\partial X; o_\Z)$ for $o_\Z$ the system of local orientations defined on $X\setminus\partial X$ by $x\mapsto H_n(X;X-x;\Z)$.
\end{cor1}

	This is the application of Theorem~\ref{thm:cell_poinca_lef} to the constant cosheaf $\Z$ and in this special case the proof is the same as the one given by Zeeman in \cite{Zee_dih_iii}\footnote{E. C. Zeeman. \emph{Dihomology III. A Generalization of the Poincar\'e Duality for Manifolds}, Theorem 1 p.159.}. We want to emphasise that we chose the name \emph{dihomologic} in reference to Zeeman's theory of dihomology \cite{Zee_dih_i,Zee_dih_ii,Zee_dih_iii}. A statement similar to Theorem~\ref{thm:cell_poinca_lef} could be derived the assumptions on the local homology of $F$. However, in this case we would associate a complex of cellular sheaves to $F$ whose cohomology with compact support (or more precisely hypercohomology) would be isomorphic to the homology of $F$. If $F$ was the subdivision of a cosheaf on $K$ the statement would be close to Verdier duality of cellular cosheaves given by Curry in \cite{Cur_she_cos}. Another corollary of Theorem \ref{thm:cell_poinca_lef} is a version of Serre duality for flat vector bundles on a homology manifolds.

\begin{cor2}
	If $X$ is a homology $n$-manifold and $E$ is a flat bundle of $\F$-vector spaces of finite rank over $X$ then:
	\begin{equation*}
		H^k(X;E)\cong \big( H^{n-k}_c(X;\partial X; o_\F\otimes_\F E^*)\big)^*.
	\end{equation*}
\end{cor2}
     
	The second theorem is a generalistaion of the tropical version of the Lefschetz hyperplane section theorem given by C. Arnal, A. Renaudineau and K. Shaw in \cite{Arn-Ren-Sha_Lef_sec}\footnote{C. Arnal, A. Renaudineau and K. Shaw. \emph{Lefschetz Section Theorems for Tropical Hypersurfaces}, Theorem 1.2 .1349.} and its extension by E. Brugallé, L. Lopez de Medrano and J. Rau in \cite{Brug-LdM-Rau_Comb_pac}\footnote{E. Brugallé, L. Lopez de Medrano and J. Rau. \emph{Combinatorial Patchworking: Back from Tropical Geometry}, Proposition 3.2 p.15.} to non-convex triangulations. Tropical homology is defined as the homology of certain dihomologic cosheaves on a convex polyhedral subdivision $K$ of a convex polytope $P$. If we consider an hypersurface of the toric variety $Y$ associated with $P$ in \cite{Ite-Kat-Mik-Zha_tro_hom} I. Itenberg, Katzarkov, G. Mikhalkin and I. Zharkov gave two families of cosheaves:
	
\begin{equation*}
	F^{(1)}_p\subset F^{(0)}_p,\,p\in\N,
\end{equation*} 

\noindent whose homologies are respectively the tropical homology groups of the hypersurface and of the toric variety $Y$. The tropical version of the Lefschetz hyperplane section theorem describes the nature of the morphisms induced in homology by the inclusions:

\begin{equation*}
	H_q(K;F^{(1)}_p)\rightarrow H_q(K; F^{(0)}_p).
\end{equation*} 

\noindent C. Arnal, A. Renaudineau and K. Shaw showed in \cite{Arn-Ren-Sha_Lef_sec} that when the toric variety $Y$ associated with $P$ is smooth and $K$ is an unimodular triangulation these morphisms are isomorphisms when $p+q<\dim P -1$ and surjevtive when $p+q=\dim P -1$.  Considering a tropical hypersurface of $Y$ implies the convexity of the subdivision $K$. However, the definition of the cosheaves $F^{(1)}_p$ and $F^{(0)}_p,\,p\in\N,$ still makes sense for non-convex subdivisions and E. Brugallé, L. Lopez de Medrano and J. Rau showed that this statement remains true when the convexity hypothesis is dropped. We also state our result without assuming the subdivision to be convex. Using Theorem~\ref{thm:cell_poinca_lef} we are able to extend the statement to orbifold toric varieties modulo a change of coefficients. The toric variety associated with $P$ is orbifold when the polytope $P$ is simple. When this the case we define two integers $\delta(P),\theta(K)\geq 1$ respectively associated with the polytope and the subdivision and we show that:

\begin{thm3}
	Let $R$ be a ring in which both $\delta(P)$ and $\theta(K)$ are invertible, the homological morphisms:
	 
	\begin{equation*}
		i_{p,q}\colon H_{q}(K;F^{(1)}_p\otimes R)\rightarrow H_{q}(K;F^{(0)}_p\otimes R)\,,
	\end{equation*}
	
	\noindent induced by the inclusions $i_p\colon F_p^{(1)}\rightarrow F_p^{(0)}$ are:
	
	\begin{itemize}
		\item isomorphisms for all $p+q<\dim P-1$ ;
		\item surjective morphisms for all $p+q=\dim P-1$.
	\end{itemize}	
\end{thm3} 

	The number $\delta(P)$ is linked to the singularities of $Y$. Its value is $1$ if and only if $Y$ is smooth. On the other hand, even if the number $\theta(K)$ is determined by the proper singularities\footnote{In the sense : non-inherited from the singularities of $Y$.} of the tropical hypersurface, i.e. of the subdivision $K$, it is less fine than $\delta(P)$ as $\theta(K)=1$ on every unimodular triangulations but the converse does not even imply that $K$ is a triangulation.
	
	\vspace{5pt}   

	In addition of Theorem~\ref{thm:Lefschetz_hyp_sec} we recover the formulæ giving the dimensions of the homology groups of the sheaves $F^{(0)}_p$, corresponding to the rational Betti numbers of the toric variety $Y$.

\begin{prop1}%
	For every ring $R$ in which $\delta(P)$ is invertible, and every $p\in\N$, the only non-trivial homology group of the cosheaf $F^{(0)}_p\otimes R$ is $H_p(K;F^{(0)}_p\otimes R)$. Moreover this module is free of rank $h_p(P^\circ)$, the $p$-th $h$-number of the polar polytope $P^\circ$ of the simple polytope $P$. More precisely:
     \begin{equation*}
     	\textnormal{rk}_R\, H_p(K;F^{(0)}_p\otimes R) =\sum_{k=0}^p(-1)^{p-k} \binom{n-k}{p-k}f_{n-k}(P),
     \end{equation*}
     where $f_k(P)$ is the number of $k$-faces of $P$.
\end{prop1}

	We divide this text into three parts. The first is devoted to the introduction of the objects of cellular homology we study here. In particular we introduce the dihomologic pseudo-subdivision of a regular CW-complex $K$ and study its properties. In the second part we state and prove Theorem~\ref{thm:cell_poinca_lef} and its corollaries. In the last section we apply the previous results to prove Theorem~\ref{thm:Lefschetz_hyp_sec}.


\section{CW-Complexes and Cellular Homology}

	CW-complexes were introduced by J. H. C. Whitehead in \emph{Combinatorial homotopy. {I}}, \cite{Whi_com_hom}. Their underlying topological spaces, their supports, form a broad family of spaces usually considered well-behaved. Some of the sheaves defined on their support are particularly adapted to their structure. They can be described by a relatively small amount of data and their cohomology can be computed the techniques of cellular cohomology. In the following paragraphs we give a succinct presentation of the objects at play in this text.

\subsection*{CW-Complexes}


    \begin{dfn}[CW-complex]
        A \emph{CW-complex} $K$ is the data of a Hausdorff topological space $|K|$, called the \emph{support} of $K$, filtered by closed subsets $\varnothing=K^{(-1)}\subset K^{(0)}\subset ...\subset K^{(k)}\subset ...\subset |K|$ called the skeleta of $K$ whose union covers $|K|$. Such filtration has to satisfied the additional properties:
         
        \begin{enumerate}
            \item For every $k\geq 0$ and every connected component $e^k$ of $ K^{(k)}\setminus K^{(k-1)}$, called an \emph{open $k$-cell}, there exists a surjective continuous map from the closed $k$-dimensional ball onto the closure $\bar{e}^k$ carrying homeomorphically the open ball onto $e^k$, such a map is called a \emph{characteristic map} of the open cell $e^k$;
            \item $|K|$ has the weak topology : \emph{a subset $A\subset |K|$ is closed if and only if its intersection $A\cap\Bar{e}^k$ with every closed cell is closed};
            \item Every skeleton $K^{(k)}$ has the weak topology in the same sense as in point 2.
        \end{enumerate}
        
        \noindent We call the dimension of $K$, $\dim K$, the smallest integer from which the filtration $(K^k)_{k\geq -1}$ is stationary. It might be $\infty$. A sub-complex $L$ of $K$ is determined by a closed subset $|L|$ for which the induced filtration:
        
        \begin{equation*}
        	\varnothing=L^{(-1)}\subset L^{(0)}\subset ...\subset L^{(k)}\subset ...\subset |L| \textnormal{ with }L^{(k)}=|L|\cap K^{(k)} \textnormal{ for all }k\in \N,
        \end{equation*}
        
        \noindent turns it into a CW-complex of its own right. The intersection of sub-complexes is again a sub-complex. For $A$ any subset of $|K|$ we set $K(A)$ to be the smallest sub-complex containing $A$ in its support i.e. the intersection of all sub-complexes containing $A$ in their support. 
    \end{dfn}

\begin{exs}
	\begin{enumerate}
		\item The most basic examples are given by simplices and all the geometric realisations of simplicial complexes as defined in \cite{Whi_sim_spa}. More generally, a polyhedral complex is an example of CW-complex. By a polyhedral complex we mean a collection $K$ of polytopes\footnote{a convex hull of a finite number of vertices.} in a real vector space that contains all the faces of its polytopes and in which two distinct polytopes intersect on a common face (which might be empty). In a polyhedral complex the open cell corresponding to a polytope is its relative interior, that is to say the topological interior of the polytope in the affine space it spans.
		\item  An extremely classical example is given by the real projective spaces. They filter themselves $\R \P^0\subset\R \P^1\subset ... \subset \R \P^n$ by inclusion on the first coordinates and the partition of $\R \P^n$ into open cells corresponds to a decomposition into affine spaces, one for every $0\leq k\leq n$. By extension, the inductive limit $\R \P^\infty$ is also a CW-complex for the induced filtration.
	\end{enumerate}
\end{exs}
	  
\begin{dfn} 
	A CW-complex is called \emph{locally finite} if all of its points has a neighbourhood that meets only finitely many open cells.  
\end{dfn}

	Any \emph{finite} (with finitely many cells) CW-complex is obviously locally finite. Among our examples, $\R \P^\infty$ is not locally finite as the neighbourhood of a point in the open cell $\R^k$ will meet all the open cells $\R^n$ for $n\geq k$. 

    \begin{prop}[J. H. C. Whitehead. \emph{Combinatorial homotopy. I} \cite{Whi_com_hom},  (G), pp.225-227, (M), pp.230-231.]
        A CW-complex is a normal and locally contractible topological space.
    \end{prop}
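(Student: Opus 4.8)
The plan is to establish the two properties separately, each by induction on the skeleta of $K$, with the weak-topology axioms (2) and (3) serving as the gluing mechanism; throughout I will use that each closed cell $\bar e$, being the continuous image of the compact ball $D^k$ inside the Hausdorff space $|K|$, is compact Hausdorff.

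\emph{Normality.} First, $|K|$ is $T_1$: a singleton is closed in every $\bar e$, hence closed in $|K|$ by axiom (2). It then suffices, by Urysohn's lemma, to separate two disjoint closed sets $A,B\subset|K|$ by a continuous $f\colon|K|\to[0,1]$ with $f|_A=0$ and $f|_B=1$. I would build $f$ skeleton by skeleton. On the discrete set $K^{(0)}$, set $f=0$ on $A$, $f=1$ on $B$, and (say) $0$ elsewhere. Given $f$ continuous on $K^{(n-1)}$ with $f|_A=0$, $f|_B=1$ there, extend it over each closed $n$-cell $\bar e$: picking a characteristic map $\Phi_e\colon D^n\to\bar e$, the set $C_e:=S^{n-1}\cup\Phi_e^{-1}(A)\cup\Phi_e^{-1}(B)$ is closed in $D^n$ and the assignment ``$f\circ\Phi_e$ on $S^{n-1}$, $0$ on $\Phi_e^{-1}(A)$, $1$ on $\Phi_e^{-1}(B)$'' is a well-defined continuous function on $C_e$ — the three clauses agree on overlaps because $\Phi_e(S^{n-1})\subset K^{(n-1)}$ and $A\cap B=\varnothing$ — which Tietze's theorem (the disc being a compact metric space) extends to $D^n\to[0,1]$; pushing forward by $\Phi_e$ gives $f$ on $\bar e$. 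Axiom (3) makes $f$ continuous on each $K^{(n)}$, and axiom (2) makes the resulting $f$ continuous on $|K|$. This half is essentially bookkeeping.

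\emph{Local contractibility.} Given $x$ in an open cell $e_0$ of dimension $n_0$ and a neighbourhood $N$ of $x$, I would produce a contractible open neighbourhood of $x$ contained in $N$ by a parallel induction. Start with a small open ball $U_{n_0}\subset e_0\cap N$ about $x$ (open in $K^{(n_0)}$ since $e_0$ is). Given an open $U_{n-1}\subset K^{(n-1)}$ which, inductively, deformation retracts onto $\{x\}$, thicken it radially into each open $n$-cell $e$: over $D^n$ one takes a collar-type neighbourhood of $\Phi_e^{-1}(U_{n-1})\subset S^{n-1}$, roughly $\{z\in D^n:|z|>1/2,\ z/|z|\in\Phi_e^{-1}(U_{n-1})\}$ (with a cutoff so that the radial retraction onto $\Phi_e^{-1}(U_{n-1})$ patches continuously with the identity outside). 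Assembling these over all $n$-cells — legitimate because distinct open $n$-cells are disjoint — yields an open $U_n\subset K^{(n)}$ with $U_n\cap K^{(n-1)}=U_{n-1}$ that deformation retracts onto $U_{n-1}$. Set $U:=\bigcup_{n\geq n_0}U_n$. Since each skeleton is closed in $|K|$, one checks $U\cap K^{(n)}=U_n$, so $U$ is open, $U\subset N$, and $U$ is the increasing union of the $U_n$ with the corresponding weak topology; the same holds for $U\times I$ because $I$ is compact.

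The step I expect to resist is packaging the stagewise deformation retractions into one continuous contraction of $U$. The plan is to arrange each $H_n\colon U_n\times I\to U_n$ so that it fixes $U_{n-1}$ pointwise while retracting $U_n$ onto it, and then to telescope: run $H_{n_0+1}$ on the time interval $[1/2,1]$, $H_{n_0+2}$ on $[1/4,1/2]$, and in general $H_{n_0+k}$ on $[2^{-k},2^{-k+1}]$, with the constant map to $x$ at $t=0$. On each $U_m\times I$ only finitely many stages move points, so this defines a continuous map there, and continuity on all of $U\times I$ follows from its weak topology; the one genuinely delicate point is continuity at the $t=0$ slice, which is where one uses that the high-index $H_n$ are stationary on the lower strata. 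Together with the cell-by-cell collaring of the previous paragraph, this completes the construction, and local contractibility follows since every neighbourhood of $x$ contains such a contractible open $U$.
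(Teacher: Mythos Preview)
The paper offers no proof of this proposition; it is recorded with a citation to Whitehead, so there is nothing to compare against beyond noting that your outline is the standard textbook argument. The normality half is correct as written.

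For local contractibility there are two points to repair. The minor one: a collar of fixed inner radius $\tfrac12$ will not keep $U_n$ inside $N$; you must use a variable-thickness collar $\{z:|z|>\rho_e(z/|z|),\ z/|z|\in\Phi_e^{-1}(U_{n-1})\}$ for a continuous $\rho_e\colon S^{n-1}\to(0,1)$, which exists because $\Phi_e^{-1}(N)$ is an open neighbourhood of $\Phi_e^{-1}(\overline{U_{n-1}})$ in the metric disc. The more substantial one is that your telescoped homotopy cannot take the value $x$ at $t=0$. With stage $H_{n_0+k}$ on $[2^{-k},2^{-k+1}]$, the very fact you invoke---that high-index $H_n$ fix the lower strata---forces $H(\,\cdot\,,0)=\mathrm{id}_U$: for $p\in U_m$ every stage with $k>m-n_0$ fixes $p$, so $H(p,t)=p$ for all $t\in[0,2^{-(m-n_0)}]$. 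What the telescope actually produces (read in the natural direction) is a deformation retraction of $U$ onto $U_{n_0}$, with $H(\,\cdot\,,0)=\mathrm{id}$ and $H(\,\cdot\,,1)\colon U\to U_{n_0}$; declaring $H(\,\cdot\,,0)=x$ instead makes the map discontinuous at $t=0$. To finish, concatenate this retraction with the straight-line contraction of the ball $U_{n_0}$ to $x$ on a separate time interval. With these two corrections your argument goes through.
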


    \begin{dfn}
        A \emph{regular} CW-complex is one that admits for each cell a characteristic map that is a homeomorphism on the entire closed ball.
    \end{dfn}

    It implies in particular that every closed cell of a regular CW-complex is homeomorphic to a closed ball. It excludes the CW-complex structure of the real projective spaces (apart from the trivial case $\R \P^0$) given by affine spaces as every positive dimensional closed cell is a projective space, different from a closed ball.  An important example of regular CW-complex is given by geometric realisations of simplicial complexes for which every closed cell is a closed simplex, hence topologically a closed ball. Likewise a polyhedral complex is necessarily a regular CW-complex.
    
    \vspace{5pt}
    
    For a general CW-complex the formula $e_1\leq e_2\iff \Bar{e}_1\subset\Bar{e}_2$ defines an order on the cells. When the CW-complex is regular this order shares the same properties as the inclusion of faces in a simplicial complex.

    \begin{lem}
        For any two open cells $e_1,e_2$ of $K$ a regular CW-complex, $e_1$ meets the closure of $e_2$ if and only if it is fully contained in it:
        
        \begin{equation*}
            e_1\cap\Bar{e}_2\neq \varnothing \, \iff\, e_1\subset \Bar{e}_2.
        \end{equation*}
        
    \end{lem}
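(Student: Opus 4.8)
The reverse implication is immediate, since open cells are non-empty. For the direct one I would first reduce to the following assertion: for every cell $e$ of $K$, the closed cell $\overline{e}$ is a union of open cells of $K$ — equivalently, since $\overline{e}$ is closed, the support of a sub-complex. Granting this, if $e_1\cap\overline{e}_2\neq\varnothing$, any point $x$ of the intersection lies in a unique open cell of $K$, necessarily $e_1$, and also in one of the open cells composing $\overline{e}_2$; the two coincide, so $e_1\subset\overline{e}_2$. I would prove the reduced assertion by induction on $n=\dim e$, the case $n=0$ being trivial.

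For the inductive step, fix a characteristic homeomorphism $\Phi\colon D^n\xrightarrow{\ \sim\ }\overline{e}$. The first ingredient is a \emph{frontier lemma}: $\overline{e}\setminus e=\Phi(S^{n-1})\subset K^{(n-1)}$. Indeed $e=\Phi(\mathring D^n)$ is open in $K^{(n)}$, hence in $\overline{e}$, so $\overline{e}\setminus e$ is closed; and if a point of $\overline{e}\setminus e$ lay in an open $n$-cell $e'\neq e$ then, $e'$ being open in $K^{(n)}\supset\overline{e}$, the set $\Phi^{-1}(e'\cap\overline{e})$ would be a non-empty open subset of $D^n$ meeting $S^{n-1}$, hence meeting $\mathring D^n$, which would force $e\cap e'\neq\varnothing$ — absurd. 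Consequently $B:=\overline{e}\setminus e$ is a compact subset of $K^{(n-1)}$ \emph{homeomorphic to $S^{n-1}$} via $\Phi|_{S^{n-1}}$; this is the one place where regularity is genuinely indispensable, and precisely what fails for a general complex (a cell attached to a one-dimensional complex by a non-injective map already violates the conclusion). Since $\overline{e}=e\sqcup B$, it suffices to show $B$ is a union of open cells; being compact, $B$ meets only finitely many open cells, all of dimension $\le n-1$.

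I would then show that every open $(n-1)$-cell $e'$ meeting $B$ is contained in $B$: since $e'$ is open in $K^{(n-1)}\supset B$, the set $e'\cap B$ is open in the $(n-1)$-manifold $B$, hence is itself an $(n-1)$-manifold; as $e'\cong\R^{n-1}$, invariance of domain then forces $e'\cap B$ to be open in $e'$ as well; being also non-empty and closed in the connected set $e'$, it equals $e'$. Let $\Sigma$ be the (finite) union of the closed cells $\overline{e'}$ over the open $(n-1)$-cells $e'\subset B$; by the induction hypothesis $\Sigma$ is a union of open cells, it is closed, and $\Sigma\subset B$. It remains to check $B=\Sigma$. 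Otherwise $U:=B\setminus\Sigma$ is a non-empty open subset of the manifold $B$, so it contains a compact subspace $C$ homeomorphic to a closed ball $D^{n-1}$; but $U$ meets no open $(n-1)$-cell — such a cell would lie in $B$, hence in $\Sigma$ — so $U\subset K^{(n-2)}$, and then $C$ lies in a finite sub-complex of $K^{(n-2)}$, a compact metrizable space of covering dimension $\le n-2$, contradicting that $C\cong D^{n-1}$ has covering dimension $n-1$. Hence $B=\Sigma$ is a union of open cells, so $\overline{e}=e\cup\Sigma$ is one too, which closes the induction.

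The steps I expect to cost the most care are the frontier lemma, where the hypothesis that the characteristic map is a \emph{homeomorphism} must be used exactly right, and the last paragraph, which rests on the manifold structure of $\overline{e}\setminus e$ — again a consequence of regularity — together with the classical fact that a $d$-dimensional CW-complex has covering dimension $d$.
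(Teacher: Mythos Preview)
The paper does not supply its own proof of this lemma: it simply refers the reader to Cooke and Finney, \emph{Homology of Cell Complexes}, pp.~229--230. So there is no in-paper argument to compare yours against.

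Your proof is correct and self-contained. The reduction to showing that each closed cell is a union of open cells is the standard one, and your inductive step is sound: regularity gives $B=\overline{e}\setminus e\cong S^{n-1}$, invariance of domain then forces any $(n-1)$-cell touching $B$ to lie entirely inside it, and the covering-dimension obstruction rules out any remaining piece of $B$ hiding in $K^{(n-2)}$. The two places you flagged as delicate --- the frontier lemma and the dimension argument --- are handled correctly; for the latter it is worth noting that the finite sub-complex of $K^{(n-2)}$ containing $C$ is itself regular, hence triangulable (as the paper records in Proposition~\ref{prop:bar_sub_cw}), which makes the covering-dimension bound immediate. Since the paper outsources the proof entirely, your write-up actually provides more than the text does.
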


    One can find a proof in \cite{Coo-Fin_hom_cel}\footnote{G. Cooke and R. Finney. \emph{Homology of cell complexes}, pp.229-230, R.R.1.}. Therefore, in a regular CW-complex whenever a cell $e_1$ meets the closure of another one $e_2$ we have $e_1\leq e_2$ and we say that $e_1$ is a \emph{face} of $e_2$. If $e_1$ is distinct from $e_2$ we say that $e_1$ is a \emph{proper face} of $e_2$ and denote it $e_1<e_2$. Furthermore, if $e_1\leq e_2$ or $e_2\leq e_1$ we say that $e_1$ and $e_2$ are \emph{adjacent}.

    \begin{lem}
        In a regular CW-complex $K$ the support of the sub-complex $K(e)$ for any open cell $e$ is its closure $\bar{e}$.
    \end{lem}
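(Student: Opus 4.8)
The plan is to prove the two inclusions $\bar e\subseteq|K(e)|$ and $|K(e)|\subseteq\bar e$ separately. The first is immediate: the support of any sub-complex is by definition a \emph{closed} subset of $|K|$, and $|K(e)|$ contains the open cell $e$, hence it contains $\bar e$.

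For the reverse inclusion it suffices to exhibit one sub-complex of $K$ whose support equals $\bar e$, since $K(e)$ is by definition the smallest sub-complex with $e$ in its support. Let $L$ be the set of open cells $e_1$ of $K$ with $e_1\leq e$, that is with $\bar e_1\subseteq\bar e$. I claim $\bigcup_{e_1\in L}e_1=\bar e$. The inclusion $\subseteq$ holds because $e_1\in L$ gives $e_1\subseteq\bar e_1\subseteq\bar e$; for $\supseteq$, a point $x\in\bar e$ lies in a unique open cell $e_1$ of $K$, and as $e_1\cap\bar e\neq\varnothing$ the preceding lemma forces $e_1\subseteq\bar e$, hence $\bar e_1\subseteq\bar e$ because $\bar e$ is closed, so $e_1\in L$. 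Moreover $L$ is stable under passing to faces, the order $\leq$ being transitive. Thus $\bar e$ is a closed subset of $|K|$, equal to a union of open cells and stable under taking faces.

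There remains to check that the induced filtration $\bar e\cap K^{(k)}$ turns $\bar e$ into a CW-complex. As $K$ is regular, a characteristic map of $e$ is a homeomorphism of a closed ball onto $\bar e$, so $\bar e$ is compact and therefore meets only finitely many open cells of $K$; these are exactly the members of $L$, say $e_1,\dots,e_m$. Their characteristic maps in $K$ serve verbatim as characteristic maps for the induced structure, and each $\bar e\cap K^{(k)}$ is closed. For the weak-topology axioms, observe that for any open cell $e'$ of $K$ the set $\bar e\cap\bar e'$ is a union of some of the closed cells $\bar e_i$: any open cell meeting $\bar e\cap\bar e'$ is, by the preceding lemma, contained in both $\bar e$ and $\bar e'$, hence is one of the $e_i$ and satisfies $\bar e_i\subseteq\bar e'$. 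Consequently, for $A\subseteq\bar e$ the set $A\cap\bar e'$ is a finite union of sets of the form $A\cap\bar e_i$, so $A$ meets every closed cell of $K$ in a closed set if and only if it meets each of $\bar e_1,\dots,\bar e_m$ in a closed set; the weak-topology conditions for $\bar e$, and likewise for its skeleta, then follow from those for $K$. Hence $\bar e$ supports a sub-complex and $|K(e)|\subseteq\bar e$.

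The only genuinely delicate step is this last verification of the weak-topology axioms; it is routine precisely because $\bar e$ is compact, which reduces everything to a finite union of closed cells, and because the preceding lemma controls the intersections $\bar e\cap\bar e'$. Everything else is a formal unwinding of the definitions.
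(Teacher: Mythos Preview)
Your proof is correct. The paper itself does not supply a proof of this lemma but defers to Cooke--Finney, \emph{Homology of Cell Complexes}; your argument is the standard one and matches what one finds there: use the preceding lemma to write $\bar e$ as the union of the faces of $e$, then verify that the induced filtration makes $\bar e$ a sub-complex.

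One small simplification is available in your last paragraph. Once you have established that $L=\{e_1,\dots,e_m\}$ is finite, the weak-topology axiom is automatic: if $A\subseteq\bar e$ meets each $\bar e_i$ in a closed set, then $A=\bigcup_{i=1}^m(A\cap\bar e_i)$ is a finite union of closed sets and hence closed. Your detour through intersections $\bar e\cap\bar e'$ with arbitrary cells $e'$ of $K$ is not needed.
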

    
    \begin{lem}\label{lem:2_faces}
    	Let $k\in\N$ and $e^{k+2}$ be an open cell of $K$ a regular CW-complex. For all faces of codimension 2 $e^k$ of $e^{k+2}$ there are exactly two cells of codimension 1 between $e^k$ and $e^{k+2}$:
	
    \begin{equation*}
    	\card \{e^{k+1}\,|\,e^k<e^{k+1}<e^{k+2}\}=2.
    \end{equation*}
    
    \end{lem}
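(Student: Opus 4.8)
The plan is to reduce the statement to an assertion about the boundary sphere of $e^{k+2}$, and then read it off from a local homology computation.

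First I would pass to the subcomplex $K(e^{k+2})$, whose support is $\bar e^{k+2}$. Since $K$ is regular, a characteristic map of $e^{k+2}$ is a homeomorphism $B^{k+2}\to\bar e^{k+2}$ carrying $S^{k+1}$ onto $\partial\bar e^{k+2}:=\bar e^{k+2}\setminus e^{k+2}$; moreover $\partial\bar e^{k+2}$ is the subcomplex $L$ of $K(e^{k+2})$ formed by all proper faces of $e^{k+2}$ (proper faces having strictly smaller dimension, this is a regular CW-complex of dimension $k+1$ with $|L|\cong S^{k+1}$). The $(k+1)$-cells of $L$ are exactly the codimension-one faces of $e^{k+2}$, so
\[
\{\,e^{k+1}\mid e^k<e^{k+1}<e^{k+2}\,\}=\{\text{$(k+1)$-cells of $L$ admitting $e^k$ as a face}\}.
\]
It thus suffices to prove: in a regular CW-complex $L$ with $|L|\cong S^{k+1}$, every $k$-cell is a face of exactly $m=2$ of the $(k+1)$-cells. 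That $m\ge 1$ is clear, since a point of $S^{k+1}$ cannot have a neighbourhood of covering dimension $<k+1$, so an interior point $x\in e^k$ must lie in the closure of some $(k+1)$-cell of $L$.

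Next I would compute the local homology $H_*(S^{k+1},S^{k+1}\setminus\{x\};\Z)$ at a point $x$ in the relative interior of $e^k$ in two ways. On one hand $S^{k+1}$ is a topological $(k+1)$-manifold, so this group is $\Z$ in degree $k+1$ and $0$ otherwise. On the other hand, using the conical local structure of a regular CW-complex, a neighbourhood of $x$ in $L$ is homeomorphic to the open cone on the link $\Lk(x)$, itself a regular CW-complex, here of dimension $k$; since $x$ sits in the interior of the cell $e^k$, this link decomposes as a join $\partial\bar e^k\ast T$, with $\partial\bar e^k\cong S^{k-1}$ and $T$ the link of the cell $e^k$ in $L$. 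Because $\dim L=k+1$, the cells of $L$ strictly above $e^k$ are precisely its $(k+1)$-cells having $e^k$ as a face, so $T$ is a finite discrete set with exactly $m$ points. Excising $S^{k+1}$ minus that neighbourhood, and using that an open cone is contractible with the complement of its apex deformation retracting onto the base, gives $H_{j}(S^{k+1},S^{k+1}\setminus\{x\})\cong\widetilde H_{j-1}(\Lk(x))$. Feeding in the join formula, for $T$ finite discrete of cardinality $m$ one has $\widetilde H_{*}(S^{k-1}\ast T)\cong\widetilde H_{k-1}(S^{k-1})\otimes\widetilde H_0(T)\cong\Z^{m-1}$ concentrated in degree $k$ (the case $k=0$ being $\Lk(x)=T$ with $\widetilde H_0(T)\cong\Z^{m-1}$ in degree $0$, to the same effect). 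Hence $\Z\cong H_{k+1}(S^{k+1},S^{k+1}\setminus\{x\})\cong\widetilde H_k(\Lk(x))\cong\Z^{m-1}$, which forces $m=2$.

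The homological bookkeeping in the last step is routine; I expect the real work to lie in justifying the local structure used — that a point of a regular CW-complex has a neighbourhood that is an open cone over its link, and that this link splits as the join of $\partial\bar e$ with the link of the cell $e$ containing the point. These are classical facts about regular CW-complexes (see e.g. Lundell--Weingram, \emph{The Topology of CW Complexes}, or they can be extracted from the incidence results already cited); I would either invoke them directly or, for a self-contained treatment, establish the $0$-dimensional transverse case by hand (a cone on $m$ points is locally Euclidean only for $m=2$) and bootstrap. I would also remark that only the homology-manifold property of $S^{k+1}$ is used, so singular local homology is enough.
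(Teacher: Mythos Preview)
Your argument is correct. The paper does not prove this lemma itself but defers to Cooke--Finney (\emph{Homology of Cell Complexes}, Theorem~4.2), so there is no in-house proof to compare against.

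Your local-homology route is one of the standard arguments. Two remarks that tighten it within the paper's own framework. First, the cone-over-link structure you flag as the ``real work'' is already available from Proposition~\ref{prop:bar_sub_cw}: passing to $\Sd L$ and taking $x$ to be the barycentre $\hat e^k$, the open star of $\hat e^k$ is the open cone on its simplicial link, and that link is visibly the join of $\Sd(\partial\bar e^k)\cong S^{k-1}$ with the discrete set of barycentres of the $(k+1)$-cells above $e^k$ --- so no appeal beyond what the surrounding text provides is needed. Second, the separate check that $m\ge 1$ is unnecessary: for $T=\varnothing$ the link reduces to $S^{k-1}$ (or to $\varnothing$ when $k=0$) and $\widetilde H_k$ of that is $0\ne\Z$, so the homology comparison already excludes $m=0$ and the covering-dimension appeal can be dropped.
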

    
    \begin{lem}[Open Star]
        For $e$ a cell of a regular CW-complex $K$ the union of all the cells having $e$ as a face, called the \emph{open star} of $e$, is an open subset of $|K|$.
    \end{lem}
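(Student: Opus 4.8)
The plan is to show that the complement of $\St(e)$ in $|K|$ is a sub-complex of $K$ --- a union of closed cells --- and therefore closed, so that $\St(e)$ itself is open.

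First I would describe this complement combinatorially. Since the open cells of $K$ partition $|K|$, every point $x\in|K|\setminus\St(e)$ lies in a unique open cell $g$, and $x\notin\St(e)$ forces $e$ not to be a face of $g$. Conversely I claim that whenever $e$ is not a face of a cell $f$, the whole closed cell $\bar f$ avoids $\St(e)$: by the lemma identifying $|K(f)|$ with $\bar f$, any $x\in\bar f$ lies in some open cell $g$ with $g\leq f$, and if $e$ were a face of $g$ then transitivity of the face order would give $e\leq f$, a contradiction; hence $e\not\leq g$ and $x\notin\St(e)$. Combining the two directions,
\[
	|K|\setminus\St(e)\;=\;\bigcup_{e\not\leq f}\bar f\;=\;\bigcup_{e\cap\bar f=\varnothing}\bar f ,
\]
the last equality being the lemma stating that $e_1\cap\bar e_2\neq\varnothing\iff e_1\subset\bar e_2$.

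Next I would check that the set $A:=\bigcup\{\bar f\mid e\not\leq f\}$ is closed. Each $\bar f$ is the support of the sub-complex $K(f)$, so $A$ is a union of sub-complexes, and it suffices to see that such a union is again a sub-complex. For this I invoke the weak topology: given any closed cell $\bar c$, compactness of $\bar c$ implies it meets only the finitely many open cells $g\leq c$, so $A\cap\bar c=\bigcup\{g\mid g\leq c\text{ and }g\leq f\text{ for some }f\text{ with }e\not\leq f\}$. The set $S$ of faces of $c$ occurring here is downward closed --- if $g\in S$ via some $f$ and $h\leq g$, then $h\leq f$ as well, so $h\in S$ --- whence $A\cap\bar c=\bigcup_{g\in S}\bar g$ is a finite union of closed cells, hence closed in $\bar c$. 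By the weak topology axiom, $A$ is closed in $|K|$, and $\St(e)=|K|\setminus A$ is open.

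The only step that requires genuine care --- the main obstacle --- is the passage from \emph{``$A$ is a union of closed cells''} to \emph{``$A$ is closed''}: an arbitrary union of closed subsets need not be closed, and it is precisely the weak topology together with the compactness of each closed cell (so that it meets only finitely many open cells) that makes the argument go through. Everything else is bookkeeping with the face relation $\leq$.
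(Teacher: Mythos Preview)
Your argument is correct and is the standard one: the complement of the open star is the sub-complex $K-e$, and you verify closedness via the weak topology. The paper does not supply its own proof of this lemma; it simply cites Cooke--Finney, \emph{Homology of Cell Complexes}, where essentially the same argument appears.

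One small remark on the passage you flag as ``the only step that requires genuine care'': the finiteness of the faces of $c$ is more directly a consequence of the closure-finiteness axiom (the ``C'' in CW) than of compactness per se; compactness of $\bar c$ combined with the weak topology does imply this, but only by going through the standard fact that compact subsets of CW-complexes lie in finite sub-complexes, whose proof already uses closure-finiteness. Invoking closure-finiteness directly would shorten the argument slightly. This is purely cosmetic and does not affect correctness.
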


    Proofs of these statements are given in \cite{Coo-Fin_hom_cel}\footnote{Ibid. in order, Proposition 1.6, p.30, Theorem 4.2, pp.231-232, Lemma 4.1, p.230.}. In a locally finite CW-complex $K$ the open star of a cell is a finite union of cells so its closure, the \emph{closed star} of the cell, is a finite sub-complex of $K$. The collection $K-e$ of all the cells whose closure avoid $e$ is the complement of the open star of $e$ and is a sub-complex of $K$. Its underlying topological space is a deformation retract of $|K|\setminus e$ and is the largest sub-complex of $K$ contained in the complement $|K|\setminus e$. Likewise we define for a subset $A$ of $|K|$, $K-A$ to be the largest sub-complex of $K$ contained in $|K|\setminus A$.

	\begin{dfn}[Subdivisions]
		A \emph{subdivision} $K'$ of a CW-complex $K$ is a CW-complex on the same support $|K'|=|K|$ in which every cell $e'\in K'$ is contained in a cell $e\in K$. Another way of saying it is that the partition of $|K|$ into open cells of $K'$ is finer than the partition given by $K$.
	\end{dfn}
	
\begin{figure}[h]
	\centering
	\begin{tikzpicture}[scale=2]
		\tikzfading[name=fade inside, inner color=transparent!100, outer color=transparent!50]
		\coordinate (a) at (0,0);
		\coordinate (b) at (2,0);
		\coordinate (c) at ($(a)+(60:2)$);
		\coordinate (a1) at ($(a)+(4,0)$);
		\coordinate (b1) at ($(b)+(4,0)$);
		\coordinate (c1) at ($(c)+(4,0)$);
		\coordinate (center) at ($(a)!0.33333333!(a1)+(a)!0.33333333!(b1)+(a)!0.33333333!(c1)$);
		\fill[pattern={Lines[angle=90, line width=.5pt]}] (a) -- (b)-- (c) -- cycle;
		\draw[very thick](a) -- (b)-- (c) -- cycle;
		\draw[very thick] (a1) -- (b1)-- (c1) -- cycle;
		\filldraw[pattern={Lines[angle=90, line width=.5pt]},very thick] (a1) -- (center) -- ($(c1)!(b1)!(a1)$);
		\filldraw[pattern={Lines[angle=120, yshift=-3.75pt , line width=.5pt]},very thick] (a1) -- (center) -- ($(b1)!(c1)!(a1)$);
		\filldraw[pattern={Lines[angle=60, yshift=0.75pt, line width=.5pt]},very thick] (b1) -- (center) -- ($(b1)!(c1)!(a1)$);
		\filldraw[pattern={Lines[angle=90, line width=.5pt]},very thick] (b1) -- (center) -- ($(b1)!(a1)!(c1)$);
		\filldraw[pattern={Lines[angle=120, yshift=5pt , line width=.5pt]},very thick] (c1) -- (center) -- ($(c1)!(a1)!(b1)$);
		\filldraw[pattern={Lines[angle=60, yshift=4pt , line width=.5pt]},very thick] (c1) -- (center) -- ($(c1)!(b1)!(a1)$);
		\fill (a) circle (.05);
		\fill (a1) circle (.05);
		\fill (b) circle (.05);
		\fill (b1) circle (.05);
		\fill (c) circle (.05);
		\fill (c1) circle (.05);
		\fill ($(a1)!.5!(b1)$) circle (.05);
		\fill ($(c1)!.5!(b1)$) circle (.05);
		\fill ($(a1)!.5!(c1)$) circle (.05);
		\fill (center) circle (0.05);
		\draw[->,ultra thick] ($(b)+(0,1)$) -- ($(a1)+(0,1)$);
	\end{tikzpicture}
	\caption{The barycentric subdivision of the triangle.}
	\label{fig:bar_trig}
\end{figure}
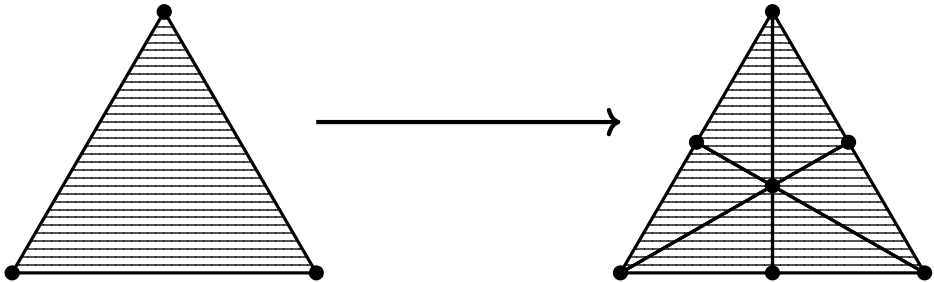
	
A common example of subdivision is given by the \emph{barycentric subdivision} $\Sd S$ of a simplicial complex $S$, see for instance Figure~\ref{fig:bar_trig}. It is described abstractly as follows : the vertices of $\Sd S$ are given by the simplices of $S$ and the simplices of $\Sd S$ by the flags of simplices of $S$. More concretely a collection of simplices of $S$ say $\{\sigma_0,...,\sigma_k\}$ is a simplex of $\Sd S$ if and only if they can be totally ordered by adjacency i.e. there is a permutation $\pi$ of the indices $\{0,...,k\}$ such that:
	
\begin{equation*}
	\sigma_{\pi(0)} < ... < \sigma_{\pi(n)}.
\end{equation*}

One can define an homeomorphism between the geometric realisation of $\Sd S$ and the geometric realisation of $S$ by sending each vertex of $\Sd S$ to the barycenter of its corresponding simplex in $S$ and extending such map by linearity. An example of such homeomorphism is depicted in Figure~\ref{fig:homeo_segment}.
	
\begin{figure}[h]
	\centering
	\begin{tikzpicture}[scale=2]
		\draw[very thick] (0,0) -- (1,0);
		\fill (0,0) circle (0.05);
		\fill (1,0) circle (0.05);
		\draw[->,ultra thick] (1.25,0) -- (1.75,0);
		\draw[dashed] (2,-.5) -- (3.5,-.5);
		\draw[dashed] (2.25,-.75) -- (2.25,.75);
		\fill (2.25,-.5) circle (0.05);
		\fill (3.25,-.5) circle (0.05);
		\fill (2.25,.5) circle (0.05);
		\draw[very thick] (2.25,-.5) -- (2.25,.5) -- (3.25,-.5);
		\draw[thick,->] ($(2.25,.5)!.1!(2.75,-.5)$) -- ($(2.25,.5)!.95!(2.75,-.5)$);
		\draw[->,ultra thick] (3.75,0) -- (4.25,0);
		\draw[very thick] (4.5,0) -- (5.5,0);
		\fill (4.5,0) circle (0.05);
		\fill (5,0) circle (0.05);
		\fill (5.5,0) circle (0.05);
	\end{tikzpicture}
	\caption{The homeomorphism from the barycentric subdivision of the segment to the initial segment.}
	\label{fig:homeo_segment}
\end{figure}
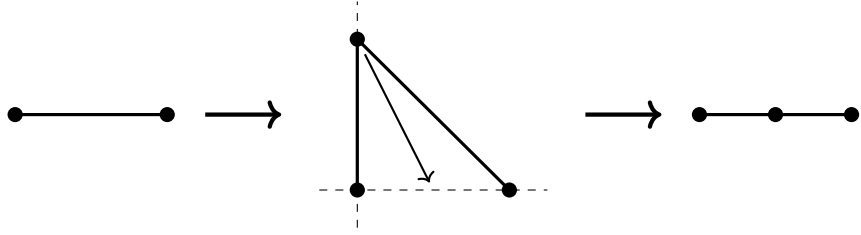

The image of the skeletal filtration of $\Sd S$ under such homeomorphism defines a subdivision of the CW-complex induced by $S$ in the sense of the previous definition. Note that if instead we chose to send each vertex of $\Sd S$ to an arbitrary point in the open cell defined by the corresponding simplex in $S$ (and then extending the map by linearity) we would also have defined a subdivision of $S$, equivalent in a combinatorial way, to the previous one. We could say that the barycentric subdivision is only defined unequivocally on the abstract level. The same abstract procedure can be performed with a regular CW-complex $K$. Its \emph{barycentric subdivision} $\Sd K$ is defined to be the following simplicial complex:

\begin{enumerate}
	\item Every cell of $K$ corresponds to a vertex of $\Sd K$;
	\item A finite set of cells of $K$ corresponds to a simplex of $\Sd K$ if and only if it is totally ordered by adjacency.
\end{enumerate}

\begin{prop}\label{prop:bar_sub_cw}
	For every regular CW-complex $K$, the geometric realisation of $\Sd K$ is homeomorphic to $|K|$ in such a way that $\Sd K$ can be seen as a subdivision of $K$.
\end{prop}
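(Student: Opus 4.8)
The plan is to construct a homeomorphism $h\colon|\Sd K|\to|K|$ by induction on skeleta, defining it one cell of $K$ at a time and maintaining two invariants: (i) for every cell $e$ of $K$ the restriction of $h$ to $|\Sd K(e)|$ is a homeomorphism onto $\overline{e}$; and (ii) $h$ carries the open simplex spanned by any chain $e_0<e_1<\dots<e_j$ of cells of $K$ into the open cell $e_j$. A few preliminaries make the induction run smoothly. A subcomplex of a regular CW-complex is again regular, so each $K(e)$ is a regular CW-complex; by the lemma above its support is $\overline e$, and since $\overline e$ is compact it meets only finitely many cells — all of them faces of $e$ — so $K(e)$ is finite and $|\Sd K(e)|$ is a compact Hausdorff space. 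Regularity supplies a characteristic map $\phi_e\colon D^{k}\to\overline e$ (with $k=\dim e$) that is a homeomorphism on the whole closed ball, restricting to a homeomorphism from $S^{k-1}$ onto the boundary $\partial e=\overline e\setminus e$. Finally, writing $\hat e$ for the vertex of $\Sd K$ attached to $e$, one has $\Sd K(e)=\hat e\ast\Sd(K(e)-e)$ as abstract simplicial complexes (a chain of faces of $e$ either has $e$ as top element or consists entirely of proper faces), so $|\Sd K(e)|$ is canonically the cone $\mathrm{Cone}\big(|\Sd(K(e)-e)|\big)$ with apex $\hat e$, and $|\Sd(K(e)-e)|=|\Sd K(e)|\cap|\Sd K^{(k-1)}|=\bigcup_{e'<e}|\Sd K(e')|$.

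For the induction, $0$-cells give single vertices of $\Sd K$ and there is nothing to do. Suppose $h$ has been defined on $|\Sd K^{(k-1)}|$ subject to (i)--(ii), and let $e$ be a $k$-cell. By (i) for the proper faces of $e$, $h$ is already defined on $|\Sd(K(e)-e)|$ and restricts there to a continuous bijection onto $\partial e$, hence — both spaces being compact Hausdorff — to a homeomorphism. Set $\psi_e:=\phi_e^{-1}\circ h\colon|\Sd(K(e)-e)|\to S^{k-1}$ and extend $h$ over $|\Sd K(e)|=\mathrm{Cone}\big(|\Sd(K(e)-e)|\big)$ by the radial (Alexander) formula $h\big((1-s)\hat e+s\,y\big)=\phi_e\big(s\cdot\psi_e(y)\big)$ for $y\in|\Sd(K(e)-e)|$ and $s\in[0,1]$. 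This is a homeomorphism of $|\Sd K(e)|$ onto $\overline e$ sending $\hat e$ to the interior point $\phi_e(0)$ of $e$; it agrees with the previously defined $h$ on each $|\Sd K(e')|$, $e'<e$, since those simplices lie in the base $s=1$, where the formula reduces to $\phi_e\circ\phi_e^{-1}\circ h=h$; and it sends the open simplex of any chain with top element $e$ into $e$ — so (i)--(ii) persist. Two distinct $k$-cells satisfy $|\Sd K(e)|\cap|\Sd K(\tilde e)|=|\Sd(K(e)\cap K(\tilde e))|$, a union of $|\Sd K(e')|$ with $\dim e'<k$, on which both extensions coincide with the old $h$; the extensions therefore glue to a well-defined $h$ on $|\Sd K^{(k)}|$.

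Taking the union over all $k$ gives a bijection $h\colon|\Sd K|\to|K|$. It is continuous, because its restriction to each closed simplex of $\Sd K$ — which lies in some finite $|\Sd K(e)|$, where $h=\phi_e\circ(\text{radial cone map})$ — is continuous and $|\Sd K|$ (as well as each $|\Sd K^{(k)}|$) carries the weak topology determined by its closed simplices. Symmetrically, $h^{-1}$ is continuous, because its restriction to each closed cell $\overline e$ of $K$ is $(\text{radial cone map})^{-1}\circ\phi_e^{-1}$ and $|K|$ carries the weak topology determined by its closed cells. Hence $h$ is a homeomorphism, and transporting the CW-structure of $\Sd K$ along $h$ equips $|K|$ with a CW-complex on the same support each of whose open cells, by (ii), is contained in an open cell of $K$; this is exactly a subdivision of $K$ in the sense of the definition. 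I expect the only real work to lie in the bookkeeping that keeps the per-cell choices $\phi_e$ coherent — which is precisely what invariants (i)--(ii) are designed to enforce — together with the routine weak-topology verifications when $K$ is infinite-dimensional or has infinitely many cells; the one genuinely geometric input, that the radial extension of a self-homeomorphism of $S^{k-1}$ is a self-homeomorphism of $D^{k}$, is elementary.
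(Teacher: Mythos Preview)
The paper does not supply its own proof of this proposition; it simply cites Lundell and Weingram, \emph{The Topology of CW Complexes}, Theorem~1.7. Your argument is precisely the standard inductive construction one finds there: exploit the cone description $\Sd K(e)=\hat e\ast\Sd(K(e)-e)$, assume the homeomorphism built on $|\Sd K^{(k-1)}|$, and extend over each closed $k$-cell by the radial (Alexander) trick using a characteristic homeomorphism $\phi_e$. Your invariants (i)--(ii) are exactly what is needed to make the gluing and the bijectivity onto $\partial e$ go through, and your weak-topology checks for continuity of $h$ and $h^{-1}$ are the right ones. The proof is correct and matches the cited reference in both strategy and detail.
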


This proposition, proven in \cite{Lun-Wei_top_cw}\footnote{A. Lundell and S. Weingram. \emph{The Topology of CW Complexes}, Theorem 1.7, pp.80-81.}, allows us to see a regular CW-complex as “a simplicial complex in which the simplexes are more efficiently combined into closed cells.”\footnote{Ibid. p.77.} Another feature of simplicial complexes shared by regular complex is the following:
    
\begin{prop}
	In a regular CW-complex the open stars of cells are contractible.
\end{prop}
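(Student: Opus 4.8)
The plan is to transport the problem to the barycentric subdivision, where it becomes a statement about full subcomplexes of a simplicial complex. By Proposition~\ref{prop:bar_sub_cw} I identify $|K|$ with the realisation of $\Sd K$, whose simplices are the flags of cells of $K$; write $v_f$ for the vertex of $\Sd K$ attached to a cell $f$, and for a flag $\phi=(e_0<\dots<e_k)$ write $\sigma_\phi$ for the corresponding open simplex of $\Sd K$. The first thing I would pin down is that $\sigma_\phi$ lies inside the open cell $e_k=\max\phi$ of $K$: indeed $\phi$ is a flag of the subcomplex $K(e_k)$, so the closure of $\sigma_\phi$ is contained in $|K(e_k)|=\overline{e_k}$ (the lemma identifying the support of $K(f)$ with $\overline f$), while $v_{e_k}\in\overline{\sigma_\phi}$ lies in the open cell $e_k$ and hence in no closed cell $\overline{e_j}$ with $e_j<e_k$; since $\sigma_\phi$ is connected and the open cells of $K$ partition $|K|$, these two facts force $\sigma_\phi\subseteq e_k$. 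Consequently each open cell $f$ of $K$ is the union of the $\sigma_\phi$ with $\max\phi=f$.

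It follows that, viewed inside $\Sd K$, the open star $\St_K(e)$ is exactly the set of points whose carrier flag (the unique $\phi$ with $x\in\sigma_\phi$) contains some cell $f\ge e$; equivalently $\St_K(e)=\bigcup_{f\ge e}\St_{\Sd K}(v_f)$ is the open star, inside the simplicial complex $\Sd K$, of the full subcomplex of $\Sd K$ on the vertices $\{v_f:f\ge e\}$, which is nothing but the complex of flags of the subposet $P_{\ge e}:=\{f:f\ge e\}$ of the face poset of $K$. That this subcomplex is \emph{full} is immediate: a flag of $K$ all of whose members are $\ge e$ is already a flag of $P_{\ge e}$.

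Next I would invoke the standard fact that the open star $N(A)$ of a full subcomplex $A$ of a simplicial complex is a strong deformation retract onto $|A|$. Concretely, for $x$ in $N(A)$ with carrier $\sigma$, split the vertices of $\sigma$ into those lying in $A$ — these span a face $\sigma_A\in A$, nonempty because $x\in N(A)$ — and the remaining ones; then let the homotopy scale to $0$ the total barycentric mass carried by the remaining vertices, redistributing it among the vertices of $\sigma_A$ proportionally to their weights. Written through the global barycentric coordinate functions of $\Sd K$, this formula is continuous on $N(A)$ (the $A$-mass is a continuous function, positive on $N(A)$), fixes $|A|$ pointwise, and at time $1$ takes values in $|A|$; in particular no local finiteness of $K$ is needed. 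Hence $\St_K(e)$ deformation retracts onto the realisation of the flag complex of $P_{\ge e}$.

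Finally, $P_{\ge e}$ has a least element, namely $e$ itself, so its flag complex is the cone with apex $v_e$ over the flag complex of $P_{>e}$, and is therefore contractible; chaining the two deformations contracts $\St_K(e)$ onto the single vertex $v_e$. The hard part is the bookkeeping of the first paragraph — identifying, for each open simplex of $\Sd K$, the open cell of $K$ carrying it — together with the routine verification that the retraction of the third paragraph is genuinely continuous; both rest only on the description of $\Sd K$ as a subdivision and of the supports of the subcomplexes $K(f)$ recalled earlier.
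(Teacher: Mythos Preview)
Your argument is correct and is essentially the paper's proof in more conceptual packaging: the paper realises $|\Sd K|$ in the $\ell^1$-sphere of the free vector space on the cells of $K$, and its retraction $\Psi(t,u)=\|u\|_1\,\Phi_t(u)/\|\Phi_t(u)\|_1$ with $\Phi_t=(\id-\pi)+t\pi$ (where $\pi$ projects onto the span of the cells \emph{not} containing $e$) is precisely your ``scale the non-$A$ barycentric mass to zero'' retraction onto the full subcomplex on $\{v_f:f\ge e\}$, after which both proofs finish with the same straight-line cone contraction to $v_e$. One small wrinkle in your first paragraph: connectedness of $\sigma_\phi$ alone does not pin it to a single open cell of $K$; rather, the definition of subdivision (built into Proposition~\ref{prop:bar_sub_cw}) already guarantees each open simplex of $\Sd K$ lies in a unique cell of $K$, and your closure argument with $v_{e_k}\in e_k$ then identifies that cell as $e_k$.
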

    
\begin{proof}
	The geometric realisation of $\Sd K$ lives in the real vector space $V$ spanned by the cells of $K$. We use the same symbol to denote a cell $e^k$ and its associated generator in $V$. Hence an element of $V$ is a formal finite linear combinations of the cells of $K$. We endow this vector space with the norm $1$:
	
    \begin{equation*}
    	\Big|\Big| \sum_{e\in K} x_ee\Big|\Big|_1=\sum_{e\in K}|x_e|.
    \end{equation*}
    
    \noindent The geometric realisation $|\Sd K|$ is the union of the convex hulls of the sets of cells $\{e^{k_0},...,e^{k_n}\}$ corresponding to barycentric simplices i.e. flags of cells. It is a subset of the intersection of the unit sphere with the positive ortant $V_+:=\{\sum_{e\in K}x_ee\in V\;|\;x_e\geq 0\}$. For a flag of cells $e^{k_0}<...<e^{k_n}$ let us denote here the corresponding open simplex by:
    
    \begin{equation*}
    \big(e^{k_0};...\,;e^{k_n}\big):=\left\{\sum_{i=0}^n x_ie^{k_i}\;|\;\forall i,\, x_i>0\text{ and }\sum_{i=0}^n x_i=1\right\}.
    \end{equation*}
    
    \noindent An open cell $e^k$ of $K$ corresponds under the homeomorphism of Proposition~\ref{prop:bar_sub_cw} to the union of the open barycentric simplices $\big(e^{k_0};...\,;e^{k_n}\big)$ for which $e^{k_n}=e^k$. Therefore the open star $S$ of $e^k$ is in this context:
    
    \begin{equation*}
    S=\bigcup_{\substack{e^{k_0}<...<e^{k_n}\\e^k\leq e^{k_n}}}\big(e^{k_0};...\,;e^{k_n}\big).
    \end{equation*}
    
    \noindent Now define the family of bounded linear operators $(\Phi_t:V\rightarrow V)_{0\leq 1\leq t}$ by $\Phi_t=(\id-\pi)+t\pi$ for $\pi$ the projection to the sub-space spanned by the $e^p$ that does not contain $e^k$ parallely to the sub-space spanned by those that contain it. Let $U$ be the open set of $V_+$ of vectors that have at least one coordinate indexed by a cell that contains $e^k$ strictly positive. We have $S=|\Sd K|\cap U$. For all $t\in[0;1]$ the map:
    
    \begin{equation*}
    	\begin{array}{rcl}%
			\Psi:[0;1]\times U& \longrightarrow & U \\%
			u & \longmapsto & \frac{||u||_1}{||\Phi_t(u)||_1}\Phi_t(u)\,,%
    	\end{array}
	\end{equation*}
    
	\noindent is continuous and every partial map $\Psi(t;-)$ stabilises $S$. The image $\Psi(0;S)$ is the union of the open barycentric simplices $\big(e^{k_0};...\,;e^{k_n}\big)$ for which $e^k\leq e^{k_0}$. Note that the restriction of every map $\Psi(t;-)$ is constant on this set. Therefore $\Psi(0;S)$ is a deformation retract of $S$. Now this set retracts on the barycentre of $e^k$ by simple convex interpolation $(t;u)\in [0;1]\times\Psi(0;S)\mapsto (1-t)u+te^k$ thus $S$ is contractible.
    \end{proof}

	When we consider the barycentric subdivision of a finite regular CW-complex we increase considerably the number of cells. There is however a less expensive procedure that builds for any regular CW-complex $K$ a “pseudo-subdivision” that sits between $K$ and $\Sd K$. By pseudo-subdivision we mean a certain recombination of the barycentric simplices that looks a lot like a regular subdivision. 

\begin{dfn}[Dihomologic Pseudo-subdivision]
	For $K$ a regular CW-complex and $e^p\leq e^q$ a pair of adjacent cells we define its associated \emph{dihomologic pseudo-cell} to be the union of the open barycentric simplices\footnote{The open simplex on a vertex set $v_0,...,v_n$ is the set $\big\{\sum_{i=0}^nt_iv_i\;|\; \forall i,\,t_i>0\textnormal{ and }\sum_{i=0}^nt_i=1\big\}$.} associated the flags $e^{k_1}<...<e^{k_n}$ for which $e^p= e^{k_1}$ and $e^{k_n}=e^q$. These “open” pseudo-cells partition the simplicial complex $\Sd K$. We call such partition the \emph{dihomologic pseudo-subdivision} of $K$. We say that the dihomologic pseudo-cell associated with the pair $e^p\leq e^q$ has dimension $q-p$. Figure \ref{fig:dih_sub_disc} illustrates this procedure on a disc.
\end{dfn}

\begin{figure}[h]
	\centering
	\begin{tikzpicture}[scale=2]
		\filldraw[pattern={Lines[angle=90, line width=.5pt]},very thick] (0,0) circle (1);
		\fill (-1,0) circle (.05);
		\fill (1,0) circle (.05);
		\fill (3,0) circle (.05);
		\fill (5,0) circle (.05);
		\fill (4,1) circle (.05);
		\fill (4,-1) circle (.05);
		\fill (4,0) circle (.05);
		\filldraw[pattern={Lines[angle=90, line width=.5pt, xshift=1pt]},very thick] (4,-1) -- (4,1) arc [start angle=90, end angle=270, radius=1] (4,-1) ;
		\filldraw[pattern={Lines[angle=0, line width=.5pt]},very thick] (4,1) -- (4,-1) arc [start angle=270, end angle=450, radius=1] (4,1) ;
		\draw[->,ultra thick] (1.25,0) -- (2.75,0);
	\end{tikzpicture}
	\caption{The dihomologic pseudo-subdivision of a regular CW-complex structure of the disc.}
	\label{fig:dih_sub_disc}
\end{figure}
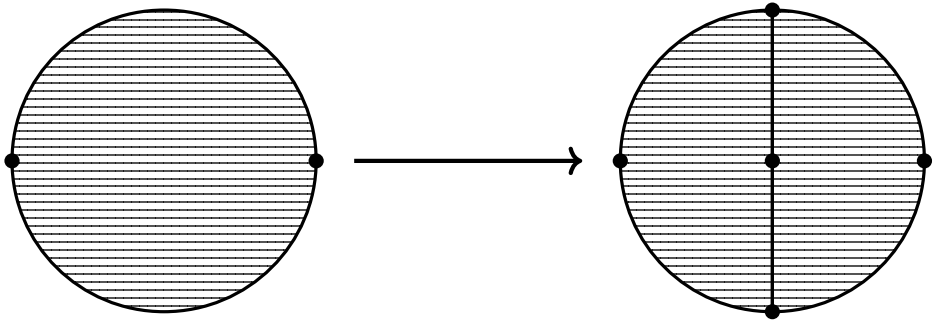

	This partition shares many properties with a regular subdivision of $K$ but may fail to define a CW-complex structure on $|K|$. In full generality the closure of dihomologic pseudo-cells might not be homeomorphic to closed balls. However, for a broad variety of examples, this is indeed a regular subdivision of $K$.

\begin{rem}
	We chose to use the terminology “dihomologic” because Zeeman's dihomology bicomplex, c.f. \cite{Zee_dih_i,Zee_dih_ii,Zee_dih_iii}, would be the cellular chain complex of this pseudo-subdivision. This bicomplex was latter (dually) rediscovered by Forman in \cite{For_comb_nov} under the name of “combinatorial differential forms”.
\end{rem}

\begin{prop}
	Let $e^p\leq e^q$ be a pair of adjacent cells of $K$. The closure of the pseudo-cell associated with $e^p\leq e^q$, is a pure collapsable simplicial complex of dimension $q-p$. Moreover, any codimension 1 simplex of the open pseudo-cell is exactly contained in two of its maximal simplices. 
\end{prop}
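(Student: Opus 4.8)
The plan is to recognise the closed pseudo-cell as an order complex and to extract the three assertions from poset-theoretic properties of an interval in the face poset of $K$. Write $[e^p,e^q]:=\{e\in K\mid e^p\leq e\leq e^q\}$ for the interval in the face order; since $\bar e^q$ is compact, $K(e^q)$ is a finite regular CW-complex, so $[e^p,e^q]$ is a finite poset. First I would verify that the closure in $|\Sd K|$ of the open pseudo-cell attached to $e^p\leq e^q$ is exactly the order complex $\Delta([e^p,e^q])$, i.e. the full subcomplex of $\Sd K$ spanned by the vertices that lie in $[e^p,e^q]$. One inclusion is immediate: every face of a closed barycentric simplex attached to a flag $e^p=f_0<\dots<f_n=e^q$ is attached to a flag whose cells all lie in $[e^p,e^q]$. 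Conversely, any flag contained in $[e^p,e^q]$ can be completed — by adjoining $e^p$ and $e^q$ at the two ends and inserting missing cells — to a maximal chain of $[e^p,e^q]$, hence its barycentric simplex lies in the closed pseudo-cell.

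Next comes the key input, the gradedness of the face poset of a regular CW-complex: if $e<f$ is a proper face relation then $\dim e<\dim f$, and covering relations raise the dimension by exactly one. Strict monotonicity holds because $\bar e\subsetneq\bar f$ are both subcomplexes homeomorphic to closed balls and $e$ misses the $(\dim f-1)$-skeleton, so $e\subset f$ would force $e=f$; the statement on covers I would obtain from the Open Star lemma together with invariance of domain applied inside $\partial K(f)\cong S^{\dim f-1}$ — a maximal cell of that sphere of dimension $<\dim f-1$ would have open star equal to itself, hence be open in the sphere, which is impossible — or simply invoke that regular CW-posets are graded and thin (with Lemma~\ref{lem:2_faces} being precisely the thinness). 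Gradedness gives that every maximal chain of $[e^p,e^q]$ has exactly $q-p+1$ elements, so all maximal simplices of $\Delta([e^p,e^q])$ have dimension $q-p$; the complex is pure of dimension $q-p$. Collapsibility is then free: $e^p$ is the minimum of $[e^p,e^q]$, so it belongs to every maximal chain and $\Delta([e^p,e^q])=e^p*\Delta\big((e^p,e^q]\big)$ is a simplicial cone, and a cone over a finite simplicial complex is collapsible.

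For the last assertion, a codimension-one simplex of the open pseudo-cell is an open barycentric simplex attached to a flag $e^p=f_0<f_1<\dots<f_{q-p-1}=e^q$ of exactly $q-p$ cells. Since $\dim$ is strictly increasing along $<$ and accumulates a total increase of $q-p$ over $q-p-1$ steps, exactly one step satisfies $\dim f_{i+1}=\dim f_i+2$ and every other step increases the dimension by one. Extending such a flag to a maximal chain of $[e^p,e^q]$ means inserting a single cell; no cell can be inserted in a step of size one (strict monotonicity leaves no room), so the inserted cell must lie strictly between the two cells bounding the unique step of size two, necessarily of the intermediate dimension, and by Lemma~\ref{lem:2_faces} there are exactly two such cells. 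Each choice yields a saturated chain, i.e. a maximal simplex, so the given codimension-one simplex is a face of exactly two maximal simplices of the closed pseudo-cell.

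The one place requiring genuine care is the gradedness step: making precise, from the tools already available (the Open Star lemma, the description of $K(e)$, and Lemma~\ref{lem:2_faces}), that covering relations in the face poset increase dimension by one. Once that is in hand, the remaining steps are essentially bookkeeping with chains and order complexes.
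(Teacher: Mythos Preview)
Your argument is correct and matches the paper's proof in its identification of the closed pseudo-cell as the order complex $\Delta([e^p,e^q])$, in the purity argument via gradedness of the face poset, and in the codimension-one count via Lemma~\ref{lem:2_faces}. The one genuine difference is collapsibility: you observe that $e^p$ is a cone point, so $\Delta([e^p,e^q])=e^p*\Delta\big((e^p,e^q]\big)$ is a simplicial cone and hence collapsible in one stroke. The paper instead exhibits an explicit sequence of collapses: each maximal simplex $e^p<e^{p+1}<\cdots<e^{q-1}<e^q$ has the interior flag $e^{p+1}<\cdots<e^{q-1}$ as a free face, and after collapsing all of these one is left with simplices of one lower dimension still containing both $e^p$ and $e^q$; iterating brings the complex down to the edge $\{e^p,e^q\}$. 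Your cone argument is shorter and conceptually cleaner; the paper's version is more hands-on and incidentally shows the complex collapses onto the barycentric edge $e^p<e^q$ rather than just a vertex, though nothing later uses that refinement. On gradedness, you are right to flag it as the point requiring care --- the paper simply asserts it (``since $K$ is regular such chain necessarily has length $q-p+1$''), relying on the standard fact for regular CW-complexes that your sketch via invariance of domain in $\partial K(e^q)$ makes explicit.
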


\begin{proof}
	Let $e^p\leq e^q$ be a pair of adjacent cells. If $e^p=e^q$ the associated pseudo-cell is a single vertex and the statement of the proposition is true. Now suppose $e^p$ is a proper face of $e^q$. The closure of the associated pseudo-cell is the union of the barycentric simplices whose flags $e^{k_1}<...<e^{k_n}$ satisfy $e^p\leq e^{k_1}<...<e^{k_n}\leq e^q $. This is a simplicial sub-complex of $\Sd K$. A maximal simplex of this closed pseudo-cell is given by a maximal chain of adjacent cells of $K$ starting from $e^p$ and ending at $e^q$. Since $K$ is regular such chain necessarily has length $q-p+1$, so the corresponding simplex has dimension $q-p$. Hence, the closed pseudo-cell is a pure $(q-p)$-dimensional simplicial complex. If $q-p=1$, this closed pseudo-cell corresponds to the closed barycentric edge $e^p<e^q$. This is a collapsable complex. If $q-p>1$, the maximal simplices of the closed pseudo-cell correspond to flags $e^p<e^{p+1}<...<e^{q-1}<e^q$. They all have a codimension 2 free face, associated with $e^{p+1}<...<e^{q-1}$. If we collapse all such simplices we obtain the union of the closed $(q-p-1)$-simplices associated with the flags of the form $e^p<e^{k_1}<...<e^{k_{q-p-2}}<e^q$. All such simplices are now maximal and contain each a free face of codimension 2, namely $e^{k_1}<...<e^{k_{q-p-2}}$. We can recursively perform such collapses to end up with the barycentric edge $e^p<e^q$. Thus the closed pseudo-cell of the pair $e^p\leq e^q$ is collapsable. For the final part, a simplex of codimension 1 of the open pseudo-cell is given by a flag $e^p<e^{k_1}<...<e^{k_{q-p-2}}<e^q$ of length $q-p$ and has the form $e^p<...<e^i<e^{i+2}<...<e^q$. Since $K$ is regular there is exactly two $(i+1)$-cells between $e^i$ and $e^{i+1}$, hence two $(q-p)$-simplices.
\end{proof}

\begin{prop}
	The open pseudo-cell associated with the pair $e^p\leq e^q$ meets the closed pseudo-cell indexed by the pair $e^{p'}\leq e^{q'}$ if and only if it is fully contained in it. This only happens when $e^{p'}\leq e^{p}\leq e^q\leq e^{q'}$. Moreover, if $\epsilon_0$ is an open pseudo-cell of dimension $k$ included in a closed pseudo-cell $\bar{\epsilon}_2$ of dimension $k+2$ then there is exactly two open pseudo-cells $\epsilon_1$ of dimension $k+1$ such that $\epsilon_0\subset\bar{\epsilon}_1$ and $\epsilon_1\subset\bar{\epsilon}_2$.
\end{prop}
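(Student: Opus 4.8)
The plan is to reduce everything to combinatorics in the face poset of $K$, exploiting the description of a pseudo-cell as a union of open barycentric simplices. First I would record the elementary fact that in any simplicial complex an open simplex $\sigma^\circ$ meets a closed subcomplex $L$ if and only if $\sigma\in L$ (equivalently $\sigma^\circ\subset|L|$): a point of $\sigma^\circ\cap|L|$ lies in the relative interior of some simplex $\tau$ of $L$, and since distinct open simplices are disjoint this forces $\sigma^\circ=\tau^\circ$, hence $\sigma\le\tau$ and $\sigma\in L$. Apply this to the open pseudo-cell $\epsilon$ of $e^p\le e^q$, which is the union of the open simplices $(e^{k_1};\dots;e^{k_n})$ with $e^{k_1}=e^p$ and $e^{k_n}=e^q$, and to the closed pseudo-cell $\bar\epsilon'$ of $e^{p'}\le e^{q'}$, which as a subcomplex of $\Sd K$ is made of the simplices of the flags $e^{p'}\le e^{k_1}<\dots<e^{k_n}\le e^{q'}$. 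Then $\epsilon$ meets $\bar\epsilon'$ iff some flag from $e^p$ to $e^q$ satisfies $e^{p'}\le e^p$ and $e^q\le e^{q'}$; but that condition does not involve the intermediate cells of the flag, so if one flag from $e^p$ to $e^q$ lies in $\bar\epsilon'$ then all of them do, i.e. $\epsilon\subset|\bar\epsilon'|$. This settles the first two assertions simultaneously, with the equivalence being exactly $e^{p'}\le e^p\le e^q\le e^{q'}$.

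For the last assertion, let $\epsilon_0$ be attached to the pair $e^{p_0}\le e^{q_0}$ and $\epsilon_2$ to the pair $e^{p_2}\le e^{q_2}$. By what precedes, $\epsilon_0\subset\bar\epsilon_2$ means $e^{p_2}\le e^{p_0}\le e^{q_0}\le e^{q_2}$, and (writing $\dim$ for the dimension of a cell) the hypotheses on dimensions read $\dim e^{q_0}-\dim e^{p_0}=k$ and $\dim e^{q_2}-\dim e^{p_2}=k+2$; subtracting gives $(\dim e^{q_2}-\dim e^{q_0})+(\dim e^{p_0}-\dim e^{p_2})=2$, both summands being non-negative. A pseudo-cell $\epsilon_1$ attached to $e^{p_1}\le e^{q_1}$ satisfies $\epsilon_0\subset\bar\epsilon_1$ and $\epsilon_1\subset\bar\epsilon_2$ precisely when $e^{p_2}\le e^{p_1}\le e^{p_0}$ and $e^{q_0}\le e^{q_1}\le e^{q_2}$, and it has dimension $k+1$ precisely when $(\dim e^{p_0}-\dim e^{p_1})+(\dim e^{q_1}-\dim e^{q_0})=1$. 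So I must count the admissible configurations with $(\dim e^{p_0}-\dim e^{p_1},\dim e^{q_1}-\dim e^{q_0})$ equal to $(1,0)$ or $(0,1)$.

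The two inputs are: the elementary observation that a face of a cell having the same dimension as that cell must coincide with it (hence the only cell of dimension $\dim e^{p_0}$ lying between $e^{p_2}$ and $e^{p_0}$ is $e^{p_0}$ itself, and similarly on the $q$-side), and Lemma~\ref{lem:2_faces}, which provides exactly two cells strictly between a cell and one of its codimension-$2$ faces. I then split into cases according to $(\dim e^{p_0}-\dim e^{p_2},\dim e^{q_2}-\dim e^{q_0})\in\{(0,2),(1,1),(2,0)\}$. If it is $(0,2)$, then $e^{p_1}=e^{p_0}$ is forced and $e^{q_1}$ must run over the two cells of dimension $\dim e^{q_0}+1$ between $e^{q_0}$ and $e^{q_2}$, giving two pseudo-cells; the case $(2,0)$ is symmetric; if it is $(1,1)$, each of the two admissible branches forces both $e^{p_1}$ and $e^{q_1}$, producing the two distinct pairs $e^{p_2}\le e^{q_0}$ and $e^{p_0}\le e^{q_2}$, hence again two pseudo-cells. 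In each branch one checks the resulting pair indeed has dimension $k+1$ and that the two solutions are distinct, which concludes.

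I do not expect any conceptual difficulty: the geometric content is entirely carried by Lemma~\ref{lem:2_faces}. The only point requiring care is the bookkeeping in the last step — verifying that the dimension equality $\dim e^{q_1}-\dim e^{p_1}=k+1$ genuinely holds in every branch, and that the enumeration produces exactly two pseudo-cells with no repetition — together with isolating cleanly the little lemma that a same-dimensional face equals the cell (which follows since the topological boundary of a cell lies in the preceding skeleton).
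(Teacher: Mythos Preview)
Your proof is correct and follows essentially the same approach as the paper: the first part is unpacked more explicitly than the paper's terse ``consequence of the definition,'' but the content is the same, and for the second part both you and the paper perform the identical three-case split on how the codimension-$2$ gap distributes between the $p$-side and the $q$-side, invoking Lemma~\ref{lem:2_faces} in the $(2,0)$ and $(0,2)$ cases.
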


\begin{proof}
	The first part is a consequence of the definition. For the second part choose $\epsilon_0$ corresponding to a pair $e^{p+a}\leq e^{p+a+k}$ and $\epsilon_2$ to a pair $e^p\leq e^{p+a+k+b}$ satisfying $e^p\leq e^{p+a}\leq e^{p+a+k}\leq e^{p+a+k+b}$. By assumption, we have $a+b=2$ and three different cases can occur : one of the two numbers $a,b$ is $2$ and the other $0$ or both equal $1$. The two first cases are symmetric. If it's $a$ that equals $2$ we have $e^p\leq e^{p+2}\leq e^{p+2+k}\leq e^{p+2+k}$ and the two $(k+1)$-pseudo-cells between $\epsilon_0$ and $\epsilon_1$ are those associated with the two pairs $e^{p+1}\leq e^{p+2+k}$ with $e^p\leq e^{p+1}\leq e^{p+2}$. The symmetric case is similar. If both $a$ and $b$ equal $1$ then we have $e^p\leq e^{p+1}\leq e^{p+1+k}\leq e^{p+2+k}$ and the two $(k+1)$-pseudo-cells between $\epsilon_0$ and $\epsilon_1$ are those associated with the two pairs $e^p\leq e^{p+1+k}$ and $e^{p+1}\leq e^{p+2+k}$.
\end{proof}

	In the light of this property it makes sense to talk about \emph{adjacent} pseudo-cells as we do for cells of regular CW-complexes. As in the regular case, if $\epsilon$ and $\epsilon'$ are adjacent pseudo-cells with $\epsilon\subset\overline{\epsilon}'$ we say that $\epsilon$ is a \emph{face} of $\epsilon'$. If in addition $\epsilon\neq\epsilon'$ we say that $\epsilon$ is a \emph{proper face} of $\epsilon'$. Also we see here that two dihomologic pseudo-cells meet on a common faces if their intersection is not empty. Note that this property might not be true in $K$, two closed cells meet on the union of their common faces if the intersection is non-empty.  Let $e^p< e^q$ be a proper adjacent pair of cells of $K$ and denote $\epsilon$ their associated pseudo-cell. The simplicial complex supported on the closure, $(\Sd K)(\epsilon)$, is the join of the barycentric edge $e^p<e^q$ and a sub-complex $A(e^p;e^q)\subset\Sd K$. This sub-complex is the collection of all the barycentric simplices indexed with the flags $e^{k_1}<...<e^{k_{n}}$ for which $e^p<e^{k_1}$ and $e^{k_{n}}<e^q$.

\begin{prop}\label{prp:A_homo}
	The support of $A(e^p;e^q)$ is a connected $(q-p-2)$-dimensional homology manifold whose $(p+1)$-fold suspension is homeomorphic to a $(q-1)$-sphere.
\end{prop}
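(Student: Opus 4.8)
We propose the following strategy, organised around one genuinely hard point.

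\emph{Combinatorial reduction.} First unravel the definition: $A(e^p;e^q)$ is precisely the order complex of the open interval $\{e\in K\mid e^p<e<e^q\}$ of the face poset of $K$, and, as already noted, $(\Sd K)(\epsilon)$ is the join of the barycentric edge $e^p<e^q$ with $A(e^p;e^q)$, so that $|A(e^p;e^q)|=\Lk_{(\Sd K)(\epsilon)}(\text{the edge }e^p<e^q)$. Using the elementary identity $\Lk_{\Sd X}(\sigma)=\Delta(\{\tau<\sigma\})\ast\Delta(\{\tau>\sigma\})$ for a simplicial subdivision, together with the fact that no cell lies strictly between two cells of consecutive dimensions, a flag $e^0<e^1<\dots<e^p$ of faces of $e^q$ yields
\[
A(e^p;e^q)\;=\;\Lk_{\Sd(\partial\bar e^q)}\big([v_{e^0},\dots,v_{e^p}]\big)\;=\;\Lk_{A(e^{p-1};e^q)}(v_{e^p})\quad(p\ge 1),
\]
with base case $A(e^0;e^q)=\Lk_{\Sd(\partial\bar e^q)}(v_{e^0})$; here $\partial\bar e^q\cong S^{q-1}$ by regularity of $K$ together with Proposition~\ref{prop:bar_sub_cw}, and $\Sd(\partial\bar e^q)$ is an honest simplicial triangulation of that sphere. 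One also records that $\Lk_{\Sd(\partial\bar e^q)}(v_{e^p})\cong S^{p-1}\ast|A(e^p;e^q)|=\Sigma^{p}|A(e^p;e^q)|$, since $\bar e^p$ is a ball. The degenerate ranges $q=p+1$ (where $A=\varnothing$) and $q=p+2$ (where $A$ is a pair of points, by Lemma~\ref{lem:2_faces}) are treated by inspection, so from now on assume $q\ge p+3$.

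\emph{Homology manifold and homology type.} Argue by induction on $p$, using the classical fact that the link of a $k$-simplex $\tau$ in a triangulated closed homology $n$-manifold is again a closed homology manifold, of dimension $n-k-1$: this follows from the local-homology/link formula $H_*(M,M\setminus x)\cong\tilde H_{*-k-1}(\Lk_M\tau)$ for $x\in\relint\tau$, together with $\Lk_M(\sigma\ast\rho)=\Lk_{\Lk_M\sigma}(\rho)$, and the same input computes the reduced homology of the link from the local homology of $M$ along $\tau$. For the base case apply this to the honest sphere $\partial\bar e^q$ and the vertex $v_{e^0}$; for the inductive step apply it to $A(e^{p-1};e^q)$ and $v_{e^p}$. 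One gets that $|A(e^p;e^q)|$ is a closed homology $(q-p-2)$-manifold with $\tilde H_*(|A(e^p;e^q)|)\cong\tilde H_*(S^{q-p-2})$; in particular $\tilde H_0=0$, so it is connected (recall we assumed $q-p-2\ge 1$).

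\emph{The homeomorphism, and the main obstacle.} It remains to prove $\Sigma^{p+1}|A(e^p;e^q)|\cong S^{q-1}$. By the previous paragraph $|A(e^p;e^q)|$ is a polyhedral homology $(q-p-2)$-sphere, hence so is every iterated suspension $\Sigma^{k}|A(e^p;e^q)|$, of dimension $q-p-2+k$. If $p\ge 1$ this is immediate from the Edwards--Cannon double suspension theorem: $\Sigma^{p+1}|A(e^p;e^q)|=\Sigma^{2}\!\big(\Sigma^{p-1}|A(e^p;e^q)|\big)$, and the double suspension of the polyhedral homology $(q-3)$-sphere $\Sigma^{p-1}|A(e^p;e^q)|$ is homeomorphic to $S^{q-1}$. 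The case $p=0$ is the genuinely delicate one, since it asks for the \emph{single} suspension $\Sigma\,\Lk_{\Sd(\partial\bar e^q)}(v_{e^0})$ to be $S^{q-1}$, which the double suspension theorem does not supply. Here one uses essentially that $\partial\bar e^q$ is an honest sphere, via the fact that an honestly triangulated $n$-sphere is homeomorphic to the simplicial suspension of any of its vertex links --- equivalently, the closed complement of the open star of a vertex is homeomorphic, rel the link, to the cone on that link --- which is proved by collapsing the contractible subpolyhedron $\overline{\St}(v_{e^0})$ and applying the cell-like approximation theorem (the low dimensions $q-1\le 4$ being handled by classical arguments). Applying this with $n=q-1$ and $v=v_{e^0}$ gives $\Sigma|A(e^0;e^q)|\cong\partial\bar e^q\cong S^{q-1}$, which completes the proof. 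The principal difficulty is exactly this last, base, case: for $p\ge1$ the double suspension theorem is a black box that does the job, but for $p=0$ one needs the sharper ``one-suspension'' statement about vertex links of honest triangulated spheres, and it is in making that precise (and in dealing with the small-dimensional cases) that the real work lies.
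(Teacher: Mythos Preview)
Your argument is correct and reaches the same conclusion, but the organisation differs substantially from the paper's and leans on heavier machinery than is needed.

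\textbf{Homology manifold.} You use induction on $p$ via the recursion $A(e^p;e^q)=\Lk_{A(e^{p-1};e^q)}(v_{e^p})$ together with the general fact that vertex links in a closed homology manifold are again homology manifolds (and homology spheres). The paper instead observes directly that for any simplex $e^{k_0}<\cdots<e^{k_m}$ of $A(e^p;e^q)$ one has
\[
\Lk_{A(e^p;e^q)}(e^{k_0}<\cdots<e^{k_m})\;=\;A(e^p;e^{k_0})\ast A(e^{k_0};e^{k_1})\ast\cdots\ast A(e^{k_m};e^q),
\]
and then reads off the reduced homology of every link from the suspension homeomorphism already established. Your inductive approach is clean and self-contained; the paper's join formula is a useful structural statement in its own right and makes the local homology computation a one-liner once the suspension result is in hand.

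\textbf{Suspension homeomorphism.} Here the contrast is sharpest. The paper identifies $A(e^p;e^q)$ as the link of a $p$-simplex in the triangulated $(q-1)$-sphere $\Sd(\partial\bar e^q)$ and then cites a single result --- Proposition~1.3 of Galewski--Stern --- which says precisely that in any simplicial triangulation of a closed topological $n$-manifold, the $(\dim\sigma+1)$-fold suspension of $\Lk\sigma$ is $S^n$. This handles all $p$ at once, including your ``delicate'' $p=0$ case; there is no case split. Your route via Edwards--Cannon for $p\ge1$ and a cell-like collapse for $p=0$ is essentially a reconstruction of that proposition, and it has one soft spot you should tighten: the double suspension theorem in its classical form takes as input a \emph{topological} homology sphere, whereas your $\Sigma^{p-1}|A|$ is a priori only a polyhedral homology manifold. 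The extension you need (that $\Sigma^2$ of a polyhedral homology sphere is a genuine sphere) does follow from Edwards' manifold recognition theorem plus the generalized Poincar\'e conjecture, but you should say so. Citing Galewski--Stern directly buys you uniformity and avoids this bookkeeping.
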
  

Let us recall the definition:

\begin{dfn}[Homology Manifold]
	A \emph{homology manifold} of dimension $n\in\N$ is the support $X$ of a regular, finite dimensional, locally finite CW-complex for which the graded local homology group $H_*(X;X\setminus\{x\};\Z)$ of every point $x\in X$ is isomorphic to either $H_*(\R^n;\R^n\setminus \{0\};\Z)$ or $0$. The \emph{boundary} of $X$ denoted $\partial X$ is the set of points $x\in X$ for which $H_*(X;X\setminus\{x\};\Z)=0$.
	\label{dfn:homo_man}
\end{dfn}

\begin{proof}
	Let $B$ denote the simplicial complex $(\Sd K)(\bar{e}^q\setminus e^q)$ and $e^0<...<e^p$ be a complete flag of cells of $K$. The simplicial complex $A(e^p;e^q)$ is the link in $B$ of the barycentric simplex associated with $e^0<...<e^p$. $B$ is a simplicially triangulated $(q-1)$-sphere thus in application of Proposition~1.3 from \cite{Gal-Ste_Cla_Sim}\footnote{D. Galewski and R. Stern. \emph{Classification of Simplicial Triangulations of Topological Manifolds}, Proposition 1.3 p.5.} the $(p+1)$-fold suspension of $A(e^p;e^q)$ is homeomorphic to a $(q-1)$-sphere. The Mayer-Vietoris long exact sequence in singular homology implies that if $\Sigma A$ is the suspension of a topological space $A$ we have $H_0(\Sigma A;\Z)=\Z$, $H_{k}(\Sigma A;\Z)\cong H_{k-1}(A;\Z) \textnormal{ for all }k\geq 2$ and the exact sequence:

	\begin{equation*}
		0 \rightarrow H_1(\Sigma A;\Z) \rightarrow H_0(A;\Z) \rightarrow \Z \rightarrow 0\;.
	\end{equation*}

	\noindent So $A$ has the homology of a $k$-sphere if and only if the $l$-fold suspension $\Sigma^l A$ has the homology of an $(l+k)$-sphere. Therefore the simplicial complex $A(e^p;e^q)$ has the integral homology of a $(q-p-2)$-sphere. For the remaining part of the proposition we note that if the barycentric $n$-simplex with indexing flag $e^{k_0}<...<e^{k_{n}}$ belongs to $A(e^p;e^q)$ then its link $L$ in this complex is the join:
	
	\begin{equation*}
		A(e^p;e^{k_0})*A(e^{k_0};e^{k_1})*...*A(e^{k_{n-1}};e^{k_n})*A(e^{k_n};e^{q}).
	\end{equation*}
	
	\noindent Hence, its $(p+n+2)$-fold suspension is homeomorphic to a $(q-1)$-sphere and $L$ has the integral homology of a $(q-p-n-3)$-sphere. Now $A(e^p;e^q)$ is a pure simplicial complex of dimension $(q-p-2)$ in which the link of every $n$-dimensional simplex has the homology of a $(q-p-n-3)$-sphere. This is a homology manifold of dimension $(q-p-2)$. Indeed, if $x$ is a point of $|A(e^p;e^q)|$ that belongs to the relative interior of the barycentric $n$-simplex $\sigma$, then, by excision, $H_k(|A(e^p;e^q)|;|A(e^p;e^q)|\setminus\{x\};\Z)=H_k(|S|;|S|\setminus\{x\};\Z)$ for all $k$ with $S$ the closed star of $\sigma$. Note that $|S|$ is contractible and $|S|\setminus\{x\}$ is non-empty so $H_0(|S|;|S|\setminus\{x\};\Z)=0$, $H_k(|S|;|S|\setminus\{x\};\Z)=H_{k-1}(|S|\setminus\{x\};\Z)$ for $k\geq2$ and :

	\begin{equation*}
		0 \rightarrow H_1(|S|;|S|\setminus\{x\};\Z) \rightarrow H_0(|S|\setminus\{x\};\Z) \rightarrow \Z \rightarrow 0\;.
	\end{equation*}
	
	\noindent Since $|S|$ is homeomorphic to the topological join $\sigma * |L|$ for $L$ the link of $\sigma$, $|S|\setminus\{x\}$ is homotopic to the $n$-fold suspension of $|L|$. By assumption $|L|$ has the homology of a $(q-p-n-3)$-sphere so $|S|\setminus\{x\}$ has the homology of a $(q-p-3)$-sphere. Combining this with the relation observed by the relative homology of $(|S|;|S|\setminus\{x\})$ with the homology of $|S|\setminus\{x\}$ we find that $H_k(|A(e^p;e^q)|;|A(e^p;e^q)|\setminus\{x\};\Z)\cong H_k(\R^{q-p-2};\R^{q-p-2}\setminus\{0\};\Z)$ for all $k$ and $A(e^p;e^q)$ is a compact $(q-p-2)$-homology manifold without boundary.
\end{proof}

	As a direct consequence we get that:

\begin{prop}\label{prop:initial_cells}
	The closed pseudo-cells associated with the adjacent pairs of the form $e^0\leq e^p$ are homeomorphic to closed balls.
\end{prop}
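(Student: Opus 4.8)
The plan is to read off the closed pseudo-cell of a pair $e^0\leq e^p$ from the join decomposition recorded just before Proposition~\ref{prp:A_homo} and then feed it into that proposition. The case $p=0$ is immediate: the pseudo-cell of $e^0\leq e^0$ is a single vertex, a closed $0$-ball. So assume $p\geq 1$, let $\epsilon$ be the pseudo-cell of the proper adjacent pair $e^0<e^p$, and recall that $\dim\epsilon=p-0=p$, so that what must be shown is that $\overline\epsilon$ is a closed $p$-ball. By the discussion preceding Proposition~\ref{prp:A_homo}, the simplicial complex $(\Sd K)(\epsilon)$ carried by $\overline\epsilon$ is the join of the barycentric $1$-simplex $[e^0,e^p]$ with the subcomplex $A(e^0;e^p)\subset\Sd K$ of flags of cells lying strictly between $e^0$ and $e^p$, with the convention that $A(e^0;e^p)=\varnothing$ when $p=1$.

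First I would translate this into a statement about topological joins. Since geometric realisation sends a simplicial join onto the topological join of the realisations, $\overline\epsilon$ is homeomorphic to $I*|A(e^0;e^p)|$ with $I$ a closed interval. Realising $I$ as the cone $\{m\}*\{e^0,e^p\}$ over its two endpoints, with cone point an interior point $m$ of the edge, and using that the topological join of compact spaces is associative and commutative up to homeomorphism, I get
\[
\overline\epsilon\;\cong\;\{m\}*\{e^0,e^p\}*|A(e^0;e^p)|\;\cong\;\{m\}*\bigl(\{e^0,e^p\}*|A(e^0;e^p)|\bigr)\;=\;\{m\}*\Sigma|A(e^0;e^p)|,
\]
so that $\overline\epsilon$ is homeomorphic to the cone over the (unreduced) suspension of $|A(e^0;e^p)|$.

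Then I would invoke Proposition~\ref{prp:A_homo} applied to the pair $e^0<e^p$, i.e.\ with the roles of its parameters $p$ and $q$ played here by $0$ and $p$: the $1$-fold suspension $\Sigma|A(e^0;e^p)|$ is homeomorphic to the sphere $S^{p-1}$ — for $p=1$ this is the convention $\Sigma\varnothing=S^0$. Hence $\overline\epsilon\cong\{m\}*S^{p-1}$, the cone over a $(p-1)$-sphere, which is homeomorphic to the closed $p$-ball. Since $\dim\epsilon=p$, this is the desired conclusion.

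I do not anticipate a genuine obstacle: all the substance is already contained in Proposition~\ref{prp:A_homo}, and the steps above are formal. The points deserving a little care are the compatibility of geometric realisation with joins together with the associativity and commutativity of the topological join — both valid here since every complex involved is finite, hence compact; the degenerate ranges $p=0$ and $p=1$, absorbed by the conventions $Y*\varnothing=Y$ and $\Sigma\varnothing=S^0$; and the classical identification of the cone over $S^{p-1}$ with the closed $p$-ball. It is worth noting that $A(e^0;e^p)$ is itself only a homology $(p-2)$-sphere in general, so the polyhedron $\overline\epsilon=[e^0,e^p]*A(e^0;e^p)$ is a priori merely contractible; it is the suspension hidden in the computation, genuinely a sphere by Proposition~\ref{prp:A_homo}, that upgrades it to an honest ball.
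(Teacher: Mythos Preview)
Your proposal is correct and follows essentially the same route as the paper: identify the closed pseudo-cell with the topological join $[0,1]*|A(e^0;e^p)|$, rewrite this as the cone over the suspension $\Sigma|A(e^0;e^p)|$, and invoke Proposition~\ref{prp:A_homo} to see that this suspension is a $(p-1)$-sphere. Your version is simply more explicit about the degenerate cases $p=0,1$ and the join manipulations, which the paper leaves implicit.
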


\begin{proof} 
	It follows from the previous observation that the support of this closed pseudo-cell is homeomorphic to the topological join $[0;1]*|A(e^0;e^p)|$ which is the cone over the suspension of $|A(e^0;e^p)|$. By the last proposition this suspension is a $(p-1)$-sphere so the closed pseudo-cell is actually a closed ball.
\end{proof}

	For $\epsilon$ a dihomologic pseudo-cell associated with a pair $e^p\leq e^q$, its “boundary” $\bar{\epsilon}\setminus \epsilon$ is the support of the simplicial join of the union of the barycenters of $e^p$ and $e^q$ with $A(e^p;e^q)$ (so the suspension of $A(e^p;e^q)$). From that description we see that it is the union of dihomologic pseudo-cells that are faces of $\epsilon$. 

\vspace{5pt}

	We will see further that the dihomologic pseudo-subdivision also shares a lot of homological features with a regular subdivision. Now we state a condition that ensures the regularity of this pseudo-complex. 

\begin{prop}\label{prop:dih_reg_shel}
	If $K$ is not only regular but also satisfies that the induced CW-complex on every closed cell $K(e)$ is shellable in the sense of \emph{\cite{Bjo_pos_reg}} then every closed dihomologic pseudo-cell is also shellable. As a consequence, the geometric realisation of every closed pseudo-cell is actually homeomorphic to a closed ball making the dihomologic pseudo-subdivision a regular subdivision of $K$.
\end{prop}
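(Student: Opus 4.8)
The plan is to argue by induction on the dimension $q-p$ of the pseudo-cell, using the recursive structure of closed pseudo-cells as iterated cones and joins, and feeding in the shellability hypothesis cell by cell. Recall that the closure of the pseudo-cell attached to a proper pair $e^p<e^q$ is the simplicial complex $(\Sd K)(\epsilon)$, which by the discussion preceding Proposition~\ref{prp:A_homo} is the join of the barycentric edge $e^p<e^q$ with the subcomplex $A(e^p;e^q)$. Since a join $\Delta * L$ is shellable whenever both factors are shellable (this is Björner's join lemma, which I would cite from \cite{Bjo_pos_reg}), and a single edge is trivially shellable, it suffices to prove that $A(e^p;e^q)$ is shellable for every proper adjacent pair. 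Then, because a shellable pseudomanifold that is a ball-or-sphere homology type is in fact a PL-ball or PL-sphere — and Proposition~\ref{prp:A_homo} already tells us $A(e^p;e^q)$ has the homology of a $(q-p-2)$-sphere and that its suitable suspension is a genuine sphere — shellability will upgrade "homology ball" to "honest ball": the closed pseudo-cell, being the join of a $1$-simplex with a shellable homology $(q-p-2)$-sphere, is a shellable homology $(q-p)$-ball, hence (by Danaraj–Klee, or directly: a shellable simplicial complex which is a homology manifold with nonempty boundary is a PL-ball) a topological $(q-p)$-ball. This last implication is standard and I would quote it; the real content is the shellability of $A(e^p;e^q)$.

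To get shellability of $A(e^p;e^q)$ I would proceed by induction on $q-p$. The base cases $q-p\le 2$ are immediate ($A$ is empty, a point, or two points). For the inductive step, observe that $A(e^p;e^q)$ is the link, inside the simplicial complex $B=(\Sd K)(\bar e^q\setminus e^q)$, of the barycentric simplex corresponding to a complete flag $e^0<\dots<e^p$; and $B$ is precisely the barycentric subdivision of the boundary complex of the induced CW-complex $K(e^q)$ on the closed cell $\bar e^q$. By hypothesis $K(e^q)$ is shellable, and a shelling of a regular CW-complex induces a shelling of its boundary (the boundary of a shellable ball is a shellable sphere — again Björner), whose barycentric subdivision is then shellable because barycentric subdivision preserves shellability. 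Thus $B$ is shellable. Now I invoke the fact that the link of a face in a shellable complex need not be shellable in general, so this alone is not enough — and here is where the induction and the join decomposition must be combined more carefully: the link $L$ of a barycentric $n$-simplex $e^{k_0}<\dots<e^{k_n}$ inside $A(e^p;e^q)$ was computed in the proof of Proposition~\ref{prp:A_homo} to be the join $A(e^p;e^{k_0})*A(e^{k_0};e^{k_1})*\dots*A(e^{k_n};e^q)$, each factor of strictly smaller "span", hence shellable by the inductive hypothesis and the join lemma; so every vertex link, and more generally every face link, in $A(e^p;e^q)$ is shellable, i.e. $A(e^p;e^q)$ is locally shellable. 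Combined with the fact that its appropriate suspension is a PL-sphere, I would conclude shellability of $A(e^p;e^q)$ itself by a lifting argument: a complex all of whose proper links are shellable spheres and which is a homology sphere is a shellable sphere (this uses that the suspension $\Sigma^{p+1}A(e^p;e^q)$ is a shellable sphere — the suspension of a complex with shellable links is handled by Björner's techniques — and that de-suspending preserves shellability for PL-spheres).

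The step I expect to be the genuine obstacle is the passage from "all proper links of $A(e^p;e^q)$ are shellable, and $A$ has the right homology and PL type" to "$A$ is itself shellable". Local shellability plus sphere homology does not automatically give global shellability — there are homology spheres with shellable links that are not themselves shellable only in high dimensions, but for genuine PL-spheres one does have robust inductive constructions of shellings, and the cleanest route is probably to shell $A(e^p;e^q)$ directly by transporting a shelling of the shellable sphere $B$ through the "link" operation: I would try to show that if $B$ is shellable and $\sigma$ is a face of $B$ such that every interval $[\sigma,\tau]$ behaves well, then $\mathrm{Lk}_B(\sigma)$ is shellable — which is true when $\sigma$ is a vertex of a shellable complex whose vertex-decomposability is available, and barycentric subdivisions are vertex-decomposable (a theorem I would cite). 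In fact vertex-decomposability is the slick way around the whole difficulty: barycentric subdivisions of shellable regular CW-complexes, and their boundary spheres, are vertex-decomposable; vertex-decomposability is inherited by links and is preserved under joins; and vertex-decomposable complexes are shellable. So the streamlined plan is: show $B=(\Sd K)(\partial K(e^q))$ is vertex-decomposable (from shellability of $K(e^q)$, via boundary and barycentric subdivision), deduce $A(e^p;e^q)=\mathrm{Lk}_B(\text{flag }e^0<\dots<e^p)$ is vertex-decomposable, hence so is the closed pseudo-cell $= (e^p<e^q)*A(e^p;e^q)$, hence it is shellable; and a vertex-decomposable homology $d$-ball is a PL $d$-ball, giving the topological conclusion. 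Verifying that barycentric subdivision of a shellable regular CW-complex is vertex-decomposable — in the generality of CW- rather than simplicial complexes — is where I would expect to spend the most care.
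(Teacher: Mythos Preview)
Your overall architecture matches the paper's: reduce to showing $A(e^p;e^q)$ is shellable, realise it as a link inside the barycentric subdivision of a shellable cell, and conclude with a shellable-pseudomanifold-is-a-sphere argument. The divergence is that you then assert ``the link of a face in a shellable complex need not be shellable in general'' and launch into an elaborate detour through vertex-decomposability. That premise is false for simplicial complexes: links of faces in a (pure) shellable simplicial complex \emph{are} shellable. This is precisely the lemma the paper invokes (Ziegler, \emph{Lectures on Polytopes}, Lemma~8.7), and it collapses your whole inductive/vertex-decomposable programme into a one-line citation.

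Concretely, the paper's proof runs: Bj\"orner's Proposition~4.4 says that if $K(e^q)$ is shellable then $\Sd(K(e^q))$ is a shellable simplicial complex; $A(e^p;e^q)$ is the link in $\Sd(K(e^q))$ of the barycentric simplex on the flag $e^0<\cdots<e^p<e^q$; links in shellable simplicial complexes are shellable (Ziegler~8.7); hence $A(e^p;e^q)$ is shellable. Being a pure shellable complex in which every codimension-$1$ simplex lies in exactly two facets, it is a sphere (Bj\"orner, Proposition~4.3), so the closed pseudo-cell, a join of a $1$-simplex with this sphere, is a shellable ball. No induction on $q-p$, no join-of-links decomposition, no vertex-decomposability is needed.

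Your vertex-decomposability route is not wrong in principle, but the step you flagged as hardest --- that the barycentric subdivision of a shellable regular CW-complex is vertex-decomposable, not merely shellable --- is indeed not standard in that generality and would need its own argument. So the detour replaces a one-line citation by a genuinely harder lemma. The fix is simply to delete the sentence claiming links in shellable complexes can fail to be shellable, and cite Ziegler~8.7 instead.
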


\begin{proof}
	Let $e^q$ be a cell of $K$. From the first part of Proposition~{4.4} of \cite{Bjo_pos_reg}\footnote{A. Bj\"orner. \emph{Posets, Regular CW Complexes and Bruhat Order}, Proposition 4.4 p.12.} we know that the barycentric subdivision of $K(e^q)$ is a shellable simplicial complex. For $e^p\leq e^q$ a face of $e^q$ we expressed the associated closed dihomologic pseudo-cell $\bar{\epsilon}$ as the simplicial join of a closed interval and the simplicial complex $A(e^p;e^q)$. As in the proof of Proposition~\ref{prp:A_homo} we can write this complex as the link of a barycentric simplex $\sigma$ with indexing flag $e^0<e^1<...<e^p<e^q$ in $\Sd(K(e^q))$. The link of $\sigma$ is shellable by Lemma~8.7 of \cite{Zie_lec_pol}\footnote{G. Ziegler, \emph{Lectures on Polytopes}, Lemma 8.7 p.237.}. The complex $A(e^p;e^q)$ is a pure $(q-p-2)$-dimensional shellable simplicial complex in which every codimension 1 simplex belongs to exactly two maximal simplices, hence it is homeomorphic to a sphere by Proposition~4.3 of \cite{Bjo_pos_reg}\footnote{A. Bj\"orner. op. cit., Proposition 4.3 p.12.}. Finally, the closed dihomologic pseudo-cell $\bar{\epsilon}$ is the support of a shellable simplicial complex homeomorphic to a closed ball.
\end{proof}

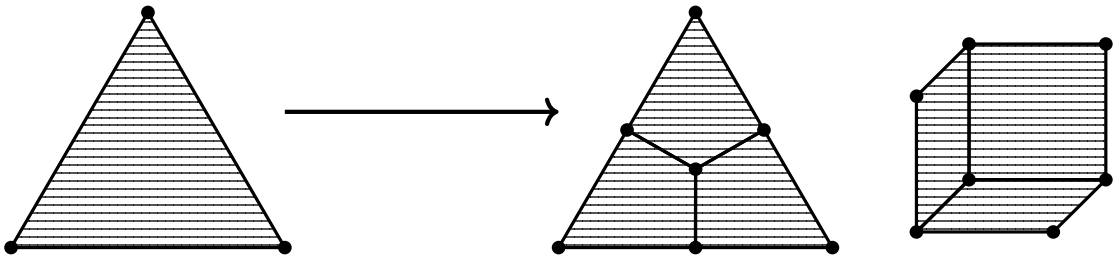
\begin{figure}[h]
	\centering
	\begin{tikzpicture}[scale=1.8]
		\coordinate (a) at (0,0);
		\coordinate (b) at (2,0);
		\coordinate (c) at ($(a)+(60:2)$);
		\coordinate (a1) at ($(a)+(4,0)$);
		\coordinate (b1) at ($(b)+(4,0)$);
		\coordinate (c1) at ($(c)+(4,0)$);
		\coordinate (center) at ($(a)!0.33333333!(a1)+(a)!0.33333333!(b1)+(a)!0.33333333!(c1)$);
		\filldraw[pattern={Lines[angle=90, line width=.5pt]},very thick] (a) -- (b)-- (c) -- cycle;
		\filldraw[pattern={Lines[angle=120, yshift=-3.75pt , line width=.5pt]},very thick] (a1) -- ($(a1)!(b1)!(c1)$) -- (center) -- ($(b1)!(c1)!(a1)$)-- cycle;
		\filldraw[pattern={Lines[angle=60, line width=.5pt]},very thick] (b1) -- ($(b1)!(c1)!(a1)$) -- (center) -- ($(b1)!(a1)!(c1)$) -- cycle;
		\filldraw[pattern={Lines[angle=0, yshift=4pt , line width=.5pt]},very thick] (c1) -- ($(b1)!(a1)!(c1)$) -- (center) -- ($(c1)!(b1)!(a1)$) -- cycle;
		\fill (a) circle (.05);
		\fill (a1) circle (.05);
		\fill (b) circle (.05);
		\fill (b1) circle (.05);
		\fill (c) circle (.05);
		\fill (c1) circle (.05);
		\fill ($(a1)!.5!(b1)$) circle (.05);
		\fill ($(c1)!.5!(b1)$) circle (.05);
		\fill ($(a1)!.5!(c1)$) circle (.05);
		\fill (center) circle (0.05);
		\draw[->,ultra thick] ($(b)+(0,1)$) -- ($(a1)+(0,1)$);
		
		\begin{scope}[canvas is xy plane at z=0]
			\filldraw[pattern={Lines[angle=150, yshift=4pt , line width=.5pt]},very thick] (7,.5) -- (8,.5) -- (8,1.5) -- (7,1.5) -- cycle;
			\fill (7,.5) circle (.05);
			\fill (8,.5) circle (.05);
			\fill (8,1.5) circle (.05);
			\fill (7,1.5) circle (.05);
		\end{scope}

		\begin{scope}[canvas is xz plane at y=0.5]
			\filldraw[pattern={Lines[angle=22.5, yshift=4pt , line width=.5pt]},very thick] (7,0) -- (8,0) -- (8,1) -- (7,1) -- cycle;
			\coordinate (w) at (8,1);
			\coordinate (x) at (7,1);
		\end{scope}

		\fill (w) circle (.05);
		\fill (x) circle (.05);

		\begin{scope}[canvas is yz plane at x=7]
			\filldraw[pattern={Lines[angle=60, yshift=4pt , line width=.5pt]},very thick] (0.5,0) -- (1.5,0) -- (1.5,1) -- (0.5,1) -- cycle;
			\coordinate (y) at (1.5,1);
		\end{scope}

		\fill (y) circle (.05);
	\end{tikzpicture}
	\caption{The dihomologic subdivision of the triangle.}
	\label{fig:cubical_triang}
\end{figure}

When $K$ is a polyhedral complex the theorem of Bruggesser and P. Mani \cite{Bru-Man_she_dec}\footnote{H. Bruggesser and P. Mani. \emph{Shellable decompositions of cells and spheres}, Corollary p.203.} ensures that it satisfies the hypotheses of the last proposition and the dihomologic pseudo-subdivision is an actual regular subdivision. This is especially the case when $K$ is a simplicial complex. In this particular case the associated dihomologic subdivision even has the structure of a cubical complex, c.f. Figure~\ref{fig:cubical_triang}. It comes from the following triangulation of the cube $[0;1]^n$: order its vertex set $\{0;1\}^n$ with the product order\footnote{$(x_i)_{1\leq i\leq n}\leq(y_i)_{1\leq i\leq n}$ if and only if $x_i\leq y_i$, for all $1\leq i\leq n$.} and consider the convex hulls of the flags of such vertices as the simplices of the triangulation. The triangulation of the 3-dimensional cube is illustrated in Figure~\ref{fig:sub_cube}.

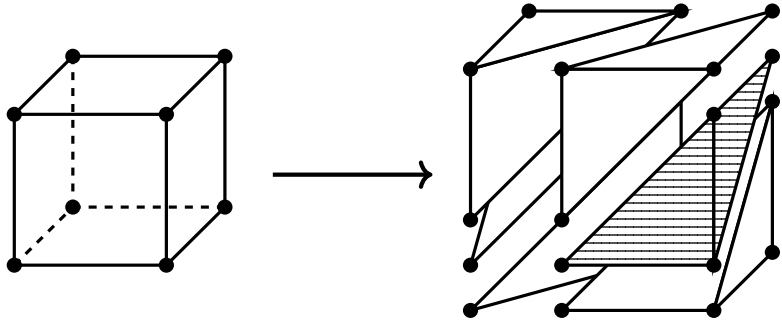
\begin{figure}[h]
	\centering
	\begin{tikzpicture}[scale=2]
		\fill[white] (0,0,0) -- (1,0,-1) -- (1,1,-1) -- cycle ;
		\draw[draw=black, very thick] (0,0,0) -- (1,0,-1) -- (1,1,-1) -- cycle ;
		\fill[white] (0.6,0,0) -- (1.6,0,0) -- (1.6,1,-1) -- cycle; 
		\draw[draw=black, very thick] (0.6,0,0) -- (1.6,0,0) -- (1.6,1,-1) -- cycle; 
		\fill[white] (1.6,0,0) -- (1.6,1,-1) -- (1.6,0,-1) -- cycle; 
		\draw[draw=black, very thick] (1.6,0,0) -- (1.6,1,-1) -- (1.6,0,-1) -- cycle;  
		\fill[white] (0,0.3,0) -- (0,1.3,-1) -- (1,1.3,-1) -- cycle ;
		\draw[draw=black, very thick] (0,0.3,0) -- (0,1.3,-1) -- (1,1.3,-1) -- cycle ;
		\fill[white] (0.6,0.3,0) -- (1.6,0.3,0) -- (1.6,1.3,0) -- cycle; 
		\filldraw[pattern={Lines[angle=22.5, yshift=4pt , line width=.5pt]}, draw=black, very thick] (0.6,0.3,0) -- (1.6,0.3,0) -- (1.6,1.3,0) -- cycle; 
		\fill[white] (1.6,0.3,0) -- (1.6,1.3,-1) -- (1.6,1.3,0) -- cycle;
		\filldraw[pattern={Lines[angle=60, yshift=4pt , line width=.5pt]}, draw=black, very thick] (1.6,0.3,0) -- (1.6,1.3,-1) -- (1.6,1.3,0) -- cycle;
		\filldraw[fill = white, draw=black, very thick] (0,0.6,0) -- (0,1.6,0) -- (1,1.6,-1) -- cycle;
		\filldraw[fill = white, draw=black, very thick] (0,1.6,0) -- (0,1.6,-1) -- (1,1.6,-1) -- cycle;
		\fill[white] (0.6,0.6,0) -- (0.6,1.6,0) -- (1.6,1.6,0) -- cycle; 
		\draw[draw=black, very thick] (0.6,0.6,0) -- (0.6,1.6,0) -- (1.6,1.6,0) -- cycle; 
		\fill[white] (1.6,1.6,0) -- (1.6,1.6,-1) -- (0.6,1.6,0) -- cycle;
		\draw[draw=black, very thick] (1.6,1.6,0) -- (1.6,1.6,-1) -- (0.6,1.6,0) -- (1.6,1.6,0);
		\fill (0.6,0,0) circle (.05);
		\fill (1.6,0,0) circle (.05);
		\fill (1.6,0,-1) circle (.05);
		\fill (0,0.3,0) circle (.05);
		\fill (0.6,0.3,0) circle (.05);
		\fill (1.6,0.3,0) circle (.05);
		\fill (1.6,1.3,0) circle (.05);
		\fill (1.6,1.3,-1) circle (.05);
		\fill (1.6,1,-1) circle (.05);
		\fill (1.6,1.6,-1) circle (.05);
		\fill (0.6,0.6,0) circle (.05);
		\fill (0.6,1.6,0) circle (.05);
		\fill (1.6,1.6,0) circle (.05);
		\fill (0,0,0) circle (.05);
		\fill (0,0.6,0) circle (.05);
		\fill (0,1.6,0) circle (.05);
		\fill (0,1.6,-1) circle (.05);
		\fill (1,1.6,-1) circle (.05);
		\fill (-3,0.3,0) circle (0.05);
		\fill (-2,0.3,0) circle (0.05);
		\fill (-2,1.3,0) circle (0.05);
		\fill (-3,1.3,0) circle (0.05);
		\draw[very thick] (-3,0.3,0) -- (-2,0.3,0) -- (-2,1.3,0) -- (-3,1.3,0) -- cycle;
		\fill (-2,.3,-1) circle (0.05);
		\fill (-2,1.3,-1) circle (0.05);
		\draw[very thick] (-2,.3,0) -- (-2,0.3,-1) -- (-2,1.3,-1) -- (-2,1.3,0) ;
		\fill (-3,1.3,-1) circle (0.05);
		\draw[very thick] (-3,1.3,0) -- (-3,1.3,-1) -- (-2,1.3,-1) ;
		\fill[very thick] (-3,.3,-1) circle (0.05);
		\draw[dashed, very thick] (-3,0.3,0) -- (-3,.3,-0.95);
		\draw[dashed, very thick] (-2,0.3,-1) -- (-2.96,.3,-1);
		\draw[dashed, very thick] (-3,1.3,-1) -- (-3,.35,-1);
		\draw[->,ultra thick] (-1.3,.90,0) -- (-.25,.90,0);
	\end{tikzpicture}
	\caption{The subdivision of a cube into six tetrahedra, the convex hull of $\big\{(0;0;0);(1;0;0);(1;0;1);(1;1;1)\big\}$ is marked.}
	\label{fig:sub_cube}
\end{figure}

	It produces a triangulation of the $n$-cube into $n!$ simplices. Observe now that the ordered set of vertices of $[0;1]^n$ is naturally isomorphic to the lattice of subsets of a set with $n$ elements. Moreover, if $\sigma\leq\tau$ are a pair of adjacent simplices of relative codimension $n$, the lattice of intermediary simplices $\{\sigma\leq \nu\leq \tau\}$ is the same as the lattice of faces of the link of $\sigma$ in $\tau$ (empty face included) i.e. the lattice of subsets of a set with $\dim(\tau)+1-(\dim(\sigma)+1)=n$ elements. For more general polyhedral complexes the shapes of the pseudo-cells can be different, as shown in Figure~\ref{fig:oct}.

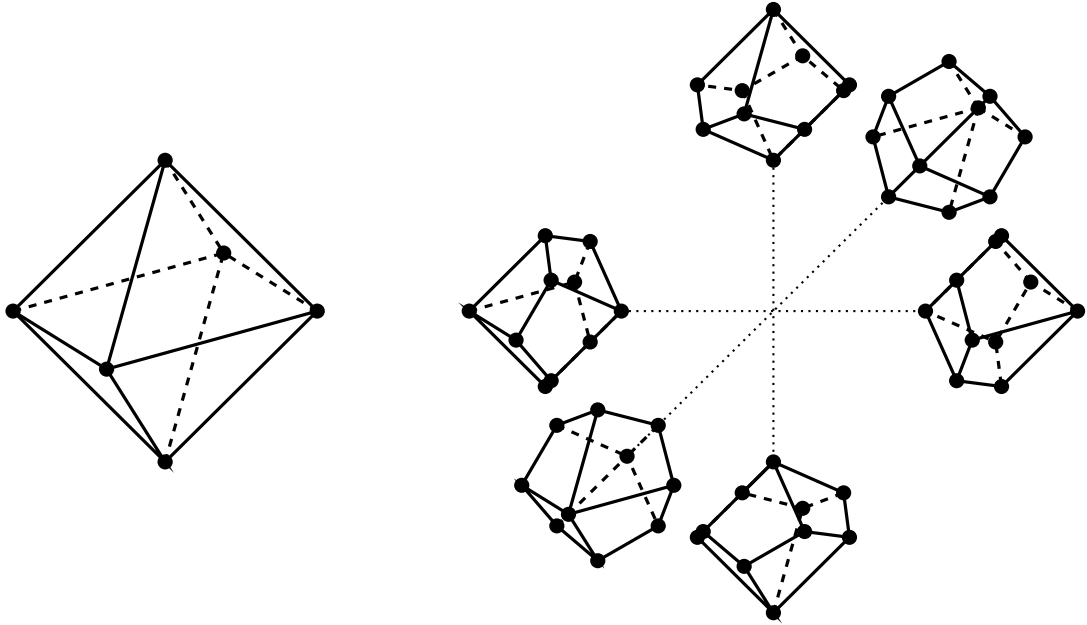
\begin{figure}[h]
	\centering
	\begin{tikzpicture}[scale=2]
		\coordinate (U) at (0,1,0) ;	
		\coordinate (L) at (-1,0,0);
		\coordinate (R) at (1,0,0);
		\coordinate (F) at (0,0,1);
		\coordinate (B) at (0,0,-1);
		\coordinate (D) at (0,-1,0);
		\coordinate (v) at (4,0,0);
		\fill (U) circle (0.05);
		\fill (L) circle (0.05);
		\fill (R) circle (0.05);
		\fill (F) circle (0.05);
		\fill (B) circle (0.05);
		\fill (D) circle (0.05);
		\draw[very thick] (0,1,0) -- (0,0,1) -- (-1,0,0) -- cycle ;
		\draw[dashed ,very thick] (0,1,0) -- (0,0,-1) -- (-1,0,0) ;
		\draw[very thick] (0,0,1) -- (1,0,0) -- (0,1,0) ;
		\draw[dashed, very thick] (1,0,0) -- (0,0,-1);
		\draw[very thick] (-1,0,0) -- (0,-1,0) -- (0,0,1) ;
		\draw[very thick] (0,-1,0) -- (1,0,0) ;
		\draw[dashed, very thick] (0,-1,0) -- (0,0,-1);
		\coordinate (vU) at ($(v)+(0,1,0)$);
		\fill ($(vU)+(U)$) circle (.05);
		\fill ($(vU)+1/2*(U)+1/2*(L)$) circle (.05);
		\fill ($(vU)+1/2*(U)+1/2*(F)$) circle (.05);
		\fill ($(vU)+1/2*(U)+1/2*(R)$) circle (.05);
		\fill ($(vU)+1/2*(U)+1/2*(B)$) circle (.05);
		\fill ($(vU)+1/3*(U)+1/3*(L)+1/3*(F)$) circle (.05);
		\fill ($(vU)+1/3*(U)+1/3*(L)+1/3*(B)$) circle (.05);
		\fill ($(vU)+1/3*(U)+1/3*(R)+1/3*(F)$) circle (.05);
		\fill ($(vU)+1/3*(U)+1/3*(R)+1/3*(B)$) circle (.05);
		\fill ($(vU)$) circle (.05);
		\draw[very thick] ($(vU)+(U)$) -- ($(vU)+1/2*(U)+1/2*(L)$) -- ($(vU)+1/3*(U)+1/3*(L)+1/3*(F)$) -- ($(vU)+1/2*(U)+1/2*(F)$) -- cycle;
		\draw[very thick] ($(vU)+(U)$) -- ($(vU)+1/2*(U)+1/2*(R)$) -- ($(vU)+1/3*(U)+1/3*(R)+1/3*(F)$) -- ($(vU)+1/2*(U)+1/2*(F)$);
		\draw[dashed, very thick] ($(vU)+(U)$) -- ($(vU)+1/2*(U)+1/2*(B)$) -- ($(vU)+1/3*(U)+1/3*(R)+1/3*(B)$) -- ($(vU)+1/2*(U)+1/2*(R)$);
		\draw[dashed, very thick] ($(vU)+1/2*(U)+1/2*(B)$) -- ($(vU)+1/3*(U)+1/3*(L)+1/3*(B)$) -- ($(vU)+1/2*(U)+1/2*(L)$);
		\draw[very thick] ($(vU)+1/3*(U)+1/3*(L)+1/3*(F)$) -- (vU) -- ($(vU)+1/3*(U)+1/3*(R)+1/3*(F)$);
		\draw[dashed, very thick] ($(vU)+1/3*(U)+1/3*(L)+1/3*(B)$) -- (vU) -- ($(vU)+1/3*(U)+1/3*(R)+1/3*(B)$);
		\coordinate (vD) at ($(v)+(0,-1,0)$);
		\fill ($(vD)+(D)$) circle (.05);
		\fill ($(vD)+1/2*(D)+1/2*(L)$) circle (.05);
		\fill ($(vD)+1/2*(D)+1/2*(F)$) circle (.05);
		\fill ($(vD)+1/2*(D)+1/2*(R)$) circle (.05);
		\fill ($(vD)+1/2*(D)+1/2*(B)$) circle (.05);
		\fill ($(vD)+1/3*(D)+1/3*(L)+1/3*(F)$) circle (.05);
		\fill ($(vD)+1/3*(D)+1/3*(L)+1/3*(B)$) circle (.05);
		\fill ($(vD)+1/3*(D)+1/3*(R)+1/3*(F)$) circle (.05);
		\fill ($(vD)+1/3*(D)+1/3*(R)+1/3*(B)$) circle (.05);
		\fill ($(vD)$) circle (.05);
		\draw[very thick] ($(vD)+(D)$) -- ($(vD)+1/2*(D)+1/2*(L)$) -- ($(vD)+1/3*(D)+1/3*(L)+1/3*(F)$) -- ($(vD)+1/2*(D)+1/2*(F)$) -- cycle;
		\draw[very thick] ($(vD)+(D)$) -- ($(vD)+1/2*(D)+1/2*(R)$) -- ($(vD)+1/3*(D)+1/3*(R)+1/3*(F)$) -- ($(vD)+1/2*(D)+1/2*(F)$);
		\draw[dashed, very thick] ($(vD)+(D)$) -- ($(vD)+1/2*(D)+1/2*(B)$) -- ($(vD)+1/3*(D)+1/3*(R)+1/3*(B)$) ;
		\draw[very thick] ($(vD)+1/3*(D)+1/3*(R)+1/3*(B)$) -- ($(vD)+1/2*(D)+1/2*(R)$);
		\draw[dashed, very thick] ($(vD)+1/2*(D)+1/2*(B)$) -- ($(vD)+1/3*(D)+1/3*(L)+1/3*(B)$) -- ($(vD)+1/2*(D)+1/2*(L)$);
		\draw[very thick] ($(vD)+1/3*(D)+1/3*(L)+1/3*(F)$) -- (vD) -- ($(vD)+1/3*(D)+1/3*(R)+1/3*(F)$);
		\draw[very thick] ($(vD)+1/3*(D)+1/3*(L)+1/3*(B)$) -- (vD) -- ($(vD)+1/3*(D)+1/3*(R)+1/3*(B)$);
		\coordinate (vF) at ($(v)+(0,0,2.5)$);
		\fill ($(vF)+(F)$) circle (.05);
		\fill ($(vF)+1/2*(F)+1/2*(L)$) circle (.05);
		\fill ($(vF)+1/2*(F)+1/2*(D)$) circle (.05);
		\fill ($(vF)+1/2*(F)+1/2*(R)$) circle (.05);
		\fill ($(vF)+1/2*(F)+1/2*(U)$) circle (.05);
		\fill ($(vF)+1/3*(F)+1/3*(L)+1/3*(D)$) circle (.05);
		\fill ($(vF)+1/3*(F)+1/3*(L)+1/3*(U)$) circle (.05);
		\fill ($(vF)+1/3*(F)+1/3*(R)+1/3*(D)$) circle (.05);
		\fill ($(vF)+1/3*(F)+1/3*(R)+1/3*(U)$) circle (.05);
		\fill ($(vF)$) circle (.05);
		\draw[very thick] ($(vF)+(F)$) -- ($(vF)+1/2*(F)+1/2*(L)$) -- ($(vF)+1/3*(F)+1/3*(L)+1/3*(D)$) -- ($(vF)+1/2*(F)+1/2*(D)$) -- cycle;
		\draw[very thick] ($(vF)+(F)$) -- ($(vF)+1/2*(F)+1/2*(R)$) -- ($(vF)+1/3*(F)+1/3*(R)+1/3*(D)$) -- ($(vF)+1/2*(F)+1/2*(D)$);
		\draw[very thick] ($(vF)+(F)$) -- ($(vF)+1/2*(F)+1/2*(U)$) -- ($(vF)+1/3*(F)+1/3*(R)+1/3*(U)$) -- ($(vF)+1/2*(F)+1/2*(R)$);
		\draw[very thick] ($(vF)+1/2*(F)+1/2*(U)$) -- ($(vF)+1/3*(F)+1/3*(L)+1/3*(U)$) -- ($(vF)+1/2*(F)+1/2*(L)$);
		\draw[dashed,very thick] ($(vF)+1/3*(F)+1/3*(L)+1/3*(D)$) -- (vF) -- ($(vF)+1/3*(F)+1/3*(R)+1/3*(D)$);
		\draw[dashed, very thick] ($(vF)+1/3*(F)+1/3*(L)+1/3*(U)$) -- (vF) -- ($(vF)+1/3*(F)+1/3*(R)+1/3*(U)$);
		\coordinate (vB) at ($(v)+(0,0,-2.5)$);
		\fill ($(vB)+(B)$) circle (.05);
		\fill ($(vB)+1/2*(B)+1/2*(L)$) circle (.05);
		\fill ($(vB)+1/2*(B)+1/2*(D)$) circle (.05);
		\fill ($(vB)+1/2*(B)+1/2*(R)$) circle (.05);
		\fill ($(vB)+1/2*(B)+1/2*(U)$) circle (.05);
		\fill ($(vB)+1/3*(B)+1/3*(L)+1/3*(D)$) circle (.05);
		\fill ($(vB)+1/3*(B)+1/3*(L)+1/3*(U)$) circle (.05);
		\fill ($(vB)+1/3*(B)+1/3*(R)+1/3*(D)$) circle (.05);
		\fill ($(vB)+1/3*(B)+1/3*(R)+1/3*(U)$) circle (.05);
		\fill ($(vB)$) circle (.05);
		\draw[dashed, very thick] ($(vB)+(B)$) -- ($(vB)+1/2*(B)+1/2*(L)$) ;
		\draw[dashed, very thick] ($(vB)+(B)$) -- ($(vB)+1/2*(B)+1/2*(D)$) ;
		\draw[very thick] ($(vB)+1/2*(B)+1/2*(L)$) -- ($(vB)+1/3*(B)+1/3*(L)+1/3*(D)$) -- ($(vB)+1/2*(B)+1/2*(D)$);
		\draw[dashed, very thick] ($(vB)+(B)$) -- ($(vB)+1/2*(B)+1/2*(R)$);
		\draw[very thick] ($(vB)+1/2*(B)+1/2*(R)$) -- ($(vB)+1/3*(B)+1/3*(R)+1/3*(D)$) -- ($(vB)+1/2*(B)+1/2*(D)$);
		\draw[dashed, very thick] ($(vB)+(B)$) -- ($(vB)+1/2*(B)+1/2*(U)$);
		\draw[very thick] ($(vB)+1/2*(B)+1/2*(U)$) -- ($(vB)+1/3*(B)+1/3*(R)+1/3*(U)$) ;
		\draw[very thick] ($(vB)+1/3*(B)+1/3*(R)+1/3*(U)$) -- ($(vB)+1/2*(B)+1/2*(R)$);
		\draw[very thick] ($(vB)+1/2*(B)+1/2*(U)$) -- ($(vB)+1/3*(B)+1/3*(L)+1/3*(U)$) -- ($(vB)+1/2*(B)+1/2*(L)$);
		\draw[very thick] ($(vB)+1/3*(B)+1/3*(L)+1/3*(D)$) -- (vB) -- ($(vB)+1/3*(B)+1/3*(R)+1/3*(D)$);
		\draw[very thick] ($(vB)+1/3*(B)+1/3*(L)+1/3*(U)$) -- (vB) -- ($(vB)+1/3*(B)+1/3*(R)+1/3*(U)$);
		\coordinate (vL) at ($(v)+(-1,0,0)$);
		\fill ($(vL)+(L)$) circle (.05);
		\fill ($(vL)+1/2*(L)+1/2*(D)$) circle (.05);
		\fill ($(vL)+1/2*(L)+1/2*(F)$) circle (.05);
		\fill ($(vL)+1/2*(L)+1/2*(U)$) circle (.05);
		\fill ($(vL)+1/2*(L)+1/2*(B)$) circle (.05);
		\fill ($(vL)+1/3*(L)+1/3*(D)+1/3*(F)$) circle (.05);
		\fill ($(vL)+1/3*(L)+1/3*(D)+1/3*(B)$) circle (.05);
		\fill ($(vL)+1/3*(L)+1/3*(U)+1/3*(F)$) circle (.05);
		\fill ($(vL)+1/3*(L)+1/3*(U)+1/3*(B)$) circle (.05);
		\fill ($(vL)$) circle (.05);
		\draw[very thick] ($(vL)+(L)$) -- ($(vL)+1/2*(L)+1/2*(D)$) -- ($(vL)+1/3*(L)+1/3*(D)+1/3*(F)$) -- ($(vL)+1/2*(L)+1/2*(F)$) -- cycle;
		\draw[very thick] ($(vL)+(L)$) -- ($(vL)+1/2*(L)+1/2*(U)$) -- ($(vL)+1/3*(L)+1/3*(U)+1/3*(F)$) -- ($(vL)+1/2*(L)+1/2*(F)$);
		\draw[dashed, very thick] ($(vL)+(L)$) -- ($(vL)+1/2*(L)+1/2*(B)$) -- ($(vL)+1/3*(L)+1/3*(U)+1/3*(B)$) ;
		\draw[very thick] ($(vL)+1/3*(L)+1/3*(U)+1/3*(B)$) -- ($(vL)+1/2*(L)+1/2*(U)$);
		\draw[dashed, very thick] ($(vL)+1/2*(L)+1/2*(B)$) -- ($(vL)+1/3*(L)+1/3*(D)+1/3*(B)$) -- ($(vL)+1/2*(L)+1/2*(D)$);
		\draw[very thick] ($(vL)+1/3*(L)+1/3*(D)+1/3*(F)$) -- (vL) -- ($(vL)+1/3*(L)+1/3*(U)+1/3*(F)$);
		\draw[dashed, very thick] ($(vL)+1/3*(L)+1/3*(D)+1/3*(B)$) -- (vL);
		\draw[very thick] (vL) -- ($(vL)+1/3*(L)+1/3*(U)+1/3*(B)$);
		\coordinate (vR) at ($(v)+(1,0,0)$);
		\fill ($(vR)+(R)$) circle (.05);
		\fill ($(vR)+1/2*(R)+1/2*(U)$) circle (.05);
		\fill ($(vR)+1/2*(R)+1/2*(F)$) circle (.05);
		\fill ($(vR)+1/2*(R)+1/2*(D)$) circle (.05);
		\fill ($(vR)+1/2*(R)+1/2*(B)$) circle (.05);
		\fill ($(vR)+1/3*(R)+1/3*(U)+1/3*(F)$) circle (.05);
		\fill ($(vR)+1/3*(R)+1/3*(U)+1/3*(B)$) circle (.05);
		\fill ($(vR)+1/3*(R)+1/3*(D)+1/3*(F)$) circle (.05);
		\fill ($(vR)+1/3*(R)+1/3*(D)+1/3*(B)$) circle (.05);
		\fill ($(vR)$) circle (.05);
		\draw[very thick] ($(vR)+(R)$) -- ($(vR)+1/2*(R)+1/2*(U)$) -- ($(vR)+1/3*(R)+1/3*(U)+1/3*(F)$) -- ($(vR)+1/2*(R)+1/2*(F)$) -- cycle;
		\draw[very thick] ($(vR)+(R)$) -- ($(vR)+1/2*(R)+1/2*(D)$) -- ($(vR)+1/3*(R)+1/3*(D)+1/3*(F)$) -- ($(vR)+1/2*(R)+1/2*(F)$);
		\draw[dashed, very thick] ($(vR)+(R)$) -- ($(vR)+1/2*(R)+1/2*(B)$) -- ($(vR)+1/3*(R)+1/3*(D)+1/3*(B)$) -- ($(vR)+1/2*(R)+1/2*(D)$);
		\draw[dashed, very thick] ($(vR)+1/2*(R)+1/2*(B)$) -- ($(vR)+1/3*(R)+1/3*(U)+1/3*(B)$) -- ($(vR)+1/2*(R)+1/2*(U)$);
		\draw[very thick] ($(vR)+1/3*(R)+1/3*(U)+1/3*(F)$) -- (vR) -- ($(vR)+1/3*(R)+1/3*(D)+1/3*(F)$);
		\draw[dashed, very thick] ($(vR)+1/3*(R)+1/3*(U)+1/3*(B)$) -- (vR) -- ($(vR)+1/3*(R)+1/3*(D)+1/3*(B)$);
		\draw[dotted, thick] (vF) -- (vB);
		\draw[dotted, thick] (vU) -- (vD);
		\draw[dotted, thick] (vL) -- (vR);
	\end{tikzpicture}
	\caption{The dihomologic subdivision of an octahedron.}
	\label{fig:oct}
\end{figure}

\clearpage

However, even when $K$ doesn't satisfies the hypotheses of the Proposition~\ref{prop:dih_reg_shel} all the 2-dimensional dihomologic pseudo-cell are squares because of Lemma~\ref{lem:2_faces}, c.f. Figure~\ref{fig:dih_squares}. Finally in low dimension the pseudo subdivision is always regular:

\begin{prop}\label{prp:low_dim}
	Let $e^p\leq e^q$ be an adjacent pair of cells of $K$. If, $2\leq q-p\leq 4$, then $A(e^p;e^q)$ is homeomorphic to a sphere. If $q=p+5$, $A(e^p;e^q)$ is a $3$-dimensional integral homology sphere.
\end{prop}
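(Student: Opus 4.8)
The plan is to extract everything from Proposition~\ref{prp:A_homo} and its proof. Writing $d=q-p-2$ for the dimension of $A:=A(e^p;e^q)$, that proof already shows that $A$ is a \emph{pure} $d$-dimensional simplicial complex, that $\widetilde H_*(A;\Z)\cong\widetilde H_*(S^d;\Z)$, and — this is the point I would lean on — that the link in $A$ of every $k$-simplex has the integral homology of $S^{d-k-1}$; moreover a link of a link is again a link, so the class $\mathcal{C}$ of pure finite simplicial complexes $X$ of some dimension $e$ with $\widetilde H_*(X)\cong\widetilde H_*(S^e)$ and all whose simplex-links have the sphere homology of the expected dimension is stable under passing to simplex-links. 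Since $2\le q-p\le 5$ we have $d\in\{0,1,2,3\}$, so it suffices to prove, by induction on $e\le 3$, that every $X\in\mathcal{C}$ of dimension $e$ is a closed combinatorial $e$-manifold, and is homeomorphic to $S^e$ when $e\le 2$; I would then apply this to $A$.

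For $e=0$ the complex is two points, hence $S^0$ (alternatively, for $A$ one reads off directly from Lemma~\ref{lem:2_faces} that $A(e^p;e^q)$ for $q-p=2$ is precisely the two vertices recording the two cells between $e^p$ and $e^q$). For $e=1$, each vertex-link lies in $\mathcal{C}$ in dimension $0$, hence is a pair of points, so $X$ is a closed combinatorial $1$-manifold, i.e. a disjoint union of cycles; connectedness and $H_1(X;\Z)=\Z$ force a single cycle, so $X\cong S^1$. For $e=2$, each vertex-link lies in $\mathcal{C}$ in dimension $1$, hence $\cong S^1$ by the previous case, while each edge-link is a pair of points; thus $X$ is a closed combinatorial surface, and from $\chi(X)=2$ together with $H_2(X;\Z)=\Z$ (whence orientability) the classification of closed surfaces gives $X\cong S^2$.

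The one genuinely delicate case is $e=3$, i.e. $q=p+5$. Each vertex-link lies in $\mathcal{C}$ in dimension $2$, hence is homeomorphic to $S^2$ by the $e=2$ step; invoking the classical fact that every simplicial complex homeomorphic to $S^2$ is a combinatorial $2$-sphere (Radó–Moise), these vertex-links are combinatorial $S^2$'s, the edge-links are combinatorial circles and the triangle-links are pairs of points, so $A$ is a closed combinatorial — and therefore topological — $3$-manifold; combined with $\widetilde H_*(A;\Z)\cong\widetilde H_*(S^3;\Z)$ this is exactly the statement that $A(e^p;e^q)$ is an integral homology $3$-sphere. I expect this last step to be the real obstacle: upgrading ``homology $3$-manifold'' (all we get for free from Proposition~\ref{prp:A_homo}) to ``topological $3$-manifold'' uses low-dimensional input beyond homological bookkeeping, and the conclusion cannot be strengthened to ``$\cong S^3$'' since $A$ may be, e.g., the Poincaré homology sphere. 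This also accounts for the cutoff at $q-p=5$: for $q-p\ge 6$ a simplex-link of $A$ can be the suspension of a homology $3$-sphere with non-trivial fundamental group, which fails to be a topological manifold, so $A$ itself need not be one either.
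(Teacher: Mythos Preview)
Your proof is correct and follows essentially the same inductive strategy as the paper: both arguments climb up the cases $q-p=2,3,4,5$ by showing that all simplex-links in $A(e^p;e^q)$ are spheres (using the previous cases), hence $A$ is a genuine manifold, and then invoke the classification of low-dimensional manifolds. The only cosmetic difference is that the paper works directly with the explicit join decomposition $A(e^p;e^{k_0})*\cdots*A(e^{k_n};e^q)$ of the links (so the inductive hypothesis already yields them as PL spheres), whereas you abstract this into the link-stable class $\mathcal{C}$ and therefore need the extra Rad\'o--Moise step to pass from topological to combinatorial $S^2$ in the $e=3$ case; both routes are sound.
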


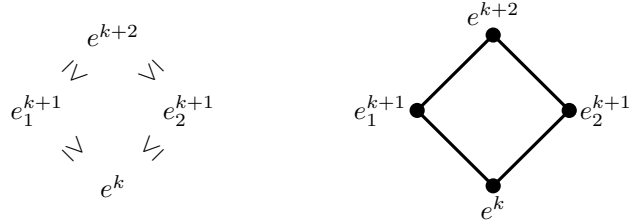
\begin{figure}[h]
	\centering
	\begin{tikzpicture}[scale=2]
		\draw (0,0) node{$e^k$};
		\draw (-.5,.5) node{$e^{k+1}_1$};
		\draw (.5,.5) node{$e^{k+1}_2$};
		\draw (0,1) node{$e^{k+2}$};
		\draw ($(0,0)!.5!(-.5,.5)$) node[rotate=-45]{$\geq$};
		\draw ($(0,0)!.5!(.5,.5)$) node[rotate=45]{$\leq$};
		\draw ($(0,1)!.5!(-.5,.5)$) node[rotate=-135]{$\geq$};
		\draw ($(0,1)!.5!(.5,.5)$) node[rotate=135]{$\leq$};
		\draw[very thick] (2.5,0) node[anchor=north]{$e^k$} -- (2,.5) node[anchor=east]{$e^{k+1}_1$} -- (2.5,1) node[anchor=south]{$e^{k+2}$} -- (3,0.5) node[anchor=west]{$e^{k+1}_2$} -- cycle;
		\fill (2.5,0) circle (.05);
		\fill (2,.5) circle (.05);
		\fill (2.5,1) circle (.05);
		\fill (3,0.5) circle (.05);
	\end{tikzpicture}
	\caption{The lattice of faces and the dihomologic square associated with a pair of relative codimension 2.}
	\label{fig:dih_squares}
\end{figure}

\begin{proof}
	As shown in Figure~\ref{fig:dih_squares} the simplicial complex $A(e^p;e^{p+2})$ consists of two vertices and therefore is a $0$-sphere. If we look at $A(e^p;e^{p+3})$ the link of every simplex is either empty or a $A(e^k;e^{k+2})$ so $A(e^p;e^{p+3})$ is actually a manifold by Proposition~{1.3} of \cite{Gal-Ste_Cla_Sim}\footnote{D. Galewski and R. Stern, \emph{Classification of Simplicial Triangulations of Topological Manifolds}, Proposition 1.3 p.5.}. Therefore by Proposition~\ref{prp:A_homo}, it is a $1$-dimensional integral homology sphere, so a circle. Now for $A(e^k;e^{k+4})$ we have from the proof of Proposition~\ref{prp:A_homo} that the link of every simplex is either empty or a join of a $A(e^k;e^{k+2})$ with a $A(e^k;e^{k+3})$ which we have just shown to be spheres. Therefore, $A(e^k;e^{k+4})$ is a $2$-dimensional integral homology sphere. By classification of compact orientable $2$-dimensional manifolds it is homeomorphic to a $2$-sphere. For the last part our previous arguments show that the $A(e^p;e^{p+5})$ are $3$-dimensional integral homology spheres.
\end{proof}

\begin{rem}
	The $3$-dimensional closed pseudo-cells are not only closed balls but even \emph{trapezohedra} i.e. similar to Figure~\ref{fig:trape}. The family of such polyhedra is indexed by an integer $n$ at least equal to $3$ for which we find “the cube”.
\end{rem}

\begin{figure}[h]
	\centering
	\begin{tikzpicture}[scale=2]
		\foreach \a in {0,60,120,180,240,300}{
		 
			\begin{scope}[canvas is xz plane at y=0]
				\coordinate (p) at (\a:1);
			\end{scope}
			
			\fill (p) circle (.05);}	

		\foreach \a in {0,60,120,300}{
		
			\begin{scope}[canvas is xz plane at y=0]
				\draw[very thick] (\a:1) -- ($(0,0)!1!60:(\a:1)$);
			\end{scope}}

		\foreach \a in {180,240}{
		
			\begin{scope}[canvas is xz plane at y=0]
				\draw[dashed,very thick] (\a:1) -- ($(0,0)!1!60:(\a:1)$);
			\end{scope}}

		\foreach \a in {0,60,120,180,300}{
		
			\begin{scope}[canvas is xz plane at y=0]
				\coordinate (p) at (\a:1);
			\end{scope}
			
			\draw[very thick] (p) -- (0,1,0);}	

		\foreach \a in {240}{ 
		
			\begin{scope}[canvas is xz plane at y=0]
				\coordinate (p) at (\a:1);
			\end{scope}
			
			\draw[dashed, very thick] (p) -- (0,1,0);}		

		\fill (0,1,0) node[above=2pt]{$e^0$} circle (.05);	
		\draw (-.2,0,0) node{$e^3$};
		\coordinate (v) at (4,-.2,0);
		\draw[ultra thick, ->] (1.3,.7,0) -- (2.3,.7,0);
		\fill ($(v)+2*(0,1,0)$) node[above=2pt]{$e^0$} circle (.05);	

		\foreach \a in {0,60,120,180,240,300}{ 
			
			\begin{scope}[canvas is xz plane at y=0]
				\coordinate (p) at (\a:1);
			\end{scope}

			\fill ($(v)+(p)+(0,1,0)$) circle (.05);}
	
		\foreach \a in {0,60,120,180,300}{
		 
			\begin{scope}[canvas is xz plane at y=0]
				\coordinate (p) at (\a:1);
			\end{scope}
	
			\draw[very thick] ($(v)+(p)+(0,1,0)$) -- ($(v)+2*(0,1,0)$);}
	
		\foreach \a in {240}{
		 
			\begin{scope}[canvas is xz plane at y=0]
				\coordinate (p) at (\a:1);
			\end{scope}
	
			\draw[dashed,very thick] ($(v)+(p)+(0,1,0)$) -- ($(v)+2*(0,1,0)$);}
		
		\foreach \a in {0,60,120,180,240,300}{ 
	
			\begin{scope}[canvas is xz plane at y=0]
				\coordinate (p) at (\a:1);
				\coordinate (q) at ($(0,0)!1!60:(\a:1)$);
			\end{scope}
	
			\fill ($(v)+2/3*(p)+2/3*(0,1,0)+2/3*(q)$) circle (.05);}
	
		\foreach \a in {0,60,120,300}{ 
	
			\begin{scope}[canvas is xz plane at y=0]
				\coordinate (p) at (\a:1);
				\coordinate (q) at ($(0,0)!1!60:(\a:1)$);
			\end{scope}
	
			\draw[very thick] ($(v)+(p)+(0,1,0)$) -- ($(v)+2/3*(p)+2/3*(0,1,0)+2/3*(q)$) -- ($(v)+(q)+(0,1,0)$);}
	
		\foreach \a in {240,180}{ 
	
			\begin{scope}[canvas is xz plane at y=0]
				\coordinate (p) at (\a:1);
				\coordinate (q) at ($(0,0)!1!60:(\a:1)$);
			\end{scope}
	
			\draw[dashed, very thick] ($(v)+(p)+(0,1,0)$) -- ($(v)+2/3*(p)+2/3*(0,1,0)+2/3*(q)$) -- ($(v)+(q)+(0,1,0)$);}

		\foreach \a in {0,60,120,300}{
		 
			\begin{scope}[canvas is xz plane at y=0]
				\coordinate (p) at (\a:1);
				\coordinate (q) at ($(0,0)!1!60:(\a:1)$);
			\end{scope}
	
			\draw[very thick] ($(v)+(0,-.7,0)$) -- ($(v)+2/3*(p)+2/3*(0,1,0)+2/3*(q)$);}
	
		\foreach \a in {180,240}{ 
	
			\begin{scope}[canvas is xz plane at y=0]
				\coordinate (p) at (\a:1);
				\coordinate (q) at ($(0,0)!1!60:(\a:1)$);
			\end{scope}
	
			\draw[dashed, very thick] ($(v)+(0,-.7,0)$) -- ($(v)+2/3*(p)+2/3*(0,1,0)+2/3*(q)$);}
	
		\fill ($(v)+(0,-.7,0)$) node[below=2pt]{$e^3$} circle (.05);
	\end{tikzpicture}
	\caption{The shape of the dihomologic cell associated with the adjacent pair $e^0\leq e^3$.}
	\label{fig:trape}
\end{figure}
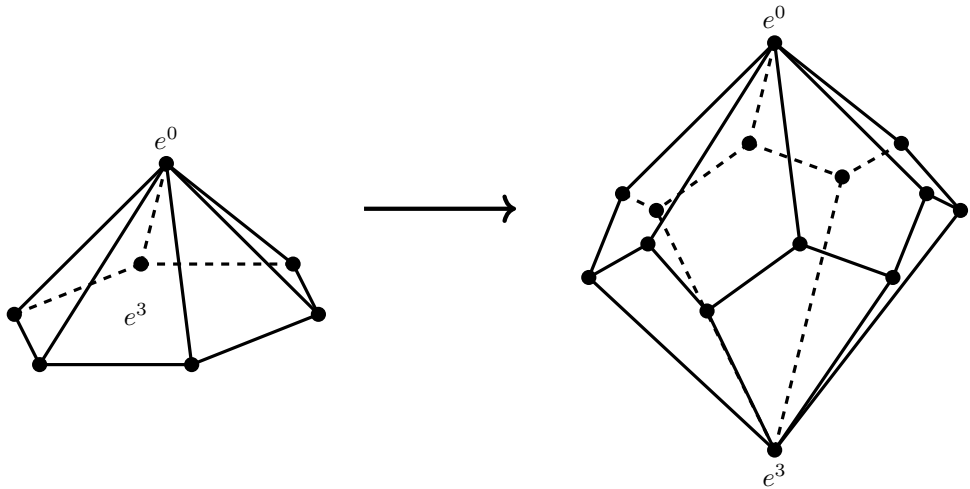

A direct consequence of Proposition~\ref{prp:low_dim} and Proposition~\ref{prop:initial_cells} is:

\begin{prop}
	If $\dim K$ is at most 5 its dihomologic pseudo-subdivision is a regular subdivision.
\end{prop}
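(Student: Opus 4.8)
The plan is to upgrade the shape computations already obtained for individual pseudo-cells into a global statement. Recall that for a proper adjacent pair $e^p<e^q$ the closed dihomologic pseudo-cell $\bar\epsilon$ is supported on the simplicial join of the barycentric edge $e^p<e^q$ with the complex $A(e^p;e^q)$, so that $|\bar\epsilon|$ is homeomorphic to $[0;1]*|A(e^p;e^q)|$, equivalently to the cone over the suspension $\Sigma|A(e^p;e^q)|$. Consequently, if $A(e^p;e^q)$ is homeomorphic to a sphere $S^{q-p-2}$, then $\bar\epsilon$ is homeomorphic to the closed ball $D^{q-p}$, since $[0;1]*S^{m}\cong D^{m+2}$. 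The hypothesis $\dim K\le 5$ means that every adjacent pair satisfies $0\le q-p\le 5$, so I only need to treat the six residual values of $r:=q-p$; of these, $r=0$ (a single vertex, a $0$-ball) and $r=1$ (the closed barycentric edge $e^p<e^q$, for which $A(e^p;e^q)=\varnothing$, a $1$-ball) are immediate.

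For $2\le r\le 4$, Proposition~\ref{prp:low_dim} asserts that $A(e^p;e^q)$ is homeomorphic to the $(r-2)$-sphere, so the join description above makes $\bar\epsilon$ homeomorphic to $D^{r}$. The remaining value $r=5$ is the only delicate one, and this is where I expect the single genuine obstacle: Proposition~\ref{prp:low_dim} only guarantees that $A(e^p;e^{p+5})$ is an integral homology $3$-sphere, and such a complex need not \emph{a priori} be a genuine $3$-sphere, nor bound a ball. The way around it is that $\dim K\le 5$ forces $p=0$ and $q=5$, so every pseudo-cell with $r=5$ is of the special form $e^0\le e^5$; for these Proposition~\ref{prop:initial_cells} already identifies $\bar\epsilon$ with a $5$-ball. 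One can also see this directly from Proposition~\ref{prp:A_homo}: for $p=0$ the $1$-fold suspension of $A(e^0;e^5)$ is homeomorphic to an honest $4$-sphere, and coning once more yields $D^5$, bypassing any three-dimensional Poincaré-type issue.

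Finally, I would assemble these facts exactly as the conclusion of Proposition~\ref{prop:dih_reg_shel} is drawn: once every closed pseudo-cell is homeomorphic to a closed ball, then, together with the combinatorial properties already established for the pseudo-subdivision, namely that the open pseudo-cells partition $|K|=|\Sd K|$, that $\bar\epsilon\setminus\epsilon$ is precisely the union of the strictly lower-dimensional pseudo-cells that are faces of $\epsilon$, and that two adjacent pseudo-cells meet along a common face, the characteristic maps furnished by these ball homeomorphisms endow $|K|$ with a regular CW-structure refining that of $K$, i.e. a regular subdivision. The one subtlety worth flagging in the write-up is precisely the $r=5$ case, and the observation that the dimension bound is exactly what confines it to the already-understood family $e^0\le e^p$.
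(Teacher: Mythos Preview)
Your argument is correct and is exactly the approach the paper intends: it states the proposition as ``a direct consequence of Proposition~\ref{prp:low_dim} and Proposition~\ref{prop:initial_cells}'' without further detail, and you have simply spelled out that deduction, including the key observation that the dimension bound forces any pair with $q-p=5$ to have $p=0$ so that Proposition~\ref{prop:initial_cells} applies.
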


\newpage

\subsection*{Cellular Sheaves and Cosheaves}

	In this paragraph $K$ denotes a regular CW-complex.

\begin{dfn}[Constructible Sheaves]
	A sheaf $F$ on $|K|$ is called \emph{constructible} with respect to the CW-complex structure if its restriction to every open cell is constant.
\end{dfn}

	Such sheaves are among the “simplest ones” on $|K|$ as they are reducible to combinatorial data. The knowledge of their section groups above every open star as well as the restrictions morphisms between these stars is enough to characterise the sheaf completely up to isomorphism. The key facts about these sheaves is the following : for every cell $e$ of $K$, if $S$ denotes its open star and $x\in e$ then the two following morphisms are isomorphisms:

\begin{equation*}
	\begin{tikzcd}
		F(S)\arrow[r,"\textnormal{rest.}" below] & F|_e(e) \ar[r, "\textnormal{stalk}" below ] & F_x.
	\end{tikzcd}
\end{equation*}

	\noindent See for instance \cite{Kas_rie_hil}\footnote{M. Kashiwara,\emph{The {Riemann}-{Hilbert} Problem for Holonomic Systems}, Proposition 1.3 p.323.}. Since $e$ is connected and locally connected the fact that $F|_e(e) \rightarrow F_x$ is an isomorphism follows from the constance of $F|_e$. Let $F(e)$ denote the group $F|_e(e)$ of values of $F|_e$. From the previous observation we gain a “restriction map” between $F(e^q)$ and $F(e^p)$ for all pairs of adjacent cells $e^q\leq e^p$, coming from the commutative diagram:

\begin{equation*}
	\begin{tikzcd}
		F(e^q) \arrow[rr] & & F(e^p) \\%
		F(S_{e^q}) \arrow[u,"\text{rest. to }F|_{e^q}(e^q)" left , "\cong " right] \arrow[rr, "S_{e^p}\subset S_{e^q}" below, "\text{rest.}" above] & & F(S_{e^p}) \ar[u, "\text{rest. to }F|_{e^p}(e^p)" right, "\cong" left]
	\end{tikzcd}
\end{equation*} 

\noindent with $S_{e^q}$ and $S_{e^p}$ the respective open stars of $e^q$ and $e^p$. The data of the groups $F(e^p)$ together with the morphisms connecting them, defined from the topological sheaf $F$, is called a cellular sheaf:

\begin{dfn}[Cellular Sheaf]
	A \emph{cellular sheaf} on $K$ is the data of a covariant functor:
	
	\begin{equation*}
		F:\mathbf{Cell}\;K\rightarrow \mathbf{Mod}_R,
	\end{equation*}

\noindent from the category of cells of $K$ with arrows given by adjacency to the category of $R$-modules (for some commutative ring $R$). We call the images of the arrows by such functor its \emph{restriction morphisms}. For two adjacent cells $e^p\leq e^q$ and $f\in F(e^p)$ we will denote by $f\big|^{e^q}_{e^p}$ the image of $f$ in $F(e^q)$ by the restriction morphism.
\end{dfn}

A typical example of cosheaf on a topological space is the assignment to every open set its ring of continuous functions with compact support. The definition of cellular cosheaf is dual to the definition of cellular sheaves.

\begin{dfn}[Cellular Cosheaf]
	A \emph{cellular cosheaf} on $K$ is the data of a contravariant functor:
	
	\begin{equation*}
		F:(\mathbf{Cell}\;K)^\textnormal{op}\rightarrow \mathbf{Mod}_R,
	\end{equation*}
	
	\noindent from the category of cells of $K$ with arrows given by adjacency to the category of $R$-modules (for some commutative ring $R$). We call the images of the arrows by such functor its \emph{extension morphisms}. For two adjacent cells $e^p\leq e^q$ and $f\in F(e^q)$ we will denote by $f\big|^{e^q}_{e^p}$ the image of $f$ in $F(e^p)$ by the extension morphism.
\end{dfn}

Every functorial operation performed on Abelian groups, or more generally on modules over a given commutative ring, such as direct sums, products, tensor products, etc. can be performed as well on cellular sheaves and cosheaves by performing it group by group over every cell. Also we can construct a cosheaf from a sheaf $F$ by considering for $G$ a fixed group the contravariant functor $e\mapsto \Hom(F(e);G)$ with adjoint arrows. This construction also goes the other way around when one start with a cosheaf. 

\begin{dfn}[Morphisms of Sheaves and Cosheaves]
	A \emph{morphism of cellular sheaves} (or \emph{cosheaves}) $f:F\rightarrow F'$ is a natural transformation. Such morphism is said to be \emph{injective} (resp. \emph{surjective}, resp. \emph{invertible}) if the associated morphisms $f_e:F(e)\rightarrow F'(e)$ are injective (resp. surjective, resp. invertible) for all cells $e$. The \emph{kernel}, \emph{image}, and \emph{cokernel} of such morphism $f$ are the “cell-wise” kernel, image, and cokernel. They are (co)sheaves themselves with the induced restriction/extension morphisms because $f$ is a natural transformation. 
\end{dfn}

The most basic example of such objects are given by \emph{local system of coefficients}. We see such local systems as fibre bundles of discrete groups above $|K|$. Since every cell is connected and contractible the restriction of its sheaf of continuous sections to any cell is constant. Therefore, it satisfies the hypothesis of the definition and induces a cellular sheaf. It has the property that all its restriction morphisms are invertible. This is even a way to characterise such local systems. This special property also allows us to see it as a cellular cosheaf by inverting every arrows. Indeed, the commutativity conditions on the composition of such morphisms are automatically satisfied from the ones given by the cellular sheaf structure. Another family of examples is given by the \emph{characteristic cosheaves} associated with sub-complexes:

\begin{dfn}
	Let $K'$ be a sub-complex of $K$ and $G$ be an Abelian group (or a module over a commutative ring) we denote by $\big[K';G\big]$ the cellular cosheaf defined by:
	
	\begin{equation*}
		e\in K \longmapsto \left\{ \begin{array}{cl} G & \text{if }e\in K' \\ 0 & \text{otherwise} \end{array}\right.,
	\end{equation*}

	\noindent with extension morphisms given either by the identity of $G$ or the zero morphism whenever one of the two groups involved is trivial. If a cell belongs to $K'$ then all of its faces belong to it too. As a consequence the commutativity conditions are satisfied for every triplet of adjacent cells gives rise to one of the following commutative diagrams:
	
	\begin{equation*}
		\begin{tikzcd}
			G \ar[dd,"\id"] \ar[dr,"\id"] & & & 0 \ar[dd,"0"] \ar[dr,"0"] & & & 0 \ar[dd,"0"] \ar[dr,"0"] & & & 0 \ar[dd,"0"] \ar[dr,"0"] & & \\%
			 & G \ar[dl,"\id"] & & & G \ar[dl,"\id"] & & & 0 \ar[dl,"0"] & & & 0 \ar[dl,"0"] & \\%
 			G & & & G & & & G & & & 0 & &
 		\end{tikzcd}
	\end{equation*}

	\noindent Whenever $K''$ is a sub-complex of $K'$ we have a natural injective morphism of cosheaves $\big[K'';G\big]\rightarrow \big[K'\,;G\big]$ either given by the $0$ morphism or the identity. We will denote the resulting quotient by $\big[K';K'';G\big]$. It is $G$ on the cells of $K'$ not contained in $K''$ and $0$ elsewhere. A sub-family of these examples will be of particular interest. They are the “local” cosheaves $\big[K;K-e\,;G\big]$, for $e$ a cell of $K$, whose value is $G$ only on the cells containing $e$. For a cell $e$, the cosheaves $\big[K(e)\,;G\big]$ and $\big[K;K-e\,;G\big]$ are the dual constructions of the elementary cellular sheaves considered by A. Shepard in his thesis \cite{She_cel_des}. They were also considered later by J. Curry in \cite{Cur_she_cos} for instance. 

\end{dfn}

\begin{dfn}[Localisation of a cellular cosheaf]
	Let $F$ be a cellular cosheaf on $K$ and $e$ a cell. We denote by $F_e$ the tensor product $F\otimes_\Z\big[K;K-e\,;\Z\big]$ and call it the \emph{localisation} of $F$ at $e$. For $e'$ another cell $F_e(e')$ is $F(e')$ if $e'\geq e$ and $0$ otherwise, its extension morphisms are then appropriately given by the extension morphisms of $F$ or $0$. Moreover, the natural projection $\big[K;K-e\,;\Z\big]\rightarrow \big[ K;K-e'\,;\Z\big]$ for adjacent cells $e\leq e'$ induces a surjective localisation morphism $F_e\rightarrow F_{e'}$.
\end{dfn}

\begin{dfn}[Subdivision]\label{dfn:subd}
	If $K'$ is a subdivision of $K$ there is a subdivision functor from the category of cellular cosheaves of $K$ (resp. cellular sheaves). If $F$ is a cosheaf (resp. sheaf) on $K$ its \emph{subdivision} $F'$ is given for all cell $e'\in K'$ by:
	
	\begin{equation*}
		F'(e')=F(e),
	\end{equation*}

	\noindent for $e$ the only cell of $K$ containing $e'$. The extension (resp. restriction) morphisms are the adequately derived from those of $F$. If $e'_0\leq e'_1$ are contained in the same cell the morphism is the identity. If they are not it is given by the morphism associated with the only pair of cells $e_0\leq e_1$ of $K$ satisfying $e'_0\subset e_0$ and $e'_1\subset e_1$.
\end{dfn}

\begin{dfn}[Dihomologic Cellular Sheaves and Cosheaves]\label{dfn:dih_cosheaves}
	A \emph{dihomologic cellular cosheaf} (resp. \emph{sheaf}) on $K$ is the data of a contravariant (resp. covariant) functor $F$ from the category associated with the set of dihomologic pseudo-cells of $K$ ordered by adjacency to the category of $R$-modules. As in the case of cellular sheaves and cosheaves a morphism of such objects is defined to be a natural transformation of functors. The notions of injectivity, surjectivity and invertibility are also defined “cell-wise” and so are the kernels, images and cokernels. 
	
\vspace{5pt}

Formally, a dihomologic cellular cosheaf consists of an assignment of a module $F(e^p;e^q)$ to every pair of adjacent cells $e^p\leq e^q$ and morphisms connecting them. Because of the commutativity conditions on the compositions of such morphisms it is only necessary to define them on elementary adjacency relations. By that we mean that if the dihomologic pseudo-cell of the pair $e^p\leq e^q$ is a face of the pseudo-cell $e^{p'}\leq e^{q'}$ then we have $e^{p'}\leq e^{p}\leq e^{q}\leq e^{q'}$ and the commutative diagram of extension morphisms:

\begin{equation*}
	\begin{tikzcd}
		 & F(e^{p'};e^{q'}) \ar[dd, "(5)" description] \ar[dr,"(3)" description] \ar[dl,"(1)" description] & \\%
		 F(e^{p};e^{q'}) \ar[dr,"(2)" description] & & F(e^{p'};e^{q}) \ar[dl,"(4)" description] \\%
		 & F(e^{p};e^{q}) &
	\end{tikzcd}
\end{equation*}

\noindent Knowing the morphism $(5)$ only amounts to knowing the composition of $(1)$ and $(2)$ or $(3)$ and $(4)$. So to describe such $F$ completely we can only provide the groups and the extension morphisms when we “increase the first coordinate” and “decrease the second one” and verify that these satisfy the commutative diagram:

\begin{equation*}
	\begin{tikzcd}
		& F(e^{p'};e^{q'})  \ar[dr] \ar[dl] & \\%
		F(e^{p};e^{q'}) \ar[dr] & & F(e^{p'};e^{q}) \ar[dl] \\%
		& F(e^{p};e^{q}) &
	\end{tikzcd}	
\end{equation*}

\end{dfn}

\begin{dfn}[Dihomologic Subdivision of Cellular Sheaves and Cosheaves]
	Let $F$ be a cellular cosheaf (resp. sheaf) on $K$ its \emph{dihomologic subdivision} $F'$ is the dihomologic cellular cosheaf (resp. sheaf) that associates to every pair of adjacent cells $e^p\leq e^q$ the module:
	
	\begin{equation*}
		F'(e^p;e^q):=F(e^q),
	\end{equation*}

	\noindent with extension (resp. restriction) morphisms coming from those of $F$ and illustrated in the following commutative diagram (resp. with opposite arrows) for elementary adjacency relations $e^{p'}\leq e^{p}\leq e^{q}\leq e^{q'}$:

	\begin{equation*}
		\begin{tikzcd}
			& & F(e^{q'}) \arrow[ddll,"\id" above left] \arrow[ddrr,"\big|^{e^{q'}}_{e^q}"] \arrow[d,equal] & & \\%
			& & F(e^{p'};e^{q'})  \arrow[dr] \arrow[dl] & & \\%
			F(e^{q'})\arrow[ddrr,"\big|^{e^{q'}}_{e^q}" below left] \arrow[r,equal] & F(e^{p};e^{q'}) \arrow[dr] & & F(e^{p'};e^{q}) \arrow[dl]  \arrow[r,equal] & F(e^q) \arrow[ddll,"\id"] \\%
			& & F(e^{p};e^{q}) \arrow[d,equal] & & \\%
			& & F(e^q) & & 
		\end{tikzcd}
	\end{equation*}

	\noindent Whenever the dihomologic pseudo-subdivision of $K$ is a regular subdivision this construction corresponds to the usual subdivision of cosheaves (resp. sheaves) of Definition~\ref{dfn:subd}. The open cell $e^q$ is covered by the open dihomologic (pseudo)-cells associated with the adjacent pairs of the form $e^p\leq e^q$. 

\end{dfn}

\begin{dfn}[Localisation by fixing the first coordinate]
	Let $F$ be a dihomologic cosheaf on $K$. For $e$ a cell of $K$ we define the local cellular cosheaf $F_e$ on $K$ by the formula:

	\begin{equation*}
		e'\in K \longmapsto \left\{ \begin{array}{cl} F(e;e') & \text{if }e'\geq e \\ 0 & \text{otherwise} \end{array}\right.,
	\end{equation*}

	\noindent with extension morphisms either $0$ or given by $F$. We call it local as it is invariant by the operation of localisation at $e$ : $(F_e)_e=F_e$. Moreover, if we apply this process to a dihomologic cosheaf $F'$ obtained by subdividing a cellular cosheaf $F$, we recover the localisation operation previously defined. The situation is illustrated in the following commutative diagram:

	\begin{equation}\tag{D1}\label{diag:comm_rel_loc}
		\begin{tikzcd}%
			\big\{\textnormal{Cosheaves of }K\big\} \arrow[dd,"\textnormal{loc. at }e" left] \arrow[rr,"\textnormal{subd.}"] & & \big\{\textnormal{Dihomologic cosheaves of }K\big\} \arrow[ddll,"\textnormal{fix. loc. at }e" description] \arrow[dd,"\textnormal{loc. at }(e\leq e)"] \\%
			&  \\%
			\big\{\textnormal{Cosheaves of }K\big\} \ar[rr,"\textnormal{subd.}" below] &  & \big\{\textnormal{Dihomologic cosheaves of }K\big\}
		\end{tikzcd}
	\end{equation}

\end{dfn}

\subsection*{Cellular Homology and Cohomology}

In this paragraph, $K$ denotes a locally finite regular CW-complex. When one computes the homology of the CW-complex $K$ cellularilly by filtering, the singular chain complex for instance, by its skeleta, one ends up on the $E^1$-page with the cellular chain complex of $K$ : the $k$-th group in this complex is given by the direct sum of free Abelian groups of rank $1$, one for each $k$-cell. These groups, that we redefine below, are a key ingredient in cellular homology. Its two generators correspond to the two orientations of the cell.

\begin{dfn}[Oriented Cells, J. Munkers, \emph{Elements of Algebraic Topology} \cite{Munk_ele_alg}, \S 39. pp.222-231.]
	Let $e$ be a $k$-cell of $K$. We call an \emph{orientation} of $e$ a generator of the group $\Z(e):=H_k(|K|;|K|\setminus e;\Z)=H_k(\bar{e};\bar{e}\setminus e;\Z)$ computed with the singular homology. We will call the latter group the \emph{group of oriented coefficients} of $e$ and say that $[e]$ is an \emph{oriented $k$-cell} when $[e]$ is an orientation of the $k$-cell $e$. Whenever $e^{p-1}$ is a codimension $1$ face of $e^{p}$ we have a \emph{boundary morphism} $\Z(e^p)\rightarrow\Z(e^{p-1})$ defined by the composition :

	\begin{equation*}
		\begin{tikzcd}%
			H_k(\bar{e}\,^{p};\bar{e}\,^{p}\setminus e^{p}) \arrow[r,"(1)" description] & H_{k-1}(\bar{e}\,^{p}\setminus e^{p}) \arrow[r,"(2)" description] & H_{k-1}(\bar{e}\,^{p}\setminus e^{p} ; \bar{e}\,^{p}\setminus (e^{p}\cup e^{p-1})) \arrow[r,"(3)" description] & H_{k-1}(\bar{e}\,^{p-1} ; \bar{e}\,^{p-1}\setminus e^{p-1})\;,
		\end{tikzcd}
	\end{equation*}
	
	\noindent with all four homology groups computed with integer coefficients. The morphism $(1)$ is the connection morphism of the homological long exact sequence associated with the pair $(\bar{e}\,^{p}\setminus e^{p})\subset \bar{e}\,^{p}$, $(2)$ is the reduction modulo $\bar{e}\,^{p}\setminus (e^{p}\cup e^{p-1})$, and $(3)$ is the inverse of the excision isomorphism. The image of an orientation $[e^p]$ under this morphism is nothing but the $\Z(e^{p-1})$-component of its boundary when seen as a relative cellular chain. It is a generator of $\Z(e^{p-1})$. The first map, the connection morphism, comes from the boundary operator of the singular homology chain complex and therefore relies on the canonical orientation of $\R^n$. For a singular simplex $\sigma$ on $\text{Conv}(\{0,...,n\})$ we have the formula:
	
	\begin{equation*}
	 \partial \sigma = \sum_{i=0}^n (-1)^i\sigma_i\,,
	\end{equation*}
	
	\noindent with $\sigma_i$ the restriction of $\sigma$ to $\text{Conv}(\{0,...,n\}\setminus\{i\})$. The convention on the orientation of such restriction $\sigma_i$ is then given by “outward pointing normal vector” as illustrated in Figure~\ref{fig:orientation}. 

	\begin{figure}[h]
		\centering
		\begin{tikzpicture}[scale=2]
			\tikzfading[name=fade outside,inner color=transparent!30,outer color=transparent!100]
			\fill[pattern={Lines[angle=90, yshift=4pt , line width=.5pt]}](0,0) -- ++(60:1) -- ++(-60:1) -- cycle;
			\fill[white,path fading=fade outside](0,0) -- ++(60:1) -- ++(-60:1) -- cycle;
			\draw[very thick] (0,0) node[below left=.3pt]{$0$} -- ++(60:1) node[above=2pt]{$2$} -- ++(-60:1)node[below right=.3pt]{$1$} -- cycle;
			\draw[very thick,->] (0.52,.2) arc[start angle=-80, end angle=250, radius=4.5pt];
			\fill (0,0) circle (.05) -- ++(60:1) circle (.05) -- ++(-60:1) circle (.05);
			\draw[ultra thick,->] (1.25,0.5) -- (2.75,.5) node[midway,above]{$\partial$} ;
			\draw[very thick] (3,0) node[below left=.3pt]{$0$} -- ++(60:1) node[above=2pt]{$2$} -- ++(-60:1)node[below right=.3pt]{$1$} -- cycle ;
			\fill (3,0) circle (.05) -- ++(60:1) circle (.05) -- ++(-60:1) circle (.05);
			\draw[very thick,->] (3,0) -- (3.5,0);
			\draw[very thick,->] (4,0) -- +(120:.5);
			\draw[very thick,->] ($(3,0)+(60:1)$) -- ($(3,0)+(60:.5)$);
			
			\begin{scope}[>=Triangle]
				\draw[very thick,->] (3.25,0) -- (3.25,-.5);
				\draw[very thick,->] ($(4,0)+(120:.25)$) -- ($(4,0)+(120:.25)+(30:.5)$);
				\draw[very thick,->] ($(4,0)+(120:1)+(-120:.25)$) -- ($(4,0)+(120:1)+(-120:.25)+(150:.5)$);
			\end{scope}

			\draw[dashed,->] (3.25,-.25) arc[start angle=-90, end angle=-30, radius=.25];
			\draw[dashed,->] ($(4,0)+(120:.25)+(30:.25)$) arc[start angle=30, end angle=90, radius=.25];
			\draw[dashed,->] ($(4,0)+(120:1)+(-120:.25)+(150:.25)$) arc[start angle=150, end angle=210, radius=.25];
		\end{tikzpicture}
		\caption{The orientation of the boundary.}
		\label{fig:orientation}
	\end{figure}
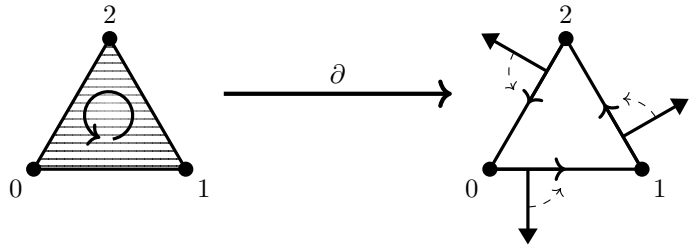
      
\end{dfn}
	
These boundary morphisms satisfy the well known property that if $e^p$ is a codimension $2$ face of some cell $e^{p+2}$ and $e^{p+1}_1,e^{p+1}_2$ denote the two codimension 1 faces of $e^{p+2}$ adjacent to $e^p$ then the two compositions of boundary morphisms $\Z(e^{p+2})\rightarrow \Z(e_1^{p+1}) \rightarrow \Z(e^p)$ and $\Z(e^{p+2})\rightarrow \Z(e_2^{p+1}) \rightarrow \Z(e^p)$ are opposite of each other. In particular, composing boundary morphisms between oriented coefficients does not define a cellular cosheaf. 
	
\begin{dfn}[Dihomologic Orientations]\label{dfn:rel_or}
	Let $\epsilon$ be the dihomologic pseudo-cell associated with an adjacent pair $e^p\leq e^q$, we define its \emph{group of oriented coefficients} to be:

	\begin{equation*}
		\Z(\epsilon)=\Z(e^p;e^q):=\Hom(\Z(e^p);\Z(e^q)).
	\end{equation*}

	\noindent We call a generator of such group an \emph{orientation} of $\epsilon$ or a \emph{relative orientation} of the pair $e^p\leq e^q$ and denote such element by the symbol $[e^p; e^q]$. If $\epsilon'$ is a codimension 1 face of the pseudo-cell $\epsilon$ we also have a \emph{boundary morphism} $\Z(\epsilon)\rightarrow \Z(\epsilon')$ defined by the boundary morphisms between the groups of oriented cells of $K$ and, up to sign, by the functorial properties of the bifunctor $\Hom$, as shown in the diagram below.

	\begin{equation}\tag{D2}\label{diag:bound_mor}
		\begin{tikzcd}
			& \Z(e^p;e^q) \arrow[ld,"(-1)^{p+1}\Hom(\partial;\id)" above left] \arrow[rd,"(-1)^p\Hom(\id;\partial')" above right] & & \Z(e^{p+1})\arrow[d,"\partial" left] & \Z(e^q) \arrow[d,"\partial'" right] \\ \Z(e^{p+1};e^q)  & & \Z(e^p;e^{q-1}) & \Z(e^p) & \Z(e^{q-1})
		\end{tikzcd}
	\end{equation}
	
\end{dfn}

Note that if $\epsilon$ is a dihomologic pseudo-cell of dimension $k$ its closure can be expressed as the cone over a space that has the homology of a $(k-1)$-sphere. In that description $\epsilon$ would correspond to the open cone. Therefore, the singular homology of $\bar{\epsilon}$ relatively to $\bar{\epsilon}\setminus\epsilon$ is isomorphic to singular homology of a $k$-ball relatively to its boundary. Thus, we could equivalently define $\Z(\epsilon)$ to be $H_k(\bar{\epsilon};\bar{\epsilon}\setminus\epsilon;\Z)$ as in the case of a regular CW-complex. Indeed, for an adjacent pair $e^p\leq e^q$ of $K$ indexing the pseudo-cell $\epsilon$, there is a canonical isomorphism between the group $H_{q-p}(\bar{\epsilon};\bar{\epsilon}\setminus\epsilon;\Z)$ and $\Hom(\Z(e^p);\Z(e^q))$. If we represent a cell $e^p$ by a $p$-dimensional real vector space inside a $q$-dimensional vector space (representing $e^q$) then the dihomologic pseudo-cell $\epsilon$ indexed by $e^p\leq e^q$ represents a supplementary sub-space of the former in the latter. An orientation of such supplementary space allows by wedge product to orient the $q$-dimensional vector space from an orientation of the $p$-dimensional one. The situation is explained by the cohomological bilinear cup product:
	
\begin{equation*}
	\cup : H^p(\bar{e}\,^p;\bar{e}\,^p\setminus e^p)\otimes H^{q-p}(\bar{\epsilon};\bar{\epsilon}\setminus\epsilon) \rightarrow H^q(\bar{e}\,^q;\bar{e}\,^q\setminus e^q).
\end{equation*}
	
	\noindent It is non-degenerate. To see it one can notice that all the spaces considered are supports of simplicial complexes so each of these groups can be computed simplicially. A generator of $H^p(\bar{e}\,^p;\bar{e}\,^p\setminus e^p)$ is represented by the simplicial cocycle whose value is $1$ on the barycentric simplex indexed by a complete flag $e^0<...<e^p$ and $0$ elsewhere. Likewise, a generator of $H^{q-p}(\bar{\epsilon};\bar{\epsilon}\setminus\epsilon)$ is represented by the simplicial cocycle whose value is $1$ on the barycentric simplex indexed by a complete flag $e^p<...<e^q$ and $0$ elsewhere. Their cup product is the simplicial cocycle whose value is $1$ on the barycentric simplex indexed by the complete flag $e^0<...<e^p<...<e^q$ and $0$ elsewhere. It represents a generator of $H^q(\bar{e}\,^q;\bar{e}\,^q\setminus e^q)$. It gives rise to an isomorphism:

\begin{equation*}
	H^{q-p}(\bar{\epsilon};\bar{\epsilon}\setminus\epsilon)\cong \Hom\Big(H^p(\bar{e}\,^p;\bar{e}\,^p\setminus e^p);H^q(\bar{e}\,^q;\bar{e}\,^q\setminus e^q)\Big).
\end{equation*}

\noindent Using the universal coefficients theorem \cite{Car-Eil_hom_alg}\footnote{H. Cartan and S. Eilenberg. \emph{Homological Algebra}, Theorem 3.3 p.113.} three times this isomorphism becomes:

\begin{equation*}
	\Z(\epsilon) \cong \Hom\Big(\Hom\Big(\Hom(\Z(e^p);\Z);\Hom(\Z(e^q);\Z)\Big);\Z\Big).
\end{equation*}

\noindent Since all the groups involved are free we can compose it with the musical isomorphism of the trace scalar product:

\begin{equation*}
	\Hom\Big(\Hom\Big(\Hom(\Z(e^p);\Z);\Hom(\Z(e^q);\Z)\Big);\Z\Big)\cong \Hom\Big(\Hom(\Z(e^q);\Z);\Hom(\Z(e^p);\Z)\Big),
\end{equation*}

\noindent and then with the transposition:

\begin{equation*}
	\Hom\Big(\Hom(\Z(e^q);\Z);\Hom(\Z(e^p);\Z)\Big) \cong \Hom\Big(\Z(e^p);\Z(e^q)\Big),
\end{equation*}

\noindent to finally find our desired isomorphism $\Z(\epsilon)\cong\Hom(\Z(e^p);\Z(e^q))$. Note that the signs in the Definition~\ref{dfn:rel_or}, diagram~\ref{diag:bound_mor}, of the boundary morphisms between the groups of oriented coefficients of dihomologic pseudo-cells come from this description and the relations:

\begin{equation*}
	d_1\alpha\cup\beta+(-1)^p\alpha\cup d_2\beta = 0 \textnormal{ and } d_3(\alpha\cup \gamma)=0+(-1)^p\alpha\cup d_4\gamma,
\end{equation*}

\noindent emanating from the graded Leibniz rule and satisfied by all $\alpha\in H^p(\bar{e}\,^p;\bar{e}\,^p\setminus e^p)$, $\beta\in H^{q-p-1}(\bar{\epsilon}_1;\bar{\epsilon}_1\setminus\epsilon_1)$ and $\gamma\in H^{q-p-1}(\bar{\epsilon}_2;\bar{\epsilon}_2\setminus\epsilon_2)$. Where:

\begin{enumerate}
	\item $\epsilon_1$ and $\epsilon_2$ are adjacent dihomologic pseudo-cells respectively indexed by the pairs $e^{p+1}\leq e^q$ and $e^p\leq e^{q-1}$;
	\item $d_1$ and $d_3$ are respectively transpose of the orientation boundary morphisms $\partial_1:\Z(e^{p+1})\rightarrow\Z(e^p)$ and ${\partial_3:\Z(e^{q})\rightarrow\Z(e^{q-1})}$;
	\item $d_2:H^{q-p-1}(\bar{\epsilon}_1;\bar{\epsilon}_1\setminus\epsilon_1)\rightarrow H^{q-p}(\bar{\epsilon}_3;\bar{\epsilon}_3\setminus\epsilon_3)$ and $d_4:H^{q-p-1}(\bar{\epsilon}_2;\bar{\epsilon}_2\setminus\epsilon_2)\rightarrow H^{q-p}(\bar{\epsilon}_3;\bar{\epsilon}_3\setminus\epsilon_3)$ with $\epsilon_3$ indexed by the pair $e^p\leq e^q$.
\end{enumerate}
	
If $[e^p]\in\Z(e^p)$ is an orientation of $e^p$ and $[e^p;e^q]\in\Z(e^p;e^q)$ is a relative orientation, we denote $[e^p][e^p;e^q]$ the associated orientation of $e^q$. For any pair $e^p\leq e^{p+1}$ of relative codimension $1$ the inverse of the boundary morphism $\partial : \Z(e^{p+1})\rightarrow\Z(e^{p})$ defines a canonical relative orientation. We will always denote it by the symbol $[e^p;e^{p+1}]$ but we should emphasise that for any other positive dimensional dihomologic pseudo-cell the similar notation denotes an arbitrary orientation, possibly subject to conditions, as there are no canonical orientation for them. We can compose relative orientations in their morphism representations, we adopt the convention $[e^p;e^q][e^q;e^r]$ to denote $[e^q;e^r]\circ[e^p;e^q]$. In these notations, one can rewrite the anti-commutativity of the boundary morphisms as follows: for all adjacent pair $e^p\leq e^{p+2}$ of relative codimension $2$ we have:
	
\begin{equation*}
	\sum_{e^p\leq e^{p+1}\leq e^{p+2}}[e^p;e^{p+1}][e^{p+1};e^{p+2}]=0.
\end{equation*}

\begin{dfn}[Cellular Chain and Cochain Complexes]
	Let $F$ be a cellular cosheaf on $K$ and $k\in \N$. We define the group of \emph{cellular $k$-chains} of $K$ with coefficients in $F$ to be :
	
	\begin{equation*}
		C_k(K;F):=\bigoplus_{\dim e = k} F(e)\otimes_\Z\Z(e).
	\end{equation*}
	
	\noindent Moreover, if $[e]$ is an oriented $k$-cell of $K$ and $c$ a $k$-chain with coefficients in $F$ we denote $\langle c,[e]\rangle$ the unique element of $F(e)$ satisfying $c_e=\langle c,[e]\rangle\otimes [e]$. We also define the \emph{boundary operator} $\partial : C_k(K;F) \rightarrow C_{k-1}(K;F)$ by the formula :
	
	\begin{equation*}
		\partial f :=\sum_{e^{k-1}< e^k} \langle f,[e^k]\rangle\big|^{e^k}_{e^{k-1}}\otimes [e^{k-1}],
	\end{equation*}

	\noindent for all $f\in F(e^k)\otimes_\Z\Z(e^k)$ with $[e^{k-1}]$ the image of $[e^k]$ by the boundary morphism, which can be written ${[e^{k-1}][e^{k-1};e^k]=[e^k]}$ with $[e^{k-1};e^k]$ the canonical relative orientation. The property of the compositions of the boundary morphisms between oriented coefficients implies directly that $\partial^2=0$ and that $(C_k(K;F);\partial)$ is a chain complex. 
	
	\vspace{5pt}
	
	Dually, when $F$ is a cellular sheaf, the \emph{cochain complex} with coefficients in $F$, is defined, for $k\in \N$, by the groups:
	
	\begin{equation*}
		C^k(K;F):= \prod_{\dim e = k} \Hom(\Z(e);F(e)).
	\end{equation*}
	
	\noindent We write for $\alpha$ a \emph{$k$-cochain} with coefficients in $F$ and $[e]$ an oriented $k$-cell, $\alpha([e])\in F(e)$ for the value of the map $\alpha(e)$ at $[e]$. The \emph{coboundary operator} $\df:C^k(K;F)\rightarrow C^{k+1}(K;F)$ is given for all $k$-cochains $\alpha$ and all oriented  $(k+1)$-cells $[e^{k+1}]$ by:
	
	\begin{equation*}
		\df\alpha([e^{k+1}])=\sum_{e^k<e^{k+1}}\alpha([e^k])\big|^{e^{k+1}}_{e^k},
	\end{equation*}
	
	\noindent for $[e^k]$ the unique orientation of $e^k$ whose image by the boundary morphism is $[e^{k+1}]$, i.e. $[e^k][e^k;e^{k+1}]=[e^{k+1}]$.
	
	\vspace{5pt}
	
	The last cochain complex we define here is the complex of \emph{cellular cochains with compact support}. It is a sub-complex of $(C^k(K;F))_{k\geq 0}$ whose groups are given for all $k\in \N$, by:
	
	\begin{equation*}
		C_c^k(K;F):=\bigoplus_{\dim e = k} \Hom(\Z(e);F(e)).
	\end{equation*}
	
	\noindent For $k\in\N$, the image of $C_c^k(K;F)$ under $\df$ is contained in $C_c^{k+1}(K;F)$ only because we assumed $K$ to be locally finite. A morphism of cosheaves or sheaves gives rise to a morphism of chain or cochain complexes, respectively. In a more categorical language, the association $F\mapsto \big(C_k(K;F)\big)$ for $F$ a cosheaf and $G\mapsto \big(C_{(c)}^k(K;G)\big)$ for $G$ a sheaf are all covariant functors.
\end{dfn}
	
\begin{dfn}[Dihomologic chain complex]
	 Let $F$ be a dihomologic cellular cosheaf on $K$. We define the group of \emph{dihomologic cellular $k$-chains} of $K$ with coefficients in $F$ to be :
	
	\begin{equation*}
		\Omega_k(K;F):=\bigoplus_{\dim \epsilon = k} F(\epsilon)\otimes_\Z\Z(\epsilon),
	\end{equation*}
	
	\noindent where $\epsilon$ runs through the set of $k$-dimensional dihomologic pseudo-cells of $K$. If $[\epsilon]$ is an oriented $k$-pseudo-cell of $K$ and $c$ a $k$-chain with coefficients in $F$ we denote $\langle c,[\epsilon]\rangle$ the unique element of $F(e)$ satisfying $c_\epsilon=\langle c,[\epsilon]\rangle\otimes [e]$. We also define the boundary operator $\partial : \Omega_k(K;F) \rightarrow \Omega_{k-1}(K;F)$ by the formula :
	
	\begin{equation*}
		\partial f :=\sum_{\epsilon^{k-1}< \epsilon^k} \langle f,[\epsilon^k]\rangle\big|^{\epsilon^k}_{\epsilon^{k-1}}\otimes [\epsilon^{k-1}],
	\end{equation*}

	\noindent for all $f\in F(\epsilon^k)\otimes_\Z\Z(\epsilon^k)$ with $[\epsilon^{k-1}]$ the image of $[\epsilon^k]$ by the boundary morphism.
\end{dfn}
	
\begin{rem}
	If the dihomologic pseudo-subdivision of $K$ is a regular subdivision of $K$ we find that the last definition is identical to the definition of the cellular chain complex associated with a cellular cosheaf on a CW-complex. 
\end{rem}
	
\begin{dfn}
	For $F$ a cellular cosheaf (resp. a cellular sheaf) we define its \emph{homology} (resp. \emph{cohomology}, resp. \emph{cohomology with compact support}) to be the homology of its cellular chain complex (resp. cohomology of its cellular cochain complex, resp. cohomology of cellular its cochain complex with compact support). We also define the homology of a dihomologic cosheaf to be the homology of the associated chain complex.
\end{dfn}
	
\begin{ex}
	Given two sub-complexes $K_1\leq K_2$ of $K$ and an Abelian group $G$ the homology of $[K_2;K_1;G]$ is exactly the same as the cellular homology of $K_2$ relatively to $K_1$.
\end{ex}
	
The following propositions illustrate the usefulness of these cellular constructions:
	
\begin{prop}
	If $F$ is a sheaf of Abelian groups on $|K|$, constructible with respect to the skeletal filtration, then the sheaf cohomology of $F$ is isomorphic to the cellular cohomology of the cellular sheaf associated with $F$.
\end{prop}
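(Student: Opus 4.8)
The plan is to compute $H^{*}(|K|;F)$ via the spectral sequence of the skeletal filtration $\varnothing=K^{(-1)}\subseteq K^{(0)}\subseteq K^{(1)}\subseteq\cdots$ of $|K|$. For any sheaf $F$ of abelian groups on a locally finite CW-complex this produces a spectral sequence
\begin{equation*}
	E_{1}^{p,q}=H^{p+q}\bigl(K^{(p)},K^{(p-1)};F\bigr)\ \Longrightarrow\ H^{p+q}(|K|;F),
\end{equation*}
in which the relative term is $H^{*}\bigl(K^{(p)};j_{!}(F|_{K^{(p)}\setminus K^{(p-1)}})\bigr)$ for $j$ the open inclusion of $K^{(p)}\setminus K^{(p-1)}$ into $K^{(p)}$. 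When $\dim K=\infty$ one supplements this with the fact that $|K|$ is the colimit of its skeleta in the weak topology, so that $H^{*}(|K|;F)=\varprojlim_{k}H^{*}(K^{(k)};F|_{K^{(k)}})$, the $\varprojlim^{1}$-terms vanishing because—by the computation below—the inverse system is eventually constant in each fixed degree. Equivalently, and more transparently, I would present the argument as an induction on skeleta whose inductive step is the long exact cohomology sequence of the pair $(K^{(p)},K^{(p-1)})$, exactly as one proceeds for constant coefficients.

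First I would identify the $E_{1}$-page. Constructibility of $F$ means that its restriction to $K^{(p)}\setminus K^{(p-1)}=\coprod_{\dim e=p}e$ is the constant sheaf with stalk $F(e)$ on each open $p$-cell, and the family of closed $p$-cells is locally finite in $K^{(p)}$ (this is where local finiteness of $K$ is used), so that
\begin{equation*}
	E_{1}^{p,q}\ \cong\ \prod_{\dim e=p}H^{p+q}\bigl(\bar{e},\bar{e}\setminus e;\underline{F(e)}\bigr).
\end{equation*}
Regularity of $K$ makes $(\bar{e},\bar{e}\setminus e)$ homeomorphic to $(D^{p},S^{p-1})$, hence $H^{p+q}(\bar{e},\bar{e}\setminus e;\underline{F(e)})\cong\tilde H^{p+q}(S^{p};F(e))$, which—$\Z(e)=H_{p}(\bar{e},\bar{e}\setminus e;\Z)$ being free—equals $\Hom(\Z(e),F(e))$ for $q=0$ and vanishes otherwise. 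Thus the $E_{1}$-page is concentrated on the row $q=0$, with $E_{1}^{p,0}\cong\prod_{\dim e=p}\Hom(\Z(e),F(e))=C^{p}(K;F)$.

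The main point—and the step I expect to be the real obstacle—is to match the differential $d_{1}$ with the cellular coboundary $\mathrm{d}$. The map $d_{1}\colon E_{1}^{p,0}\to E_{1}^{p+1,0}$ is the connecting homomorphism of the triple $(K^{(p+1)},K^{(p)},K^{(p-1)})$, that is, the composite $H^{p}(K^{(p)},K^{(p-1)};F)\to H^{p}(K^{(p)};F)\xrightarrow{\ \delta\ }H^{p+1}(K^{(p+1)},K^{(p)};F)$, the first arrow being onto since $H^{p}(K^{(p-1)};F)=0$ by the inductive hypothesis (equivalently, by the present statement applied to the lower-dimensional complex $K^{(p-1)}$). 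One computes this component by component: for a pair $e^{p}<e^{p+1}$, excising and restricting to the closed cell $\bar{e}^{p+1}$ reduces $d_{1}$ to the connecting map $\tilde H^{p}(\bar{e}^{p}/(\bar{e}^{p}\setminus e^{p});F(e^{p}))\to\tilde H^{p+1}(\bar{e}^{p+1}/(\bar{e}^{p+1}\setminus e^{p+1});F(e^{p+1}))$, and unwinding the construction of the associated cellular sheaf—whose restriction morphism $F(e^{p})\to F(e^{p+1})$ is the one induced by the inclusion $S_{e^{p+1}}\subseteq S_{e^{p}}$ of open stars—identifies this with that restriction morphism tensored with the incidence isomorphism $\Z(e^{p})\to\Z(e^{p+1})$ dictated by the boundary morphism of oriented cells; this is precisely the defining formula $\mathrm{d}\alpha([e^{p+1}])=\sum_{e^{p}<e^{p+1}}\alpha([e^{p}])\big|^{e^{p+1}}_{e^{p}}$. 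Tracking the signs and orientation conventions through this chain of excisions is the only genuinely delicate part; the identification of the $E_{1}$-page is otherwise routine, resting only on constructibility (so $F$ is constant on cells) and regularity (so $\bar{e}/(\bar{e}\setminus e)$ is a sphere).

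With both items in hand the spectral sequence degenerates at $E_{2}$: $E_{2}^{p,0}=E_{\infty}^{p,0}=H^{p}(C^{\bullet}(K;F),\mathrm{d})$, so $H^{p}(|K|;F)\cong H^{p}(C^{\bullet}(K;F))$, the cellular cohomology of the cellular sheaf associated with $F$. Running the same argument on each sub-complex $K^{(k)}$ shows that $H^{p}(K^{(k)};F|_{K^{(k)}})$ is independent of $k$ for $k>p$, which is exactly what licenses the passage to the colimit invoked above in the infinite-dimensional case.
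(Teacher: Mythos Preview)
Your argument is correct and complete, but it follows a genuinely different route from the paper's. The paper does not use the skeletal filtration at all: it observes instead that the open stars of the vertices of $K$ form a Leray cover for $F$ (each star is contractible and $F$ is constant on it, hence acyclic there), so that sheaf cohomology coincides with the \v Cech cohomology of this cover, and then identifies the \v Cech complex with the cellular cochain complex. Your approach via the spectral sequence of the skeletal filtration is the direct sheaf-theoretic analogue of the classical proof that singular and cellular cohomology agree; it is arguably more uniform for general regular CW-complexes, since the identification of the \v Cech complex of the vertex-star cover with the cellular cochain complex is transparent for simplicial complexes but requires a bit more care otherwise (intersections of vertex stars need not be stars of single cells). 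On the other hand, the paper's route avoids the sign-and-orientation bookkeeping you flag in the identification of $d_1$, trading it for the single input that the cover is Leray. Both arguments rest on the same two structural facts---constructibility makes $F$ constant on cells, and regularity makes the local topology that of a ball---but they package them differently.
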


The proof of this fact is easily derived from the fact that the open cover of $|K|$ by the open stars of its vertices is a Leray cover for $F$ and that the cellular cochain complex of its associated cellular sheaf is the complex of its \v Cech cochains, see \cite{God_top_alg}\footnote{R. Godement. \emph{Topologie Algébrique et Théorie des Faisceaux}, Théorème 5.10.1 p.228.} for instance. 

\begin{prop}
	Whenever $F$ is the cellular cosheaf arising from a local system of coefficients $L$ on $|K|$ there is a canonical isomorphism from the cellular homology of $F$ to the singular homology of $|K|$ with coefficients in $L$.
\end{prop}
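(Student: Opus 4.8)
The plan is to compute the singular homology $H_*(|K|;L)$ via the filtration of the singular chain complex of $|K|$ with coefficients in $L$ by the subcomplexes associated with the skeleta $K^{(p)}$, and to identify the resulting spectral sequence with the chain complex $C_*(K;F)$. The basic local observation is that, $K$ being regular, every closed cell $\bar e$ is a closed ball and hence contractible, so the restriction $L|_{\bar e}$ is a constant sheaf; restriction to $e$ then identifies its group of sections canonically --- with no choice involved --- with $F(e)=L|_e(e)$. With this in hand, the first step is to compute the $E^1$-page $E^1_{p,q}=H_{p+q}(K^{(p)};K^{(p-1)};L)$. By excision this relative group splits as a direct sum over the $p$-cells, and since $L$ is constant on each $\bar e$ one gets
\begin{equation*}
	E^1_{p,q}\cong\bigoplus_{\dim e=p}F(e)\otimes_\Z H_{p+q}(\bar e;\bar e\setminus e;\Z),
\end{equation*}
which vanishes for $q\neq 0$ (as $\bar e\setminus e\cong S^{p-1}$) and equals $\bigoplus_{\dim e=p}F(e)\otimes_\Z\Z(e)=C_p(K;F)$ for $q=0$.

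The central step is to check that the differential $d^1\colon E^1_{p,0}\to E^1_{p-1,0}$, namely the composite of the connecting homomorphism of the triple $(K^{(p)};K^{(p-1)};K^{(p-2)})$ with the natural projection, coincides with the boundary operator $\partial$ of $C_*(K;F)$. The $\Z(e)$-part of this comparison is exactly the classical identification of the cellular boundary with the boundary morphisms on oriented cells recalled in the definition of $C_*(K;F)$; the genuinely new point is the coefficient part. For it I would work locally around a face $e^{p-1}<e^p$ of codimension $1$: pushing a section over $e^p$ to one over $e^{p-1}$ through the connecting map amounts to transporting it in $L$ from a point of $e^p$ into $e^{p-1}$, and by the very way the cellular cosheaf of a local system is built --- the extension morphism $F(e^p)\to F(e^{p-1})$ being the inverse of the sheaf-restriction isomorphism $F(e^{p-1})\to F(e^p)$, which is that same transport map --- this is precisely $\,\cdot\,\big|^{e^p}_{e^{p-1}}$. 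Hence $d^1=\partial$. I expect this to be the main obstacle: making the monodromy-and-orientation bookkeeping match on the nose, including signs, rather than only up to sign.

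Finally, $E^1$ being concentrated in the single row $q=0$, all higher differentials vanish, $E^2=E^\infty$, and $E^2_{p,0}=H_p(C_*(K;F))=H_p(K;F)$. The skeletal filtration is exhaustive and bounded below and induces a finite filtration on each homology group --- a singular cycle has compact image, hence lies in a finite subcomplex and so in some $K^{(p)}$, and $H_n(K^{(p)};L)\to H_n(|K|;L)$ is an isomorphism for $p>n$ --- so the spectral sequence converges and yields an isomorphism $H_n(K;F)\cong H_n(|K|;L)$. All the identifications used along the way (in particular $F(e)\cong L|_{\bar e}(\bar e)$ and the edge maps of the spectral sequence) are canonical and natural in $L$, so the isomorphism is canonical, as asserted. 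If one wishes to avoid spectral-sequence language, the same argument runs as an induction over the skeleta using the long exact sequences of the triples $(K^{(p)};K^{(p-1)};K^{(p-2)})$.
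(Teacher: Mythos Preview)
Your proof is correct and is precisely the ``adaptation of the classical proof relating singular homology and cellular homology'' that the paper invokes; the paper itself gives no further details beyond a one-sentence reference to Hatcher's description of singular chains with local coefficients. Your write-up supplies the argument in full, including the identification of the coefficient part of $d^1$ with the extension morphisms of the cosheaf, which is the only point where the local-system case differs from the constant-coefficient one.
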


The result follows directly from the description in \cite{Hat_alg_top}\footnote{A. Hatcher. \emph{Algebraic Topology}, Section 3.H. Local Coefficients pp.327-337.} of the complex of singular chains with coefficients in $L$ and an adaptation of the classical proof relating singular homology and cellular homology.

\begin{prop}
	Let $K'$ be a subdivision of $K$ and $F$ a cellular cosheaf on $K$. If $F'$ denotes the subdivision of $F$ we have the following injective morphism of chain complexes:

	\begin{equation*}
		\begin{array}{rcl}%
			C_k(K;F) & \longrightarrow & C_k(K';F') \\%
			c & \longmapsto & \displaystyle \sum_{e^k\in K} \Bigg(\;\sum_{\substack{e'^k\subset e^k\\ e'^k\in K'}} \langle c\,;\,[e^k]\rangle\otimes [e'^k]\; \Bigg)		\end{array}
	\end{equation*}
	
	\noindent with the orientations defined as follows : if $[e^k]$ is an orientation of $e^k$ then $[e'^k]$ is the image of such orientation under the isomorphism $H_k(\bar{e}\,^k;\bar{e}\,^k\setminus e^k)\rightarrow H_k(\bar{e}'^k;\bar{e}'^k\setminus e'^k)$ inverse of the map induced by the inclusion $({\bar{e}'^k\setminus e^k};\bar{e}'^k)\subset(\bar{e}\,^k\setminus e'^k;\bar{e}\,^k)$. This is a quasi-isomorphism of chain complexes. 
\end{prop}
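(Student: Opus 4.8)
The plan is to compare the two chain complexes through the filtration by the skeleta of $K$, reducing the statement to a local computation on each closed cell of $K$. Write $K^{(p)}$ for the $p$-skeleton of $K$ and $L^{(p)}\subset K'$ for the sub-complex of $K'$ supported on $K^{(p)}$; since $K'$ refines $K$, every cell $e'\in K'$ lies in a unique open cell $e\in K$, and $e'$ belongs to $L^{(p)}\setminus L^{(p-1)}$ precisely when $\dim e=p$. Filter $C_*(K;F)$ by $F_pC_*(K;F)=C_*(K^{(p)};F)$ and $C_*(K';F')$ by $F_pC_*(K';F')=C_*(L^{(p)};F')$. These filtrations are exhaustive and bounded below ($F_{-1}=0$), and the subdivision morphism is filtered with the induced filtration on the source, so (the morphism being injective) it suffices to show that its cokernel complex $Q_*$ has acyclic associated graded: taking $\mathrm{gr}_p$ of the short exact sequence $0\to C_*(K;F)\to C_*(K';F')\to Q_*\to 0$ stays exact, and if $\mathrm{gr}_pC_*(K;F)\to\mathrm{gr}_pC_*(K';F')$ is a quasi-isomorphism for all $p$ then each $\mathrm{gr}_pQ_*$ is acyclic, hence $Q_*$ is acyclic (its filtration being exhaustive and bounded below), and the long exact sequence gives the claim.

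Fix $p$ and compute the associated graded. On the left, $\mathrm{gr}_pC_*(K;F)$ is concentrated in degree $p$ and equals $\bigoplus_{\dim e=p}F(e)\otimes_\Z\Z(e)$. On the right, $\mathrm{gr}_pC_*(K';F')$ splits as a direct sum over the $p$-cells $e$ of $K$, the summand of $e$ being the relative cellular chain complex $C_*\big(K'|_{\bar e},K'|_{\bar e\setminus e};\underline{F(e)}\big)$ with constant coefficient group $F(e)$: here $\bar e\setminus e$ is a genuine sub-complex, being the boundary sphere of the closed ball $\bar e$, and $F'$ is constant equal to $F(e)$ on all cells of $K'$ contained in the open cell $e$. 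Since $\bar e$ is compact, $K'|_{\bar e}$ is a finite complex, so this is a bounded complex of finitely generated free $\Z$-modules tensored with $F(e)$, whose integral version computes $H_*(\bar e,\bar e\setminus e;\Z)=\Z(e)$ placed in degree $p$; by the universal coefficient theorem it therefore has homology $F(e)$ in degree $p$ and $0$ elsewhere. Finally, by the defining property of the subdivided orientations $[e'^p]$ — they are the images of $[e^p]$ under the isomorphisms $H_p(\bar e^p,\bar e^p\setminus e^p)\cong H_p(\bar e'^p,\bar e'^p\setminus e'^p)$ — the chain $\sum_{e'^p\subset e^p}[e'^p]$ is a relative $p$-cycle representing the fundamental class of $(\bar e,\bar e\setminus e)$, i.e. the generator of $H_p$ corresponding to $[e^p]$. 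Hence the $e$-component of $\mathrm{gr}_p$ of the subdivision morphism carries $F(e)\otimes\Z(e)$ isomorphically onto the homology of its target, so $\mathrm{gr}_p$ of the subdivision morphism is a quasi-isomorphism and the argument of the previous paragraph concludes.

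The point requiring genuine care — and the only real obstacle — is the verification, implicit in the statement, that the displayed formula is a chain map, i.e. that it commutes with the boundary operators. This reduces to showing that for each $p$-cell $e$ of $K$ the interior $(p-1)$-walls of the subdivision $K'|_{\bar e}$ cancel in pairs inside $\partial\big(\sum_{e'^p\subset e^p}\langle c,[e^p]\rangle\otimes[e'^p]\big)$: two adjacent $p$-cells of $K'$ contained in $e$ induce opposite orientations on their common wall, while the extension morphism of $F'$ across any such interior wall is the identity of $F(e)$; what survives is exactly the subdivision of $\partial c$ over the proper faces of $e$, where the extension morphisms of $F'$ are those of $F$. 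Matching the signs here against the orientation conventions fixed for oriented cells is the bookkeeping heart of the proof; everything else is formal.
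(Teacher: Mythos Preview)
The paper does not give its own proof of this proposition; it refers to Shepard's thesis for the dual (sheaf) statement. Your skeletal-filtration argument is correct and is precisely the method the paper does spell out for the immediately following dihomologic analogue: filter both complexes by the skeleta of $K$, identify the graded piece over each $p$-cell $e$ of the target as the relative cellular chain complex of the closed ball $\bar e$ modulo its boundary with constant coefficients $F(e)$, observe that its homology is concentrated in top degree and is hit isomorphically by the source, and conclude via the associated spectral sequence (equivalently, acyclicity of the cokernel).
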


A proof of this can be found in \cite{She_cel_des}\footnote{A. Shepard. \emph{A Cellular Description of the Derived Category of a Stratified Space}, Theorem 1.5.2 p.31.} in the dual context of cellular sheaves. The same proposition holds for dihomologic subdivisions of cellular cosheaves on $K$. A proof of the specific case of the constant sheaf on the dihomologic subdivision is given in \cite{For_comb_nov}\footnote{R. Forman, \emph{Combinatorial Novikov--Morse Theory}, Theorem 1.2 p.12.}.

\begin{prop}
	Let $F$ be a cellular cosheaf on $K$. If $F'$ denotes the dihomologic subdivision of $F$ we have the following injective morphism of chain complexes:
	
	\begin{equation*}
		\begin{array}{ccl}%
			C_k(K;F) & \longrightarrow & \Omega_k(K;F') \\%
			c & \longmapsto & \displaystyle \sum_{e^0\leq e^k} \langle c\,;\,[e^k]\rangle\otimes [e^0;e^k]%
		\end{array}
	\end{equation*}
	
	\noindent with the orientations defined as follows : if $[e^k]$ is an orientation of $e^k$ then $[e^0;e^k]$ is the relative orientation defined by the relation $[e^0][e^0;e^k]=[e^k]$ with $[e^0]$ the canonical orientation of the vertex $e^0$ of $K$. This is a quasi-isomorphism of chain complexes. 
\end{prop}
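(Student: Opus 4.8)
The plan is to verify by a direct computation that the displayed map, call it $\Phi\colon C_*(K;F)\to\Omega_*(K;F')$, is a well-defined injective chain map, and then to deduce that it is a quasi-isomorphism by comparing the skeletal filtrations of the two complexes; the argument runs parallel to the subdivision-invariance theorem of \cite{She_cel_des}, all the geometry of the dihomologic pseudo-subdivision being supplied by the structural results of Section~1.

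\emph{Chain map and injectivity.} Fix a $k$-cell $e^k$ and write $a=\langle c;[e^k]\rangle$. By Definition~\ref{dfn:rel_or} and diagram~\ref{diag:bound_mor}, the boundary in $\Omega_*(K;F')$ of the summand $a\otimes[e^0;e^k]$ of $\Phi(c)$ splits into first-coordinate terms, indexed by the edges $e^1$ with $e^0<e^1\leq e^k$ and landing in the pseudo-cells $[e^1;e^k]$, and second-coordinate terms, indexed by the facets $e^{k-1}<e^k$ with $e^0\leq e^{k-1}$ and landing in $[e^0;e^{k-1}]$; the coefficient is pushed forward by the extension morphism of $F'$, which is the identity $F(e^k)=F(e^k)$ in the first case and $\big|^{e^k}_{e^{k-1}}\colon F(e^k)\to F(e^{k-1})$ in the second. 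A first-coordinate pseudo-cell $[e^1;e^k]$ lies in the open cell $e^k$ and, for fixed $e^k$, occurs only in the two summands of $\Phi(c)$ indexed by the two vertices of the edge $e^1$ (cf. Lemma~\ref{lem:2_faces}); since the normalisation $[e^0][e^0;e^k]=[e^k]$ forces $[e^0;e^k]$ to send the canonical orientation of the vertex $e^0$ to $[e^k]$, the same for both endpoints, while the two vertex-coefficients of $\partial[e^1]$ add to zero ($\bar{e}^1$ is a connected $1$-ball), these two contributions cancel. The second-coordinate terms survive and, using $[e^0][e^0;e^{k-1}]=[e^{k-1}]$ and $[e^{k-1}][e^{k-1};e^k]=[e^k]$, assemble into $\sum_{e^{k-1}<e^k}\sum_{e^0\leq e^{k-1}}a\big|^{e^k}_{e^{k-1}}\otimes[e^0;e^{k-1}]$, which is exactly the $e^k$-contribution to $\Phi(\partial c)$; hence $\partial\Phi=\Phi\partial$. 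Injectivity is immediate, since any cell $e^k$ has a vertex $e^0\leq e^k$ (its closure is a ball), $[e^0;e^k]$ generates $\Z(e^0;e^k)\cong\Z$, and the $[e^0;e^k]$-component of $\Phi(c)$ is $a\otimes[e^0;e^k]$.

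\emph{Quasi-isomorphism.} Filter both complexes by the subcomplexes of chains supported in the skeleta of $K$: for $C_*(K;F)$ this is $\mathcal F^m=C_{\leq m}(K;F)=\bigoplus_{\dim e\leq m}F(e)\otimes\Z(e)$, and for $\Omega_*(K;F')$, $\mathcal G^m$ is spanned by the summands $F'(e^p;e^q)\otimes\Z(e^p;e^q)$ with $q\leq m$ (the pseudo-cell $[e^p;e^q]$ lies in $K^{(m)}$ iff $q\leq m$). The first-coordinate part of $\partial$ keeps $q$ fixed and the second-coordinate part lowers it, so $\mathcal G^m$ is a subcomplex and $\Phi$ preserves the filtrations; both filtrations are finite in each total degree, so the spectral sequences converge and the comparison theorem applies. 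For $C_*(K;F)$ the $E^1$-page is the cellular chain complex $C_*(K;F)$ itself, with $F(e)$ placed in homological degree $\dim e$. For $\Omega_*(K;F')$ the $E^0$-differential is the first-coordinate boundary $\partial^{\mathrm{fc}}$, so the $E^0$-page in column $m$ is $\bigoplus_{\dim e^q=m}F(e^q)\otimes\big(\bigoplus_{e^p\leq e^q}\Z(e^p;e^q),\,\partial^{\mathrm{fc}}\big)$, with $F(e^q)$ factored out because the first-coordinate extension morphisms are identities. The essential local input is that $\big(\bigoplus_{e^p\leq e^q}\Z(e^p;e^q),\partial^{\mathrm{fc}}\big)$ is the relative cellular chain complex of the dihomologic pseudo-subdivision of the ball $\bar{e}^q$ modulo its boundary, with constant coefficients; by Propositions~\ref{prp:A_homo} and~\ref{prop:initial_cells} — equivalently by the $F=\Z$ case of the present statement, which is \cite{For_comb_nov} — this complex has homology $\Z$ concentrated in degree $q$, generated by the fundamental cycle $\sum_{e^0\leq e^q}[e^0;e^q]$. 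Hence the $E^1$-page of $\Omega_*(K;F')$ is again the cellular chain complex $C_*(K;F)$, and $\Phi$, which sends $v\otimes[e^q]$ to $v\otimes\sum_{e^0\leq e^q}[e^0;e^q]$, induces the identity on $E^1$. By the comparison theorem $\Phi$ is a quasi-isomorphism.

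\emph{Main obstacle.} Apart from the local computation just quoted, the whole proof is sign-bookkeeping and extension-morphism chasing, formally identical to the case of ordinary subdivisions of cellular cosheaves. The genuinely new point — and hence the main obstacle — is precisely that local computation: one must know that, although the dihomologic pseudo-subdivision of $\bar{e}^q$ need not be a regular CW-subdivision, its cellular chain complex still computes the singular homology of $(\bar{e}^q,\bar{e}^q\setminus e^q)$, with the fundamental class carried by $\sum_{e^0\leq e^q}[e^0;e^q]$. This is exactly what Propositions~\ref{prp:A_homo} and~\ref{prop:initial_cells} on the complexes $A(e^p;e^q)$ and on the initial pseudo-cells were set up to provide, and it is the only place where those structural results enter.
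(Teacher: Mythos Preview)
Your argument is correct and follows the same skeletal-filtration route as the paper. The one substantive difference is in the local computation: you read $\big(\bigoplus_{e^p\leq e^q}\Z(e^p;e^q),\partial^{\mathrm{fc}}\big)$ as the relative dihomologic chain complex of $(\bar e^{\,q},\partial e^q)$ and then invoke Forman or the structural Propositions~\ref{prp:A_homo} and~\ref{prop:initial_cells}, whereas the paper simply observes that, via $\Z(e^p;e^q)=\Hom(\Z(e^p);\Z(e^q))$, this complex is (up to the sign $(-1)^{p+1}$ of diagram~\ref{diag:bound_mor}) the ordinary cellular cochain complex $C^*(K(e^q);F(e^q)\otimes\Z(e^q))$, whose acyclicity above degree~$0$ is nothing but the contractibility of $\bar e^{\,q}$. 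Thus the ``main obstacle'' you flag dissolves: Propositions~\ref{prp:A_homo} and~\ref{prop:initial_cells} do not enter this proof at all, and in fact those two propositions alone would not suffice for your formulation either --- they control only the top pseudo-cells $[e^0;e^q]$ and the links $A(e^p;e^q)$, not the acyclicity of the full local complex --- so it is your alternative citation of \cite{For_comb_nov} that actually carries the step in your version.
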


This is an adaptation of the proof given by R. Forman.

\begin{proof}
	Denote by $f$ the chain complexes morphism given in the statement of the proposition. We have natural inclusions of the dihomologic complexes associated with the restrictions of $F'$ to the skeleta of $K$:

	\begin{equation*}
		\Omega_*(K^{(q)};F')\subset \Omega_*(K^{(q+1)};F'),
	\end{equation*}

	\noindent for all $q\in\N$. Note that for every $k\in\N$, $f$ maps $C_k(K;F)$ to $\Omega_k(K^{(k)};F')$. Now consider the spectral sequence associated with this filtration:

	\begin{equation*}
		E^r_{p,q}:=\frac{Z^r_{p,q} + \Omega_{q-p}(K^{(q-1)};F')}{\partial Z^{r-1}_{p+r-2,q+r-1}+\Omega_{q-p}(K^{(q-1)};F')}\;,
	\end{equation*}

	\noindent with $Z^r_{p,q}=\{c\in \Omega_{q-p}(K^{(q)};F') \,|\, \partial c \in \Omega_{q-p-1}(K^{(q-r)};F') \}$. The boundary operator $\partial^r_{p,q}:E^r_{p,q}\rightarrow E^r_{p-r+1,q-r}$ applied to an element $c\in E^r_{p,q}$ is given by computing $\partial c'\in \Omega_{q-p-1}(K;F')$ for any lift $c'\in Z^r_{p,q}$ and then projecting to $E^r_{p-r+1,q-r}$. The term $E^0_{p,q}$ is written:

	\begin{equation*}
		E^0_{p,q}=\bigoplus_{e^p\leq e^q} F'(e^p;e^q)\otimes\Z(e^p;e^q),
	\end{equation*}

	\noindent with $F'(e^p;e^q)=F(e^q)$ by assumption. The boundary $\partial^0_{p,q}$ acts on the element $f\otimes[e^p;e^q]\in F'(e^p;e^q)\otimes\Z(e^p;e^q)=F(e^q)\otimes\Z(e^p;e^q)$ as follows:

	\begin{equation*}
		\partial^0_{p,q} f\otimes[e^p;e^q] =\sum_{e^p\leq e^{p+1}\leq e^q}f\otimes [e^{p+1};e^q] \in \bigoplus_{e^p\leq e^{p+1}\leq e^q}F'(e^{p+1};e^q)\otimes\Z(e^{p+1};e^q),
	\end{equation*} 

	\noindent for $[e^{p+1};e^q]$ the image of $[e^p;e^q]$ by the boundary morphism $\partial:\Z(e^{p};e^q)\rightarrow \Z(e^{p+1};e^q)$. Remember that this morphism is given by $(-1)^{p+1}\Hom(\partial';\id)$ with $\partial':\Z(e^{p+1})\rightarrow\Z(e^q)$. Therefore, if we decide to write $[e^p;e^q]$ in the form $\alpha\otimes[e^q]\in \Hom(\Z(e^p);\Z)\otimes \Z(e^q)$ we have $[e^{p+1};e^q]=(-1)^{p+1}(\alpha\circ\partial')\otimes[e^q]$. As a consequence we see that the line of index $q$ of the $E^0$-page of the spectral sequence splits into the direct sum of the cellular cochain complexes of the $K(e^q)$ with coefficients in the constant cellular sheaf $F(e^q)\otimes\Z(e^q)$ with coboundary operator $\df$ twisted by $(-1)^{p+1}$:

	\begin{equation*}
		\begin{tikzcd}
			\cdots \longrightarrow E^0_{p,q} \ar[rr,"\partial^0"] & & E^0_{p+1,q}  \longrightarrow \cdots \\%
			\displaystyle \bigoplus_{e^q\in K} C^p(K(e^q);F(e^q)\otimes\Z(e^q)) \ar[rr,"\oplus(-1)^{p+1}\df" below] & & \displaystyle \bigoplus_{e^q\in K} C^{p+1}(K(e^q);F(e^q)\otimes\Z(e^q))%
		\end{tikzcd}
	\end{equation*}

	\noindent Since the support of $K(e^q)$ is the closure of $e^q$, the $E^1$-page of the spectral sequence is concentrated on the $0$-th column and satisfies:

	\begin{equation*}
		E^1_{0,q}=\bigoplus_{e^q\in K} H^0(K(e^q);F(e^q)\otimes\Z(e^q)).
	\end{equation*}

	\noindent Note that $E^1_{0,q}\subset E^0_{0,q}$ and is precisely the image of $f$. It defines an isomorphism between this $0$-th column and $C_*(K;F)$ and therefore between the homology of $F$ and the homology of its subdivision $F'$. Moreover, we deduce that our filtration is adapted to the cokernel of $f$ and that computing the first page of the induced spectral sequence amounts to replace in our computations the cohomology groups of the $K(e^q)$'s with coefficients in the group $F(e^q)\otimes\Z(e^q)$ with the reduced cohomology. Since $|K(e^q)|$ is contractible they all vanish. Thus, the cokernel of $f$ has trivial homology and $f$ is a quasi-isomorphism. 
\end{proof}

\section{A Poincaré-Lefschetz Theorem for Dihomologic Cellular Cosheaves}

Let $K$ be a locally finite regular CW-complex and $F$ a dihomologic cosheaf on $K$. The chain complex $(\Omega_k(K;F))_{k\geq 0}$ is actually the total complex of a bicomplex. The $k$-dimensional dihomologic pseudo-cells being represented by adjacent pairs of cells $e^p\leq e^q$ with $q-p=k$ their set is partitioned into sets of cells of different types. If we say that a dihomologic pseudo-cell indexed by $e^p\leq e^q$ has type $(p,q)$, the set of $k$-dimensional dihomologic pseudo-cells is the disjoint union of the set of pseudo-cells of type $(p,q)$ for $q-p=k$. Similarly the group $\Omega_k(K;F)$ splits into the direct sum:

\begin{equation*}
	\Omega_k(K;F)=\bigoplus_{q-p=k}\Omega_{p,q}(K;F),
\end{equation*}

\noindent with:

\begin{equation*}
	\Omega_{p,q}(K;F)=\bigoplus_{e^p\leq e^q}F(e^p;e^q)\otimes\Z(e^p;e^q),
\end{equation*}

\noindent for all $p,q\in\N$. A codimension $1$ face of $e^p\leq e^q$ either starts with $e^p$ or ends with $e^q$ and therefore the restriction of the boundary operator to $\Omega_{p,q}(K;F)$ has values in the sum $\Omega_{p+1,q}(K;F)\oplus\Omega_{p,q-1}(K;F)$. It splits into the sum of an operator $\partial_1$ of bidegree $(+1;0)$ and an operator $\partial_2$ of bidegree $(0;-1)$. With the additional structure we can consider the two canonical filtrations and associated spectral sequences. We will only look at the (decreasing) horizontal filtration, that is to say the one filtered by the index $p$. The filtering pieces are, for $l,k\in\N$:

\begin{equation*}
	\Omega_k^{(l)}(K;F):=\bigoplus_{\substack{q-p=k\\ p\geq l}}\Omega_{p,q}(K;F).
\end{equation*}

The associated spectral sequence is given for all $p,q\in\Z$ by :
        
\begin{equation*}
	E^r_{p,q}:=\frac{Z^r_{p,q} + \Omega_{q-p}^{(p+1)}(K;F)}{\partial Z^{r-1}_{p-r+1,q-r+2}+\Omega_{q-p}^{(p+1)}(K;F)},
\end{equation*}
        
\noindent with $Z^r_{p,q}=\{c\in \Omega_{q-p}^{(p)}(K;F) \,|\, \partial c \in \Omega_{q-p-1}^{(p+r)}(K;F) \}$. The boundary operators $\partial^r_{p,q}:E^r_{p,q}\rightarrow E^r_{p+r,q+r-1}$ applied to an element $c\in E^r_{l,k}$ is given by computing $\partial c'\in F^{p+r}C_{q-p-1}(K;F)$ for any lift $c'\in Z^r_{p,q}$ and then projecting to $E^r_{p+r,q+r-1}$.

\begin{prop}\label{prp:degenerate_E2_lef}
	If there is an $n\in\N$ such that for every cell $e$ of $K$ the local cosheaf $F_e$ has its homology concentrated in dimension $n$ then the horizontal spectral sequence of $\Omega(K;F)$ degenerates at second page. 
\end{prop}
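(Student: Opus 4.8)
The plan is to make the first page of the horizontal spectral sequence explicit in terms of the local cosheaves $F_e$, to observe that the hypothesis forces that page to live on a single line, and then to conclude degeneration at $E^2$ by a purely numerical argument on the bidegrees of the higher differentials.

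First I would unwind the zeroth page. By construction $E^0_{p,q}=\Omega_{p,q}(K;F)=\bigoplus_{e^p\leq e^q}F(e^p;e^q)\otimes\Z(e^p;e^q)$, and the induced differential $\partial^0$ is the component $\partial_2$ of the total boundary of bidegree $(0,-1)$, the one that decreases the second coordinate. Fix a dimension $p$ and a $p$-cell $e^p$. Since $\Z(e^p;e^q)=\Hom(\Z(e^p);\Z(e^q))$ with $\Z(e^p)$ free of rank one, there is a natural isomorphism $F(e^p;e^q)\otimes\Z(e^p;e^q)\cong\Z(e^p)^*\otimes\big(F_{e^p}(e^q)\otimes\Z(e^q)\big)$, where $F_{e^p}$ is the localisation of $F$ by fixing the first coordinate at $e^p$, so that $F_{e^p}(e^q)=F(e^p;e^q)$. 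The extension morphisms of $F$ that "decrease the second coordinate" are, by the very definition of $F_{e^p}$, the extension morphisms of that localised cosheaf; and diagram~\eqref{diag:bound_mor} of Definition~\ref{dfn:rel_or} shows that under the above isomorphism the orientation part of $\partial_2$ becomes $(-1)^p$ times $\id_{\Z(e^p)^*}$ tensored with the usual cellular boundary orientation map. Hence, for each fixed $p$, the complex $\big(\bigoplus_q E^0_{p,q},\partial^0\big)$ is, up to an overall sign $(-1)^p$ irrelevant to homology,
\begin{equation*}
	\bigoplus_{\dim e^p = p}\Z(e^p)^*\otimes C_\bullet(K;F_{e^p}).
\end{equation*}

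Taking homology in the $q$-direction then gives
\begin{equation*}
	E^1_{p,q}\;\cong\;\bigoplus_{\dim e^p = p}\Z(e^p)^*\otimes H_q(K;F_{e^p}),
\end{equation*}
and the hypothesis that each $F_e$ has homology concentrated in degree $n$ makes $E^1_{p,q}$ vanish for all $q\neq n$. Thus $E^1$, and a fortiori every later page, is supported on the single line $q=n$. To finish I would run a degree count: the differential $\partial^r_{p,q}\colon E^r_{p,q}\to E^r_{p+r,q+r-1}$ shifts the line index $q$ by $r-1$, which is nonzero once $r\geq 2$; so for $r\geq 2$ every such differential either leaves the line $q=n$ (landing in a zero group) or enters it from outside (starting in a zero group), forcing $\partial^r=0$. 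Therefore $E^2=E^\infty$ and the horizontal spectral sequence degenerates at the second page.

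The only delicate point is the identification of $\partial^0$ with the cellular boundary of $C_\bullet(K;F_{e^p})$: it requires keeping track of the orientation groups $\Z(e^p;e^q)$ and of the signs in the definition of the dihomologic boundary morphism, together with the (essentially definitional) matching of the relevant extension morphisms of $F$ with those of the localisation $F_{e^p}$. Everything after that identification is formal.
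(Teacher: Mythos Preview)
Your argument is correct and follows essentially the same route as the paper: identify each column $(E^0_{p,\bullet},\partial^0)$ with $\bigoplus_{e^p}\Hom(\Z(e^p);C_\bullet(K;F_{e^p}))$ up to sign, read off $E^1_{p,q}\cong\bigoplus_{e^p}\Hom(\Z(e^p);H_q(K;F_{e^p}))$, and conclude by the bidegree of $\partial^r$. The paper carries out the same identification via an explicit isomorphism $\Phi_{p,q}$ with carefully chosen signs $(-1)^{p(p+1)/2+q(q+1)/2}$, because those signs are needed later to match $\partial^1$ with the cellular coboundary in the proof of Theorem~\ref{thm:cell_poinca_lef}; for this proposition alone your ``up to an irrelevant sign'' is entirely sufficient.
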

        
\begin{proof}
	We have :
    
    \begin{equation*}
    	E^0_{p,q}\cong \Omega_{p,q}(K;F),
	\end{equation*}

	\noindent with $\partial^0_{p,q}$ corresponding to the vertical component, $\partial_2$, of the total boundary operator, $\partial$, of $(\Omega_{k}(K;F))_{k\in\N}$. The following page $(E^1_{p,q})_{p,q\in\N}$ is given by the homology groups of the column complexes of the bicomplex $(\Omega_{p,q}(K;F))_{p,q\in\N}$. For $p,q\in\N$, let's consider the morphism:
	
	\begin{equation*}
		\Phi_{p,q} : \Omega_{p,q}(K;F) \rightarrow \bigoplus_{e^p\in K}\Hom(\Z(e^p);C_q(K;F_{e^p})),
	\end{equation*}
	
	\noindent for which the $(e^p)$-component of the image of an element $c$ is given by the linear map:
	
	\begin{equation*}
		[e^p]\mapsto (-1)^{\frac{p(p+1)}{2}+\frac{q(q+1)}{2}}\sum_{e^q>e^p}\langle c,[e^p;e^q]\rangle\otimes[e^q],
	\end{equation*}

	\noindent with $[e^p;e^q],[e^q]$ some choices of orientations satisfying $[e^p][e^p;e^q]=[e^q]$ (the map does not depend on such choices). For a fixed $p\in\N$, the collection $(\Phi_{p,q})_{q\in\N}$ is almost a chain complex isomorphism between the $p$-th column of the bicomplex of $F$-valued dihomologic chains $\Omega_{p,*}(K;F)$ and the direct sum over the $p$-cells $e^p$ of the complexes $\Hom(\Z(e^p);C_*(K;F_{e^p}))$. We have $\partial\Phi_{p,q}=(-1)^{q-p}\Phi_{p,q-1}\partial_2$ so $(\Phi_{p,q})_{q\in\N}$ still defines an isomorphism in homology. Each of the $\Phi_{p,q}$ being individually an isomorphism is a matter of bookkeeping. On the left hand side we sum the groups $F(e^p,e^q)\otimes \Z(e^p,e^q)$ over all ordered pairs of cells $e^p<e^q$ and on the right hand side we sum the groups $\Hom(\Z(e^p);F(e^p,e^q)\otimes\Z(e^q))$ over the same index set but in a different order, $\Phi_{p,q}$ then sends bijectively each one of the former summands to one of the latter by means of the composition:
	
	\begin{equation*}
		\begin{tikzcd}%
			F(e^p,e^q)\otimes \Z(e^p,e^q) \arrow[r,equal] & F(e^p,e^q)\otimes \Hom(\Z(e^p);\Z(e^q))\arrow[r,"\pm 1" above] & \Hom(\Z(e^p);F(e^p,e^q)\otimes\Z(e^q)).%
		\end{tikzcd}
	\end{equation*}

	\noindent For the “commutativity” relation with the boundary operators, let $c$ be a $(p,q)$-chain. On the one hand, for $e^p$ a $p$-cell of $K$, the value of the $(e^p)$-component of $\Phi_{p,q-1}(\partial_2 c)$ on an oriented cell $[e^p]$ is given by the formula:
	
	\begin{equation*}
		\Phi_{p,q-1}(\partial_2c)[e^p]=(-1)^{\frac{p(p+1)}{2}+\frac{q(q-1)}{2}}\sum_{e^{q-1}>e^p}\left(\sum_{e^q>e^{q-1}} \langle c, [e^p;e^q]\rangle \Big|^{e^p,e^q}_{e^p,e^{q-1}}\right)\otimes[e^{q-1}],
	\end{equation*}
		
	\noindent for $[e^p][e^p,e^{q-1}]=[e^{q-1}]$ and $[e^p,e^q]=(-1)^p[e^p,e^{q-1}][e^{q-1},e^q]$. That is to say:
	
	\begin{equation*}
		\Phi_{p,q-1}(\partial_2c)[e^p]=(-1)^{\frac{p(p-1)}{2}+\frac{q(q-1)}{2}}\sum_{e^{q-1}>e^p}\left(\sum_{e^q>e^{q-1}} \langle c, [e^p;e^q]\rangle \Big|^{e^p,e^q}_{e^p,e^{q-1}}\right)\otimes[e^{q-1}],
	\end{equation*}
	
	\noindent for $[e^p][e^p,e^q]=[e^{q-1}][e^{q-1},e^q]$. Note that we changed the equation defining the orientation which resulted in the multiplication by a factor $(-1)^p$ in the computation of $\Phi_{p,q-1}(\partial_2c)[e^p]$ and hence turned $(-1)^{\frac{p(p+1)}{2}}$ into $(-1)^{\frac{p(p-1)}{2}}$. On the other hand, the value of the $(e^p)$-component of $\partial\Phi_{p,q}(c)$ on $[e^p]$ is the same as the boundary of $\Phi_{p,q}(c)[e^p]$:
	
	\begin{equation*}
		\begin{split}
			\partial\Phi_{p,q}(c)[e^p] & =(-1)^{\frac{p(p+1)}{2}+\frac{q(q+1)}{2}} \sum_{e^q>e^p} \left( \sum_{e^{q-1}<e^q} \langle c,[e^p,e^q]\rangle \Big|^{e^q}_{e^{q-1}}\otimes[e^{q-1}] \right),
		\end{split}
	\end{equation*}
	
	\noindent for $[e^p][e^p;e^q]=[e^q]$ and $[e^{q-1}][e^{q-1};e^q]=[e^q]$, i.e. $[e^p][e^p,e^q]=[e^{q-1}][e^{q-1},e^q]$. Note that the element $\langle c,[e^p,e^q]\rangle$ of $F(e^p,e^q)$ in the last formula is understood as an element of $F_{e^p}(e^q)$ hence the extension morphsims of $F_{e^p}$ apply to it and this is why we wrote $\langle c,[e^p,e^q]\rangle \big|^{e^q}_{e^{q-1}}$. However, these extension morphisms are zero whenever $e^{q-1}$ doesn't contain $e^p$ and identical to those of $F$ in the opposite case. Therefore we can write :
	\begin{equation*}
		\begin{split}
			\partial\Phi_{p,q}(c)[e^p] & =(-1)^{\frac{p(p+1)}{2}+\frac{q(q+1)}{2}} \sum_{e^q>e^p} \left( \sum_{e^p<e^{q-1}<e^q} \langle c,[e^p,e^q]\rangle \Big|^{e^p,e^q}_{e^p,e^{q-1}}\otimes[e^{q-1}] \right) \\ & = (-1)^{\frac{p(p+1)}{2}+\frac{q(q+1)}{2}} \sum_{e^{q-1}>e^p}\left(\sum_{e^q>e^{q-1}} \langle c, [e^p;e^q]\rangle \Big|^{e^p,e^q}_{e^p,e^{q-1}}\right)\otimes[e^{q-1}] \\ & = (-1)^{q-p}\Phi_{p,q-1}(\partial_2c)[e^p].
		\end{split}
	\end{equation*}
	
	
	
	\noindent For each $p$-cell, the group $\Z(e^p)$ is free so the universal coefficient theorem ensures that $E^1_{p,q}$ is isomorphic to the direct sum of the $\Hom(\Z(e^p);H_q(K;F_{e^p}))$'s. By assumptions $H_q(K;F_{e^p})$ is trivial as soon as $q$ is not $n$. As a consequence all pages following $E^1$ are concentrated on the horizontal line $\{q=n\}$ and since $\partial^r_{p,q}$ has bidegree $(r,r-1)$ the spectral sequence degenerates at second page.        \end{proof}
        
\begin{thm}[Cellular Poincaré-Lefschetz Theorem]\label{thm:cell_poinca_lef}
	Let $K$ be a finite dimensional, locally finite and regular CW-complex and $F$ satisfy the hypotheses of Proposition~\ref{prp:degenerate_E2_lef}. Then for every $0\leq k\leq n$, $H_k(K;F)$ and $H_c^{n-k}(K;H_n(F_*))$ are canonically isomorphic. In particular $H_k(K;F)$ vanishes for $k>n$. If in addition $K$ has dimension $n$, then this isomorphism comes from an injective quasi-isomorphism: 
	\begin{equation*}
		C^{n-*}_c(K;H_n(F_*))\rightarrow \Omega_*(K;F).
	\end{equation*} 
\end{thm}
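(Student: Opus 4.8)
\emph{Proof plan.} The plan is to extract both statements from the horizontal spectral sequence of $\Omega(K;F)$, whose $E^2=E^\infty$ page was identified in Proposition~\ref{prp:degenerate_E2_lef}. First I would record that $e\mapsto H_n(K;F_e)$ is a cellular sheaf on $K$, which I call $H_n(F_*)$: for adjacent cells $e\le e'$ the ``increase the first coordinate'' extension morphisms of the dihomologic cosheaf $F$ assemble into a morphism of ordinary cosheaves $F_e\to F_{e'}$, hence induce $H_n(K;F_e)\to H_n(K;F_{e'})$, and functoriality is inherited from that of $F$. By Proposition~\ref{prp:degenerate_E2_lef} the spectral sequence degenerates at $E^2$ and $E^1_{p,q}\cong\bigoplus_{\dim e=p}\Hom(\Z(e);H_q(K;F_e))$ vanishes unless $q=n$, so that $E^1_{p,n}=\bigoplus_{\dim e=p}\Hom(\Z(e);H_n(K;F_e))=C^p_c(K;H_n(F_*))$ as graded groups. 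The one point requiring work is that the differential $\partial^1\colon E^1_{p,n}\to E^1_{p+1,n}$ agrees, up to a sign depending only on $p$, with the cellular coboundary $\df$ of $C^*_c(K;H_n(F_*))$. To see this I would push the horizontal component $\partial_1$ of the total boundary through the isomorphisms $\Phi_{p,q}$ of Proposition~\ref{prp:degenerate_E2_lef}: $\partial_1$ is built from the orientation boundary maps $\Z(e^p;e^q)\to\Z(e^{p+1};e^q)$, which are $\pm\Hom(\partial;\id)$ by Definition~\ref{dfn:rel_or}, together with precisely the localisation morphisms $F(e^p;e^q)\to F(e^{p+1};e^q)$ above, so under $\Phi$ it sends $\alpha$ to $[e^{p+1}]\mapsto\sum_{e^p<e^{p+1}}\alpha([e^p])\big|^{e^{p+1}}_{e^p}$, i.e. to $\df\alpha$.

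Granting this, the first assertion is immediate. Since the abutment carries a filtration with a single nonzero graded piece in each total degree, there is no extension problem, and for $0\le k\le n$ one gets $H_k(K;F)\cong E^2_{n-k,n}=H^{n-k}\bigl(E^1_{*,n},\partial^1\bigr)=H^{n-k}_c(K;H_n(F_*))$, while $H_k(K;F)=E^2_{n-k,n}=0$ once $k>n$ because then $n-k<0$ (convergence is unproblematic as $K$ is finite dimensional, so the filtration is finite in each degree).

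For the addendum, suppose $\dim K=n$. Then $C_{n+1}(K;F_e)=0$, so $H_n(K;F_e)=Z_n(K;F_e)$ is an honest subgroup of $C_n(K;F_e)$; combined with $\Phi_{n-k,n}\colon\Omega_{n-k,n}(K;F)\cong\bigoplus_{\dim e=n-k}\Hom(\Z(e);C_n(K;F_e))$ this exhibits $E^1_{n-k,n}=C^{n-k}_c(K;H_n(F_*))$ as a subgroup of $\Omega_{n-k,n}(K;F)\subseteq\Omega_k(K;F)$. I would let $\iota\colon C^{n-k}_c(K;H_n(F_*))\hookrightarrow\Omega_k(K;F)$ be this inclusion, corrected by the sign found above so that it respects the differentials. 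It is a chain map: on $E^1_{n-k,n}$ the vertical component $\partial_2$ vanishes (these are cycle-valued homomorphisms under $\Phi$), so $\partial\circ\iota$ equals $\partial_1$ restricted to $E^1_{n-k,n}$, which lands in $E^1_{n-k+1,n}$ and is exactly $\partial^1=\iota\circ\df$. Injectivity is built in. To see $\iota$ is a quasi-isomorphism I would show $\coker(\iota)$ is acyclic by filtering it with the horizontal filtration inherited from $\Omega(K;F)$. Off the bottom slots its $E^0$ page coincides with that of $\Omega(K;F)$; in each bottom slot $(p,n)$ the group $\Omega_{p,n}$ is replaced by $\Omega_{p,n}/E^1_{p,n}$. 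Through $\Phi_{p,\bullet}$ the $p$-th column of $E^0(\coker\iota)$ becomes $\bigoplus_{\dim e=p}\Hom(\Z(e);C_\bullet(K;F_e))$ with its top group $C_n(K;F_e)$ replaced by $C_n(K;F_e)/Z_n(K;F_e)$. Since $C_n/Z_n\to C_{n-1}$ is injective, this modification kills the homology in degree $n$, leaves the degrees below $n-1$ unchanged, and in degree $n-1$ yields $Z_{n-1}(K;F_e)/B_{n-1}(K;F_e)=H_{n-1}(K;F_e)=0$; all other $H_q(K;F_e)$ with $q<n$ vanish by hypothesis as well. Hence every column of $E^0(\coker\iota)$ is acyclic, $E^1(\coker\iota)=0$, and $\coker(\iota)$ is acyclic. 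Finally, a homology class of $\Omega(K;F)$ in degree $k$ admits, by the degeneration, a representative in the bottom piece $\Omega_{n-k,n}(K;F)$, and any such cycle is precisely a compactly supported $\df$-cocycle of $H_n(F_*)$; so $\iota$ induces the isomorphism of the first part.

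The only genuinely laborious step is the sign-careful verification that $\partial^1$, read through the $\Phi_{p,q}$, is the cellular coboundary of $C^*_c(K;H_n(F_*))$. The $\partial_2$-side of this intertwining was already carried out in Proposition~\ref{prp:degenerate_E2_lef}; the $\partial_1$-side is the same kind of bookkeeping, relying on the bifunctoriality of $\Hom$ in the orientation groups and the sign conventions of Definition~\ref{dfn:rel_or}. It is conceptually routine, but it is where all the signs accumulate, so I expect it to be the main obstacle.
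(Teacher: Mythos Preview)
Your proposal is correct and follows essentially the same route as the paper: analyse the horizontal spectral sequence of the dihomologic bicomplex, use the $\Phi_{p,q}$ from Proposition~\ref{prp:degenerate_E2_lef} to identify the surviving row $E^1_{*,n}$ with $C^*_c(K;H_n(F_*))$, verify that $\partial^1$ transports to $\df$, and when $\dim K=n$ realise the quasi-isomorphism as the inclusion of the $\partial_2$-cycles in bidegree $(*,n)$ with acyclic cokernel. The paper absorbs the signs into the definition of $\Phi_{p,q}$ so that $\partial^1$ matches $\df$ on the nose rather than up to a sign, but this is cosmetic; your identification of the sign bookkeeping as the only genuinely laborious step is accurate.
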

		
\begin{proof}
	Let $d=\dim K$ and consider the horizontal filtration of the homology of $F$ :
	
	\begin{equation*}
		0\subset H_k(K;F)^{(d)} \subset \cdots \subset H_k(K;F)^{(n-k)} \subset \cdots \subset H_k(K;F)^{(0)},
	\end{equation*}
	
	\noindent whose graded pieces:
	
	\begin{equation*}
		E^\infty_{d,k+d} \hspace{0.5cm} \cdots \hspace{0.5cm}  E^\infty_{n-k,n} \hspace{0.5cm} \cdots \hspace{0.5cm} E^\infty_{0,k},
	\end{equation*}

	\noindent satisfy, in light of the last proposition, $E^\infty_{p,q}=E^2_{p,q}=0$ as soon as $q$ is not $n$. Therefore, $H_k(K;F)=H_k(K;F)^{(n-k)}=E^\infty_{n-k,n}=E^2_{n-k,n}$. Now because of the isomorphisms $(\Phi_{p,q})_{p,q\in\N}$ given in the last proof, we recognise that $E^1_{p,q}\cong C^p_c(K;H_q(F_*))$. Then it remains only to show that the boundary operator $\partial^1_{p,q}$ of the spectral sequence is mapped to the coboundary operator $\df$. If $c'\in\Omega_{p,q}(K;F)$ is a $\partial_2$-cycle representing an element $c\in E^1_{p,q}$ then $\partial_1 c'$ is a $\partial_2$-cycle representing $\partial^1_{p,q} c$. We have:
	
	\begin{equation*}
		\langle \partial_1 c',[e^{p+1};e^q]\rangle = \sum_{e^p<e^{p+1}} \langle c',[e^{p};e^q]\rangle \Big|^{e^p,e^q}_{e^{p+1};e^q},
	\end{equation*}  
	
	\noindent for $[e^{p};e^q]=(-1)^{p+1}[e^p;e^{p+1}][e^{p+1};e^q]$. So the image of $\partial_1 c'$ under $\Phi$ satisfies:
	
	\begin{equation*}
		\Phi_{p+1,q}(\partial_1 c')[e^{p+1}]=(-1)^{\frac{(p+2)(p+1)}{2}+\frac{q(q+1)}{2}} \sum_{e^q>e^{p+1}}\left( \sum_{e^p<e^{p+1}} \langle c',[e^{p};e^q]\rangle \Big|^{e^p,e^q}_{e^{p+1};e^q} \right)\otimes[e^q],
	\end{equation*}
	
	\noindent for $[e^{p+1}][e^{p+1};e^q]=[e^q]$ and $[e^{p};e^q]=(-1)^{p+1}[e^p;e^{p+1}][e^{p+1};e^q]$. Which can be written as:
	
	\begin{equation*}
		\begin{split}
			\Phi_{p+1,q}(\partial_1 c')[e^{p+1}] & =(-1)^{\frac{p(p+1)}{2}+\frac{q(q+1)}{2}}\sum_{e^p<e^{p+1}}\left( \sum_{e^q>e^{p+1}} \langle c',[e^{p};e^q]\rangle \Big|^{e^p,e^q}_{e^{p+1};e^q} \otimes[e^q]\right) \\%
			& = (-1)^{\frac{p(p+1)}{2}+\frac{q(q+1)}{2}}\sum_{e^p<e^{p+1}}  \Psi_{e^p}^{e^{p+1}} \left( \sum_{e^q>e^{p}} \langle c',[e^{p};e^q]\rangle \otimes[e^q]\right),%
		\end{split}
	\end{equation*}
	
	\noindent for $[e^{p+1}][e^{p+1};e^q]=[e^q]$, $[e^{p};e^q]=[e^p;e^{p+1}][e^{p+1};e^q]$ and $\Psi_{e^p}^{e^{p+1}}:C_q(K;F_{e^p})\rightarrow C_q(K;F_{e^{p+1}})$ associated with the cosheaf morphism $F_{e^p}\rightarrow F_{e^{p+1}}$. Therefore, $\Phi_{p+1,q}(\partial^1_{p,q} c)$ is the $(p+1)$-cochain with compact support that associates to an oriented cell $[e^{p+1}]$ the sum over its codimension 1 faces of the images in $H_q(K;F_{e^{p+1}})$ of the homology classes $\left[ (-1)^{\frac{p(p+1)}{2}+\frac{q(q+1)}{2}} \sum_{e^q>e^{p}} \langle c',[e^{p};e^q]\rangle \otimes[e^q]\right]\in H_q(K;F_{e^{p}})$ for $[e^{p};e^q]=[e^p;e^{p+1}][e^{p+1};e^q]$ and $[e^{p};e^q]=[e^p;e^{p+1}][e^{p+1};e^q]$. Finally we check that this is precisely $\df\Phi_{p,q}(c)$ for:

	\begin{equation*}
		\df\Phi_{p,q}(c)[e^{p+1}]=\sum_{e^p<e^{p+1}}\Phi_{p,q}(c)[e^p],
	\end{equation*}

	\noindent with $[e^{p}][e^{p};e^{p+1}]=[e^{p+1}]$ and $\Phi_{p,q}(c)[e^p]=\left[(-1)^{\frac{p(p+1)}{2}+\frac{q(q+1)}{2}} \sum_{e^q>e^{p}} \langle c',[e^{p};e^q]\rangle \otimes[e^q]\right]$.
	
	\vspace{5pt}
	
	For the second part of the statement, when $\dim K=n$, we need to remember that $\Phi$ provided us with an isomorphism between the chain complexes $(E^1_{n-*,n};\partial^1)$ and $(C^{n-*}_c(K;H_n(F_*));\df)$. Also, in the special context of spectral sequences of bicomplex we know that $E^1_{n-p,n}$ is the $n$-th homology group of the $(n-p)$-th column of $\Omega_{*,*}(K;F)$. With the dimensional assumption, this is the homology group of highest dimension and therefore the same as the group of cycles. We have an inclusion of $E^1_{n-p,n}$ in $\Omega_{n-p,n}(K;F)$ as the kernel of the vertical part of the boundary operator, namely $\partial_2$. With this description it is clearly an injective chain complexes morphism $(E_{n-*,n};\partial^1)\rightarrow (\Omega_*(K;F);\partial)$ whose cokernel inherits a bicomplex structure from $\Omega_{*,*}(K;F)$. By construction all the columns of this bicomplex are exact and so is its total complex. Our injective quasi-isomorphism is then the composition of $\Phi$ with the inclusion of $(E_{n-*,n};\partial^1)$ in the total complex $(\Omega_*(K;F);\partial)$.
\end{proof}
		
\begin{rem}
	In the proof of Theorem~\ref{thm:cell_poinca_lef} we did not actually used that $c'$ was a $\partial_2$-cycle and have actually proved that $\Phi$ is a bicomplex isomorphism. Indeed we have $(\sum_{q-p=k}\Phi_{p,q})\circ(\partial_1+\partial_2)=(\df+(-1)^{q-p}\partial)\circ(\sum_{q-p=k+1}\Phi_{p,q})$. So if we no longer assume the vanishing hypotheses on the local homology of $F$ what we get instead is a complex of cellular sheaves $e\mapsto C^*(K;F_e)$ whose cohomology (or hypercohomology) with compact support corresponds to the homology of $F$.
\end{rem}
		
A direct consequence of the last corollary is the already known Poincaré-Lefschetz theorem:
		
\begin{cor}
	If $X$ is a homology $n$-manifold in the sense of Definition~\ref{dfn:homo_man} then $H_k(X;\Z)\cong H^{n-k}_c(X;\partial X; o_\Z)$ for $o_\Z$ the system of local orientations defined on $X\setminus\partial X$ by $x\mapsto H_n(X;X-x;\Z)$.
\end{cor}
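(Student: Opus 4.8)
The plan is to apply Theorem~\ref{thm:cell_poinca_lef} to the dihomologic subdivision of the constant cosheaf $\Z$, and then to recognise the cellular sheaf $H_n(F_*)$ it produces as the orientation system relative to the boundary. Since, by Definition~\ref{dfn:homo_man}, $X$ is the support of a regular, finite dimensional, locally finite CW-complex, fix such a structure $K$ on $X$ and let $F$ be the dihomologic subdivision of the constant cellular cosheaf $\Z$ on $K$. The cellular chain complex $\Omega_*(K;F)$ is quasi-isomorphic to $C_*(K;\Z)$ by the subdivision quasi-isomorphism of Section~1, and the latter computes $H_*(X;\Z)$; so it suffices to compute $H_*(K;F)$ and to follow what the theorem does with it.

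First I would verify the hypothesis of Proposition~\ref{prp:degenerate_E2_lef}. By the commutativity of diagram~\eqref{diag:comm_rel_loc}, for every cell $e$ of $K$ the localisation $F_e$ obtained by fixing the first coordinate is exactly the localisation of the constant cosheaf, i.e. the characteristic cosheaf $[K;K-e;\Z]$; hence $H_*(K;F_e)$ is the cellular homology of $K$ relative to $K-e$ with integer coefficients, namely $H_*(|K|,|K-e|;\Z)$. As $|K-e|$ is a deformation retract of $|K|\setminus e$, this equals $H_*(|K|,|K|\setminus e;\Z)$, the local homology of $X$ along $e$; local homology being locally constant on each open cell, it is isomorphic to $H_*(X;X\setminus\{x\};\Z)$ for every $x\in e$ — this is the one genuinely topological input and is precisely where Zeeman's argument is used. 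By the defining property of a homology $n$-manifold this group is isomorphic to either $H_*(\R^n;\R^n\setminus\{0\};\Z)$ or to $0$, hence in any case concentrated in degree $n$. Therefore Theorem~\ref{thm:cell_poinca_lef} applies with this value of $n$ and gives, for $0\le k\le n$, a canonical isomorphism $H_k(X;\Z)\cong H^{n-k}_c(K;H_n(F_*))$.

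It then remains to identify the cellular sheaf $H_n(F_*)$, i.e. $e\mapsto H_n(K;F_e)$. By the computation above its stalk at $e$ is $H_n(X;X\setminus\{x\};\Z)$ for $x\in e$, and, tracing through the proof of the theorem, its restriction morphism along an adjacent pair $e\le e'$ is the map induced on $H_n$ by the localisation morphism $F_e\to F_{e'}$, which under the identifications above is the transport map $H_n(X;X\setminus\{x\};\Z)\to H_n(X;X\setminus\{x'\};\Z)$ of local homology, for $x\in e$ and $x'\in e'$. On the cells lying in $X\setminus\partial X$ the stalk is $\Z$ and these transport maps are isomorphisms, so $H_n(F_*)$ restricts there to the orientation local system $o_\Z$ of the statement, while on the cells contained in $\partial X$ the stalk is $0$. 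Thus $H_n(F_*)$ is the extension by zero of $o_\Z$ from $X\setminus\partial X$, whose cellular cochain complex with compact support is the standard cellular model for $H^*_c(X;\partial X;o_\Z)$; substituting this into the isomorphism above finishes the proof.

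The main obstacle is this last step: one must check carefully that the restriction morphisms of $H_n(F_*)$ — which are produced abstractly by the horizontal spectral sequence through the localisation morphisms $F_e\to F_{e'}$ — genuinely coincide with the transport maps of the orientation system, in particular that they are isomorphisms away from $\partial X$, and that the resulting cellular model agrees with the chosen definition of $H^*_c(X;\partial X;o_\Z)$. Everything else is bookkeeping: the subdivision quasi-isomorphism, the reading of $F_e$ off diagram~\eqref{diag:comm_rel_loc}, and the identification of $H_*(|K|,|K|\setminus e;\Z)$ with the local homology of $X$ at a point of $e$, for which one simply invokes Zeeman.
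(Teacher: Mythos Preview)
Your proposal is correct and follows essentially the same route as the paper: apply Theorem~\ref{thm:cell_poinca_lef} to the (dihomologic subdivision of the) constant cosheaf $\Z$, verify the local hypothesis via $H_*(K;K-e;\Z)\cong H_*(X;X\setminus\{x\};\Z)$, and identify the resulting sheaf $H_n(\Z_*)$ as $o_\Z$ extended by zero across $\partial X$. The paper's proof is terser---it asserts the final identification in one sentence---whereas you spell out the restriction morphisms and flag that step as the genuine point to check; but the argument is the same.
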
  
		
\begin{proof}
	Let's consider the local system $o_\Z$ on $X\setminus \partial X$ given by $x\mapsto H_n(X;X\setminus\{x\};\Z)$. Denote $K$ a locally finite, regular, finite dimensional CW-complex whose support is $X$. We have by hypotheses, for every cell $e$ of $K$, that $H_k(K;K-e;\Z)$ is zero as soon as $k$ does not equal $n$. Then by Theorem~\ref{thm:cell_poinca_lef} we have:
	
	\begin{equation*}
		H_k(K;\Z)\cong H^{n-k}_c\big(X;H_n(\Z_*)\big).
	\end{equation*}  
	
	\noindent The cellular sheaf $H_n(\Z_*)$ vanishes on the boundary and corresponds to the local system $o_\Z$ elsewhere hence the corollary follows.
\end{proof}
		
An interesting corollary is a version of Serre duality for flat vector bundles over a field $\F$ with $o_\F=o_\Z\otimes\F$ playing the role of the canonical line bundle:
		
\begin{cor}
	If $X$ is a homology $n$-manifold in the sense of Definition~\ref{dfn:homo_man} and $E$ is a flat bundle of $\F$-vector spaces of finite rank over $X$ then:
	
	\begin{equation*}
		H^k(X;E)\cong \big( H^{n-k}_c(X;\partial X; o_\F\otimes_\F E^*)\big)^*.
	\end{equation*}

\end{cor}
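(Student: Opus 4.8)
The plan is to reduce this to the version of Theorem~\ref{thm:cell_poinca_lef} already exploited for the first corollary, applied now to the dual local system $E^*$, and then to transfer the conclusion from homology to cohomology by $\F$-linear duality.

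First I would fix a locally finite, finite-dimensional, regular CW-complex $K$ with $|K|=X$. Since $E$ is flat of finite rank, so is $E^*$; it is a local system, hence defines a cellular cosheaf on $K$ (invertible restriction morphisms, extension morphisms their transposes), and I write $(E^*)'$ for its dihomologic subdivision. For a cell $e$, the diagram~\eqref{diag:comm_rel_loc} identifies the localisation by fixing the first coordinate of $(E^*)'$ with the dihomologic subdivision of $E^*_e=E^*\otimes_\Z[K;K-e\,;\Z]$, and the subdivision quasi-isomorphism computes its homology as $H_*(K;E^*_e)\cong H_*(X;X\setminus e;E^*)$. Because $E^*$ is locally constant near $e$ and $H_*(X;X\setminus x;\Z)$ is free and concentrated in degree $n$ (or zero when $x\in\partial X$), the universal coefficient theorem gives $H_*(X;X\setminus e;E^*)\cong H_*(X;X\setminus x;\Z)\otimes_\Z E^*_x$, concentrated in degree $n$; so $(E^*)'$ satisfies the hypotheses of Proposition~\ref{prp:degenerate_E2_lef}. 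Theorem~\ref{thm:cell_poinca_lef} then yields, for $0\le k\le n$, a canonical isomorphism $H_k(X;E^*)\cong H^{n-k}_c(K;H_n((E^*)_*))$; and exactly as in the proof of the first corollary the cellular sheaf $H_n((E^*)_*)$ vanishes on $\partial X$ and coincides with $o_\Z\otimes E^*=o_\F\otimes_\F E^*$ on $X\setminus\partial X$, so this reads $H_k(X;E^*)\cong H^{n-k}_c(X;\partial X;o_\F\otimes_\F E^*)$.

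It then remains to identify $H^k(X;E)$ with $(H_k(X;E^*))^*$. Here I would observe that the cellular cochain complex $C^\bullet(K;E)$ is canonically the $\F$-linear dual of the cellular chain complex $C_\bullet(K;E^*)$: cell by cell, $\Hom_\Z(\Z(e);E(e))\cong \Hom_\F\big(E^*(e)\otimes_\Z\Z(e),\F\big)$, the (unrestricted) product over $k$-cells defining $C^k$ matching the $\F$-dual of the direct sum over $k$-cells defining $C_k$, and the coboundary $\df$ being by construction the transpose of $\partial$ — the restriction morphisms of the sheaf $E$ being the transposes of the extension morphisms of the cosheaf $E^*$. Since $\F$ is a field, $\Hom_\F(-,\F)$ is exact, so $H^k(X;E)\cong \Hom_\F\big(H_k(X;E^*),\F\big)=(H_k(X;E^*))^*$, and combining with the previous paragraph gives $H^k(X;E)\cong\big(H^{n-k}_c(X;\partial X;o_\F\otimes_\F E^*)\big)^*$, as claimed.

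The substance of the argument is all borrowed: the degeneration of the spectral sequence and the computation of $H_n((E^*)_*)$ are handled as for the first corollary, the genuine geometric input being precisely the homology-manifold hypothesis that makes the local homology concentrated in a single degree. What is left is only bookkeeping — checking that the cell-by-cell duality isomorphisms of the last paragraph are compatible with the (co)boundary operators, including the orientation signs built into $\df$ and $\partial$, and hence assemble into an isomorphism of complexes. I expect this sign/compatibility verification, together with carefully unwinding the cosheaf structure on $E^*$, to be the only mildly delicate point; no serious obstacle is anticipated.
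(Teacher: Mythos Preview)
Your proposal is correct and follows essentially the same route as the paper: apply Theorem~\ref{thm:cell_poinca_lef} to the dihomologic subdivision of the cosheaf $E^*$, verify the local-homology hypothesis by trivialising $E^*_e$ as $[K;K-e;E^*(e)]$ (your universal-coefficient step is exactly this), identify the resulting sheaf $H_n((E^*)_*)$ with $o_\F\otimes_\F E^*$ extended by zero over $\partial X$, and pass from $H_k(X;E^*)$ to $H^k(X;E)$ via the observation that $C^\bullet(K;E)$ is the $\F$-dual of $C_\bullet(K;E^*)$. The paper performs the duality step first and the Poincar\'e--Lefschetz step second, whereas you do them in the opposite order, but the content is identical.
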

		
\begin{proof}
	Let $K$ denotes a locally finite, regular, finite dimensional CW-complex whose support is $X$. By universal coefficients theorem (c.f. \cite{Car-Eil_hom_alg}\footnote{H. Cartan and S. Eilenberg, \emph{Homological Algebra}, Theorem 3.3 p.113.}) we have, after noticing that $(C^k(K;E))_{k\geq 0}$ is dual to the complex $(C_k(K;E^*))_{k\geq 0}$, that  $H^k(X;E)\cong(H_k(X;E^*))^*$. For $e$ a cell of $K$ we define an isomorphism of cosheaves $\phi_e:\big[K;K-e;E^*(e)\big]\rightarrow E^*_e$ given by the inverses of the extension morphisms, i.e. its restriction morphisms. $E^*(e')\rightarrow E^*(e)$ for all cells $e'\geq e$. This being done, we have the isomorphism:

	\begin{equation*}
		H_k(K;K-e;E^*)\cong H_k(K;K-e;E^*(e)) \cong H_k(K;K-e;\F)\otimes_\F E^*(e).
	\end{equation*}
	
	\noindent By hypotheses these groups are all $0$ as soon as $k$ does not equal $n$ hence the dihomologic pseudo-subdivision of the cosheaf $E^*$ satisfies the conditions of Proposition~\ref{prp:degenerate_E2_lef}. Therefore, Theorem~\ref{thm:cell_poinca_lef} applies. In addition, for $e^p\leq e^q$ we have the commutative square:
	
	\begin{equation*}
		\begin{tikzcd}
			\big[K;K-e^p;E^*(e^p)\big]\arrow[d] \arrow[r,"\phi_{e^p}"] & E^*_{e^p} \arrow[d,"\textnormal{loc. at }e^q" right] \\%
			\big[K;K-e^q;E^*(e^q)\big]\arrow[r,"\phi_{e^q}" below] & E^*_{e^q}
		\end{tikzcd}
	\end{equation*} 
	
	\noindent with the unlabelled morphism on the left given by the tensor product of the localisation morphism $\big[K;K-e^p;\F\big]\rightarrow \big[K;K-e^q;\F\big]$ with the extension $E^*(e^p)\rightarrow E^*(e^q)$. It appears that the cellular sheaf $e\mapsto H_n(K;K-e;E^*)$ is isomorphic to the tensor product $e\mapsto H_n(K;K-e;\F)\otimes_\F E^*$. By Theorem~\ref{thm:cell_poinca_lef}, we get:
	
	\begin{equation*}
		H^k(X;E)\cong \Big(H^{n-k}_c\big(X;H_n(E^*_*)\big)\Big)^* \cong  \Big(H^{n-k}_c(X; H_n(\F_*)\otimes_\F E^*)\Big)^*.
	\end{equation*}
	
	\noindent The sheaf $H_n(\F_*)$ vanishes on $\partial X$ and its restriction to $X\setminus \partial X$ is given by the local system $o_\F$ so finally:
	
	\begin{equation*}
		H^k(X;E) \cong  \Big(H^{n-k}_c(X;\partial X; o_\F\otimes_\F E^*)\Big)^*.
	\end{equation*}
		
\end{proof}
		
\section{Application to Tropical Homology : Lefschetz Hyperplane Section Theorem}

In this section we apply Theorem~\ref{thm:cell_poinca_lef} to the cosheaves arising from tropical geometry. In a first paragraph we will state and prove four preliminary lemmata. Then we will define the objects we consider. Finally, we will state and prove Theorem~\ref{thm:Lefschetz_hyp_sec}.

\subsection*{Four Lemmata}

\begin{dfn}\label{dfn:resolution_ext_alg}
	Let $V$ be an $R$-module, $G\subset V$ a finite subset and $p\in \N$. We define the complex of $R$-modules $C(V;G;p):=(C_*,\partial)$ :

	\begin{equation*}
	C_k:= \bigoplus_{\substack{F\subset G \\ |F|=k}} \bigwedge^{p-k} V_F,
    \end{equation*}
	
	\noindent with $V_F$ the quotient of $V$ by the sub-module spaned by $F$. We set the map $\partial_k:C_k\rightarrow C_{k-1} $ to be the sum of the maps :

	\begin{equation*}
		\begin{array}{rcl}%
			\displaystyle \bigwedge^{p-k} V_{F} & \longrightarrow & \displaystyle\bigoplus_{f\in F}\bigwedge^{p-(k-1)} V_{F\setminus f} \\
            v &\longmapsto & \displaystyle \sum_{f\in F} f\wedge v.
        \end{array}            
	\end{equation*}
	
	\noindent Because of the antisymmetry of the wedge product, $\partial^2$ vanishes. Moreover, it is worth noticing that when $G'$ is a subset of $G$ the complex $C(V;G';p)$ is naturally a sub-complex of $C(V;G;p)$.
\end{dfn}

\begin{lem}\label{lem:resolution_ext_alg}
	If $V$ is free of finite rank, $G$ is linearly independent and spans a free summand of $V$ then $(C;\partial)$ can only have non-trivial homology in dimension 0 and this $H_0$ is equal to the $p$-th exterior power of $V_G$. In other words:
	
	\begin{equation*}
		0\leftarrow \bigwedge^{p}V_G \leftarrow C_0 \leftarrow ... \leftarrow C_{|G|}\leftarrow 0,
	\end{equation*}

	\noindent is a free resolution of $\bigwedge^{p}V_G$. (The augmenting morphism is the reduction modulo the module spanned by $G$.)
\end{lem}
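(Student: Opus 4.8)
The plan is to induct on the cardinality $m:=|G|$, splitting off one element of $G$ to produce a short exact sequence of complexes. If $m=0$, or if $p=0$, then $C(V;G;p)$ is just the module $\bigwedge^p V$ sitting in degree $0$ and there is nothing to prove, so assume $m\geq 1$ and $p\geq 1$. Fix $g\in G$, set $G':=G\setminus\{g\}$, and let $\bar G'\subset V_g:=V/\langle g\rangle$ be the image of $G'$. The first thing I would check is that the triple $(V_g;\bar G';p-1)$ again satisfies the hypotheses of the lemma: since $G$ is linearly independent and its span is a free summand $\langle G\rangle=\langle g\rangle\oplus\langle G'\rangle$ of $V$, writing $V=\langle G\rangle\oplus W$ with $W$ free gives $V_g\cong\langle G'\rangle\oplus W$, so $\bar G'$ is linearly independent, spans a free summand of $V_g$, and $|\bar G'|=m-1$; moreover $(V_g)_{\bar G'}=V/\langle G\rangle=V_G$.

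Next I would realise $C(V;G;p)$ as an extension. By the remark at the end of Definition~\ref{dfn:resolution_ext_alg} the subsum $C(V;G';p)$ (the summands indexed by subsets $F$ with $g\notin F$) is a subcomplex of $C(V;G;p)$, and I claim the quotient complex $Q$ is canonically $C(V_g;\bar G';p-1)$ with homological degrees shifted up by one. Indeed the summand of $C(V;G;p)$ indexed by $F=F'\cup\{g\}$ with $F'\subseteq G'$, which sits in degree $|F'|+1$, is $\bigwedge^{p-|F'|-1}V_{F'\cup\{g\}}=\bigwedge^{(p-1)-|F'|}(V_g)_{\bar F'}$, precisely the degree-$|F'|$ summand of $C(V_g;\bar G';p-1)$; and the differential $\sum_{f\in F'\cup\{g\}}f\wedge(-)$ of $C(V;G;p)$, once projected into $Q$, loses the term $g\wedge(-)$ (which lands in the subcomplex) and keeps $\sum_{f\in F'}f\wedge(-)$, which is exactly the differential of $C(V_g;\bar G';p-1)$. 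Hence we get a short exact sequence of complexes $0\to C(V;G';p)\to C(V;G;p)\to C(V_g;\bar G';p-1)[1]\to 0$.

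Now I would run the associated long exact homology sequence, feeding in the inductive hypothesis: $C(V;G';p)$ is a resolution of $\bigwedge^p V_{G'}$ and $C(V_g;\bar G';p-1)$ is a resolution of $\bigwedge^{p-1}(V_g)_{\bar G'}=\bigwedge^{p-1}V_G$, so the shifted complex has homology $\bigwedge^{p-1}V_G$ concentrated in degree $1$. For $j\geq 2$ both neighbours of $H_j(C(V;G;p))$ in the sequence vanish, forcing $H_j(C(V;G;p))=0$. Everything then reduces to the exact tail $0\to H_1(C(V;G;p))\to \bigwedge^{p-1}V_G\xrightarrow{\ \delta\ }\bigwedge^p V_{G'}\to H_0(C(V;G;p))\to 0$, i.e. to understanding the connecting map $\delta$. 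A snake-lemma chase of a representative shows that $\delta$ sends a class $\bar z\in\bigwedge^{p-1}V_G$ to $\bar g\wedge\omega'$, where $\bar g$ denotes the image of $g$ in $V_{G'}$ and $\omega'\in\bigwedge^{p-1}V_{G'}$ is any lift of $\bar z$ along the surjection $\bigwedge^{p-1}V_{G'}\twoheadrightarrow\bigwedge^{p-1}(V_{G'}/\langle\bar g\rangle)=\bigwedge^{p-1}V_G$; this is independent of the lift because $\bar g\wedge\bar g=0$.

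The one genuinely non-formal ingredient — and the step I expect to take the most care — is the following linear-algebra fact: since $\langle\bar g\rangle$ is a free rank-one direct summand of the free module $V_{G'}$, wedging with $\bar g$ has kernel exactly $\bar g\wedge\bigwedge^\bullet V_{G'}$ and image exactly $\ker\!\big(\bigwedge^\bullet V_{G'}\to\bigwedge^\bullet V_G\big)$ (split the summand off and compute degree by degree). The first half gives that $\delta(\bar z)=0$ forces $\omega'\in\bar g\wedge\bigwedge^{p-2}V_{G'}$, hence $\bar z$, being the image of $\omega'$ in $\bigwedge^{p-1}V_G$, is zero; so $\delta$ is injective and $H_1(C(V;G;p))=0$. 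The second half gives $\coker\delta=\bigwedge^p V_{G'}\big/\ker\!\big(\to\bigwedge^p V_G\big)\cong\bigwedge^p V_G$, and chasing the identifications back shows this isomorphism is induced by reduction modulo $\langle G\rangle$ on $C_0=\bigwedge^p V$, which is precisely the augmentation in the statement. This closes the induction, proving that $C(V;G;p)$ is a free resolution of $\bigwedge^p V_G$.
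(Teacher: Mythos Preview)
Your proof is correct, but it follows a different route from the paper's. The paper proceeds non-inductively: it picks a complement $G'$ so that $G\cup G'$ is a basis of $V$, writes down an explicit basis $g_{P\setminus I}\wedge g'_Q$ of each summand $\bigwedge^{p-k}V_F$, and observes that the whole complex decomposes as a direct sum, over pairs $(P,Q)$ with $|P|+|Q|=p$, of (shifted) reduced simplicial chain complexes of the simplex on the vertex set $P$; these are acyclic unless $P=\varnothing$, and the $P=\varnothing$ summands give exactly $\bigwedge^p V_G$. So the paper identifies the complex as a sum of elementary acyclic pieces in one stroke, whereas you peel off one element of $G$ at a time via the short exact sequence $0\to C(V;G';p)\to C(V;G;p)\to C(V_g;\bar G';p-1)[1]\to 0$ and feed the induction through the long exact sequence, isolating the connecting map $\delta=\bar g\wedge(-)$ as the single nontrivial step. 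Your argument is a clean homological-algebra induction and has the virtue that it never needs to name a full basis or track monomials; the paper's argument is more explicit and makes the Koszul-type structure visible at a glance. Either way the key input is the same linear-algebra fact about wedging with a generator of a rank-one free summand, which you correctly identified as the heart of the matter.
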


\begin{proof}
	By hypotheses one can find $G'\subset V$ disjoint from $G$ such that $G\cup G'$ is a basis of $V$. Let us write $G=\{g_1,...,g_n\}$ and $G'=\{g'_1,...,g'_{n'}\}$ then for $F=\{g_i:i\in I\}\subset G$ with $|I|=k$ a basis of the $(p-k)$-th exterior power of $V_F$ is given by $g_{P\setminus I}\wedge g'_Q$ for $I\subset P\subset \{1,...,n\},$ $ Q\subset \{1,...,n'\}$ such that $|P|+|Q|=p$. Moreover the image of the generator $g_{P\setminus I}\wedge g'_Q$ under the boundary map is:
	
	\begin{equation*}
		\partial g_{P\setminus I}\wedge g'_Q = \sum_{i\in I} g_i\wedge g_{P\setminus I}\wedge g'_Q \in \bigoplus_{i\in I}\bigwedge^{p-(k-1)} V_{F\setminus g_i}.
	\end{equation*}

	\noindent Therefore, if we fix $P$ and $Q$ with $|P|+|Q|=p$ and see $P$ as an abstract simplex we have an injective morphism of chain complexes from the reduced simplicial chain complex of $P$ to $C$ given by:

    \begin{equation*}
		\begin{array}{rcl}%
			\widetilde{C}_k(P;R) & \longrightarrow & C_{k+1}  \\%
			I & \longmapsto & g_{P\setminus I}\wedge g'_Q,
		\end{array}
	\end{equation*}

	\noindent By construction, $C$ is the direct sum of the images of these complexes, hence it can only have homology in dimension 0 and the only summands that contribute are the ones for which $P=\varnothing$ : there is exactly one free summand of rank 1 for every basis element of $\bigwedge^{p}V_G$ although a very quick computation show that $B_0=\langle G\rangle \wedge \bigwedge^{p-1}V$.
\end{proof}
    
\begin{lem}[Homology Shift]\label{lem:hom_shi}
	Let $n\in\N$ be a natural number and:
	
	\begin{equation}\label{eq:hypo_hom_shi}\tag{ES}
		0 \rightarrow C(n) \overset{a_n}{\longrightarrow} C(n-1) \overset{a_{n-1}}{\longrightarrow} \cdots \overset{a_2}{\longrightarrow} C(1) \overset{a_1}{\longrightarrow} C(0) \rightarrow 0,
	\end{equation}

	\noindent be an exact sequence of chain complexes. If for all $1 \leq i \leq n-1$, $C(i)$ is exact then the homology of $C(0)$ is the one of $C(n)$ shifted by $1-n$:
        
	\begin{equation*}
		H(0)=H(n)[1-n].
	\end{equation*}
\end{lem}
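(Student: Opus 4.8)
The plan is to induct on $n$, or equivalently to break the long exact sequence of complexes into short exact sequences and iterate a single homology-shift step. The base case $n=1$ is trivial: the sequence reads $0\to C(1)\to C(0)\to 0$, so $a_1$ is an isomorphism of complexes and $H(0)=H(1)=H(1)[0]$. The substance is therefore entirely in the inductive step, which itself reduces to the following core fact: if $0\to A\to B\to D\to 0$ is a short exact sequence of chain complexes and $B$ is exact (all homology vanishing), then the long exact sequence in homology forces the connecting morphism $H_k(D)\to H_{k-1}(A)$ to be an isomorphism for every $k$, i.e. $H(D)=H(A)[-1]$.

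Concretely, I would proceed as follows. Given the exact sequence \eqref{eq:hypo_hom_shi} with $n\geq 2$, let $Z$ denote the image of $a_n$, so that $Z\cong C(n)$ as a complex (since $a_n$ is injective), and we get a short exact sequence of complexes
\begin{equation*}
	0 \rightarrow Z \rightarrow C(n-1) \rightarrow C(n-1)/Z \rightarrow 0.
\end{equation*}
Since $C(n-1)$ is exact (as $n-1\geq 1$), the long exact homology sequence gives $H(C(n-1)/Z)\cong H(Z)[-1]\cong H(n)[-1]$. On the other hand, exactness of \eqref{eq:hypo_hom_shi} at $C(n-1)$ means $Z=\ker(a_{n-1})$, so $C(n-1)/Z$ embeds as $\operatorname{im}(a_{n-1})=\ker(a_{n-2})\subset C(n-2)$, and splicing it into the truncated sequence
\begin{equation*}
	0 \rightarrow C(n-1)/Z \rightarrow C(n-2) \rightarrow C(n-3) \rightarrow \cdots \rightarrow C(0) \rightarrow 0
\end{equation*}
yields a new exact sequence of the same shape but of length $n-1$, whose top term $C(n-1)/Z$ has homology $H(n)[-1]$ and whose intermediate terms $C(n-2),\dots,C(1)$ are all exact by hypothesis. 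By the inductive hypothesis applied to this shorter sequence, $H(0) = \big(H(n)[-1]\big)[1-(n-1)] = H(n)[1-n]$, as desired.

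The main obstacle, and the only place that needs genuine care, is the core homology-shift lemma for a short exact sequence $0\to A\to B\to D\to 0$ with $B$ exact: one must check that the connecting homomorphism $\delta\colon H_k(D)\to H_{k-1}(A)$ is an isomorphism. This is immediate from the long exact sequence $\cdots\to H_k(B)\to H_k(D)\xrightarrow{\delta} H_{k-1}(A)\to H_{k-1}(B)\to\cdots$ once we know $H_*(B)=0$: the terms on either side of $\delta$ vanish, so $\delta$ is both injective and surjective. I would also want to be mildly careful about the degree conventions and the identification $Z\cong C(n)$ inducing the stated isomorphism on homology (functoriality of the long exact sequence under the morphism $C(n)\xrightarrow{\sim} Z$), and about the fact that truncating and re-splicing an exact sequence of complexes again produces an exact sequence of complexes — but these are formal diagram-chase points rather than real difficulties. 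No hypothesis on the individual groups (freeness, finite rank, etc.) is needed here; the statement is purely homological.
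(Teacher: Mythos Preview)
Your argument is correct and is essentially the same as the paper's: both split the long exact sequence into short exact sequences via the kernels/images of the $a_i$ and use that a short exact sequence with exact middle term forces the connecting map to be an isomorphism, giving a shift by $-1$ at each step. The only cosmetic difference is that the paper defines all the intermediate complexes $A(i)=\ker(a_i)$ at once and chains the shifts $H(A(i-1))=H(A(i))[-1]$ directly, whereas you peel off the top short exact sequence and recurse by induction on $n$; your $C(n-1)/Z$ is precisely the paper's $A(n-2)$.
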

    
\begin{proof}
	For all $1 \leq i \leq n$, let $A(i)$ denote the kernel complex of $a_i$. Since (\ref{eq:hypo_hom_shi}) is an exact sequence the following diagram has all of its columns exact: 
	
	\begin{equation*}
		\begin{tikzcd}%
			& & 0 \ar[d] & & 0 \ar[d] & & 0 \ar[d] & 0 \ar[d] \\%
			& 0 \ar[d] & A(n-1) \ar[d] & \cdots & A(i) \ar[d] & \cdots & A(1) \ar[d] & A(0) \ar[d] \\%
           	0 \ar[r] & C(n) \ar[d,"a_n"] \ar[r,"a_n"] &  C(n-1) \ar[d,"a_{n-1}"] \ar[r,"a_{n-1}"] & \cdots \ar[r,"a_{i+1}"] & C(i) \ar[d,"a_i"] \ar[r,"a_i"] & \cdots \ar[r,"a_2"] & C(1) \ar[d,"a_1"] \ar[r,"a_1"] & C(0) \ar[d] \ar[r] & 0 \\%
           	& A(n-1) \ar[d] & A(n-2) \ar[d] & \cdots & A(i-1) \ar[d] & \cdots & A(0) \ar[d] & 0 \\%
           	& 0 & 0 & & 0 & & 0%
		\end{tikzcd}
	\end{equation*}
    
    \noindent By assumption, for all $1\leq i\leq n-1$, $H(i)=0$ so $H(A(i-1))=H(A(i))[-1]$ by use of the long exact sequence associated with $0\rightarrow A(i)\rightarrow C(i)\rightarrow A(i-1) \rightarrow 0$. Hence by a finite recursion we get that:
        
	\begin{equation*}
		H(0)=H(A(0))=H(A(n-1))[1-n]=H(n)[1-n].
	\end{equation*}
    
\end{proof}
    
\begin{lem}[Double Localisation]\label{lem:double_loc}
	Assume $F$ is a cellular cosheaf on $K$ and $K'$ is a subdivision of $K$. If $e$ is a cell of $K$ and $e'$ is a cell of $K'$ contained in $e$ then the local homology of $F$ at $e$ is identical to the local homology of the subdivision of $F$ at $e'$. More precisely, if $F'$ is the subdivision of $F$ there is a canonical quasi-isomorphism between $C_*(K\,; F_e)$ and $C_*(K' ;F'_{e'})$.
\end{lem}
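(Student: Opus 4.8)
The plan is to derive the lemma from the quasi-isomorphism induced by subdivision on cellular chains (the Proposition proved above), applied simultaneously to $F$ on $K$ and to $F$ restricted to the subcomplex $K-e$. First I would record that $F_e$ is the quotient cosheaf $F/(F\otimes_\Z[K-e;\Z])$, so that at the level of chains $C_*(K;F_e)=C_*(K;F)/C_*(K-e;F)$, and likewise $C_*(K';F'_{e'})=C_*(K';F')/C_*(K'-e';F')$. Next I would observe that the subdivision chain map $\sigma\colon C_*(K;F)\to C_*(K';F')$ carries $C_*(K-e;F)$ into $C_*(K'-e';F')$: if $e''$ is a cell of $K$ with $e\not\le e''$ and $f$ is a cell of $K'$ contained in $e''$, then $e'\not\le f$, for otherwise $e'\subseteq\bar f\subseteq\bar{e''}$ and, since $e'\subseteq e$, this forces $e\cap\bar{e''}\neq\varnothing$, hence $e\le e''$ by the characterisation of faces in a regular CW-complex recalled above ($e_1\cap\bar e_2\neq\varnothing\iff e_1\subseteq\bar e_2$), a contradiction. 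Consequently $\sigma$ descends to a canonical chain map $\bar\sigma\colon C_*(K;F_e)\to C_*(K';F'_{e'})$, and this is the comparison map we are after. Applying the five lemma to the resulting morphism of short exact sequences of chain complexes, and using that $\sigma$ is a quasi-isomorphism, it is then enough to prove that the restriction $\sigma|\colon C_*(K-e;F)\to C_*(K'-e';F')$ is a quasi-isomorphism.

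For this, set $N:=K'|_{|K-e|}$, the restriction of $K'$ to the closed set $|K-e|$; it is a subdivision of the subcomplex $K-e$ of $K$, and $F'|_N$ is precisely the subdivision of $F|_{K-e}$. The computation above shows that $N$ is a subcomplex of $K'-e'$ and that $\sigma|$ factors as $C_*(K-e;F)\to C_*(N;F'|_N)\hookrightarrow C_*(K'-e';F')$, the first arrow being the subdivision chain map for $K-e\rightsquigarrow N$, hence a quasi-isomorphism by the Proposition. So the whole statement reduces to showing that the inclusion $C_*(N;F'|_N)\hookrightarrow C_*(K'-e';F'|_{K'-e'})$ is a quasi-isomorphism, equivalently that the relative complex $C_*(K'-e',N;F')$ is acyclic.

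This last acyclicity is where the real work lies, and I expect it to be the main obstacle. I would filter $C_*(K'-e',N;F')$ by the subcomplexes $G^{(i)}$ generated by those cells $f$ of $K'$ whose carrier — the unique open cell of $K$ containing $f$ — has dimension at most $i$; since the boundary of such an $f$ only involves cells of $K'$ whose carrier has dimension $\le i$ (and faces lying on $N$ or having $e'$ as a face are killed in the relative complex), this is a finite increasing exhaustive filtration by subcomplexes. Its $i$-th graded piece is $\bigoplus_{e''}F(e'')\otimes_\Z\bar D_{e''}$, the sum running over the $i$-dimensional cells $e''$ of $K$ with $e\le e''$, where $\bar D_{e''}$ is the integral relative cellular chain complex of the pair obtained from the closed ball $\bar{e''}$, subdivided by $K'$, by deleting the open star of $e'$, taken modulo the corresponding part of $\partial\bar{e''}$ — here one uses that the extension morphisms of $F'$ between two cells with the same carrier are identities, so the constant coefficients $F(e'')$ genuinely split off. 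A deformation-retraction argument shows $\bar D_{e''}$ is acyclic: when $e''=e$ the relevant space is a ball with an open ball around an interior point removed, which retracts onto $\partial\bar e$; when $e<e''$ the cell $e'$ lies on $\partial\bar{e''}$ and the dented ball retracts onto the dented boundary sphere; in either case the relative homology vanishes. Being a bounded complex of finitely generated free abelian groups, $\bar D_{e''}$ is contractible, hence so is $F(e'')\otimes_\Z\bar D_{e''}$; thus every graded piece is acyclic, and therefore so is $C_*(K'-e',N;F')$. The delicate points will be the clean identification of the graded pieces (the bookkeeping with carriers and orientations) and the collar/deformation-retraction statements for the dented balls, which are standard for regular CW-complexes but must be invoked carefully.
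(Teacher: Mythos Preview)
Your argument is correct and runs parallel to the paper's, but through a different factorisation and with more care at the crucial step. The paper writes the comparison map as the composite
\[
C_*(K;F_e)\ \xrightarrow{\ f\ }\ C_*\bigl(K';(F_e)'\bigr)\ \xrightarrow{\ g\ }\ C_*\bigl(K';(F_e)'_{e'}\bigr)=C_*(K';F'_{e'}),
\]
where $f$ is the subdivision quasi-isomorphism applied to the already localised cosheaf $F_e$, and then argues that the localisation $g$ is a quasi-isomorphism by showing its kernel cosheaf $(F_e)'\otimes[K'-e';\Z]$ has trivial homology. Your five-lemma reduction lands on the very same object: the relative complex $C_*(K'-e',N;F')$ is exactly $C_*\bigl(K';(F_e)'\otimes[K'-e';\Z]\bigr)$. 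At this point the paper claims, ``by a process similar to excision'', that the kernel cosheaf coincides with $[K'(e)-e';K'(e)-e;F(e)]$, hence is supported only on cells with carrier $e$; but this is not literally true, since any cell $f\subset e''$ with $e<e''$ and $e'\not\le f$ contributes $F(e'')$ to the kernel as well. Your filtration by carrier dimension is precisely what is needed to account for all the pieces $e''\ge e$: for $e''=e$ the pair $(K'(e)-e',K'(\partial e))$ has the homotopy type of (punctured ball, boundary sphere), and for $e''>e$ both $|K'(e'')-e'|$ and $|K'(\partial e'')-e'|$ are contractible, being deformation retracts of $\bar e''\setminus e'$ and $\partial e''\setminus e'$ respectively (using the fact recalled earlier that $|L-e'|$ is a deformation retract of $|L|\setminus e'$). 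So your approach buys a transparent accounting of every carrier and a genuinely complete acyclicity argument; the paper's factorisation through $(F_e)'$ is slicker conceptually but its excision step, as written, only treats the carrier $e''=e$.
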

    
\begin{proof}
	If we denote by $(F_e)'$ the subdivision of $F_e$ then we have a commutative square of cellular cosheaves on $K'$:
    
    \begin{equation*}
		\begin{tikzcd}%
			F' \ar[r] \ar[d] & (F')_{e'} \ar[d] \\%
			(F_e)' \ar[r] & (F_e)'_{e'}%
		\end{tikzcd}
     \end{equation*}
    
    \noindent where all four morphisms come from localisation and subdivision. The two cosheaves on the right hand column are actually equal and the arrow is rigorously the identity between them. To see it we only need to check it for all cells $\tilde{e}'$ that contains $e'$ since both these cosheaves vanish elsewhere. If $\tilde{e}$ is the unique cell of $K$ containing $\tilde{e}'$ then they satisfy:
    
    \begin{equation*}
		\begin{tikzcd}%
    		\tilde{e} \arrow[r,phantom,"\geq" description] \arrow[d,phantom,"\cup" description] & e \arrow[d,phantom,"\cup" description] \\%
			\tilde{e}' \arrow[r,phantom,"\geq" description] & e'%
		\end{tikzcd}
    \end{equation*}
    
    \noindent therefore $(F')_{e'}(\tilde{e}')=F(\tilde{e})=F_e(\tilde{e})=(F_e)'_{e'}(\tilde{e}')$ with our morphism given by the central equality. On the level of cellular chain complexes we have the following diagram:
    
    \begin{equation*}
		\begin{tikzcd}%
    		& & C_*(K' ;(F')_{e'}) \arrow[d,equal] \\%
			C_*(K\,;F_e)\arrow[r,"\textnormal{subd.}" below,"f" above] \ar[rru,"h",bend left=10] & C_*(K';(F_e)') \ar[r,"\textnormal{loc.}" below,"g" above] & C_*(K';(F_e)'_{e'})%
		\end{tikzcd}
    \end{equation*}
    
    \noindent with $h$ the canonical quasi-isomorphism given in the statement of the proposition. Since $f$ is a subdivision morphism it is automatically a quasi-isomorphism. We only need to show that the localisation morphism $g$ is a quasi-isomorphism. This is a surjective morphism since localisations are quotients. We have, by definition, the short exact sequence of cosheaves:
    
    \begin{equation*}
    	0\rightarrow (F_e)'\otimes \big[K'-e';\Z\big] \rightarrow (F_e)' \rightarrow (F_e)'_{e'} \rightarrow 0,
    \end{equation*}
    
    \noindent so showing that $g$ is a quasi-isomorphism amounts to showing that the kernel cosheaf has trivial homology. $(F_e)'=F'\otimes\big[K';K'-e;\Z\big]$ for $K'-e$ is the subdivision of the sub-complex $K-e$. Consequently:
    
    \begin{equation*}
    	(F_e)'\otimes \big[K'-e';\Z\big]=F'\otimes\big[K';K'-e;\Z\big]\otimes \big[K'-e';\Z\big]=F'\otimes\big[K'-e';K'-e;\Z\big].
    \end{equation*}
    
    \noindent By a process similar to excision we see that: 
    
    \begin{equation*}
    	\big[K'-e';K'-e;\Z\big]=\big[K'(e)-e';K'(e)-e;\Z\big].
    \end{equation*}
    
    \noindent Indeed, $K'(e)$ is the smallest sub-complex of $K'$ containing $e$ in its support and any cell not in it falls into $K'-e$. Note that $\big[K'(e)-e';K'(e)-e;\Z\big]$ can be non-zero only on cells of $K'$ contained in $e$ so:
    
    \begin{equation*}
    	(F_e)'\otimes \big[K'-e';\Z\big]= F'\otimes\big[K'-e';K'-e;\Z\big] = \big[K'(e)-e';K'(e)-e;F(e)\big].
    \end{equation*}
    
    \noindent Now because $K$ is regular, $|K'(e)|$ is a closed ball $B$, $|K'(e)-e|$ is its boundary $\partial B$ and $|K'(e)-e'|$ sits somewhere between $\partial B$ and $B$ punctured on one point. Since the latter retracts by deformation on the former the homology of $\big[K'(e)-e';K'(e)-e;F(e)\big]$ is trivial.
\end{proof}

\begin{lem}\label{lem:finite_index}
	Let $M$ be a lattice of finite rank with some linear forms $\alpha_1,...,\alpha_k\in\Hom(M;\Z)\setminus\{0\}$. If $\omega$ is a generator of the last exterior power of the lattice spanned by the $\alpha_i$'s then for all $p\in \N$, the quotient:
	
	\begin{equation*}
		G:=\bigslant{\displaystyle \left\{v\in\bigwedge^pM\;|\;\omega\cdot v=0\right\}}{\displaystyle \sum_{i=1}^k\bigwedge^p\ker(\alpha_i)},
	\end{equation*}

	\noindent with $\omega\cdot v$ the contraction of $v$ by $\omega$, is a finite group.
\end{lem}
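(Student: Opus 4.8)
The plan is to reduce the finiteness of $G$ to an equality of rational vector spaces, and then to settle that equality by a direct computation in a basis adapted to $\omega$. First I would note that $G$, being a subquotient of the free $\Z$-module $\bigwedge^{p}M$, is finitely generated, so $G$ is finite if and only if $G\otimes_{\Z}\Q=0$. Set $V=M\otimes_{\Z}\Q$, let $L\subseteq\Hom(M;\Z)$ be the sublattice spanned by the $\alpha_i$ and $r=\mathrm{rk}\,L$, and write $\iota_{\omega}\colon\bigwedge^{p}M\to\bigwedge^{p-r}M$ for contraction by $\omega$, so that $\{v\mid\omega\cdot v=0\}=\ker\iota_{\omega}$. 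Flatness of $\Q$ over $\Z$ lets one commute $-\otimes\Q$ past kernels, exterior powers and the finite sum $\sum_i$; hence $G\otimes\Q$ is the cokernel of $\sum_{i=1}^{k}\bigwedge^{p}\ker(\alpha_i)\hookrightarrow\ker\iota_{\omega}$ formed inside $\bigwedge^{p}V$ (with $\ker(\alpha_i)$ and $\omega$ now read over $\Q$), and the lemma reduces to the identity of subspaces of $\bigwedge^{p}V$
\[
	\ker\big(\iota_{\omega}\colon\bigwedge^{p}V\to\bigwedge^{p-r}V\big)\;=\;\sum_{i=1}^{k}\bigwedge^{p}\ker(\alpha_i).
\]

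For the inclusion $\supseteq$, I would use for each $i$ the elementary identity $\bigwedge^{p}\ker(\alpha_i)=\ker(\iota_{\alpha_i}\colon\bigwedge^{p}V\to\bigwedge^{p-1}V)$, which is immediate in any basis of $V$ whose first vector is dual to $\alpha_i$. Since $\alpha_i\neq 0$, complete it to a basis $\alpha_i=\beta_1,\dots,\beta_r$ of $L\otimes\Q$; as $\bigwedge^{r}(L\otimes\Q)$ is one dimensional and $\omega$ generates $\bigwedge^{r}L$, we get $\omega=c\,\beta_1\wedge\cdots\wedge\beta_r$ with $c\in\Q^{\times}$, hence $\iota_{\omega}=\lambda\,\psi\circ\iota_{\alpha_i}$ for some $\lambda\in\Q^{\times}$ and some linear map $\psi$, and therefore $\bigwedge^{p}\ker(\alpha_i)=\ker\iota_{\alpha_i}\subseteq\ker\iota_{\omega}$. (This inclusion in fact already holds over $\Z$, since $\bigwedge^{p-r}M$ is torsion free, which is exactly what makes the quotient defining $G$ legitimate.)

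For the inclusion $\subseteq$, which is the crux, I would extract from $\alpha_1,\dots,\alpha_k$ a $\Q$-basis $\alpha_{j_1},\dots,\alpha_{j_r}$ of $L\otimes\Q$, so that $\omega=c'\,\alpha_{j_1}\wedge\cdots\wedge\alpha_{j_r}$ with $c'\in\Q^{\times}$, and choose a basis $e_1,\dots,e_n$ of $V$ dual to a basis of $V^{*}$ whose first $r$ elements are $\alpha_{j_1},\dots,\alpha_{j_r}$, so that $e_{\ell}^{*}=\alpha_{j_\ell}$ for $1\le\ell\le r$. In these coordinates $\iota_{\omega}$ is monomial: on a basis monomial $e_I$ it vanishes unless $\{1,\dots,r\}\subseteq I$, in which case it equals $\pm e_{I\setminus\{1,\dots,r\}}$, and the nonzero images so obtained are pairwise linearly independent; hence $\ker\iota_{\omega}=\mathrm{span}\{e_I\mid\{1,\dots,r\}\not\subseteq I\}$. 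On the other hand $\bigwedge^{p}\ker(\alpha_{j_\ell})=\mathrm{span}\{e_I\mid\ell\notin I\}$, so $\sum_{\ell=1}^{r}\bigwedge^{p}\ker(\alpha_{j_\ell})=\mathrm{span}\{e_I\mid\exists\,\ell\le r,\ \ell\notin I\}=\ker\iota_{\omega}$, and in particular $\ker\iota_{\omega}\subseteq\sum_{i=1}^{k}\bigwedge^{p}\ker(\alpha_i)$. Combined with the previous paragraph this yields the displayed identity, and hence the lemma.

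I expect the only genuine obstacle to be the bookkeeping in the opening reduction from $\Z$ to $\Q$ — i.e. checking that $\iota_\omega$, the $\ker(\alpha_i)$, and the exterior powers all base-change correctly, so that $G\otimes\Q$ is literally the cokernel above. Once that is in place the coordinate computation is forced, and the exterior-algebra sign conventions are irrelevant since everything is phrased through kernels.
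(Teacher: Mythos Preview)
Your argument is correct and follows essentially the same route as the paper: reduce to $\Q$ by finite generation, then in a basis of $V$ dual to an extension of a basis $\alpha_{j_1},\dots,\alpha_{j_r}$ of $L\otimes\Q$ compute $\ker\iota_\omega$ and each $\bigwedge^p\ker(\alpha_{j_\ell})$ monomially to obtain the equality of subspaces. The paper's proof is slightly terser on the base-change step and on the inclusion $\supseteq$, but the content is the same.
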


\begin{proof}
	Let $p\in \N$. First we remind that $\omega\cdot v=0$ if $p<\deg\omega$ and is, otherwise, the unique $(p-\deg\omega)$-element satisfying $\alpha(\omega\cdot v)=(\alpha\wedge\omega)(v)$ for all $(p-\deg\omega)$-form $\alpha$. Notice that for each $1\leq i\leq k$ the form $\alpha_i$ divides a non-zero multiple of $\omega$ in the exterior algebra of $\Hom(M;\Z)$. It follows that all $p$-elements of $\ker(\alpha_i)$ contract to $0$ against $\omega$. Assume now that $r\geq 1$ is the rank of the sub-lattice of $\Hom(M;\Z)$ spanned by the $\alpha_i$'s and that $\alpha_1\wedge...\wedge\alpha_r=m\omega$ with $m\neq 0$. Then we have:
	
	\begin{equation*}
		\left\{v\in\bigwedge^pM\;|\;\omega\cdot v=0\right\}=\left\{v\in\bigwedge^pM\;|\;(\alpha_1\wedge...\wedge\alpha_r)\cdot v=0\right\}.
	\end{equation*}
	
	\noindent We can complete the set $\{\alpha_1,...,\alpha_r\}$ with a set $\{\beta_1,...,\beta_s\}\subset\Hom(M;\Z)$ into a basis of the rational vector space $\Hom_\Q(M;\Q)$. For a $p$-element $v\in\bigwedge^p M$, seen as a $p$-vector in $\bigwedge^p_\Q(M\otimes\Q)$, with $\{e_1,...,e_r,f_1,...,f_s\}$ the dual basis of $\{\alpha_1,...,\alpha_r,\beta_1,...,\beta_s\}$, we have:
	
	\begin{equation*}
		(\alpha_1\wedge...\wedge\alpha_r)\cdot v = \sum_{|J|=p-r} (\alpha_1\wedge...\wedge\alpha_r\wedge\beta_J)(v)f_J,
	\end{equation*}
	
	\noindent and thus:
	
	\begin{equation*}
		\left\{v\in\bigwedge^pM\;|\;\omega\cdot v=0\right\}\otimes\Q=\Big\langle e_I\wedge f_J\colon |I|<r \textnormal{ and }|I|+|J|=p\Big\rangle_\Q. 
	\end{equation*}
	
	\noindent Now, $\ker(\alpha_i)\otimes \Q=\ker(\alpha_i\otimes 1)=\langle e_j,f_k\colon j\neq i \textnormal{ and } 1\leq k\leq s\rangle_\Q$, so:
	
	 \begin{equation*}
	 	\left\{v\in\bigwedge^pM\;|\;\omega\cdot v=0\right\}\otimes\Q=\left(\sum_{i=1}^k\bigwedge^p\ker(\alpha_i)\right)\otimes\Q,
	 \end{equation*}
	
	\noindent then $G\otimes \Q=0$. Since $G$ is finitely generated, it is finite.  
\end{proof}

\subsection*{Hodge Theory in Tropical Toric Geometry}

Let $P$ be a full dimensional integer polytope\footnote{the convex hull of a finite number of vertices.} in a finite dimensional real vector space $\t^*(\R)$ endowed with a lattice $\t^*(\Z)$. Its corresponding toric variety is a projective algebraic variety defined over the integers. The tropical locus $Y$ of such a toric variety we can be seen as a compactification of the tropical torus $\t(\R)=\Hom_\R(\t^*(\R);\R)$. Moreover, the moment map provides an isomorphism between $Y$ and the polytope $P$ itself. The real or complex toric varietiy defined by an integer polytope comes equipped with an ample line bundle. The space of global sections of this line bundle is naturally isomorphic to the vector space of Laurent polynomials whose exponents are integer points of $P$. The “tropical sections” of this line bundle are likewise defined as tropical Laurent polynomials whose exponents are integer points of $P$ i.e. the convex piecewise affine functions of the form:

\begin{equation*}
	\begin{array}{rcl}%
		f:\t(\R) & \longrightarrow & \T=\R\cup\{-\infty\}\\%
		v & \longmapsto & \max\limits_{\alpha\in P\cap\t^*(\Z)}(a_\alpha + \alpha(v))\;,
	\end{array}
\end{equation*}

\noindent where the $a_\alpha$'s are tropical numbers. The tropical hypersurface $X$ of $Y$ defined by this equation is the topological closure of the non-differentiability locus of $f$ in $Y$. As usual the Newton polytope of $f$ is the convex hull of the $\alpha\in P\cap\t^*(\Z)$ for which the associated coefficient $a_\alpha\neq -\infty$. By Theorem~2.3.7 of G. Mikhalkin and J. Rau \cite{Mik-Rau_tro_geo}\footnote{G. Mikhalkin and J. Rau, \emph{Tropical Geometry}, Theorem 2.3.7 p.44.}, any tropical hypersurface $X$ of $Y$ defined by an equation $f$ whose Newton polytope is $P$, is dual to an integer convex polyhedral subdivision $K$ of $P$. This means that $X$ is homeomorphic to the sub-complex of the dihomologic subdivision\footnote{Here $K$ is polyhedral so its dihomologic pseudo-subdivision is an actual regular subdivision of $K$.} of $K$ consisting of the union of the closed dihomologic cells indexed the adjacent pairs $e^1\leq e^p$. The situation is illustrated in Figure~\ref{fig:trop_hyp}. The theory of tropical homology defined by I. Itenberg, L. Katzarkov, G. Mikhalkin, and I. Zharkov for both $X$ and $Y$ can be expressed as the homology of some dihomologic cosheaves on $K$. Moreover, the data of the polyhedral subdivision $K$ alone is enough to define these cosheaves. We will adopt this point of view and state our result in terms of cosheaves associated to an integer polyhedral subdivision $K$ of an integer polytope $P$. As noted by E. Brugallé, L. L\'opez de Medrano and J. Rau in \cite{Brug-LdM-Rau_Comb_pac}, the tropical cosheaves can be associated to any integer polyhedral subdivision regardless of its convexity and most of the results about the tropical homology of tropical hypersurfaces apply to them. Theorem~\ref{thm:Lefschetz_hyp_sec} is no exception to that observation. We will not assume the subdivision to be convex and therefore the theorem will not be stated in the framework of tropical hypersurfaces.  

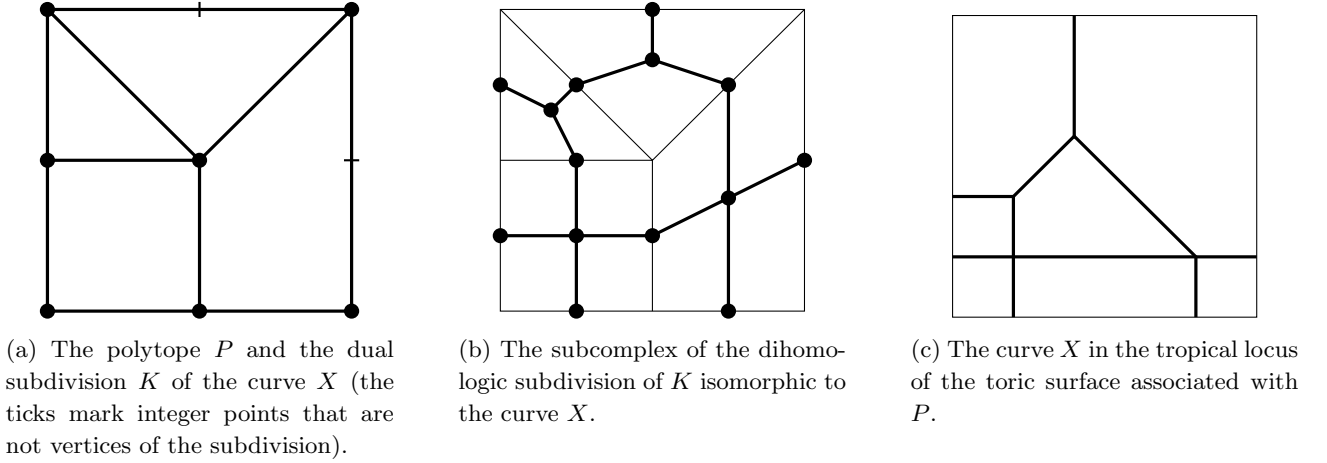
\begin{figure}[h]
	\centering
	\begin{subfigure}[t]{0.3\textwidth}
		\centering
		\begin{tikzpicture}[scale=2,very thick]
			\fill (0,0) circle (.05);
			\fill (1,0) circle (.05);
			\fill (2,0) circle (.05);
			\fill (0,1) circle (.05);
			\fill (1,1) circle (.05);
			\fill (0,2) circle (.05);
			\fill (2,2) circle (.05);
			\draw[thick]	(1.95,1) -- (2.05,1);
			\draw[thick]	(1,1.95) -- (1,2.05);	
			\draw (0,0) rectangle (2,2);
			\draw (0,1) -- (1,1) -- (1,0);
			\draw (0,2) -- (1,1) -- (2,2);		
		\end{tikzpicture}
		\caption{The polytope $P$ and the dual subdivision $K$ of the curve $X$ (the ticks mark integer points that are not vertices of the subdivision).}
	\end{subfigure}
	\hfill
	\begin{subfigure}[t]{0.3\textwidth}
		\centering
		\begin{tikzpicture}[scale=2]
			\fill (1,2) circle (.05);
			\fill (2,1) circle (.05);
			\draw[thin] (0,0) rectangle (2,2);
			\draw (0,1) -- (1,1) -- (1,0);
			\draw (0,2) -- (1,1) -- (2,2);
			\fill (.5,.5) circle (.05);
			\fill ($1/3*(0,1)+1/3*(1,1)+1/3*(0,2)$) circle (.05);
			\fill ($1/3*(2,2)+1/3*(1,1)+1/3*(0,2)$) circle (.05);
			\fill ($1/4*(1,0)+1/4*(2,0)+1/4*(2,2)+1/4*(1,1)$) circle (.05);	
			\fill ($1/2*(0,1)+1/2*(1,1)$) circle (.05);
			\fill ($1/2*(0,2)+1/2*(1,1)$) circle (.05); 
			\fill ($1/2*(2,2)+1/2*(1,1)$) circle (.05); 
			\fill ($1/2*(1,0)+1/2*(1,1)$) circle (.05);
			\fill (.5,0) circle (.05); 
			\fill (1.5,0) circle (.05); 
			\fill (0,1.5) circle (.05); 
			\fill (0,.5) circle (.05);
			
			\begin{scope}[very thick]
				\draw (0,.5) -- (.5,.5) -- (1,.5) -- ($1/4*(1,0)+1/4*(2,0)+1/4*(2,2)+1/4*(1,1)$) -- (2,1);
				\draw (.5,0) -- (.5,1) -- ($1/3*(0,1)+1/3*(1,1)+1/3*(0,2)$) -- ($1/2*(0,2)+1/2*(1,1)$);
				\draw ($1/2*(0,2)+1/2*(1,1)$) -- ($1/3*(2,2)+1/3*(1,1)+1/3*(0,2)$) -- ($1/2*(2,2)+1/2*(1,1)$) -- (1.5,0);	
				\draw ($1/3*(0,1)+1/3*(1,1)+1/3*(0,2)$) -- (0,1.5);
				\draw ($1/3*(2,2)+1/3*(1,1)+1/3*(0,2)$) -- (1,2);
			\end{scope}
		
		\end{tikzpicture}
		\caption{The subcomplex of the dihomologic subdivision of $K$ isomorphic to the curve $X$.}
	\end{subfigure}
	\hfill
	\begin{subfigure}[t]{0.3\textwidth}
		\centering
		\begin{tikzpicture}[scale=2,very thick]
			\draw[thin] (0,0) rectangle (2,2);
			\draw (0,.4) -- (2,.4);
			\draw (0.4,0) -- (.4,.8) -- ++(.4,.4) -- ++(.8,-.8) -- ++(0,-.4);
			\draw (0,.8) -- (.4,.8);
			\draw (.8,1.2) -- (.8,2);		
		\end{tikzpicture}
		\caption{The curve $X$ in the tropical locus of  the toric surface associated with $P$.}
	\end{subfigure}
	\caption{A singular curve $X$ of bidegree $(2;2)$ in the tropical locus of $\mathbb{P}^1\times\mathbb{P}^1$.}
	\label{fig:trop_hyp}
\end{figure}

\begin{ntn}

	\begin{enumerate}
		\item $\t^*(\Z)$ is a lattice of finite rank $n\in\N$ with dual lattice $\t(\Z)$ ;
		\item For $R$ a commutative ring with unit, $\t^*(R)$ (resp. $\t(R)$) is the associated free $R$-module $\t^*(\Z)\otimes R$ (resp. $\t(\Z)\otimes R$) ;
		\item $P$ is a full dimensional polytope of $\t^*(\R)$ whose vertices lie in the lattice $\t^*(\Z)$ ;
		\item $K$ is a polyhedral subdivision of $P$, with $K^{(0)}\subset \t^*(\Z)$. (Note that every cell $e^q$ of $K$ is the relative interior of a $q$-dimensional polytope whose tangent space $Te^q$ is rational relatively to $\t^*(\Z)$, i.e. $Te^q\cap\t^*(\Z)$ is free of rank $q$ and in particular, the quotient of $\t^*(\Z)$ by this sub-lattice is free of rank $n-q$.)
	\end{enumerate}

\end{ntn}

\begin{dfn}[Tropical Cosheaves]\label{dfn:trop_cosheaves}
	
	The first cosheaf we define is called the \emph{sedentarity}. It represents the stabilisators of the action of the tropical torus. We denote it by $\Sed$. It is defined on the CW-complex associated with the polytope $P$. If $Q$ is a face of $P$ we set:

	\begin{equation*}
		\Sed(Q):=\{v\in\t(\Z)\,|\,\alpha(v)=0,\,\forall \alpha\in TQ\}\subset\t(\Z),
	\end{equation*}

	\noindent with $TQ$ denoting the tangent space of $Q$ i.e. the vectorial direction of the affine space spanned by the polytope $Q$. The extension morphisms are simply given by inclusions. $\Sed(Q)$ consists of the integral vectors orthogonal to $TQ$. So, whenever $Q'$ is a face of $Q$, $\Sed(Q)$ is a sub-module of $\Sed(Q')$. 
	
	\begin{figure}[h]
		\centering
		\begin{subfigure}[t]{0.45\textwidth}
			\centering
			\begin{tikzpicture}[scale=2]
				\coordinate (u) at (-.5,0,-.5);
			
				\begin{scope}[>=Triangle]
					\draw[very thick,->] (u) -- ($(u)+(0,0,.5)$) node[above=5pt, left=.1pt]{$\df x$};
					\draw[very thick,->] (u) -- ($(u)+(.5,0,0)$) node[above=7pt, left=4pt]{$\df y$};
					\draw[very thick,->] (u) -- ($(u)+(0,.5,0)$) node[left=4pt]{$\df z$};
				\end{scope}
			
				\begin{scope}[canvas is xz plane at y=0]
					\draw[step=.5cm] (-.9,-.9) grid (1.9,2.4);
				\end{scope}
			
				\draw[dashed,very thick](0,0,0) -- (0,-1,0);
				\fill (0,-1,0) circle (.05);
				\draw[dashed,very thick] (0,-1,0) -- (0,0,1);
				\draw[dashed,very thick] (0,-1,0) -- (1,0,0);
				\fill[white] (0,0,0) -- (0,0,1) -- (1,0,0) -- cycle;
				\filldraw[pattern={Lines[angle=22.5, yshift=4pt , line width=.5pt]},very thick] (0,0,0) -- (0,0,1) -- (1,0,0) -- cycle;
				\fill (0,0,0) circle (.05) -- (0,0,1) circle (.05) -- (1,0,0) circle (.05);
				\draw[-,thick] (2,1.5,2) node[right]{$Q^2$} .. controls (0,0,-1) and (.5,.5,0) .. (.25,0,.25);
				\draw[-,thick] (-1,.5,.5) node[left]{$Q^1$} .. controls (-.5,.2,1) and (0,.175,.5) .. (0,0,.5);
			\end{tikzpicture}
			\caption{The polytope in $\t^*(\R)$.}
		\end{subfigure}
		\hfill
		\begin{subfigure}[t]{0.45\textwidth}
			\centering
			\begin{tikzpicture}[scale=2]
				\coordinate (v) at (4,0,0);
			
				\begin{scope}[canvas is xy plane at z=0]
					\draw[step=.5cm] ($(v)+(-.9,-.9)$) grid ($(v)+(.9,.9)$);
					\foreach \p in {-.5,0,.5}{
					\foreach \q in {-.5,0,.5}{
					\fill ($(v)+(\p,\q,0)$) circle (.05);}}
					\draw[dashed]($(v)+(-.03,-0.9)$) -- ($(v)+(-.03,0.9)$);
					\draw[dashed]($(v)+(.03,-0.9)$) -- ($(v)+(.03,0.9)$);
					\draw ($(v)+(0,.9)$) node[above]{$\Sed(Q^2)$};
					\draw[dotted, thick] ($(v)+(-.7,-.9)$) node[left]{$\Sed(Q^1)$} arc[start angle=0, end angle=90, radius=9pt] -- ++(-90:5pt);
				\end{scope}
			
				\begin{scope}[canvas is xz plane at y=0]
					\draw[step=.5cm] ($(v)+(-.9,-.9)$) grid ($(v)+(.9,.9)$);
				\end{scope}
			
				\coordinate (u') at (5,-0.5,0);
			
				\begin{scope}[>=Triangle]
					\draw[very thick,->] (u') -- ($(u')+(0,0,.5)$) node[below=1pt]{$\partial_x$};
					\draw[very thick,->] (u') -- ($(u')+(.5,0,0)$) node[below=1pt]{$\partial_y$};
					\draw[very thick,->] (u') -- ($(u')+(0,.5,0)$) node[right=1pt]{$\partial_z$};
				\end{scope}
		
			\end{tikzpicture}
			\caption{The sedentarity groups in $\t(\R)$.}
		\end{subfigure}
		\caption{An edge $Q^1$ of a $2$-dimensional face $Q^2$ and their respective sedentarity.}
	\end{figure}
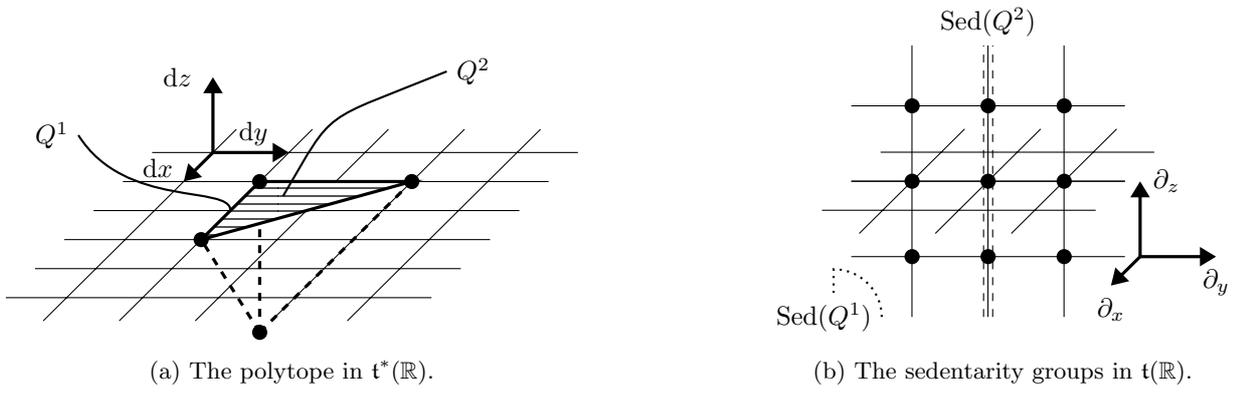
	
	\noindent The tropical cosheaves that would be associated with $Y$ are also defined on $P$, by the formula, $p\in \N$:

	\begin{equation*}
		F^{(0)}_p:=\bigwedge^p \bigslant{\t(\Z)}{\Sed}\;.
	\end{equation*}

	\noindent We will abuse the notations and denote by the same symbols $F^{(0)}_p$ all the subdivisions of these cosheaves, in particular its subdivisions on $K$ and the dihomologic subdivision of $K$. 
	
	\begin{figure}[h]
		\centering
		\begin{tikzpicture}[scale=2]
		
			\foreach \x in {0,1}{
				
				\foreach \y in {0,1}{
					
					\foreach \z in {0,1}{
						\fill (\x,\y,\z) circle (.05);}}}
				
			\foreach \p in {(0,0,1),(1,0,1),(1,0,0)}{\draw[very thick] \p -- ++(0,1,0);}
			\foreach \p in {(0,1,1),(1,1,1),(1,0,1)}{\draw[very thick] \p -- ++(0,0,-1);}
			\foreach \p in {(0,1,0),(0,1,1),(0,0,1)}{\draw[very thick] \p -- ++(1,0,0);}
			\foreach \p in {(0,0,1),(0,1,0),(1,0,0)}{\draw[dashed,very thick] (0,0,0) -- ++\p;}
			\fill[pattern={Lines[angle=22.5, yshift=4pt , line width=.5pt]}] (0,0,0) -- (0,0,1) -- (0,1,1) -- (0,1,0) -- cycle;
			\draw[thick] (-1,.5,.5) node[left]{$\displaystyle \bigwedge^2 \bigslant{\Z^3}{\Z\partial_y}$} .. controls (-0.5,0.5,0) and (-0.5,0.5,2) .. (0,.5,.5) ;
			\draw[thick] (-1,0,1) node[left]{$\displaystyle \bigwedge^2 \bigslant{\Z^3}{\Z^3}=0$} .. controls (-0.05,0.5,3) and (-0.05,0.5,3) .. (0,0,1) ;
	 		\draw[thick] (2,0.75,0.5) node[right]{$\displaystyle \bigwedge^2 \Z^3$} .. controls (0.75,0.75,0.5) and (.75,0.5,0.5) .. (0.5,0.5,0.5) ;
			\draw[thick] (2,0.2,0.5) node[right]{$\displaystyle \bigwedge^2 \bigslant{\Z^3}{\Z\partial_y+\Z\partial_z}=0$} .. controls (1.5,-0.25,0.5) and (1.5,-0.5,0.5) .. (1,0,0.5) ;			
		\end{tikzpicture}
		\caption{A cube and the groups associated by $F_2^{(0)}$ to some of its faces.}
	\end{figure}
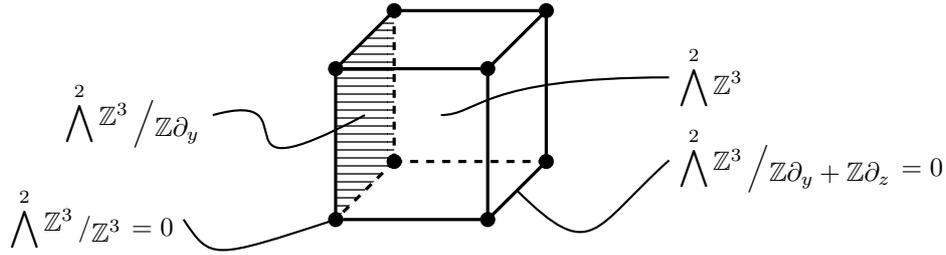
	
	\vspace{5pt}
	
	We denote the tropical cosheaves that would be associated with a tropical hypersurface $X$ by $F_p^{(1)}$, $p\in\N$. They consist of dihomologic cosheaves on $K$. For $e^q<e^{q'}$ two cells of $K$ and $p\in\N$ the group $F_p^{(1)}(e^q;e^{q'})$ is:
	
	\begin{equation}\tag{A}\label{eq:F1}
		F_p^{(1)}(e^q;e^{q'})=\sum_{e^1\leq e^q} \bigwedge^p \bigslant{(Te^1)^\perp\cap\t(\Z)}{\Sed(e^{q'})}\;.
	\end{equation} 
	
	As pointed out in Definition~\ref{dfn:dih_cosheaves}, the extension morphisms need only be defined on elementary adjacencies. For $e^{q_1} \leq e^{q_2} \leq e^{q_3} \leq e^{q_4}$ four cells of $K$ (maybe with repetitions) the elementary extension morphisms are depicted in the following diagram:
	
	\begin{equation*}
		\begin{tikzcd}
			& F^{(1)}_p(e^{q_1};e^{q_4})  \ar[dr,"g" above right] \ar[dl,"f" above left] & \\%
			F^{(1)}_p(e^{q_2};e^{q_4}) \ar[dr, "h" below left] & & F^{(1)}_p(e^{q_1};e^{q_3}) \ar[dl, "t" below right] \\%
			& F^{(1)}_p(e^{q_2};e^{q_3}) &
		\end{tikzcd}	
	\end{equation*}
	
	\noindent where:
	\begin{enumerate}
		\item The morphisms $f$ and $t$ are basic inclusions coming from the definition of the groups, c.f. (\ref{eq:F1});
		\item The morphisms $g$ and $h$ are reductions modulo $\Sed(e^{q_3})$. More precisely they correspond to the canonical projection $\bigwedge^p \bigslant{(Te^1)^\perp\cap\t(\Z)}{\Sed(e^{q_4})} \rightarrow \bigwedge^p \bigslant{(Te^1)^\perp\cap\t(\Z)}{\Sed(e^{q_3})}$ on every summand.
	\end{enumerate}
	
	By the nature of the morphisms involved the diagram is commutative. The Figure~\ref{fig:F1} illustrates the values taken by the cosheaf $F_1^{(1)}$ on a triangle with trivial subdivision $K$.
	
	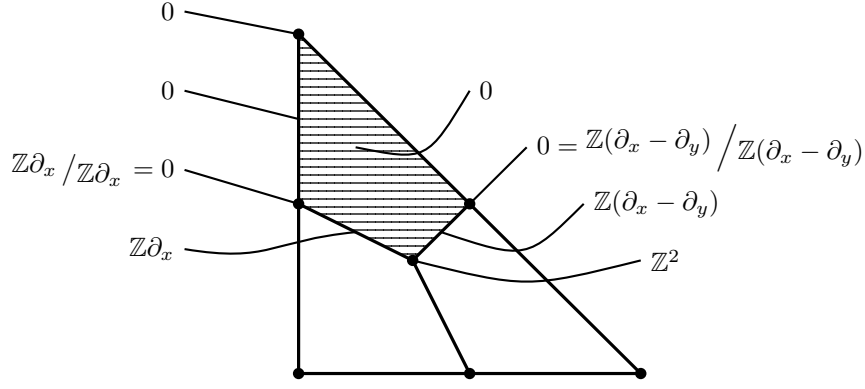
\begin{figure}[h]
		\centering
		\begin{tikzpicture}[scale=1.5,thick]
			\foreach \p in {(0,0), (3,0), (0,3)}{
				\fill \p circle (.05);}
	
			\draw[very thick] (0,0) -- (3,0) -- (0,3) -- cycle;
			\fill (1,1) circle (.05);
			\fill ($3/2*(1,1)$) circle (.05);
			\fill ($1/2*(3,0)$) circle (.05);
			\fill ($1/2*(0,3)$) circle (.05);
			\draw[very thick] ($3/2*(1,1)$) -- (1,1) -- ($1/2*(3,0)$);
			\draw[very thick] ($1/2*(0,3)$) -- (1,1);
			\fill[pattern={Lines[angle=22.5, yshift=4pt , line width=.5pt]}] (0,3) -- ($3/2*(0,1)$) -- (1,1) -- ($1/2*(3,3)$) -- cycle;
			\draw (-1,3.2) node[left]{$0$} -- (0,3);
			\draw (-1,2.5) node[left]{$0$} -- (0,2.25);
			\draw (-1,1.8) node[left]{$\displaystyle \bigslant{\Z\partial_x}{\Z\partial_x}=0$} -- (0,1.5);
			\draw (-1,1.1) node[left]{$\Z\partial_x$} .. controls ($1/2*(-1,1.1)+1/4*(1,1)+1/4*(0,1.5)+(0,-.2)$) and ($1/2*(-1,1.1)+1/4*(1,1)+1/4*(0,1.5)$) .. ($1/2*(1,1)+1/2*(0,1.5)$) ;
			\draw ($1/2*(3,0)+10/4*(0,1)$) node[right]{$0$} .. controls ($(0,3)!.75!(1.5,1.5)$) and ($(0,3)!.75!(1.5,1.5)$) .. (0.5,2);
			\draw ($1/2*(3,0)+10/4*(0,1)+(.5,-.5)$) node[right]{$\displaystyle 0=\bigslant{\Z(\partial_x-\partial_y)}{\Z(\partial_x-\partial_y)}$} -- ($3/2*(1,1)$);
			\draw ($1/2*(3,0)+10/4*(0,1)+(1,-1)$) node[right]{$\Z(\partial_x-\partial_y)$} .. controls (2,1) and (2,1) .. ($3/2*(1,1)+(-.25,-.25)$);
			\draw ($1/2*(3,0)+10/4*(0,1)+(1.5,-1.5)$) node[right]{$\Z^2$} .. controls (2,.75) and (2,.75) .. (1,1);
		\end{tikzpicture}
		\caption{A triangle and the groups associated by $F_1^{(1)}$ to some of its dihomologic cells.}
		\label{fig:F1}
	\end{figure}
	
\end{dfn}
	
\begin{dfn}[Tropical Homology Groups]\label{dfn:trop_homology}
	Let $R$ be a commutative ring. Let $X$ denote a tropical hypersurface of $Y$ dual to an integer convex polyhedral subdivision $K$ of $P$. The tropical homology groups of $Y$ are defined, for $p,q\in\N$, by:
	
	\begin{equation*}
		H_{p,q}(Y;R):=H_q(K;F_p^{(0)}\otimes R).
	\end{equation*}
 
 	\noindent Likewise, the tropical homology groups of $X$ are given for $p,q\in\N$, by:
	
	\begin{equation*}
		H_{p,q}(X;R):=H_q(K;F_p^{(1)}\otimes R).
	\end{equation*}

	\noindent Moreover, the inclusions $F_p^{(1)}\subset F_p^{(0)}$, for $p\in\N$, induce morphisms in homology:
	
	\begin{equation*}
		i_{p,q}:H_{p,q}(X;R)\longrightarrow H_{p,q}(Y;R).
	\end{equation*}

\end{dfn} 

\subsection*{Lefschetz Hyperplane Section Theorem in Tropical Orbifold Toric Varieties}

\begin{dfn}
    Let $P$ be a simple polytope, we denote by $\Sed_{(1)}$ the cosheaf:
    
    \begin{equation*}
    	\Sed_{(1)}:= \bigoplus_{\substack{Q<P \\ \textnormal{codim} Q =1}} \Big[\, Q\, ;\, \Sed(Q) \, \Big].
    \end{equation*}
    
    \noindent If $Q$ is a codimension $1$ face of $P$ there is a natural injective cosheaf morphism $\big[\, Q\, ;\, \Sed(Q) \, \big]\rightarrow \Sed$. If $Q'$ is a face of $P$ the group $\big[\, Q\, ;\, \Sed(Q) \, \big](Q')$ is either $\Sed(Q)$ or $0$. The former only happens when $Q'$ is a face of $Q$ and in this case the morphism is given by the inclusion. Summing all these yields a morphism:
    
    \begin{equation*}
    	\Sed_{(1)}\rightarrow \Sed. 
    \end{equation*}
    
    \noindent It is injective because the polytope $P$ is simple : if $Q'<P$ has codimension $q$ it is the intersection of exactly $q$ faces of codimension $1$ and:
    
    \begin{equation*}
    	\Big(TQ'\Big)^\perp=\bigoplus_{\substack{Q<P \\ \textnormal{codim} Q =1}} TQ^\perp,
    \end{equation*}
    
    \noindent so the $\Sed(Q)$'s are in direct sum inside $\Sed(Q')$. For the same reason the quotient $\Delta$ of $\Sed$ by $\Sed_{(1)}$ is a cosheaf of finite groups. We denote by $\delta(P)$ the least common multiple of the exponents\footnote{The exponent of a group $G$, with group law denoted multiplicatively, is the smallest, if any, positive integer $e$ for which $g^e=1$ for all $g\in G$.} of the groups $\Delta(Q)$ for all $Q\leq P$.
\end{dfn}

\begin{rems} 
	For $P$ a simple polytope:
	
    \begin{enumerate}
    	\item The cosheaf of finite groups $\Delta$ encodes the singularities of the toric variety associated with $P$. The set of complex points of the affine open set associated with the face $Q\leq P$ is quotient of $(\C^\times)^k\times\C^{n-k}$ by an algebraic action of the group $\Delta(Q)$ (c.f. \cite{Ful_tor_var}\footnote{W. Fulton, \emph{Introduction to Toric Varieties}, Section 2.2 p.34.}).
    	\item For two adjacent faces $Q_1<Q_2$ of $P$, the snake lemma implies that the following commutative diagram of exact sequences:
    	
		\begin{equation*}
			\begin{tikzcd}%
     				0 \ar[r] & \Sed_{(1)}(Q_2) \ar[d,"f"] \ar[r] & \Sed(Q_2) \ar[d,"g"] \ar[r] & \Delta(Q_2) \ar[d,"h"] \ar[r] & 0  \\%
					0 \ar[r] & \Sed_{(1)}(Q_1) \ar[r] & \Sed(Q_1) \ar[r] & \Delta(Q_1) \ar[r] & 0 %
			\end{tikzcd}
     	\end{equation*}
     
     	\noindent gives rise to an exact sequence:
     
     	\begin{equation*}
     		0 \rightarrow \underset{\ker(f)}{\underbrace{0}} \rightarrow \underset{\ker(g)}{\underbrace{0}} \rightarrow \ker(h) \rightarrow \underset{\textnormal{coker}(f)}{\underbrace{\bigoplus_{\substack{Q_1<Q<P \\ Q \ngtr Q_2 \\ \textnormal{codim }Q=1}}\Sed(Q)}} \rightarrow \underset{\textnormal{coker}(g)}{\underbrace{\Sed(Q_1)/\Sed(Q_2)}}  \rightarrow \textnormal{coker}(h) \rightarrow 0\; .
     	\end{equation*}
     
     	\noindent However, $\ker(h)$ is a finite group and $\textnormal{coker}(f)$ is free, so $\ker(h)=0$ and $h$ is injective. As a consequence, $\delta(P)$ is the least common multiple of the exponents of the groups $\Delta(V)$, $V$ running amongst all vertices of $P$.
     	\item For $R$ a commutative ring, the classification of finite Abelian groups implies that the vanishing of both $\Delta(V)\otimes R$ and $\textnormal{Tor}(\Delta(V);R)$ is equivalent to the invertibility of the exponent of the group $\Delta(V)$ in $R$. When $R$ is a field this is equivalent to the coprimality of this exponent with the characteristic of $R$.
	\end{enumerate}
\end{rems}

\begin{dfn}\label{dfn:theta}
	Let $Q$ be an integer polytope in $\t^*(\R)$. Its tangent space is rational, thus contains a lattice $TQ\cap\t^*(\Z)$ and we denote $\omega(Q)$ one of the two generators of its last exterior power. The contraction against this element defines an endomorphism of degree $(-\dim Q)$ of the exterior algebra $\bigwedge^*\t(\Z)$ and we denote its kernel by $\ker(\omega(Q)\cdot-)$. Because of Lemma~\ref{lem:finite_index} the sub-module:
	
	\begin{equation*}
		\sum_{\substack{E\leq Q\\ \dim E =1}}\ker(\omega(E)\cdot-) \subset \ker(\omega(Q)\cdot-),
	\end{equation*} 
	
	\noindent has the same rank. We denote by $\theta(Q)$ the exponent of the finite quotient:
	
	\begin{equation*}
		\bigslant{\ker(\omega(Q)\cdot-)}{\displaystyle\sum_{\substack{E\leq Q\\ \dim E =1}}\ker(\omega(E)\cdot-)}\;.
	\end{equation*}
	
	\noindent By definition this number is $1$ for all polytopes of dimension at most $1$. For $K$ a compact integer polyhedral complex of $\t^*(\R)$ we define $\theta(K)$ to be the least common multiple of the $\theta(e)$, for $e$ a cell of $K$.
\end{dfn}	
             
\begin{prop}\label{prop:preparation_F0_1st_position}
    Let $P$ be a simple integer polytope endowed with an integer polyhedral subdivision $K$. For all ring $R$ in which $\delta(P)$ is invertible, the images of the cosheaves $(F^{(0)}_p\otimes R)_{p\geq 0}$ under the dihomologic subdivision of $K$ satisfy the hypotheses of Proposition~\ref{prp:degenerate_E2_lef} i.e. for all polyhedral cells $e\in K$ we have:
    
    \begin{equation*}
    	H_k\left(K;(F^{(0)}_p\otimes R)_e\right)\neq 0 \; \Rightarrow \; k=n.
    \end{equation*}
    
\end{prop}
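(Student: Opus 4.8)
The plan is to peel off the dihomologic and subdivision layers so as to reduce to a purely local computation on the polytope $P$, and then to split that local chain complex into augmented cochain complexes of simplices.

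First I would use the commutative diagram~\eqref{diag:comm_rel_loc}, together with the quasi-isomorphism identifying the cellular chain complex of a cellular cosheaf with that of its dihomologic subdivision, to rewrite $H_k(K;(F^{(0)}_p\otimes R)_e)$ as the degree-$k$ cellular homology of the ordinary localisation at $e$ of $F^{(0)}_p\otimes R$, now seen as a cosheaf on $K$. Since every cell $e$ of $K$ lies in a unique face $Q$ of $P$, and $F^{(0)}_p\otimes R$ on $K$ is the subdivision of $F^{(0)}_p\otimes R$ on $P$, Lemma~\ref{lem:double_loc} then provides a quasi-isomorphism
\[
	C_*\bigl(P\,;(F^{(0)}_p\otimes R)_Q\bigr)\ \simeq\ C_*\bigl(K\,;(F^{(0)}_p\otimes R)_e\bigr),
\]
so the whole statement reduces to showing that, for every face $Q\leq P$, the complex $C_*(P;(F^{(0)}_p\otimes R)_Q)$ has homology concentrated in degree $n$. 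The case $Q=P$ is immediate: the local cosheaf then lives on the single $n$-cell and equals $\bigwedge^p\t(R)$ there, giving $\bigwedge^p\t(R)$ in degree $n$ and $0$ elsewhere, independently of $\delta(P)$.

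For a proper face $Q$ of codimension $c\geq 1$ the point is to exploit simplicity of $P$: then $Q$ lies in exactly $c$ facets $G_1,\dots,G_c$, the faces of $P$ above $Q$ are the $G_S=\bigcap_{i\in S}G_i$ for $S\subseteq\{1,\dots,c\}$ (with $G_\varnothing=P$ and $\cod G_S=|S|$), and the only facets above $G_S$ are the $G_i$, $i\in S$. Writing $n_i$ for a primitive generator of the rank-one lattice $\Sed(G_i)$, this gives $\Sed_{(1)}(G_S)=\bigoplus_{i\in S}\Z n_i$, so that $\Sed(G_S)/\bigoplus_{i\in S}\Z n_i=\Delta(G_S)$ is finite with exponent dividing $\delta(P)$. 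Invertibility of $\delta(P)$ in $R$ kills this group and its $\mathrm{Tor}$, whence $\bigoplus_{i\in S}R\,n_i\xrightarrow{\ \sim\ }\Sed(G_S)\otimes R$ inside $\t(R)$; as $\Sed(G_S)$ is a saturated sublattice, this submodule is a free $R$-summand of $\t(R)$ with basis $(n_i)_{i\in S}$, and $F^{(0)}_p(G_S)\otimes R=\bigwedge^p N_S$ with $N:=\t(R)$ and $N_S:=N/\langle n_i:i\in S\rangle$. Fixing a free complement $W$ with $N=\langle n_1,\dots,n_c\rangle\oplus W$, I would then expand
\[
	\bigwedge^p N_S\ =\ \bigoplus_{T\subseteq\{1,\dots,c\}\setminus S}R\,n_T\otimes\bigwedge^{\,p-|T|}W,\qquad n_T:=\bigwedge_{i\in T}n_i,
\]
and note that the extension morphisms of $(F^{(0)}_p\otimes R)_Q$ carry the $(T,\cdot)$-summand at $G_S$ isomorphically to the $(T,\cdot)$-summand at $G_{S'}$ when $S\subseteq S'$ and $T\cap S'=\varnothing$, and to $0$ otherwise.

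With this in hand, $C_*(P;(F^{(0)}_p\otimes R)_Q)$ splits, over the pairs $(T,\beta)$ with $T\subseteq\{1,\dots,c\}$ and $\beta$ a basis monomial of $\bigwedge^{p-|T|}W$, into copies of the cellular chain complex of the constant cosheaf $R$ on the subposet $\{G_S:S\cap T=\varnothing\}$. Via $S\mapsto(\{1,\dots,c\}\setminus T)\setminus S$ this subposet matches, up to a constant shift in dimension, the face poset (empty face included) of the simplex on $\{1,\dots,c\}\setminus T$, so each such summand is the augmented simplicial chain complex of that simplex. These are acyclic unless $T=\{1,\dots,c\}$, in which case the summand — present only when $|T|=c\le p$ — is a single copy of $R$ sitting on the $n$-cell $P$; hence $H_k(P;(F^{(0)}_p\otimes R)_Q)$ vanishes for $k\neq n$, with $H_n\cong\bigwedge^c\langle n_1,\dots,n_c\rangle\otimes\bigwedge^{\,p-c}W$, which is the desired conclusion. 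The main obstacle I anticipate is not conceptual but a sign/orientation bookkeeping: one must verify that the oriented-coefficient groups $\Z(G_S)$ and the signs of the cellular boundary operator are, after the splitting, compatible with the standard signs of the augmented cochain complexes of the simplices on $\{1,\dots,c\}\setminus T$ — morally, that the star of $Q$ in the simple polytope $P$ has the combinatorial type of $Q$ times a simplex — and this comparison must be carried out with care.
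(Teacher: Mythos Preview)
Your proposal is correct. The reduction via diagram~\eqref{diag:comm_rel_loc} and Lemma~\ref{lem:double_loc} matches the paper exactly, and the direct decomposition of $C_*(P;(F^{(0)}_p\otimes R)_Q)$ by the monomials $n_T\otimes\beta$ is sound; the sign check you flag is routine once orientations of the $G_S$ are fixed via the product structure $\bar G_S\cong \bar Q\times(\text{face of a }(c-1)\text{-simplex})$ implicit in simplicity.

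Where you diverge from the paper is in the local computation at $Q$. Rather than splitting the chain complex of $(F^{(0)}_p\otimes R)_Q$ directly, the paper builds a \emph{global} cosheaf resolution $0\leftarrow F^{(0)}_p\otimes R\leftarrow\Res_p$ on the face complex $\Pi$ of $P$, using the complex $C(\t(\Z);G;p)$ of Definition~\ref{dfn:resolution_ext_alg} and Lemma~\ref{lem:resolution_ext_alg} (this is where invertibility of $\delta(P)$ enters, making the $n_i$ a basis of a free summand so that the lemma applies). It then localises the entire resolution at $Q$: each $(\Res_{p,q})_Q$ becomes a sum of cosheaves $[\Pi(Q_{(q)});\Pi(Q_{(q)})-Q;M]$, acyclic when $Q<Q_{(q)}$ strictly and with homology concentrated in degree $n-c$ when $Q_{(q)}=Q$ (ball relative to its boundary); Lemma~\ref{lem:hom_shi} then shifts this to degree $n$. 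Your splitting and the paper's resolution encode the same Boolean-lattice combinatorics --- indeed Lemma~\ref{lem:resolution_ext_alg} is itself proved by exactly the simplex decomposition you carry out --- but the paper's packaging as a resolution is reused verbatim in the subsequent proposition computing the ranks $h_p(P^\circ)$. Your route is more elementary and self-contained, and has the bonus of exhibiting $H_n$ explicitly as $n_{\{1,\dots,c\}}\otimes\bigwedge^{p-c}W$; the paper's buys reusability.
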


\begin{proof}
	Let $F$ denote one of the cosheaves $(F^{(0)}_p)_{0\leq p\leq n}$ on $P$. The polyhedral subdivision $K$ of $P$ first and then its dihomologic subdivision give rise to two consecutive subdivisions of cosheaves $F\mapsto F'$ then $F'\mapsto F''$. For $e$ a cell of $K$, the localisation process for dihomologic cosheaves consisting in “fixing the first coordinate” $G\mapsto G_e$ applied to a subdivided cosheaf is equivalent to localising directly the cosheaf that was subdivided, c.f. diagram~\ref{diag:comm_rel_loc}. Consequently, the cosheaf $F''_e$ is the same as $F'_e$. In the light of Lemma~\ref{lem:double_loc} on double localisation, showing that $F'_e$ can only have non-trivial homology in degree $n$ for all cell $e\in K$ is strictly equivalent to showing that $F_Q$ can only have non-trivial homology in degree $n$ for all face $Q<P$. Because $P$ is simple, we have the exact sequence of cosheaves of abelian groups:

    \begin{equation*}
        0\rightarrow \Sed_{(1)} \rightarrow  \Sed \rightarrow \Delta \rightarrow 0,
    \end{equation*}

	\noindent which is tensorised to the exact sequence of cosheaves of $R$-modules:

	\begin{equation*}
        0 \rightarrow \textnormal{Tor}(\Delta;R) \rightarrow \Sed_{(1)}\otimes R \rightarrow  \Sed \otimes R \rightarrow \Delta \otimes R\rightarrow 0.
    \end{equation*}
    
    \noindent By hypotheses, $\Sed\otimes R$ is then isomorphic to $\Sed_{(1)}\otimes R$ and $F^{(0)}_p\otimes R$ is isomorphic to:
    
    \begin{equation*}
    	\bigwedge^p \bigslant{\t(R)}{(\Sed_{(1)}\otimes R)}.
    \end{equation*}
    
    \noindent The cosheaf $\Sed_{(1)}$ can be described as follows : choose, for every codimension\,$1$ face ${Q_{(1)}}$ of $P$, a generator $g_{Q_{(1)}}$ of $\Sed({Q_{(1)}})\cong \Z$, and associate to all faces $Q$ of $P$ the set $G(Q)$ of the $g_{Q_{(1)}}$'s for which $Q<{Q_{(1)}}$. The association $G:Q\mapsto G(Q)$ is a cosheaf of sets with inclusions as extension maps. In these notations, $\Sed_{(1)}$ is the cosheaf that associate to $Q$ the sub-module of $\t(\Z)$ spanned by $G(Q)$ and with extension morphisms given by inclusions. We can consider the cosheaf of chain complexes $Q\mapsto C(\t(\Z);G(Q);p)$ of Definition~\ref{dfn:resolution_ext_alg}. It has an augmentation morphism:
    
    \begin{equation*}
    	0\leftarrow \bigwedge^p \bigslant{\t(\Z)}{\Sed_{(1)}} \leftarrow C(\t(\Z);G;p)\; ,
    \end{equation*}
    
    \noindent and becomes a resolution once we tensorise every group by $R$. Indeed, for all $Q$ the set $G(Q)\otimes R$ is linearly independent in $\t(R)$ and $\Sed_{(1)}(Q)\otimes R=\Sed(Q)\otimes R$ is a free summand of $\t(R)$ since $\t(R)/(\Sed(Q)\otimes R)\cong \Hom(TQ_\Z\,;R)$ is free, so Lemma~\ref{lem:resolution_ext_alg} applies. For that reason, we set $\Res_p:=C(\t(\Z);G;p)\otimes R$ and we have a resolution of cosheaves:
    
    \begin{equation*}
    	0\leftarrow F^Y_p\otimes R \leftarrow \Res_p\; .
    \end{equation*}
   
	\noindent Localising at a face $Q$ amounts to tensorisation by a cosheaf of free modules, therefore it is an exact endofunctor of the category of cellular cosheaves. To avoid confusion we will denote, from now on, the CW-complex defined by the polytope $P$ by $\Pi$ and every face $Q\leq P$ is meant to be open. In particular, $\Pi(Q)$ is the smallest sub-complex of $\Pi$ containing the open face $Q$, that is to say the collection of its faces. For a face $Q$, we have the local resolution:
   
	\begin{equation*}
    	0\leftarrow \big(F^{(0)}_p\otimes R\big)_Q \leftarrow \big(\Res_p\big)_Q\; ,
    \end{equation*}

	\noindent and for $0\leq q$:
   
	\begin{equation*}
		\begin{split}
			\big(\Res_{p,q}\big)_Q &= \bigoplus_{\substack{Q_{(q)}<P \\ \textnormal{codim } Q_{(q)} =q}} \left[ \, \Pi(Q_{(q)})\,;\, \left(\bigwedge^{p-q}\bigslant{\t(\Z)}{\Sed_{(1)}(Q_{(q)})}\right)\otimes R \right]\otimes_R \Big[ \Pi \,;\,\Pi-Q\,;R\Big]\\%
			&= \bigoplus_{\substack{Q<Q_{(q)}<P \\ \textnormal{codim } Q_{(q)} =q}} \left[ \, \Pi(Q_{(q)})\,;\,\Pi(Q_{(q)})-Q\,;\, \left(\bigwedge^{p-q}\bigslant{\t(\Z)}{\Sed_{(1)}(Q_{(q)})}\right)\otimes R \right]\; .
		\end{split}
	\end{equation*}
	
	\noindent If $Q$ is a proper face of ${Q_{(q)}}$ then $|\Pi(Q_{(q)})|$, the closure of ${Q_{(q)}}$, retracts to $|\Pi(Q_{(q)})-Q|$ and the cosheaf:
	
	\begin{equation*}
		\left[ \, \Pi(Q_{(q)})\,;\,\Pi(Q_{(q)})-Q\,;\, \left(\bigwedge^{p-q}\bigslant{\t(\Z)}{\Sed_{(1)}(Q_{(q)})}\right)\otimes R \right]\; ,
	\end{equation*}
	
	\noindent has trivial homology. When $Q_{(q)}$ equals $Q$, we are computing the homology of a closed $(n-q)$-ball relatively to its boundary. The homology is concentrated in top dimension $n-q$. In application of Lemma~\ref{lem:hom_shi}, the homology of $\big(F^{(0)}_p\otimes R\big)_Q$ is the shift by $q$ of $(\Res_{p,q})_Q$ and therefore has homology concentrated in dimension $(n-q)+q=n$.
\end{proof}

\begin{rem}\label{rmk:resolution_F0}
	In the previous proof we have defined a resolution of the cosheaf $F^Y_p\otimes R$, namely:
    
    \begin{equation*}
    	0\leftarrow F^Y_p\otimes R \leftarrow \Res_p\; .
    \end{equation*}

\end{rem}
     
\begin{prop}\label{prop:preparation_F1_1st_position}
	Under the same hypotheses as Proposition~\ref{prop:preparation_F0_1st_position} and if $\theta(K)$ is invertible in $R$, the dihomologic cosheaves $\big(F^{(1)}_p\otimes R\big)_{0\leq p\leq n-1}$ of $K$ satisfy the hypotheses of Proposition~\ref{prp:degenerate_E2_lef} i.e. for all polyhedral cells $e\in K$:
    
    \begin{equation*}
    	H_k\left(K;(F^{(1)}_p\otimes R)_e\right)\neq 0\; \Rightarrow\; k=n.
    \end{equation*}

\end{prop}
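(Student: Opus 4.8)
The plan is to transcribe the proof of Proposition~\ref{prop:preparation_F0_1st_position}, the role of the hypothesis on $\theta(K)$ being to reduce the study of the cosheaves $F^{(1)}_p$ to that of the $F^{(0)}_p$. Fix a cell $e\in K$, of dimension $d$ and contained in the face $Q_0$ of $P$. By the compatibility of localisation with subdivision (diagram~\ref{diag:comm_rel_loc}) it suffices to show that the cellular cosheaf $(F^{(1)}_p\otimes R)_e$ on the star of $e$ has homology concentrated in degree $n$. First I would simplify its coefficients: the invertibility of $\delta(P)$ gives $\Sed\otimes R\cong\Sed_{(1)}\otimes R$ together with $\textnormal{Tor}(\Delta;R)=0$, so in formula~(\ref{eq:F1}) every $\Sed(e')$ may be replaced by $\Sed_{(1)}(e')\otimes R$. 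The summand attached to an edge $e^1\le e$ is then $\bigwedge^p$ of a saturated corank-one sublattice, namely the degree-$p$ part of $\ker\big(\omega(e^1)\cdot-\big)$ inside $\bigwedge^p\big(\t(R)/\Sed_{(1)}(e')\otimes R\big)$ — here $\omega(e^1)$ is the primitive generator of $Te^1\cap\t^*(\Z)$, a form vanishing on $\Sed_{(1)}(e')$ — so that the value of $(F^{(1)}_p\otimes R)_e$ at a cell $e'\ge e$ is $\sum_{e^1\le e}\ker\big(\omega(e^1)\cdot-\big)$, computed in $\bigwedge^p\big(\t(R)/\Sed_{(1)}(e')\otimes R\big)$.

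The second step is where $\theta(K)$ enters. By Lemma~\ref{lem:finite_index} the submodule $\sum_{e^1\le e}\ker(\omega(e^1)\cdot-)$ has finite index in $\ker(\omega(e)\cdot-)$, and by the definition of $\theta(e)$ the cokernel has exponent dividing $\theta(e)$, hence $\theta(K)$; after $\otimes R$ the two therefore coincide. Thus $(F^{(1)}_p\otimes R)_e$ is the kernel of the contraction $\omega(e)\cdot-$ acting on $(F^{(0)}_p\otimes R)_e$, which gives a short exact sequence of cellular cosheaves on the star of $e$
\[
0\longrightarrow (F^{(1)}_p\otimes R)_e\longrightarrow (F^{(0)}_p\otimes R)_e\xrightarrow{\ \omega(e)\cdot-\ }\mathcal{G}\longrightarrow 0,
\]
with $\mathcal{G}$ the image. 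Since $\omega(e)$ is a wedge of primitive generators of tangent lines of edges of $e$, this image lands in, and — again up to indices controlled by $\theta(K)$ — fills, $\bigwedge^{p-d}\big(((Te)^\perp\cap\t(R))/\Sed_{(1)}(e')\otimes R\big)$; as $(Te)^\perp\cap\t$ is a fixed lattice containing every $\Sed_{(1)}(e')$ with $e'\ge e$, this presents $\mathcal{G}$ as a cosheaf of exactly the same shape as those treated in Proposition~\ref{prop:preparation_F0_1st_position}, with $\t$ replaced by $(Te)^\perp\cap\t$ and $p$ by $p-d$, and by Lemma~\ref{lem:double_loc} it is the localisation at $Q_0$ of such a cosheaf on $P$.

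It remains to invoke Proposition~\ref{prop:preparation_F0_1st_position} twice. It applies verbatim to $(F^{(0)}_p\otimes R)_e$, and applying its proof — the Koszul resolution $\Res$, the triviality after localisation of the homology of the characteristic cosheaves occurring in $\Res$, and the homology shift Lemma~\ref{lem:hom_shi} — to the cosheaf $\mathcal{G}$ shows that $\mathcal{G}$ too has homology concentrated in degree $n$. The long exact homology sequence of the short exact sequence above then gives $H_k\big((F^{(1)}_p\otimes R)_e\big)=0$ for $k\le n-2$, and identifies $H_{n-1}\big((F^{(1)}_p\otimes R)_e\big)$ with the cokernel of $\omega(e)\cdot-$ in degree $n$; since on both $(F^{(0)}_p\otimes R)_e$ and $\mathcal{G}$ the only surviving contribution to degree-$n$ homology comes from a closed face relative to its boundary, on that piece the map is induced by the surjective contraction $\bigwedge^{p-\textnormal{codim}\,Q_0}\big(\t(R)/\Sed_{(1)}(Q_0)\otimes R\big)\to\bigwedge^{p-d-\textnormal{codim}\,Q_0}\big(((Te)^\perp\cap\t(R))/\Sed_{(1)}(Q_0)\otimes R\big)$, hence is surjective, so the cokernel vanishes.

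The main obstacle is the index bookkeeping of the second step: one must verify that inverting $\theta(K)$ simultaneously kills the index of $\sum_{e^1\le e}\ker(\omega(e^1)\cdot-)$ in $\ker(\omega(e)\cdot-)$ and the index in the identification of the image $\mathcal{G}$ with the shifted $F^{(0)}$-type cosheaf, and that this holds uniformly over all cells $e$ and all cells $e'\ge e$ bearing the extra quotient by $\Sed_{(1)}(e')$. Granting this, the rest is a mechanical copy of the proof of Proposition~\ref{prop:preparation_F0_1st_position}.
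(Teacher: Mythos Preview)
Your approach is correct but takes a genuinely different route from the paper. Both arguments begin the same way: invertibility of $\theta(K)$ identifies $\big(F^{(1)}_p\otimes R\big)_e$ with the kernel of the contraction $\omega(e)\cdot-$ acting on $\big(F^{(0)}_p\otimes R\big)_e$. From there the paper is more direct: it chooses a complement $B$ of $A=(Te)^\perp\cap\t(\Z)$ inside $\t(\Z)$, observes that locally $\big(\t(\Z)/\Sed\big)_e\cong\big(A/\Sed\big)_e\oplus\big(B/\Sed\big)_e$ (since $\Sed(e')\subset A$ for $e'\ge e$), and hence
\[
\big(F^{(0)}_p\otimes R\big)_e\;=\bigoplus_{p_A+p_B=p}\Big(\textstyle\bigwedge^{p_A}A/\Sed\Big)_e\otimes\Big(\textstyle\bigwedge^{p_B}B/\Sed\Big)_e\otimes R,
\]
with $\big(F^{(1)}_p\otimes R\big)_e$ being the sub-sum over $p_B<\dim e$. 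Thus $\big(F^{(1)}_p\otimes R\big)_e$ is a \emph{direct summand} of $\big(F^{(0)}_p\otimes R\big)_e$, and the conclusion follows immediately from Proposition~\ref{prop:preparation_F0_1st_position}. Your short-exact-sequence route works but costs more: you must separately establish that the cokernel $\mathcal{G}$ has homology concentrated in degree $n$ (by re-running the Koszul resolution argument with $\t$ replaced by $(Te)^\perp\cap\t$), and then argue surjectivity of $H_n\big((F^{(0)}_p)_e\big)\to H_n(\mathcal{G})$ by lifting the contraction to a map of resolutions and invoking naturality of the connecting maps in Lemma~\ref{lem:hom_shi}. Two minor points: the identification of the image $\mathcal{G}$ with $\bigwedge^{p-d}\big((Te)^\perp/\Sed\big)$ is actually exact over $\Z$ (no $\theta(K)$ needed there), since $Te\cap\t^*(\Z)$ is saturated; and your index bookkeeping for the kernel does need the check --- carried out in the paper --- that the relevant finite quotient modulo $\Sed(e')$ still has exponent dividing $\theta(K)$. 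The paper's splitting avoids both the second application of Proposition~\ref{prop:preparation_F0_1st_position} and the surjectivity step entirely.
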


\begin{proof}
	Let $p$ be a non-negative integer. Proposition~\ref{prop:preparation_F1_1st_position} follows from Proposition~\ref{prop:preparation_F0_1st_position} and the fact that $F_p^{(1)}\otimes R$ is locally a direct summand of $F_p^{(0)}\otimes R$. More precisely, we will show that for all cells $e^q$ of $K$, the cosheaf $\big(F_p^{(1)}\otimes R\big)_{e^q}$ is a direct summand of the cosheaf $\big(F_p^{(0)}\otimes R\big)_{e^q}$. This statement implies that the homology of the former is a direct summand of the latter. By Proposition~\ref{prop:preparation_F0_1st_position}, the latter can only have non-vanishing homology in dimension $n$ and so can the former.
	
	 \vspace{5pt}
	 
	 Let $e^q$ be a cell of $K$ and set $A:=\big(Te^q\big)^\perp\cap \t(\Z)$. Now choose $B$ a supplementary sub-module\footnote{It can be done for $\big(Te^q\big)^\perp$ is a rational sub-space of $\t(\R)$.}  of $A$ in $\t(\Z)$ and consider $\bigslant{A}{\Sed}$ and $\bigslant{B}{\Sed}$ the respective images of the constant cosheaves $[K;A]$ and $[K;B]$ in the quotient $\bigslant{\t(\Z)}{\Sed}$. Notice that if $e^r$ is a cell containing $e^q$ then $\Sed(e^r)\subset A$ and the projection $B\rightarrow \bigslant{B}{\Sed(e^r)}$ is an isomorphism. Since the commutativity conditions of these projections are satisfied they induce an isomorphism of cosheaves:
	
	\begin{equation*}
		\big[K;K-e^q;B\big]\overset{\cong}{\longrightarrow} \left(\bigslant{B}{\Sed}\right)_{e^q}\;.
	\end{equation*}
	
	\noindent Therefore, the cosheaf $\bigslant{\t(\Z)}{\Sed}$ splits around $e^q$ into the direct sum:
	
	\begin{equation*}
		\left(\bigslant{\t(\Z)}{\Sed}\right)_{e^q}=\Big(\bigslant{A}{\Sed}\Big)_{e^q}\oplus \Big(\bigslant{B}{\Sed}\Big)_{e^q}\;,
	\end{equation*}
	
	\noindent and for all $p\geq0$ we have the decomposition:
	
	\begin{equation*}
		\left(\bigwedge^p\bigslant{\t(\Z)}{\Sed}\right)_{e^q}=\bigoplus_{p_A+p_B=p}\left(\bigwedge^{p_A}\bigslant{A}{\Sed}\right)_{e^q}\otimes \left(\bigwedge^{p_B}\bigslant{B}{\Sed}\right)_{e^q}\;.
	\end{equation*}
	
	\noindent All the cosheaves involved being made of free groups this decomposition remains valid after tensorisation by $R$:
	
	\begin{equation*}
		\left(\bigwedge^p\bigslant{\t(\Z)}{\Sed}\right)_{e^q}\otimes R=\bigoplus_{p_A+p_B=p}\left(\bigwedge^{p_A}\bigslant{A}{\Sed}\right)_{e^q}\otimes \left(\bigwedge^{p_B}\bigslant{B}{\Sed}\right)_{e^q}\otimes R\;.
	\end{equation*}
	
	 By assumption $\theta(K)$ is invertible in $R$. As $\theta(e^q)$ divides $\theta(K)$, this number is invertible in $R$ as well. It implies that if $\omega$ is a generator of $\bigwedge^q(Te^q\cap\t^*(\Z))$, the group $F^{(1)}_p(e^q;e^r)\otimes R$, for $e^q<e^r$, consists of the $p$-elements of $\bigwedge^p\bigslant{\t(\Z)}{\Sed(e^r)}\otimes R$ whose contraction against $\omega\otimes 1$ vanishes. Indeed, we have a commutative diagram with exact rows and columns:
	 
	 \begin{equation*}
	 	\begin{tikzcd}%
	 		0 \arrow[r] & \displaystyle \sum_{e^1\leq e^q} \bigwedge^p((Te^1)^\perp\cap\t(\Z)) \arrow[r]\arrow[d] & \left\{ v\in \bigwedge^p\t(\Z)\;|\;\omega\cdot v=0 \right\} \arrow[r]\arrow[d] & G \arrow[r]\arrow[d] & 0 \\%
			0 \arrow[r] & F^{(1)}_p(e^q;e^r) \arrow[r]\arrow[d] & \left\{ v\in \bigwedge^p\bigslant{\t(\Z)}{\Sed(e^r)} \;|\;\omega\cdot v=0 \right\} \arrow[r]\arrow[d] & G' \arrow[r]\arrow[d] & 0 \\%
			& 0 & 0 & 0 %
		\end{tikzcd}
	 \end{equation*}
	 
	 \noindent By definition the exponent of $G$ divides $\theta(e^q)$ and therefore the exponent of $G'$ divides it as well. Then it follows that both $G'\otimes R$ and $\textnormal{Tor}(G';R)$ vanishes and $F^{(1)}_p(e^q;e^r)\otimes R = \left\{ v\in \bigwedge^p\bigslant{\t(\Z)}{\Sed(e^r)} \;|\;\omega\cdot v=0 \right\}\otimes R$. Thus the localisation $\big(F^{(1)}_p\big)_{e^q}\otimes R$ is expressed as:
	
	\begin{equation*}
		\left(F^{(1)}_p\right)_{e^q}\otimes R=\bigoplus_{\substack{p_A+p_B=p \\ p_B< q}}\left(\bigwedge^{p_A}\bigslant{A}{\Sed}\right)_{e^q}\otimes \left(\bigwedge^{p_B}\bigslant{B}{\Sed}\right)_{e^q}\otimes R\;,
	\end{equation*}
	
	\noindent and is a direct summand of $\big(F^{(0)}_p\big)_{e^q}\otimes R$ .
\end{proof}
             
Before moving on to express the relations between the homologies of the cosheaves $(F^{(1)}_p\otimes R)_{p\in\N}$ and $(F^{(0)}_p\otimes R)_{p\in\N}$, we note that the proof of Proposition~\ref{prop:preparation_F0_1st_position} contains all the necessary ingredients to completely compute the homology of the cosheaves $(F^{(0)}_p\otimes R)_{p\in\N}$.
     
\begin{prop}
	For all ring $R$ in which $\delta(P)$ is invertible, and $p\in\N$, the only non-trivial homology group of the cosheaf $F^{(0)}_p\otimes R$ is $H_p(K;F^{(0)}_p\otimes R)$. Moreover this module is free of rank $h_p(P^\circ)$, the $p$-th $h$-number of the polar polytope $P^\circ$ of the simple polytope $P$. More precisely:
	
	\begin{equation*}
    	\textnormal{rk}_R\, H_p(K;F^{(0)}_p\otimes R) = \sum_{k=0}^p(-1)^{p-k} \binom{n-k}{p-k}f_{n-k}(P),
    \end{equation*}
    
    \noindent where $f_k(P)$ denotes the number of $k$-faces of $P$.
\end{prop}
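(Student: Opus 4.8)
The plan is to push the proof of Proposition~\ref{prop:preparation_F0_1st_position} one step further. There (see also Remark~\ref{rmk:resolution_F0}) we produced a resolution $0\leftarrow F^{(0)}_p\otimes R\leftarrow \Res_p$ of cosheaves on the face complex $\Pi$ of $P$, whose term $\Res_{p,q}$ is a finite direct sum, over the codimension-$q$ faces $Q'$ of $P$, of characteristic cosheaves $\big[\Pi(Q')\,;\,\bigwedge^{p-q}\big(\t(\Z)/\Sed_{(1)}(Q')\big)\otimes R\big]$ supported on closures of faces, each of which is a closed ball. Since $C_j(\Pi;-)$ is exact and closed balls are contractible, the hyperhomology spectral sequence of the double complex $C_*(\Pi;\Res_{p,*})$ degenerates: each $\Res_{p,q}$ has cellular homology concentrated in degree $0$, equal to $C_q:=\bigoplus_{\cod Q'=q}\bigwedge^{p-q}\big(\t(\Z)/\Sed_{(1)}(Q')\big)\otimes R$, whence $H_q(K;F^{(0)}_p\otimes R)\cong H_q(C_*)$, the differential of $C_*$ being that induced by $\Res_p$. (Subdivision invariance of cellular homology allows this computation on $\Pi$ rather than on the dihomologic subdivision of $K$.)

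The next step is to recognise $C_*$ as a combinatorial complex built on the polar polytope. Codimension-$q$ faces of $P$ correspond bijectively to $(q-1)$-dimensional simplices of the simplicial sphere $\partial P^\circ$, the empty simplex corresponding to $P$ itself, and for such a simplex $\sigma$ the lattice $\Sed_{(1)}(Q')$ is spanned by $\{g_V:V\in\sigma\}$, the vertices of $\sigma$ being exactly the facets of $P$ containing the face. Because $\delta(P)$ is invertible in $R$, for each vertex $V$ of $P$ the $n$ facet generators at $V$ form an $R$-basis of $\t(R)$ (since $\Delta(V)\otimes R=0$), so for every $\sigma$ the set $\{g_V:V\in\sigma\}$ extends to a basis and $\t(R)/\Sed_{(1)}(Q')$ is free of rank $n-q$. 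Thus $C_*$ is the simplicial-chain complex of $\partial P^\circ$ with coefficients in the system $\sigma\mapsto\bigwedge^{p-|\sigma|}\big(\t(R)/\langle\sigma\rangle\big)$, with a Koszul-type differential wedging with the generators of the omitted facets.

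I would then run the standard shelling argument. By the theorem of Bruggesser and Mani \cite{Bru-Man_she_dec} the sphere $\partial P^\circ$ is shellable; fix a shelling $\sigma_1,\dots,\sigma_m$ of its facets, let $R_i$ be the $i$-th restriction face and $r_i$ the number of its vertices, and filter $C_*$ by the subcomplexes supported on $\sigma_1\cup\cdots\cup\sigma_i$. The $i$-th graded piece is supported on the interval $[R_i,\sigma_i]$, and since $\{g_V:V\in\sigma_i\}$ is a basis of $\t(R)$ it is isomorphic — after shifting the homological grading by $r_i$ — to a complex $C(V;G;p-r_i)$ of Definition~\ref{dfn:resolution_ext_alg} with $V=\t(R)/\langle R_i\rangle$ free of rank $n-r_i$ and $G$ a basis of $V$. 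By Lemma~\ref{lem:resolution_ext_alg} such a complex has homology concentrated in degree $0$, equal to $\bigwedge^{p-r_i}\big(V/\langle G\rangle\big)=\bigwedge^{p-r_i}(0)$, which is $R$ when $r_i=p$ and $0$ otherwise; hence each graded piece is acyclic unless $r_i=p$, in which case its homology is one copy of $R$ in degree $p$. Iterating the long exact sequences of the filtration gives $H_q(C_*)=0$ for all $q\neq p$ and presents $H_p(C_*)$ as an iterated extension of copies of $R$, which splits because $R$ is projective; so $H_p(C_*)$ is free of rank equal to the number of shelling steps with $r_i=p$. By the combinatorial description of the $h$-vector of a shellable complex this number is $h_p(\partial P^\circ)=h_p(P^\circ)$, and the displayed alternating sum is the expansion of $h_p(P^\circ)$ via the classical $f$-to-$h$ relation for the simplicial $n$-polytope $P^\circ$, using $f_{i-1}(P^\circ)=f_{n-i}(P)$.

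The main difficulty is bookkeeping rather than conceptual: one has to check that the differential induced on $C_*$, with the correct signs, matches the boundary operators of the complexes $C(V;G;\cdot)$ on each graded piece of the shelling filtration, and to keep track of the shift by $r_i$ and of the reindexing identifying codimension-$q$ faces of $P$ with $(q-1)$-simplices of $\partial P^\circ$. All the ingredients are already available — the resolution from Proposition~\ref{prop:preparation_F0_1st_position}, the local acyclicity from Lemma~\ref{lem:resolution_ext_alg}, the shellability of $\partial P^\circ$ from Bruggesser--Mani — so no new idea is needed, and once the graded pieces are identified the passage from $h_p(P^\circ)$ to the stated $f$-number formula is routine.
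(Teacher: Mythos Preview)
Your argument is correct, but it takes a longer road than the paper's proof. Both proofs begin identically: pass to the face complex $\Pi$ of $P$, use the acyclic resolution $\Res_p$ of Remark~\ref{rmk:resolution_F0}, and reduce the problem to computing the homology of the complex $C_q=H_0(\Pi;\Res_{p,q})$. From there the paper is much quicker. It observes directly that $F^{(0)}_p(Q)=0$ whenever $\dim Q<p$, giving $H_q=0$ for $q<p$, and that $\Res_{p,q}=0$ for $q>p$ since $\bigwedge^{p-q}=0$, giving $H_q=0$ for $q>p$; only $H_p$ can survive. Freeness is then obtained by noting that $H_p\cong\ker(\partial_\Z)\otimes R$ for a map $\partial_\Z$ between free $\Z$-modules, and the rank is read off from the Euler characteristic over $\Q$. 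Your shelling argument instead reconstructs the whole homology of $C_*$ step by step via Bruggesser--Mani and Lemma~\ref{lem:resolution_ext_alg}. This is heavier machinery and the bookkeeping you flag (signs, shifts, the identification of the graded pieces with $C(V;G;p-r_i)$) is real, but it buys you something the paper's proof does not: a direct combinatorial explanation of why the rank is $h_p(P^\circ)$, namely that each shelling step with $|R_i|=p$ contributes exactly one free generator. The paper arrives at the same number only after an Euler-characteristic computation. Both approaches are valid; the paper's is shorter, yours is more transparent about the link to the $h$-vector.
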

		
\begin{proof}
	Since $F^{(0)}_p\otimes R$ is originally defined on the CW-structure induced by the faces of $P$ we can compute its homology on $\Pi$ the CW-complex induced on $P$ by its faces. Note, on the one hand, that from the Definition~\ref{dfn:trop_cosheaves} the group $F^{(0)}_p(Q)=0$ for all faces $Q$ of dimension $q<p$. Therefore $H_q(\Pi;F^{(0)}_p\otimes R)=0$ for all $q<p$. On the other hand we have the resolution of Remark~\ref{rmk:resolution_F0}:
	
	\begin{equation*}
    	0\leftarrow F^{(0)}_p\otimes R \leftarrow \Res_p\; .
    \end{equation*}
	
	\noindent This is an acyclic\footnote{A cosheaf is acyclic if it has trivial homology in dimension at least 1.} resolution\footnote{This is even a projective resolution, c.f. \cite{She_cel_des} in the dual setting of cellular sheaves.}. The $\Res_{p,q}$'s are sums of elementary cosheaves $[\Pi(Q);M]$ for $Q$ a face of $P$ and $M$ a free $R$-module. All these sub-complexes $\Pi(Q)$ are contractible so these cosheaves only have homology in dimension $0$. The cosheaf resolution becomes a resolution of chain complexes:
	
	\begin{equation*}
    	0\leftarrow C_*(\Pi;F^{(0)}_p\otimes R) \leftarrow C_*(\Pi;\Res_{p,0})\leftarrow C_*(\Pi;\Res_{p,1})\leftarrow \cdots \; .
    \end{equation*}
    
    \noindent Since the complexes $C_*(\Pi;\Res_{p,q})_{q\leq 0}$ are acyclic it is well known that the homology of the complex ${C_*(\Pi;F^{(0)}_p\otimes R)}$ is isomorphic to the homology of the complex:
    
    \begin{equation*}
    	0 \leftarrow H_0(\Pi;\Res_{p,0})\leftarrow H_0(\Pi;\Res_{p,1})\leftarrow \cdots \; .
    \end{equation*}
    
    \noindent The cosheaves $\Res_{p,q}$ vanish for all $q>p$. Indeed, we have:
    
    \begin{equation*}
    	\Res_{p,q}=\bigoplus_{\substack{Q_{(q)}<P \\ \textnormal{codim } Q_{(q)} =q}} \Bigg[ \, \Pi(Q_{(q)})\,;\, \underset{=0 \textnormal{ if }q>p}{\underbrace{\left(\bigwedge^{p-q}\bigslant{\t(\Z)}{\Sed_{(1)}(Q_{(q)})}\right)}}\otimes R \Bigg].
    \end{equation*}
    
    \noindent Therefore, the only possibly non-trivial homology group of $F^{(0)}_p\otimes R$ is the $p$-th. Using again the resolution to compute this group we see that it coincides with a group of cycles:
    
    \begin{equation*}
    	H_{p}(\Pi;F^{(0)}_p\otimes R)\cong \ker\left(\partial\colon H_0(\Pi;\Res_{p,p})\rightarrow H_0(\Pi;\Res_{p,p-1}) \right).
    \end{equation*}
    
    \noindent This boundary $\partial$ is defined as the tensor product with $R$ of a map $\partial_\Z:M\rightarrow N$ between two free Abelian groups. Since the image of such $\partial_\Z$ is a free Abelian group, $H_{p}(\Pi;F^{(0)}_p\otimes R)\cong \ker(\partial_\Z)\otimes R$ is free of rank $r=\textnormal{rk}_\Z\,\ker(\partial_\Z)$. In particular this rank does not depend on the chosen ring $R$. Since $\delta(P)$ is always invertible in $\Q$ we can compute this rank using the rational coefficients. Because $F^{(0)}_p\otimes \Q$ can only have non-trivial homology in dimension $p$ we have:
    
    \begin{equation*} 
    	\dim_\Q H_p(\Pi;F^{(0)}_p\otimes \Q)=(-1)^p\sum_{k=0}^n(-1)^k\dim_\Q H_0(\Pi;\Res_{p,k}^\Q) = \sum_{k=0}^p(-1)^{p-k} \binom{n-k}{p-k}f_{n-k}(P)=h_p(P^\circ),
    \end{equation*} 
    
    \noindent denoting by $f_k(P)$ the number of $k$-faces of $P$ and by $\Res_{p,k}^\Q$ the cosheaves $\Res_{p,k}$ defined for the ring $\Q$. Note that $f_{k-1}(P^\circ)=f_{n-k}(P)$.
\end{proof}

\begin{thm}\label{thm:Lefschetz_hyp_sec}
	Let $R$ be a ring in which both $\delta(P)$ and $\theta(K)$ are invertible, the homological morphisms: 
	
	\begin{equation*}
		i_{p}\colon H_{q}(K;F^{(1)}_p\otimes R)\rightarrow H_{q}(K;F^{(0)}_p\otimes R)\,,
	\end{equation*}
	
	\noindent induced by the inclusions $i_p\colon F_p^{(1)}\rightarrow F_p^{(0)}$ are:
	
	\begin{itemize}
		\item isomorphisms for all $p+q<n-1$ ;
		\item surjective morphisms for all $p+q=n-1$.
	\end{itemize}
	
\end{thm}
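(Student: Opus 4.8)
The plan is to reduce the statement to a computation of compactly supported cohomology by means of the Cellular Poincaré--Lefschetz Theorem (Theorem~\ref{thm:cell_poinca_lef}). Since the range $p+q\le n-1$ together with $q\ge 0$ forces $p\le n-1$, Propositions~\ref{prop:preparation_F0_1st_position} and~\ref{prop:preparation_F1_1st_position} ensure that both dihomologic cosheaves $F^{(0)}_p\otimes R$ and $F^{(1)}_p\otimes R$ satisfy the hypotheses of Proposition~\ref{prp:degenerate_E2_lef} with concentration dimension $n=\dim P$. I would therefore invoke Theorem~\ref{thm:cell_poinca_lef} twice to obtain canonical isomorphisms
\begin{equation*}
	H_q(K;F^{(\bullet)}_p\otimes R)\;\cong\;H^{n-q}_c\big(K;H_n((F^{(\bullet)}_p\otimes R)_*)\big),\qquad\bullet\in\{0,1\},
\end{equation*}
and then check that these isomorphisms are natural in the dihomologic cosheaf: the morphism $i_p$ induces a morphism of the bicomplexes $\Omega_{*,*}$, hence of the horizontal spectral sequences, compatibly with the identification $E^1_{p,q}\cong C^p_c(K;H_q((F^{(\bullet)}_p\otimes R)_*))$ used in the proof of Theorem~\ref{thm:cell_poinca_lef}. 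Consequently, writing $\phi_p\colon H_n((F^{(1)}_p\otimes R)_*)\to H_n((F^{(0)}_p\otimes R)_*)$ for the morphism of cellular sheaves on $K$ that $i_p$ induces on top local homologies, the map $i_{p,q}$ is identified with $H^{n-q}_c(K;\phi_p)$, and it remains only to understand the latter.

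Next I would examine $\phi_p$ cell by cell. By Proposition~\ref{prop:preparation_F1_1st_position} and its proof, for every cell $e^q$ of $K$ the localisation $(F^{(1)}_p\otimes R)_{e^q}$ is a direct summand of $(F^{(0)}_p\otimes R)_{e^q}$; as $H_n(K;-)$ is additive, $\phi_p$ is injective at every cell, so that $\mathcal{C}_p:=\coker(\phi_p)$ is a cellular sheaf on $K$ sitting in a short exact sequence $0\to H_n((F^{(1)}_p\otimes R)_*)\to H_n((F^{(0)}_p\otimes R)_*)\to\mathcal{C}_p\to0$. The crucial observation is that $\mathcal{C}_p$ is supported on cells of dimension at most $p$: for a cell $e^q$ with $q>p$, the proof of Proposition~\ref{prop:preparation_F1_1st_position} identifies $F^{(1)}_p(e^q;e^r)\otimes R$ with the kernel of contraction against a generator of $\bigwedge^{q}(Te^q\cap\t^*(\Z))$, an element of degree $q$; since $q>p$ that contraction lands in $\bigwedge^{p-q}\t(\Z)=0$ and is therefore identically zero, so $(F^{(1)}_p\otimes R)_{e^q}=(F^{(0)}_p\otimes R)_{e^q}$ and $\mathcal{C}_p(e^q)=0$.

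Finally, from this support bound the compactly supported cochain group $C^j_c(K;\mathcal{C}_p)=\bigoplus_{\dim e=j}\Hom(\Z(e);\mathcal{C}_p(e))$ vanishes for $j>p$, hence $H^j_c(K;\mathcal{C}_p)=0$ for every $j>p$. Plugging the short exact sequence into the long exact sequence in compactly supported cohomology and setting $j=n-q$: when $p+q<n-1$ we have $j\ge p+2$, so both $H^{j-1}_c(K;\mathcal{C}_p)$ and $H^j_c(K;\mathcal{C}_p)$ vanish and $i_{p,q}=H^{n-q}_c(K;\phi_p)$ is an isomorphism; when $p+q=n-1$ we have $j=p+1$, so $H^j_c(K;\mathcal{C}_p)=0$ and $i_{p,q}$ is surjective. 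The real content of the argument is the second paragraph --- the determination of the cokernel sheaf $\mathcal{C}_p$ and the bound on its support --- which is exactly where the invertibility of $\theta(K)$ enters, through Proposition~\ref{prop:preparation_F1_1st_position}; the remaining steps are formal once Theorem~\ref{thm:cell_poinca_lef} and the naturality of its isomorphism with respect to morphisms of dihomologic cosheaves are in hand.
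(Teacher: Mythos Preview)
Your proposal is correct and follows essentially the same route as the paper: apply Theorem~\ref{thm:cell_poinca_lef} to both cosheaves, identify $i_{p,q}$ with $H^{n-q}_c(K;\phi_p)$ via naturality, show that the induced sheaf map $\phi_p$ is injective with cokernel supported on cells of dimension at most $p$ (using the contraction-against-$\omega$ argument), and conclude by the long exact sequence. The only cosmetic difference is that you deduce injectivity of $\phi_p$ from the direct-summand statement inside the proof of Proposition~\ref{prop:preparation_F1_1st_position}, whereas the paper obtains it from the long exact sequence of the short exact sequence $0\to F^{(1)}_p\otimes R\to F^{(0)}_p\otimes R\to\coker(i_p)\to0$ together with $\dim K=n$; both are valid.
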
 
	
\begin{proof}
	In the light of Theorem~\ref{thm:cell_poinca_lef}, Proposition~\ref{prop:preparation_F0_1st_position}, and Proposition~\ref{prop:preparation_F1_1st_position} we can write:
	
	\begin{equation*}
		H_{q}(K;F^{(k)}_p\otimes R)\cong H^{n-q}_c(K;H_n((F^{(k)}_p\otimes R)_*)) \textnormal{ for } k\in\{0;1\}.
	\end{equation*}
	
	\noindent Note that since $K$ is finite ($P$ being compact), cohomology with compact support is the same as cohomology. Let us denote by $G^{(k)}_p$, $k\in\{0;1\}$, the sheaf $H_n((F^{(k)}_p\otimes R)_*)$. The cosheaf inclusion $i_p\colon F^{(1)}_p\rightarrow F^{(0)}_p$ induces a morphism of sheaves $j_p:G^{(1)}_p \rightarrow G^{(0)}_p$. Since $K$ has dimension $n$, the long exact sequence in homology associated with:
	 
	 \begin{equation*}
	 	0\rightarrow F^{(1)}_p\otimes R\overset{i_p}{\rightarrow} F^{(0)}_p\otimes R\rightarrow \textnormal{coker}(i_p)\rightarrow 0,
	 \end{equation*}
	 
	 \noindent implies that $j_p$ is also injective. We have the following commutative square relating the homological and cohomological counterparts of $i_p$ and $j_p$: 
	 
	 \begin{equation*}
		\begin{tikzcd}%
			H_{q}\big(K;F^{(1)}_p\otimes R\big) \arrow[r,"i_p"] & H_{q}\big(K;F^{(0)}_p\otimes R\big) \\%
			H^{n-q}\big(K;G^{(1)}_p\big) \arrow[r,"j_p" below] \ar[u,"\cong" left] & H^{n-q}\big(K;G^{(0)}_p\big) \ar[u,"\cong" right] %
		\end{tikzcd}
	 \end{equation*}
	 
	 \noindent The vertical isomorphisms are induced by the quasi-isomorphisms given by the second part of Theorem~\ref{thm:cell_poinca_lef}. The commutativity is already satisfied on the level of chain and cochain complexes: 
	 
	 \begin{equation*}
		\begin{tikzcd}%
	 		\Omega_q\big(K;F^{(1)}_p\otimes R\big) \ar[r,"i_p"] & \Omega_{q}\big(K;F^{(0)}_p\otimes R\big) \\%
			\Omega_{n-q,n}\big(K;F^{(1)}_p\otimes R\big)\cap\ker(\partial_2) \arrow[u,hook,"\textnormal{q.i.}" left] \ar[r,"\textnormal{rest.}" above,"i_p" below]  & \Omega_{n-q,n}\big(K;F^{(0)}_p\otimes R\big)\cap\ker(\partial_2) \arrow[u,hook,"\textnormal{q.i.}" right] \\%
			E^1_{n-q,n} \arrow[u,equal] & E^1_{n-q,n} \arrow[u,equal] \\%
			C^{n-q}\big(K;G^{(1)}_p\big) \ar[u,"\Phi" left] \ar[r,"j_p" below] &  C^{n-q}\big(K;G^{(0)}_p\big) \ar[u,"\Phi" right] %
		\end{tikzcd}
	 \end{equation*}
	 
	 \noindent The two vertical isomorphisms in the previous diagram are the homological and cohomological counterparts of compositions of the isomorphism $\Phi$ and the quasi-isomorphic inclusions of Proposition~\ref{prp:degenerate_E2_lef}. For $F$ either $F_p^{(1)}\otimes R$ or $F_p^{(0)} \otimes R$ and $G$ respectively designating $G_p^{(0)}$ or $G_p^{(1)}$, we have:
	 
	 \begin{equation*}
	 	\begin{split}
	 		\Omega_{n-q,n}(K;F)& =\bigoplus_{e^{n-q}\leq e^{n}}F(e^{n-q};e^{n})\otimes\Z(e^{n-q};e^{n}),\\%
	 		\textnormal{and }\colon \Omega_{n-q,n-1}(K;F)& =\bigoplus_{e^{n-q}\leq e^{n-1}}F(e^{n-q};e^{n})\otimes\Z(e^{n-q};e^{n-1}).%
	 	\end{split}
	 \end{equation*}
	 
	 \noindent In this description, both these groups have a splitting indexed by the $(n-q)$-cells of $K$. Both the morphisms $i_p$ and $\partial_2$ respect these splittings which explains the commutativity of the upper square of the last diagram. Also in that setting, the value $\Hom(\Z(e^{n-q};G(e^{n-q}))$, on some cell $e^{n-q}$, is, modulo the action of the isomorphism $\Phi$, the kernel of $\partial_2$ restricted to the $(e^{n-q})$-component of $\Omega_{n-q,n}(K;F)$. By construction, $j_p$ is the restriction of $i_p$ to the $(e^{n-q})$-component of the kernel of $\partial_2$, so the bottom square also commutes.  
	 
	 \vspace{5pt}
	 
	Let us now prove that whenever $r> p$ the map $j_p(e^r)\colon G_p^{(1)}(e^{r})\rightarrow G_p^{(0)}(e^{r})$ is an isomorphism for all $r$-cells $e^{r}$. This implies that the cokernel of $j_p$ is trivial in dimension greater than $p$. By means of the long exact sequence induced in cohomology by the injective morphism of sheaves $j_p$, we find that $j_p\colon H^r\big(K;G^{(1)}_p\big)\rightarrow H^r\big(K;G^{(0)}_p\big)$ is surjective for $r=p+1$ and invertible for $r>p+1$. Theorem~\ref{thm:Lefschetz_hyp_sec} follows after performing the change of variables $r=n-q$. 
	 
	 \vspace{0.5cm}
	 
	 Let $e^r$ be a cell of $K$. We have the following commutative diagram with exact rows:
	 
	 \begin{equation*}
		\begin{tikzcd}
	 		0 \ar[r] & G^{(1)}_p(e^r) \ar[r] \ar[d,"j_p(e^r)" left] & \displaystyle \bigoplus_{e^n\geq e^r}F^{(1)}_p(e^r;e^n)\otimes R\otimes\Z(e^n) \ar[r,"\partial" above] \ar[d,"\bigoplus i_p(e^r;e^n)" right] & \displaystyle \bigoplus_{e^{n-1}\geq e^r}F^{(1)}_p(e^r;e^{n-1})\otimes R\otimes\Z(e^{n-1}) \ar[d,"\bigoplus i_p(e^r;e^{n-1})" right] \\%
	 		0 \ar[r] & G^{(0)}_p(e^r) \ar[r] & \displaystyle \bigoplus_{e^n\geq e^r}F^{(0)}_p(e^r;e^n)\otimes R\otimes\Z(e^n) \ar[r,"\partial" below] & \displaystyle \bigoplus_{e^{n-1}\geq e^r}F^{(0)}_p(e^r;e^{n-1})\otimes R\otimes\Z(e^{n-1}) %
		\end{tikzcd}
	 \end{equation*}
	 
	 \noindent Furthermore, for all cells $e^q\geq e^r$ we have the exact sequence:
	 
	 \begin{equation*}
		\begin{tikzcd}%
	 		0 \ar[r] & F^{(1)}_p(e^r;e^q)\otimes R \ar[rr,"i_p(e^r;e^q)"] && \displaystyle \underset{F^{(0)}_p(e^r;e^q)\otimes R }{\underbrace{\bigwedge^p \bigslant{\t(\Z)}{\Sed(e^q)}\otimes R }} \ar[r,"\omega\cdot"] & \displaystyle \bigwedge^{p-r} \bigslant{\t(\Z)}{\Sed(e^q)}\otimes R \;,%
		\end{tikzcd}
	 \end{equation*} 
	 
	 \noindent with the last morphism given by the contraction against $\omega$ a generator of $\bigwedge^r(Te^r\cap\t^*(\Z))$. By definition the contraction of the $r$-form $\omega$ against a $p$-vector is $0$ whenever $r>p$. Therefore, in this case $i_p(e^r;e^q)$ is the identity for all pairs $e^r<e^q$. Consequently, $\bigoplus i_p(e^r;e^{n-1})$ and $\bigoplus i_p(e^r;e^{n})$ are also the identity and so is $j_p(e^r)$.
\end{proof}
	 
From the last theorem and Definition~\ref{dfn:trop_homology} we deduce the following corollary:
	 
\begin{cor}[Lefschetz Hyperplane Section Theorem]\label{cor:Lefschetz_hyp_sec}
	Let $Y$ be the orbifold tropical variety associated with the simple polytope $P$ and $X$ be a tropical hypersurface of $Y$ dual to a convex integer polyhedral subdivision $K$ of $P$. For every ring $R$ in which both $\delta(P)$ and $\theta(K)$ are invertible, the homological morphisms:  
	
	\begin{equation*}
		i_{p,q}\colon H_{p,q}(X;R)\rightarrow H_{p,q}(Y;R)\,,
	\end{equation*}
	
	\noindent induced by the inclusions $i_{p}\colon F_p^{(1)}\rightarrow F_p^{(0)}$, $p\in\N$, are:
	
	\begin{itemize}
		\item isomorphisms for all $p+q<n-1$ ;
		\item surjective morphisms for all $p+q=n-1$.
	\end{itemize}
	
\end{cor}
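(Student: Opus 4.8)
The plan is to read the Corollary off Theorem~\ref{thm:Lefschetz_hyp_sec} after unwinding the definition of tropical homology. First I would recall from the discussion preceding Definition~\ref{dfn:trop_homology} --- the Mikhalkin--Rau duality --- that a tropical hypersurface $X$ of the orbifold toric variety $Y$ attached to a simple polytope $P$ is dual to an integer polyhedral subdivision $K$ of $P$ whose vertices lie in $\t^*(\Z)$, and that $n=\dim P=\dim Y$; this is precisely the standing data of Theorem~\ref{thm:Lefschetz_hyp_sec}. By Definition~\ref{dfn:trop_homology} one has $H_{p,q}(X;R)=H_q(K;F^{(1)}_p\otimes R)$ and $H_{p,q}(Y;R)=H_q(K;F^{(0)}_p\otimes R)$, and $i_{p,q}$ is by definition the map induced in homology by the cosheaf inclusion $i_p\colon F^{(1)}_p\to F^{(0)}_p$. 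Hence the statement of the Corollary is exactly the conclusion of Theorem~\ref{thm:Lefschetz_hyp_sec} transported through these identifications; the convexity of $K$, forced here by the fact that $K$ comes from an actual tropical hypersurface, is not used, since Theorem~\ref{thm:Lefschetz_hyp_sec} already holds for arbitrary integer polyhedral subdivisions.

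For completeness I would recall the skeleton of the proof of Theorem~\ref{thm:Lefschetz_hyp_sec} that carries the content. Since $\delta(P)$ and $\theta(K)$ are invertible in $R$, Propositions~\ref{prop:preparation_F0_1st_position} and~\ref{prop:preparation_F1_1st_position} show that $F^{(0)}_p\otimes R$ and $F^{(1)}_p\otimes R$ have all their local homologies concentrated in degree $n$, so the Cellular Poincaré--Lefschetz Theorem~\ref{thm:cell_poinca_lef} converts each $i_{p}\colon H_q(K;F^{(1)}_p\otimes R)\to H_q(K;F^{(0)}_p\otimes R)$ into a map $j_p\colon H^{n-q}(K;G^{(1)}_p)\to H^{n-q}(K;G^{(0)}_p)$ induced by an injective morphism of cellular sheaves $G^{(1)}_p\hookrightarrow G^{(0)}_p$, where $G^{(k)}_p=H_n((F^{(k)}_p\otimes R)_*)$ and compactly supported cohomology coincides with ordinary cohomology because $P$, hence $K$, is finite. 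One then verifies stalk by stalk that $j_p(e^r)$ is an isomorphism for every cell $e^r$ with $r:=\dim e^r>p$: locally $G^{(k)}_p(e^r)$ is a kernel of contraction by a generator $\omega$ of $\bigwedge^{r}(Te^r\cap\t^*(\Z))$, and the contraction of an $r$-form against a $p$-vector vanishes identically when $r>p$, so there $i_p(e^r;\cdot)$ is the identity. Thus $\coker(j_p)$ is supported in cellular dimensions $\le p$, the long exact sequence in cohomology makes $j_p$ onto in degree $p+1$ and bijective in degrees $>p+1$, and the substitution $r=n-q$ returns the ranges $p+q<n-1$ (isomorphism) and $p+q=n-1$ (surjection).

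The only delicate point in that outline --- and hence the only place where the Corollary could demand more than a citation --- is the naturality of the Poincaré--Lefschetz identification in the cosheaf $F$: one must know the quasi-isomorphism $C^{n-*}_c(K;H_n(F_*))\to\Omega_*(K;F)$ of Theorem~\ref{thm:cell_poinca_lef} can be chosen functorially, so that the square relating $i_p$ and $j_p$ commutes. This follows because $i_p$ respects the horizontal (first-coordinate) filtration of $\Omega_{*,*}$ together with the cell-indexed splittings of $\Omega_{n-q,n}$ and $\Omega_{n-q,n-1}$ on which $i_p$ and the vertical differential $\partial_2$ act, so the edge isomorphism $\Phi$ of Proposition~\ref{prp:degenerate_E2_lef} is automatically compatible with $i_p$ and $j_p$. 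Everything else is bookkeeping, and the Corollary follows.
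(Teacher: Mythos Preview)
Your proposal is correct and follows the paper's approach exactly: the paper simply states that the Corollary is deduced from Theorem~\ref{thm:Lefschetz_hyp_sec} together with Definition~\ref{dfn:trop_homology}, which is precisely your first paragraph. Your additional recollection of the skeleton of Theorem~\ref{thm:Lefschetz_hyp_sec} and the naturality check is more than the paper provides for the Corollary itself, but it is accurate and faithful to the argument given in the proof of the theorem.
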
 

The hypothesis on the coefficients cannot be dropped in Theorem~\ref{thm:Lefschetz_hyp_sec} or Corollary~\ref{cor:Lefschetz_hyp_sec} as the following two examples show where Theorem~\ref{thm:Lefschetz_hyp_sec} fails for $R=\Z$ because either $\delta(P)\neq 1$ or $\theta(K)\neq 1$.

\begin{exs}
	\begin{enumerate}
		\item Let the triangle $T$ be the convex hull of $0$, $\df x$, and $2\df y$ in $\Hom_\R(\R^2;\R)$ endowed with the canonical lattice $\Z\df x+\Z\df y$, c.f. Figure~\ref{subfig:sub_P112}. On this polytope we consider its unique unimodular triangulation $K$ depicted in Figure~\ref{subfig:sub_P112}. We have $\delta(P)=2$ and $\theta(K)=1$ and the groups $H_q(K;F^{(1)}_p)$ and $H_q(K;F^{(1)}_p)$ are given, along with some of the homological morphisms induced by inclusion, in the Figure~\ref{subfig:Hodge_P112}. In this example, the morphism  $i_{1,0}:H_1(K;F^{(1)}_0)\rightarrow H_1(K;F^{(0)}_0)$ is not surjective.

\begin{figure}[h!]
	\centering
	\begin{subfigure}[t]{0.4\textwidth}
		\centering
		\begin{tikzpicture}[scale=2]
			\foreach \x in {0,1}{
				\foreach \y in {0,1,2}{
					\fill ($3/2*(\x,\y)$) circle (.05);}}
					
				\draw[very thick] (0,0) -- (0,3) -- (1.5,0) -- cycle;
				\draw[thick] (0,1.5) -- (1.5,0);
		\end{tikzpicture}
		\caption{The triangle $T$ associated with the weighted projective space $\mathbb{P}(1,1,2)$ and its subdivision $K$ into two unimodular triangles.}
		\label{subfig:sub_P112}
	\end{subfigure}
	\hspace{1cm}
	\begin{subfigure}[t]{0.4\textwidth}
		\centering
		\begin{tikzpicture}[scale=2,very thick]
			\draw (0,0.5) -- (.25,0.5) -- (.25,0);
			\draw (.25,0.5) -- (.75,1) -- (0,1);
			\draw (0.75,1) -- +($1/10*(2,1)$);
			\draw[thin] (0,0) -- (1.5,0) -- (0,3) -- cycle;
		\end{tikzpicture}
		\caption{A tropical curve whose dual subdivision is $K$.}
	\end{subfigure}
	\begin{subfigure}[t]{\textwidth}
		\centering
		\begin{equation*}
			\begin{tikzcd}[sep=small]
				 & & 0 & & & ~ & ~ & & & \Z & & \\%
				 & 0 & & 0 & & ~ & ~ & & 0 & & 0 & \\%
				0 \arrow[rr,dotted,-] & & \Z \arrow[rr,dotted,-] & & 0 \arrow[rrr,dotted,-] & ~ & ~ & 0 \arrow[rr,dotted,-] & & \Z \arrow[rr,dotted,-] & & 0 \arrow[r,dotted,-] & p+q =2\\%
				 & 0 \arrow[rrrrrrr,"i_{1,0}", bend right=12]\arrow[rr,dotted,-] & & 0 \arrow[rrrrr,dotted,-] & & ~ & ~ & & \bigslant{\Z}{2} \arrow[rr,dotted,-] & & 0 \arrow[rr,dotted,-] & & p+q=1  \\%
				 & & \Z \arrow[rrrrrrr,"i_{0,0}=\id",bend right=12] \arrow[rrrrrrr,dotted,-] & & & ~ & ~ & & & \Z \arrow[rrr,dotted,-] & & & p+q=0%
			\end{tikzcd}
		\end{equation*}
		\caption{The Hodge diamond of the curve on the left and the Hodge diamond of $\mathbb{P}(1,1,2)$ on the right. The parameter $p$ increases in the north-east direction ($\nearrow$) and $q$ in the north-west direction ($\nwarrow$).}
		\label{subfig:Hodge_P112}
	\end{subfigure}
	\caption{A curve in the weighted projective plane $\mathbb{P}(1,1,2)$ whose integral tropical homology don't satisfies the tropical version of the Lefschetz hyperplane section theorem.}
	\label{fig:P112}%
\end{figure}
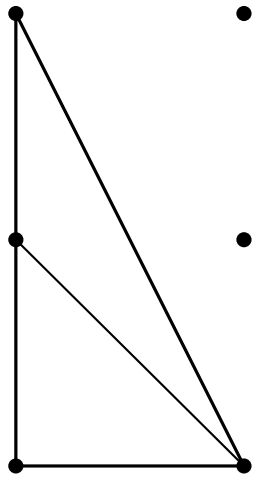
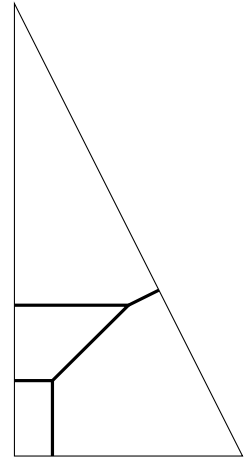
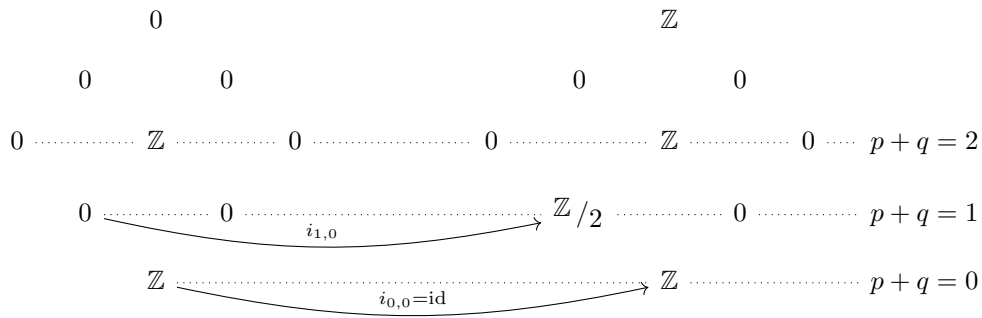

		\item Consider the cube with edges of length $2$ in $\Hom_\R(\R^3;\R)$, endowed with its canonical lattice $\Z\df x+\Z\df y+\Z\df z$, and subdivided by $K$ as in Figure~\ref{subfig:sub_P1P1P1}. Its associated tropical surfaces are singular and we have $\delta(P)=1$ and $\theta(K)=2$. Here the map $i_{1,1}$ is not surjective. Its image is $\Z (e_1+e_2)+\Z(e_1-e_2)+\Z e_3$ for $e_1,e_2,e_3$ the standard basis of $H_1(K;F_1^{(0)})$ given by the three independent copies of $\mathbb{P}^1$ in $\mathbb{P}^1\times\mathbb{P}^1\times\mathbb{P}^1$, the toric variety associated with the cube.
	\end{enumerate}
\end{exs}

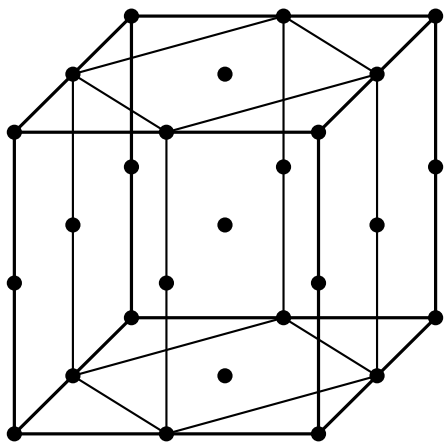
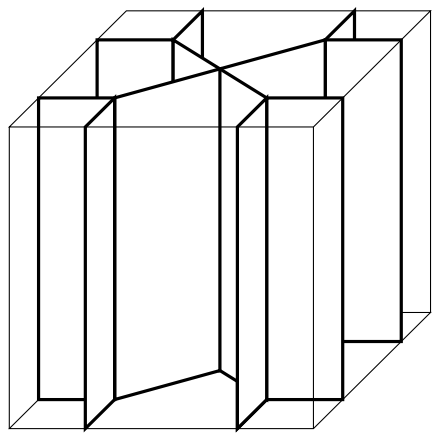
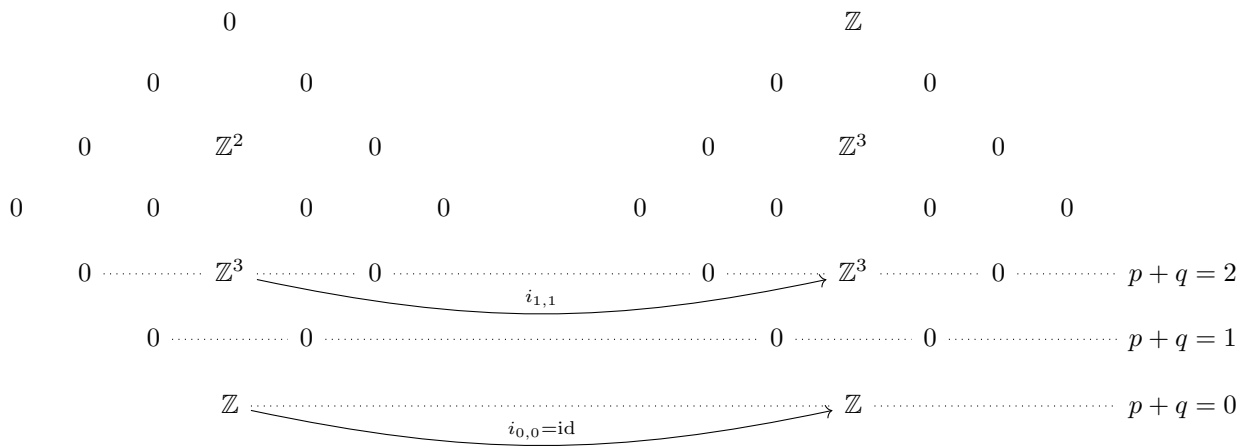
\begin{figure}[h!]
	\centering
	\begin{subfigure}[t]{0.4\textwidth}
		\centering
		\begin{tikzpicture}[scale=2,very thick]
			\draw (0,0,0) -- (2,0,0) -- (2,2,0) -- (0,2,0) -- cycle;
			\draw (0,0,2) -- (2,0,2) -- (2,2,2) -- (0,2,2) -- cycle;
			
			\foreach \p in {(0,0,2),(2,0,2),(2,2,2),(0,2,2)}{
				\draw \p -- +(0,0,-2);}
			
			\draw[thick] (1,0,0) -- (0,0,1) -- (1,0,2) -- (2,0,1) -- cycle;
			\draw[thick] (1,2,0) -- (0,2,1) -- (1,2,2) -- (2,2,1) -- cycle;
			
			\foreach \p in {(1,0,0),(0,0,1),(1,0,2),(2,0,1)}{
				\draw[thick] \p -- +(0,2,0);}
				
			\foreach \x in {0,1,2}{
				\foreach \y in {0,1,2}{
					\foreach \z in {0,1,2}{
						\fill ($(\x,\y,\z)$) circle (.05);}}}
						
		\end{tikzpicture}
		\caption{A convex subdivision $K$ of the cube.}
		\label{subfig:sub_P1P1P1}
	\end{subfigure}
	\hspace{1cm}
	\begin{subfigure}[t]{0.4\textwidth}
		\centering
		\begin{tikzpicture}[scale=2,very thick]
			\draw[thin] (0,0,0) -- (2,0,0) -- (2,0,2) -- (0,0,2) -- cycle;
			
			\foreach \p in {(.5,0,0),(1.5,0,0),(1.5,0,1.5)}{
				\filldraw[fill=white] \p -- ++(0,2,0) -- ++(0,0,.5) -- ++(0,-2,0) -- cycle;}
				
			\foreach \p in {(0,0,0.5),(1.5,0,.5)}{
				\filldraw[fill=white] \p -- ++(0,2,0) -- ++(0.5,0,0) -- ++(0,-2,0) -- cycle;}
			
			\filldraw[fill=white] ($(.5,0,-.5)+(0.5,0,1.5)$) -- ++(0,2,0) -- ++(.5,0,-.5) -- ++(0,-2,0) -- cycle;	
			\filldraw[fill=white] (0.5,0,0.5) -- ++(0,2,0) -- ++(1,0,1) -- ++(0,-2,0) -- cycle;
			\filldraw[fill=white] (0.5,0,1.5) -- ++(0,2,0) -- ++(.5,0,-.5) -- ++(0,-2,0) -- cycle;
				
			\foreach \p in {(0,0,1.5),(1.5,0,1.5)}{
				\filldraw[fill=white] \p -- ++(0,2,0) -- ++(0.5,0,0) -- ++(0,-2,0) -- cycle;}
				
			\foreach \p in {(.5,0,1.5),(1.5,0,1.5)}{
				\filldraw[fill=white] \p -- ++(0,2,0) -- ++(0,0,.5) -- ++(0,-2,0) -- cycle;}
			
			\begin{scope}[thin]
				\draw (0,2,0) -- (2,2,0) -- (2,2,2) -- (0,2,2) -- cycle;
			
				\foreach \p in {(0,0,2),(2,0,2),(2,0,0)}{
					\draw \p -- +(0,2,0);}
			\end{scope}
		\end{tikzpicture}
		\caption{A singular tropical surface of $\mathbb{P}^1\times\mathbb{P}^1\times\mathbb{P}^1$ dual to the subdivision $K$.}
	\end{subfigure}
	\begin{subfigure}[t]{\textwidth}
		\centering
		\begin{equation*}
			\begin{tikzcd}[sep=small]
				   &   &   & 0    &   &   &   & ~ & ~ &   &   &   & \Z   &   &   &   &  \\%
				   &   & 0 &      & 0 &   &   & ~ & ~ &   &   & 0 &      & 0 &   &   &  \\%
				   & 0 &   & \Z^2 &   & 0 &   & ~ & ~ &   & 0 &   & \Z^3 &   & 0 &   &  \\%
				 0 &   & 0 &      & 0 &   & 0 & ~ & ~ & 0 &   & 0 &      & 0 &   & 0 &  \\%
				   & 0 \ar[rr,dotted,-] &   & \Z^3 \ar[rr,dotted,-] \ar[rrrrrrrrr,"i_{1,1}",bend right=12] &   & 0 \ar[rrrrr,dotted,-] &   & ~ & ~ &   & 0 \ar[rr,dotted,-] &   & \Z^3 \ar[rr,dotted,-] &   & 0 \ar[rr,dotted,-] &   & p+q=2 \\%
				   &   & 0 \ar[rr,dotted,-] &      & 0 \ar[rrrrrrr,dotted,-] &   &   & ~ & ~ &   &   & 0 \ar[rr,dotted,-] &      & 0 \ar[rrr,dotted,-] &   &   & p+q=1 \\%
				   &   &   & \Z \ar[rrrrrrrrr,"i_{0,0}=\id", bend right=12] \ar[rrrrrrrrr,dotted,-] &   &   &   & ~ & ~ &   &   &   & \Z \ar[rrrr,dotted,-]  &   &   &   & p+q=0  \\%
			\end{tikzcd}
		\end{equation*}
		\caption{The Hodge diamond of the surface on the left and the Hodge diamond of $\mathbb{P}^1\times\mathbb{P}^1\times\mathbb{P}^1$ on the right. The parameter $p$ increases in the north-east direction ($\nearrow$) and $q$ in the north-west direction ($\nwarrow$).}
		\label{subfig:Hodge_P1P1P1}
	\end{subfigure}
	\caption{A singular surface in $\mathbb{P}^1\times\mathbb{P}^1\times\mathbb{P}^1$ whose integral tropical homology does not satisfy the tropical version of the Lefschetz hyperplane section theorem.}
	\label{fig:P1P1P1}%
\end{figure}

We would like to conclude few remarks on the numbers $\delta(P)$ and $\theta(K)$ and the definition of the cosheaves $(F^{(1)}_p)_{p\in\N}$. From its definition the number $\delta(P)$ equals $1$ if and only if the toric variety associated with $P$ is smooth. Therefore, assuming $\delta(P)=1$ puts us closer to the  tropical Lefschetz hyperplane section theorem of C. Arnal, A. Renaudineau and K. Shaw \cite{Arn-Ren-Sha_Lef_sec}\footnote{C. Arnal, A. Renaudineau and K. Shaw. \emph{Lefschetz Section Theorems for Tropical Hypersurfaces}, Theorem 1.2 p.1349.}. However, assuming $\delta(P)=\theta(K)=1$ does not implies that $X$ is smooth. The hypersurface $X$ is said to be smooth when $K$ is an unimodular triangulation and in this case $\theta(K)=1$. Therefore, assuming both $\delta(P)$ and $\theta(K)$ to be $1$ already includes in the statement some singular hypersurfaces. For a general polytope $Q$, the number $\theta(Q)$ seems difficult to compute. However, it seems computable for simplices. For segments it is $1$. For a triangle $T$, a direct computation yields:

\begin{equation*}
\theta(T)=\frac{2\cdot\vol_\Z(T)\cdot\textnormal{GCD}\{\vol_\Z(E) \colon E \text{ edge of }T\}}{\displaystyle\prod_{E\leq T} \vol_\Z(E)},
\end{equation*}
 
\noindent where $\vol_\Z(Q)$ for an integer polytope $Q$ is its integer volume, i.e. its Lebesgue measure in the affine sub-space it spans normalised so that a parallelogram on a basis of the induced lattice has measure $1$.

\vspace{5pt}

When $\theta(K)$ equals $1$, Lemma~\ref{lem:finite_index} and Definition~\ref{dfn:theta} describe the cosheaf $\bigoplus_{p\in\N} F^{(1)}_p$ as the kernel of a contraction. When $\theta(K)$ is greater than $1$ the latter is the saturation of the former. Theorem~\ref{thm:Lefschetz_hyp_sec} suggests that if we alternatively defined the the cosheaf $\bigoplus_{p\in\N} F^{(1)}_p$ as the kernel of a contraction then every tropical hypersurface dual to a polyhedral subdivision (combinatorially ample in the terminology of \cite{Arn-Ren-Sha_Lef_sec}) in a projective non-singular tropical toric variety would satisfy the tropical Lefschetz hyperplane section theorem with integral coefficients. 

\vspace{5pt}

Finally, we want to acknowledge that even if the Lefschetz hyperplane section theorem with rational coefficients is a consequence of the Hard Lefschetz theorem with rational coefficients is it usually not the case when one considers coefficient rings of positive characteristic. For instance a product of at least three copies of $\mathbb{P}^1$ does not have any cohomology class over $\F_2$, the field with two elements, satisfying the Hard Lefschetz property. However the Lefschetz hyperplane theorem with $\F_2$ coefficient remains valid for any non-singular hypersurface of this product.   

\clearpage

\bibliographystyle{alpha}
\bibliography{Cellular_Poincare_Lefschetz}
\end{document}